\def\quot#1#2{#1/\!\!/#2}
\def\C{\mathbb {C}}
\def\R{\mathbb {R}}
\def\NN{\mathcal N}
\def\N{\mathbb N}
\def\Z{\mathbb Z}
\def\Hol{\mathcal H}
\def\HH{\mathcal{H}}
\def\SL{\operatorname{SL}}
\def\GL{\operatorname{GL}}
\def\PSL{\operatorname{PSL}}
\def\Sp{\operatorname{Sp}}
\def\Spin{\operatorname{Spin}}
\def\SO{\operatorname{SO}}
\def\Orth{\operatorname{O}}
\def\SU{\operatorname{SU}}
\def\Diffeo{\operatorname{Diff}}
\def\Diff{\mathcal D}
\def\HDiff{\mathcal {HD}}
\def\inv{^{-1}}
\def\lie#1{{\mathfrak #1}}
\def\lieg{\lie g}
\def\lieh{\lie h}
\def\liesl{\lie {sl}}
\def\liek{\lie {k}}
\def\liez{\lie{z}}
\def\phi{{\varphi}}
\def\End{\operatorname{End}}
\def\AA{\mathsf{A}}
\def\CC{\mathsf{C}}
\def\BB{\mathsf{B}}
\def\DD{\mathsf{D}}
\def\GG{\mathsf{G}}
\def\E{\mathcal E}
\def\O{\mathcal O}
\def\M{\mathcal M}
\def\F{\mathcal F}
\def\LL{\mathcal L}
\def\pr{{\operatorname{pr}}}
\def\Ker{\operatorname{Ker}}
\def\Aut{\operatorname{Aut}}
\def\HAut{\Aut_{\HH}}
\def\codim{\operatorname{codim}}
\def\rank{\operatorname{rank}}
\def\lieA{{\lie A}}
\def\Sym{\operatorname{S}}
\def\ql{{\operatorname{{q\ell}}}}
\def\HHH{\operatorname{H}}
\def\G{\mathcal G}
\numberwithin{equation}{subsection}
\newtheorem{theorem}[subsection]{Theorem}
\newtheorem{lemma}[subsection]{Lemma}
\newtheorem{proposition}[subsection]{Proposition}
\newtheorem{corollary}[subsection]{Corollary}
\theoremstyle{definition}
\newtheorem{definition}[subsection]{Definition}
\newtheorem{problem}[subsection]{Problem}
\theoremstyle{remark}
\newtheorem{remark}[subsection]{Remark}
\newtheorem{remarks}[subsection]{Remarks}
\newtheorem{example}[subsection]{Example}
\title[Quotients, automorphisms and differential operators]{\boldmath Quotients, automorphisms and differential operators} 
 \author{Gerald W. Schwarz}
\address{Department of Mathematics\\
Brandeis University\\
Waltham, MA 02454-9110}
\email{schwarz@brandeis.edu}
\subjclass[2010]{20G20, 22E46, 57S15}
\keywords{differential operators, automorphisms, quotients}
\begin{document}
\begin{abstract}
Let $V$ be a $G$-module where $G$ is a complex reductive group. Let $Z:=\quot VG$ denote the categorical quotient and let $\pi\colon V\to Z$ be the morphism dual to the inclusion $\O(V)^G\subset\O(V)$.    Let $\phi\colon Z\to Z$ be an algebraic  automorphism. Then one can ask if there is an algebraic map $\Phi\colon V\to V$ which lifts $\phi$, i.e., $\pi(\Phi(v))=\phi(\pi(v))$ for all $v\in V$.  In Kuttler \cite{Kuttler}   the case is treated where $V=r\lieg$ is a multiple of the adjoint representation of $G$. It is shown that, for $r$ sufficiently large (often $r\geq 2$ will do), any $\phi$ has a lift. 

We consider the case of general representations (satisfying some mild assumptions). It turns out that it is natural to consider holomorphic lifting of holomorphic automorphisms of $Z$, and we show that if a holomorphic $\phi$ and its inverse  lift  holomorphically, then $\phi$ has a lift $\Phi$ which is an automorphism such that $\Phi(gv)=\sigma(g)\Phi(v)$, $v\in V$, $g\in G$ where $\sigma$ is  an  automorphism of $G$. We reduce the lifting problem to the group of automorphisms of $Z$ which preserve the natural  grading of $\O(Z)\simeq\O(V)^G$.
Lifting does not always hold, but we show that it always does for representations of tori in which case algebraic automorphisms lift to algebraic automorphisms.  We extend Kuttler's methods to show lifting in case $V$ contains a copy of $\lieg$.  
 \end{abstract}

\maketitle

\section{Introduction}\label{sec:intro}
Our base field is $\C$, the field of complex numbers. Let $G$ be a complex reductive group and $V$  a  $G$-module. We denote the algebra of polynomial functions on $V$ by $\O(V)$. For the following, we refer to \cite{KraftBook}, \cite{LunaSlice} and \cite{PopovVinberg}. By Hilbert, the algebra $\O(V)^G$ is finitely generated, so that we have a quotient variety $Z:=\quot VG$ with coordinate ring $\O(Z)=\O(V)^G$. Let $\pi\colon V\to Z$ denote the morphism dual to the inclusion $\O(V)^G\subset\O(V)$. Then $\pi$ sets up a bijection between the points of $Z$ and the closed orbits in $V$. If $Gv$ is a closed orbit, then the isotropy group $H=G_v$ is reductive. The \emph{slice representation of $H$ at $v$\/} is its action on $N_v$ where $N_v$ is an $H$-complement to $T_v(Gv)$ in $T_v(V)\simeq V$. Let $Z_{(H)}$ denote the points of $Z$ such that the isotropy groups of the corresponding closed orbits are in the conjugacy class $(H)$ of $H$.  The $Z_{(H)}$ give a finite stratification of $Z$ by locally closed smooth subvarieties. In particular, there is a unique open stratum $Z_{(H)}$, the \emph{principal stratum}, which we also denote by $Z_\pr$. We call   $H$ a \emph{principal isotropy group} and any associated closed orbit a \emph{principal orbit\/} of $G$.   

As shorthand for saying that $V$ has finite (resp.\ trivial)  principal isotropy groups we say that $V$ has FPIG (resp.\ TPIG). If $V$ has FPIG, then  there is an open set of closed orbits and a closed orbit is principal if and only if the slice representation of its isotropy group is trivial. Set $V_\pr:=\pi\inv(Z_\pr)$.  We say that $V$ is \emph{$k$-principal\/} if $V$ has FPIG and $\codim V\setminus V_\pr\geq k$.

Let $\Diff^k(V)$ (resp.\ $\Diff^k(Z)$) denote the differential operators on $V$ (resp.\  $Z$) of order at most $k$ (see \cite[\S 3]{SchLiftingDOs}). Then restriction gives us a morphism $\pi_*\colon\Diff^k(V)^G\to\Diff^k(Z)$. One just considers  elements of  $\Diff^k(V)^G$ as differential operators on $\O(V)^G=\O(Z)$. 

\begin{definition}\label{def:admissible}
We say that $V$ is \emph{admissible\/} if 
\begin{enumerate}
\item $V$ is $2$-principal.\label{def:2princ}
\item  $\pi_*\colon\Diff^k(V)^G\to\Diff^k(Z)$ is surjective for all $k$.  \label{def:pistarsurj}
\end{enumerate}
\end{definition}
If $V$ is  $2$-principal, then the principal isotropy group is the kernel of $G\to\GL(V)$ \cite[Remark 2.5]{SchVectorFields}.  Thus if $V$ is admissible, we can assume that it has TPIG by just dividing out by the ineffective part of the action. If $G^0$ is a torus, then  
(1) implies (2) (see \cite[10.4]{SchLiftingDOs}). 

Let $\lieg$ denote  the Lie algebra of $G$. Suppose that $G=G_1\times G_2$ is a product of reductive groups. Consider representations $V=V_1\oplus V_2$ where $V_i$ is a representation of $G_i$, $i=1$, $2$. If $V_1$ is not admissible, then neither is $V_1\oplus V_2$. This explains why the hypotheses of the following theorem are necessary.

\begin{theorem}(\cite[Corollary 11.6]{SchLiftingDOs})
\label{thm:finitebad}
Let $G$ be a connected semisimple group and consider representations of $G$ which contain no trivial factor and all of whose irreducible factors are faithful representations of $\lieg$. Then, up to isomorphism, there are only finitely many such representations which are not admissible.  
\end{theorem}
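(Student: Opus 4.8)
The plan is to show that the property of being non-admissible is ``bounded'' in a suitable sense: if $V$ is a representation of $G$ satisfying the hypotheses and $V$ has large enough dimension (relative to the rank and structure of $G$), then $V$ is automatically admissible. Since there are only finitely many connected semisimple groups $G$ of bounded dimension (or one can fix $G$ and vary $V$), and for each such $G$ only finitely many representations of bounded dimension with no trivial factor, this will give the finiteness statement. So the first step is to reduce to a statement of the form: there is a function $N(G)$ such that any $V$ as in the hypotheses with $\dim V \geq N(G)$ is admissible; and moreover only finitely many $G$ need be considered because a semisimple group with a faithful representation of bounded dimension has bounded rank and bounded $\dim G$.

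\textbf{Reduction to a boundedness statement.} First I would observe that if $V = V_1 \oplus V_2$ (a sum of two $G$-subrepresentations, not necessarily respecting a product decomposition of $G$), then adding more summands only helps: by the remark preceding the theorem, the admissibility of a direct sum is controlled by its pieces, and more to the point, a representation with many copies of the same faithful factor will be $k$-principal for large $k$ and will have surjective $\pi_*$ by the differential-operator results in \cite{SchLiftingDOs}. So the non-admissible $V$ must be ``small'': they cannot contain too many copies of any irreducible, and the total number of irreducible factors is bounded. Combined with the constraint that every irreducible factor is a faithful representation of $\lieg$ (so in particular each factor already has dimension bounded below by a function of $\dim\lieg$, and the action is effective on $\lieg$), this forces $\dim V$ to be bounded whenever $V$ is non-admissible and $G$ is fixed.

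\textbf{Bounding the group.} Next, since $G$ is connected semisimple and each irreducible factor of $V$ is a faithful representation of $\lieg$, a single irreducible faithful representation of $\lieg$ of bounded dimension exists; but the smallest faithful representation of a semisimple Lie algebra grows with its rank (e.g. for type $\AA_n$ the minimal faithful representation has dimension $n+1$, and similar linear lower bounds hold in the other types). Hence if $V$ is non-admissible, $\dim V$ is bounded, so each irreducible factor has bounded dimension, so $\rank\lieg$ and the number of simple factors of $\lieg$ are bounded, hence $\dim\lieg$ is bounded, so $G$ ranges over a finite list of connected semisimple groups. Then for each such $G$, the representations $V$ with no trivial factor, with all factors faithful on $\lieg$, and with $\dim V$ below the explicit bound, form a finite set up to isomorphism (there are only finitely many irreducibles of $G$ below any dimension bound, and only finitely many ways to take a bounded-size multiset of them). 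This yields the finiteness claim.

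\textbf{Main obstacle.} The hard part will be making precise and proving the ``smallness'' implication: that a non-admissible $V$ (under these hypotheses) must have bounded dimension. Condition (1) of admissibility, $2$-principality, is the more geometric half — one needs that once there are enough copies of faithful factors the principal isotropy is trivial and the non-principal locus has codimension $\geq 2$; this is the kind of stratification estimate that can be pushed through using slice representations and the classification of representations with non-trivial principal isotropy groups (as in \cite{SchVectorFields}). Condition (2), surjectivity of $\pi_*$ in all degrees, is the genuinely delicate part: one must invoke the machinery of \cite{SchLiftingDOs} relating surjectivity of $\pi_*$ to the codimension of the non-principal locus and to local conditions at the strata, and argue that all the obstructions vanish once $V$ is large. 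I would expect the proof to cite \cite[Corollary 11.6]{SchLiftingDOs} essentially verbatim here rather than reprove it — indeed the theorem is stated as a quotation of that corollary — so the ``proof'' in this paper is really the translation: verifying that the finitely many exceptional representations identified there are exactly the non-admissible ones, and noting that the list is finite. The residual work is bookkeeping: checking that ``no trivial factor'' plus ``all factors faithful on $\lieg$'' are exactly the hypotheses under which the cited corollary applies, and that dividing by ineffective parts is not needed because faithfulness on $\lieg$ already gives an almost-faithful $G$-action.
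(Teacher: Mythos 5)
You are right that the paper offers no independent proof of this statement: it is labeled explicitly as \cite[Corollary 11.6]{SchLiftingDOs} and simply imported, so the ``paper's own proof'' here is the citation, which you identified correctly. Your surrounding sketch of how the cited result would be established (bound the number of copies of each factor, use faithfulness of the irreducible factors to bound the rank of $\lieg$ and hence the list of groups, then observe only finitely many small representations remain) is a reasonable reconstruction of the strategy one expects in that reference, though it is not something this paper contains or needs to contain.
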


For $X$ an affine variety let $\Aut(X)$ denote the automorphisms of $X$ and let $\HAut(X)$ denote the holomorphic automorphisms of $X$.   Let $\phi\in\HAut(Z)$. 
We say that \emph{$\phi$ preserves the stratification\/} if it permutes the strata. From \cite[Theorems 1.1 and 1.3, Remark 2.4]{SchVectorFields} we have the following result.

\begin{theorem}\label{thm:stratapreserve}
Let $V$ be admissible, $3$-principal or $2$-principal and orthogonal. Then any $\phi\in\HAut(Z)$ preserves the stratification of $Z$.
\end{theorem}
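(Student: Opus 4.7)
The plan is to reduce the theorem to the tangency of holomorphic vector fields to each stratum, and then to argue that any biholomorphism preserving this tangency must permute the strata; the main obstacle will be producing tangency in the two non-admissible cases.

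I would first pin down the principal stratum. Under the $2$-principal hypothesis, Remark~2.5 of \cite{SchVectorFields} allows passing to the effective quotient, so we may assume $V$ has TPIG. Luna's slice theorem then identifies $Z_\pr$ with the smooth locus of $Z$: the analytic germ of $Z$ at $z=\pi(v)$ is $(\quot{N_v}{H},0)$ with $H=G_v$, and this germ is smooth exactly when the slice representation is trivial. Since any biholomorphism preserves the smooth locus, $\phi(Z_\pr)=Z_\pr$.

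Next I would prove that every holomorphic $\xi\in\Der(\O(Z))$ is tangent to each $Z_{(H)}$. In the admissible case this follows from Definition~\ref{def:admissible}\,\ref{def:pistarsurj} at order $k=1$: $\xi$ lifts to a $G$-invariant vector field $\tilde\xi$ on $V$, and the local flow of $\tilde\xi$ consists of $G$-equivariant biholomorphisms preserving the orbit-type stratification of $V$, so pushing down gives tangency of $\xi$ to the Luna stratification of $Z$. To pass from algebraic to holomorphic lifts, one uses density of algebraic vector fields together with a patching argument on Luna slices. In the $3$-principal case the lift is automatic on $V_\pr$ (where the action is locally free), and the bound $\codim(Z\setminus Z_\pr)\geq 3$ extends tangency to all of $Z$ via a Hartogs-type argument for the coherent sheaf of vector fields. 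In the $2$-principal orthogonal case, self-duality $V\simeq V^*$ yields enough $G$-invariant symplectic vector fields, built from Poisson brackets of $G$-invariants, to cover $\Der(\O(Z))$.

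Finally I would convert tangency into permutation of strata. At $z\in Z_{(H)}$ with slice $(H,N_v)$, the subspace $T^{\mathrm{vf}}_z\subset T_zZ$ spanned by values of local holomorphic vector fields equals $T_zZ_{(H)}\cong N_v^H$: tangency gives the inclusion, while $H$-invariant constant vector fields on $N_v$ with values in $N_v^H$ descend to $\quot{N_v}{H}$ and realize all of $N_v^H$ as values at $0$. Since $\phi_*$ bijectively carries local vector fields near $z$ to local vector fields near $\phi(z)$, we have $\phi_*T^{\mathrm{vf}}_z=T^{\mathrm{vf}}_{\phi(z)}$, forcing the stratum containing $\phi(z)$ to have the same dimension as $Z_{(H)}$. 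Induction along the closure filtration of the stratification (each $\overline{Z_{(H)}}$ is an analytic subset preserved by $\phi$ once lower-dimensional strata are controlled) refines this to: $\phi$ maps each connected component of $Z_{(H)}$ into a single stratum, completing the proof.
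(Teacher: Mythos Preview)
The paper does not prove this theorem; it simply cites \cite[Theorems 1.1 and 1.3, Remark 2.4]{SchVectorFields}. Your overall architecture---show that every (holomorphic) vector field on $Z$ is tangent to each Luna stratum, then observe that the span of values of such vector fields at $z$ is exactly $T_zZ_{(H)}$, so any biholomorphism must preserve strata dimension---is precisely the strategy of \cite{SchVectorFields}. Step~3 is the heart of the matter and you have it right: the identification $T^{\mathrm{vf}}_z=T_zZ_{(H)}$ via the slice model, together with naturality under push-forward, is what forces $\phi$ to respect the stratification.

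There are, however, genuine gaps in your Step~2 for the two non-admissible cases. In the $3$-principal case, the claim that ``the lift is automatic on $V_\pr$'' is not correct as stated: $V_\pr\to Z_\pr$ is a principal $G$-bundle, and lifting a vector field on the base to an invariant one on the total space requires a splitting of the Atiyah sequence, which is an obstruction problem, not a tautology. Your subsequent ``Hartogs-type argument for the coherent sheaf of vector fields'' on $Z$ is also misplaced: tangency is a pointwise condition at points of $Z\setminus Z_\pr$, not something one extends across codimension. The real content of \cite[Theorem 1.3]{SchVectorFields} is the surjectivity of $\pi_*\colon\lieA(V)^G\to\lieA(Z)$ under these hypotheses, and the $3$-principal and $2$-principal orthogonal cases each require a specific argument that you have not supplied. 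For the orthogonal case your remark about Poisson brackets points in the right direction but is far from a proof that Hamiltonian vector fields of invariants surject onto $\lieA(Z)$.

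Your Step~1 is both unnecessary and not justified: the equality of $Z_\pr$ with the smooth locus is a nontrivial assertion (it can fail without the $2$-principal hypothesis, and establishing it with that hypothesis is itself work), and in any case Step~3 already handles the principal stratum along with all the others. I would drop Step~1 entirely and, for Step~2, either invoke \cite[Theorem 1.3]{SchVectorFields} directly for surjectivity of $\pi_*$ on vector fields, or reproduce its actual arguments rather than the sketches you have now.
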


Let $\phi\in\HAut(Z)$. We say that  a holomorphic map $\Phi\colon V\to V$ is a \emph{lift of $\phi$\/}  if $\pi\circ\Phi=\phi\circ\pi$. Equivalently,  $\Phi$   maps the fiber   $\pi\inv(z)$  to the fiber $\pi\inv(\phi(z))$ for every $z\in Z$. 

\begin{remark}
Suppose that $\phi\in\HAut(Z)$ preserves the stratification. Then $\phi$ preserves the principal stratum (the one of largest dimension) and the fixed points $V^G\subset Z$ (the stratum of lowest dimension). Now $V$ is a direct sum $V^G\oplus V'$ of $G$-modules, and   the restriction of $\phi$ to $V^G$ extends to an automorphism $\Phi$ of $V$ which sends $(v,v')$ to $(\phi(v),v')$  where $v\in V^G$, $v'\in V'$. Thus we may always reduce the problem of lifting $\phi$ to the case that $\phi(\pi(0))=\pi(0)$. The same argument applies if we are looking for algebraic lifts in case that $\phi\in\Aut(Z)$.
\end{remark}

Let $\Phi$ be a lift of $\phi$ and let $\sigma$ be an automorphism of $G$. We say that $\Phi$ is \emph{$\sigma$-equivariant\/} if $\Phi(gv)=\sigma(g)\Phi(v)$ for all $v\in V$, $g\in G$. Let $\Aut_\ql(Z)$ denote the set of \emph{quasilinear automorphisms\/} of $Z$, i.e., the automorphisms of $Z$ which preserve the grading of $\O(Z)\simeq\O(V)^G$. This is a linear  algebraic group. If $\Phi $ is a holomorphic lift of $\phi\in\Aut_\ql(Z)$ with $\Phi(0)=0$, then so is $\Phi'(0)$, hence
$\phi$ has a linear lift.  When $V$ is $2$-principal and $\phi$ preserves the principal stratum we will see that $\Phi'(0)$ is invertible and normalizes $G$,  hence induces an automorphism $\sigma$ of $G$.

\begin{theorem}\label{thm:intro:reducequasilinear}
Let $V$ be an admissible $G$-module and let $\phi\in\HAut(Z)$ where $\phi(\pi(0))=\pi(0)$. Then there is a holomorphic family $\phi_t\in\HAut(Z)$, $t\in\C$,  with $\phi_0\in\Aut_\ql(Z)$ and $\phi_1=\phi$. 
\end{theorem}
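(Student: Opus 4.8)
The plan is to connect $\phi$ to a quasilinear automorphism by using the $\C^*$-action on $Z$ coming from the grading of $\O(Z)$, together with a rescaling trick. Since $\O(Z)\simeq\O(V)^G$ is a graded algebra (with generators in positive degrees $d_1,\dots,d_m$, say), there is a one-parameter group $t\mapsto \mu_t\in\Aut_\ql(Z)$ given on $V$ by $v\mapsto tv$ (for $t\in\C^*$), with $\mu_0$ collapsing $Z$ to the point $\pi(0)$. The idea is to conjugate $\phi$ by $\mu_t$: set $\widetilde\phi_t := \mu_t^{-1}\circ\phi\circ\mu_t$ for $t\ne 0$. The point $\pi(0)$ is fixed by all $\mu_t$ and, by hypothesis, by $\phi$, so each $\widetilde\phi_t$ fixes $\pi(0)$. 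One then wants to show that $\widetilde\phi_t$ extends holomorphically across $t=0$ and that the limit $\widetilde\phi_0$ is quasilinear. Reparametrizing $t = e^{-s}$ or inverting, one obtains the desired family $\phi_t$ with $\phi_1 = \phi$ and $\phi_0 = \widetilde\phi_0 \in\Aut_\ql(Z)$.

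The substantive content is the extension across $t=0$. Concretely, if $f_1,\dots,f_m$ are homogeneous generators of $\O(Z)$ with $\deg f_i = d_i$, then writing $\phi^*f_i = \sum_\alpha c_{i,\alpha}\, f^\alpha$ (a holomorphic function of the generators, with $f^\alpha$ of degree $\langle d,\alpha\rangle$), we compute $\widetilde\phi_t^* f_i = t^{-d_i}\phi^*(\mu_t^* f_i)\circ(\text{rescale})$; tracking the powers of $t$, the coefficient of $f^\alpha$ in $\widetilde\phi_t^* f_i$ is $c_{i,\alpha}\, t^{\langle d,\alpha\rangle - d_i}$. For this to extend holomorphically to $t=0$ one needs $\langle d,\alpha\rangle \geq d_i$ whenever $c_{i,\alpha}\ne 0$ — i.e., that $\phi^*$ raises degrees in the appropriate filtered sense. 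This is precisely where admissibility (via Theorem \ref{thm:stratapreserve}, so that $\phi$ preserves the stratification and in particular fixes $V^G$ and preserves $Z_\pr$) must be used: the condition $\phi(\pi(0))=\pi(0)$ forces $\phi^*$ to send the maximal graded ideal $\mathfrak m$ of functions vanishing at $\pi(0)$ into itself, giving $\langle d,\alpha\rangle\geq 1$; but one needs the sharper statement $\langle d,\alpha\rangle \geq d_i$, which should follow by combining the ideal-filtration argument degree by degree with the fact that $\phi$ is an \emph{automorphism} (so $\phi^*$ induces an automorphism of each $\mathfrak m^k/\mathfrak m^{k+1}$, hence cannot strictly decrease the "leading" degree of any generator). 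If $t=0$ is not immediately in range, one replaces $\phi$ by $\mu_c\circ\phi$ for suitable constant $c$ to normalize the linear part, or argues that the limit exists after the reparametrization as a composition with $\mu_0$ on the far side.

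Once holomorphic extension across $t=0$ is established, one must check that $\phi_0$ actually lies in $\Aut_\ql(Z)$, i.e., that it is an honest automorphism (not merely an endomorphism) and that it preserves the grading. Preservation of the grading is automatic from the construction: $\phi_0^* f_i$ is the sum of those $c_{i,\alpha} f^\alpha$ with $\langle d,\alpha\rangle = d_i$, which is homogeneous of degree $d_i$. Invertibility of $\phi_0$ follows because the family $\phi_t$ consists of automorphisms for $t\ne 0$, each conjugate to $\phi\in\HAut(Z)$, so $\phi_t^*$ is bijective on $\O(Z)$; the induced map on the associated graded is an iso, and passing to the limit the leading-term map $\phi_0^*$ is then seen to be bijective on each graded piece (using that the $\phi_t^*$ are uniformly "triangular" with respect to the degree filtration and their leading parts vary holomorphically, with $\phi_0^*$ the common leading part — invertibility on $\gr$ is an open/closed condition that passes to the limit since $\gr(\phi_t^*) = \phi_0^*$ for all $t\neq 0$ as well, once we observe the conjugation by $\mu_t$ does not change leading terms).

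The main obstacle I expect is the degree estimate $\langle d,\alpha\rangle \geq d_i$ ensuring holomorphic extension to $t=0$ — equivalently, showing $\phi^*(f_i)$ has no "low-degree tail" below $d_i$. This is genuinely a place where one must exploit more than $\phi(\pi(0)) = \pi(0)$: one needs that $\phi$ together with $\phi^{-1}$ both fix $\pi(0)$ and that $\phi$ respects the stratification, so that $\phi^*$ and $(\phi^{-1})^*$ both preserve $\mathfrak m$ and hence induce inverse automorphisms on $\mathfrak m/\mathfrak m^2$; a counting/induction argument on the degree filtration then rules out a leading-term map that drops degree. Everything else — the conjugation trick, homogeneity of the limit, reparametrization to get $\phi_1 = \phi$ — is formal.
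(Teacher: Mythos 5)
Your setup — conjugating $\phi$ by the $\C^*$-action $\mu_t$ coming from the grading, i.e., forming $\phi_t(z) = t^{-1}\cdot\phi(t\cdot z)$ and trying to show the limit at $t=0$ exists — is exactly the paper's notion of a \emph{deformable} automorphism (Remark 2.2), and you correctly identify the entire problem as the degree estimate: writing $\phi^*f_i$ in terms of the graded generators, you must rule out a component of degree strictly less than $\deg f_i$. So far so good.

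The gap is in how you propose to prove that estimate. You argue that since $\phi$ and $\phi^{-1}$ both fix $\pi(0)$, their pullbacks preserve the maximal ideal $\mathfrak m$ and induce inverse automorphisms on $\mathfrak m^k/\mathfrak m^{k+1}$, and that ``a counting/induction argument on the degree filtration then rules out a leading-term map that drops degree.'' This does not work, because the $\mathfrak m$-adic filtration and the degree filtration $F_k = S_k + S_{k+1}+\cdots$ are genuinely different when the homogeneous generators of $\O(Z)$ have different degrees: an automorphism can preserve $\mathfrak m$ (and hence all $\mathfrak m^k$, and induce isomorphisms on $\gr_{\mathfrak m}$) while strictly lowering the degree of some generator. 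Nothing in the abstract filtration picture prevents $\phi^*$ from sending a generator of degree $5$ to a sum whose leading term is a generator of degree $3$; both lie in $\mathfrak m\setminus\mathfrak m^2$. The paper's Example 2.7 makes this concrete: take $(2n\C^{n-1},\SL_{n-1})\oplus(2n\C^{n+1},\SL_{n+1})$, whose two factors have isomorphic quotients but generators of degrees $n-1$ and $n+1$ respectively; the automorphism interchanging the factors fixes $\pi(0)$, preserves the stratification, is an automorphism in every sense you invoke, and yet is \emph{not} deformable. That representation is $2$-principal but not admissible — which is exactly the point.

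What your proposal omits is admissibility condition (2), the surjectivity of $\pi_*\colon\Diff^k(V)^G\to\Diff^k(Z)$, and this is the engine of the paper's proof. The paper argues by contradiction: if $\phi^*f = h_\ell + (\text{higher})$ with $0\ne h_\ell\in S_\ell$ and $\ell<k=\deg f$, take the invariant constant-coefficient operator $P$ of order $\ell$ dual to $\pi^*h_\ell$, so that $(\pi_*P)(\phi^*f)$ is a nonzero constant at $\pi(0)$. Push $\pi_*P$ forward by $\phi$ to get a differential operator of order $\le\ell$ on $Z$; by admissibility it lifts to some $Q=\sum h_iQ_i\in\HDiff^\ell(V)^G$, and each $Q_i$ can only lower degree by at most $\ell$. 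Applying $Q$ to $\pi^*f$ therefore gives terms of degree $\ge k-\ell>0$, which vanish at the origin — contradicting the nonvanishing computed on the other side. This degree-counting via lifted differential operators is the substitute for the $\mathfrak m$-adic argument you were reaching for, and it is the place where admissibility cannot be avoided. You would need to bring in this ingredient (or something equivalent) to close the proof; the stratification-preservation consequence of admissibility that you cite is not enough.
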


\begin{theorem}\label{thm:intro:lifting}
Let $V$ be admissible, $3$-principal or $2$-principal and orthogonal. Assume that we have a holomorphic family $\phi_t \in\HAut(Z)$, $t\in\C$, where $\phi_0\in\Aut_\ql(Z)$ and $\phi_1=\phi$. Then there are lifts $\Psi_t\in\HAut(V)^G$ of $\phi_0\inv\phi_t$, $t\in[0,1]$. If $\phi_0$ has a linear lift to $V$, then  $\phi_0$ has a $\sigma$-equivariant linear lift $\Phi_0\in\GL(V)$  for some $\sigma$ and $\Phi_0\Psi_1$ is a $\sigma$-equivariant biholomorphic  lift of $\phi$. 
Finally, any element of $\Aut_\ql(Z)^0$ has a lift to $\GL(V)$.
\end{theorem}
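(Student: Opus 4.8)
The plan is to prove Theorem~\ref{thm:intro:lifting} in three stages corresponding to its three assertions, the first two of which are essentially homotopy-lifting statements and the last of which is a structural statement about the identity component $\Aut_\ql(Z)^0$.

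\textbf{Step 1: Lifting the family $\phi_0\inv\phi_t$ by vector fields.} Set $\psi_t:=\phi_0\inv\phi_t$, so $\psi_0=\id_Z$ and $t\mapsto\psi_t$ is a holomorphic family in $\HAut(Z)$ through the identity. Differentiating in $t$ produces a time-dependent holomorphic vector field $\xi_t$ on $Z$ (the ``infinitesimal generator'' of the flow $\psi_t$). The key input is that $V$ is admissible, $3$-principal, or $2$-principal and orthogonal: by Theorem~\ref{thm:stratapreserve} automorphisms preserve the stratification, and --- this is where I expect to lean on \cite{SchVectorFields} --- the surjectivity of $\pi_*$ on differential operators (in particular on vector fields, the order-$1$ part) lets one lift $\xi_t$ to a $G$-invariant holomorphic vector field $\eta_t$ on $V$ with $\pi_*\eta_t=\xi_t$, depending holomorphically on $t$. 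Integrating $\eta_t$ over $t\in[0,1]$ gives a holomorphic flow $\Psi_t\in\HAut(V)^G$; one must check the flow is complete, which should follow because it covers the complete flow $\psi_t$ on $Z$ and the fibers of $\pi$ are $G$-stable affine varieties (properness of $\pi$ on the relevant pieces, or a direct growth estimate using invariant theory, keeps the integral curves from escaping). By construction $\pi\circ\Psi_t=\psi_t\circ\pi$, so $\Psi_t$ lifts $\psi_t=\phi_0\inv\phi_t$. I expect this completeness/globalization argument to be the main obstacle, since integrating a time-dependent holomorphic vector field need not produce global automorphisms without such control.

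\textbf{Step 2: Assembling the lift of $\phi$.} Suppose now $\phi_0\in\Aut_\ql(Z)$ has a linear lift. As noted in the discussion preceding the theorem, a holomorphic lift $\Phi$ of a quasilinear $\phi_0$ with $\Phi(0)=0$ has derivative $\Phi'(0)$ again a lift, so $\phi_0$ has a linear lift $\Phi_0\in\GL(V)$; since $V$ is $2$-principal and $\phi_0$ preserves the principal stratum, $\Phi_0$ is invertible and normalizes $G$, hence $\Phi_0(gv)=\sigma(g)\Phi_0(v)$ for the automorphism $\sigma$ of $G$ that $\Phi_0$ induces by conjugation. Then $\Phi_0\Psi_1$ is a composite of biholomorphisms of $V$, and
\[
\pi\circ(\Phi_0\Psi_1)=\phi_0\circ(\pi\circ\Psi_1)=\phi_0\circ\psi_1\circ\pi=\phi_0\circ\phi_0\inv\phi_1\circ\pi=\phi\circ\pi,
\]
so it is a lift of $\phi$; it is $\sigma$-equivariant because $\Psi_1\in\HAut(V)^G$ is $G$-equivariant (i.e.\ $\id$-equivariant) and $\Phi_0$ is $\sigma$-equivariant, and the composite of a $\sigma$-equivariant and an $\id$-equivariant map is $\sigma$-equivariant.

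\textbf{Step 3: Every element of $\Aut_\ql(Z)^0$ lifts to $\GL(V)$.} Here I would use that $\Aut_\ql(Z)$ is a linear algebraic group, so $\Aut_\ql(Z)^0$ is connected; pick $\phi\in\Aut_\ql(Z)^0$ and a path $\phi_t$ in $\Aut_\ql(Z)^0$ from $\id$ to $\phi$ (a holomorphic family, e.g.\ via a one-parameter subgroup decomposition or simply because a connected algebraic group is generated by images of $\C$ and $\C^*$). Apply Step~1 with $\phi_0=\id$ to get $\Psi_t\in\HAut(V)^G$ lifting $\phi_t$. A priori $\Psi_1$ is only biholomorphic, but since each $\phi_t$ is quasilinear and $\Psi_t(0)=0$ (the lift can be normalized to fix $0$, as $0$ is the unique point over $\pi(0)$ when $V$ is $2$-principal, or by the reduction in the Remark), replacing $\Psi_1$ by its derivative $\Psi_1'(0)\in\GL(V)$ again gives a lift; since $\phi$ is quasilinear this linear map still covers $\phi$. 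Thus $\phi$ lifts to $\GL(V)$, completing the proof. The only delicate point in Step~3 is arranging the connecting family inside $\Aut_\ql(Z)^0$ to be holomorphic in $t$ with $\phi_0=\id$, which is routine for a connected linear algebraic group.
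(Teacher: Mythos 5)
Your decomposition into three steps matches the logical structure of the theorem, and Steps 2 and 3 are essentially sound; however, Step 1 has a genuine gap, and you have correctly located it yourself.

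\smallskip
\textbf{The gap in Step 1.} You write that completeness of the lifted flow ``should follow because it covers the complete flow $\psi_t$ on $Z$ and the fibers of $\pi$ are $G$-stable affine varieties (properness of $\pi$ on the relevant pieces, or a direct growth estimate...).'' This is not enough. The quotient map $\pi\colon V\to Z$ is \emph{not} proper (the fibers are in general non-compact, and a non-closed orbit in a fiber already witnesses this), so covering a complete flow downstairs does not prevent integral curves from escaping along the fibers in finite time. Nor is it clear what ``direct growth estimate'' would control a general $G$-invariant holomorphic vector field. The paper's Theorem~\ref{thm:isolifting} resolves exactly this point with a different mechanism: fix a maximal compact $K\subset G$ and work with the Kempf--Ness set $\M\subset V$, which \emph{is} proper over $Z$. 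One integrates the lifted time-dependent field for short times on an orbit-convex $K$-invariant neighborhood of a compact piece of $\M$, observes that the resulting partial flow is $K$-invariant, and then invokes Heinzner's extension principle (Proposition~\ref{prop:Kisenough}) to promote the $K$-invariant holomorphic map on $U$ to a $G$-invariant one on the $G$-saturated set $GU$. Iterating in time steps, then letting the base point range over $\M$ (whose saturation is all of $V$), yields the global flow $\Psi_t\in\HAut(V)^G$. Without some substitute for this Kempf--Ness/orbit-convexity argument, Step~1 — and hence also your Step~3, which relies on it — is not complete.

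\smallskip
\textbf{Step 2.} This part is correct and is exactly the paper's argument: the quasilinearity of $\phi_0$ lets you replace any holomorphic lift fixing $0$ by its derivative, and Proposition~\ref{prop:normalizer} (together with Theorem~\ref{thm:stratapreserve}, which guarantees $\phi_0$ preserves the principal stratum) gives that the resulting $\Phi_0\in\GL(V)$ normalizes $G$ and is $\sigma$-equivariant; the composition computation and the equivariance of $\Phi_0\Psi_1$ are verified correctly.

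\smallskip
\textbf{Step 3: a genuinely different route.} Your argument for the final assertion — connect $\phi\in\Aut_\ql(Z)^0$ to the identity by a holomorphic family inside $\Aut_\ql(Z)^0$, lift the isotopy by Step~1, and then replace $\Psi_1$ by $\Psi_1'(0)\in\GL(V)^G$ — is a valid alternative to the paper's Proposition~\ref{prop:autql0}, which instead argues purely at the Lie-algebra level: $\pi_*\colon\Diff^1(V)^G\to\Diff^1(Z)$ is surjective and degree-preserving, hence maps $\End(V)^G=\Lie\GL(V)^G$ onto the degree-zero derivations, which is $\Lie\Aut_\ql(Z)$, and surjectivity of connected algebraic groups follows. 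The paper's version is cleaner (no integration of flows at all) and, crucially, does not depend on the as-yet-unproven Step~1, whereas yours does.
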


 \begin{remark} \label{rem:alglifting}
 In case that $G^0$ is a torus, Theorem \ref{thm:intro:liftingtori} below shows that algebraic automorphisms of $Z$ lift to $\sigma$-equivariant automorphisms of $V$. 
We doubt that this is usually the case when $G^0$ is not a torus.     
\end{remark}
 
 \begin{remark}
 In case $G$ is finite, the lifting problem has been considered in   \cite{GottschlingInvarianten}, \cite{KrieglTensor}, \cite{PopovMichor} and \cite{PrillLocal}.
 \end{remark}
 
 \begin{remark}\label{rem:lifting} Suppose that $\phi\in\HAut(Z)$, $\phi(\pi(0))=\pi(0)$, and that $\phi$ and $\phi\inv$ lift to $V$ where $V$ is admissible, $3$-principal or $2$-principal and orthogonal. Then there is a holomorphic family $\phi_t$ as above such that  $\phi_0\in\Aut_\ql(Z)$ has a  linear lift (Proposition  \ref{prop:liftsimpliesdiff}  below).  Theorem \ref{thm:intro:lifting} shows that $\phi$ has  a biholomorphic $\sigma$-equivariant lift. Thus $\phi$ sends the   stratum with conjugacy class  $(H)$ to the stratum with conjugacy class  $(\sigma(H))$, i.e., $\phi$ permutes the strata of $Z$ according to the action of $\sigma$ on conjugacy classes. Kuttler \cite{Kuttler} has a similar result for $\phi$ algebraic.
\end{remark}

 From the theorems above  it is clear that the lifting problem reduces to one for the connected components of $\Aut_\ql(Z)$. We say that $V$ has the \emph{lifting property\/} if every element of $\Aut_\ql(Z)$ lifts to $\GL(V)$. In the cases of the classical representations of the classical groups (and a bit more) we can say exactly what happens. In the theorem below the actions of $\GG_2$ (resp.\  $\BB_3=\Spin_7$) on $\C^7$ (resp.\ $\C^8$)   are the irreducible representations of the given dimension.
  
 \begin{theorem}\label{thm:citlifting}
 Let $(V,G)$ be one of the following representations. Then $V$ is admissible and $V$ has the lifting property.
 \begin{enumerate}
\item $(k\C^n\oplus\ell(\C^n)^*,\SL_n)$, $n\geq 3$ where $k+\ell\geq 2n$ and if $k\ell=0$, then $k+\ell> 2n$.\item $(k\C^n\oplus\ell(\C^n)^*,\GL_n)$ where   $k$,  $\ell>n$.
\item $(k\C^n,\SO_n)$ where $k\geq n\geq 3$. 
\item $(k\C^n,\Orth_n)$ where $k>n\geq 3$.
\item $(k\C^{2n},\Sp_{2n})$ where $k\geq 2n+2$, $n\geq 2$.
\item $(k\C^7,\GG_2)$ where $k>4$.
\item $(k\C^8,\BB_3)$ where $k>5$.
\end{enumerate}
 \end{theorem}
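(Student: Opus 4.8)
The plan is to handle each family separately but following a uniform three-part strategy: (i) verify admissibility, (ii) compute $\Aut_\ql(Z)$ explicitly, and (iii) exhibit a lift of each quasilinear automorphism. For step (i), admissibility in all seven cases should follow from Theorem \ref{thm:finitebad} together with explicit checks that the small list of exceptional non-admissible representations does not intersect these ranges, or from direct verification that $V$ is $2$-principal (or $3$-principal, or $2$-principal and orthogonal) and that $\pi_*$ is surjective; for the orthogonal cases (3), (4), (5), (7) — and $\GG_2$ acting orthogonally on $\C^7$ — one only needs $2$-principality, which is a codimension count on the non-principal locus, a computation going back to the slice representations of the classical isotropy groups. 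The ranges in the statement are precisely the thresholds beyond which FPIG holds and the bad locus has codimension $\geq 2$.

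For step (ii), the key point is that $\O(V)^G$ is generated, in each of these classical cases, by the well-known ``first fundamental theorem'' invariants: inner products $\langle v_i,v_j\rangle$ (symmetric for $\SO_n$, $\Orth_n$; symplectic for $\Sp_{2n}$), contractions $\langle v_i,w_j\rangle$ between vectors and covectors for $\SL_n$, $\GL_n$, together with the determinantal invariants $\det(v_{i_1},\dots,v_{i_n})$ for $\SL_n$ and $\SO_n$, and the $\GG_2$- or $\Spin_7$-invariant trilinear/quadrilinear forms in cases (6), (7). A quasilinear automorphism of $Z$ is a graded algebra automorphism of $\O(V)^G$; I would argue that such an automorphism must permute and rescale the degree-two generators and respect the relations among them (Plücker-type and ``syzygy'' relations). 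This forces the automorphism to come from the obvious geometric symmetries: the $\GL_k$ (or $\GL_k\times\GL_\ell$, or $\Orth_k$, $\Sp_k$, etc.) action on the multiplicity space permuting and mixing the copies of the standard module, possibly composed with the vector$\leftrightarrow$covector swap in cases (1), (2) when $k=\ell$, and with the outer automorphism of $G$ where one exists. Concretely one shows $\Aut_\ql(Z)$ is generated by these, so that $\Aut_\ql(Z)^0$ is the image of a reductive group acting on the multiplicity spaces.

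For step (iii), every generator just listed obviously lifts: the multiplicity-space action is linear on $V=k\C^n\oplus\cdots$ by construction and manifestly $G$-equivariant (it commutes with $G$, so $\sigma=\mathrm{id}$); the vector-covector swap lifts to the corresponding linear isomorphism $\C^n\oplus(\C^n)^*\to(\C^n)^*\oplus\C^n$ and intertwines $G$ with itself via the relevant automorphism $\sigma$ (transpose-inverse for $\SL_n$, $\GL_n$); and the outer automorphism of $G$ lifts since the relevant representations are self-dual or get permuted among themselves. Since $\Aut_\ql(Z)$ is generated by these lifting elements, $V$ has the lifting property. By Theorem \ref{thm:intro:lifting}, since moreover $\Aut_\ql(Z)^0$ always lifts to $\GL(V)$, this completes the argument. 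The main obstacle I anticipate is step (ii): proving that $\Aut_\ql(Z)$ contains \emph{nothing beyond} the evident geometric symmetries. This is essentially a rigidity statement for the classical invariant rings — one must rule out exotic graded automorphisms — and is most delicate in the mixed cases (1) and (2) near the boundary of the allowed range and in the exceptional cases (6), (7), where one leans on the structure of the $\GG_2$- and $\Spin_7$-invariants; I would treat the $\SL_n$/$\GL_n$/$\SO_n$ cases via the Plücker relations and the symplectic and orthogonal cases via the rank stratification of the matrix of inner products, reducing in each instance to understanding graded automorphisms of a determinantal variety.
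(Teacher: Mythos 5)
Your three-part outline (check admissibility, compute $\Aut_\ql(Z)$, lift generators) matches the shape of the paper's argument, and your step (iii) is essentially right. But there is a genuine gap in step (ii), and you flag it yourself as the main obstacle without supplying the mechanism that actually closes it. You propose to show directly, as a rigidity statement, that $\Aut_\ql(Z)$ is generated by the ``evident geometric symmetries'' by analyzing graded automorphisms of the determinantal/Pl\"ucker variety. The paper never carries out such a computation, and it would be considerably harder than what is actually needed.

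The missing idea is the following reduction, which turns step (ii) from an open-ended rigidity problem into a finite computation with explicit syzygies. First, Proposition \ref{prop:autql0} (a consequence of admissibility, via surjectivity of $\pi_*$ on degree-zero derivations) shows that $\Aut_\ql(Z)^0$ is exactly the image of $\GL(V)^G$ --- i.e.\ the image of $\GL_k$ (resp.\ $\GL_k\times\GL_\ell$) acting on the multiplicity space. This is an \emph{input}, not something you have to prove by hand. Second, since any $\phi\in\Aut_\ql(Z)$ normalizes $\Aut_\ql(Z)^0$, its action on each generator space ($\Sym^2(\C^k)$, $\wedge^n(\C^k)$, $\C^k\otimes\C^\ell$, etc.) normalizes the $\SL_k$-action, and Lemma \ref{lem:selfdual} (a non-self-dual $\SL_m$-module admits only inner automorphisms from its $\GL$-normalizer, for $m\geq 3$) lets you replace $\phi$ by a representative that \emph{centralizes} $\SL_k$ (resp.\ $\SL_k\times\SL_\ell$). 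Such a $\phi$ acts by a scalar on each $\GL_k$-isotypic component of generators, and \emph{only then} do the explicit highest-weight syzygies (such as $\lambda(\psi_n^2(\alpha^n))-\lambda(\psi_n^2(\beta^2))=0$ for $\SO_n$, or the $\GG_2$/$\BB_3$ relations from \cite{SchG2}) enter, constraining the scalars to a set realized by the normalizer of $G$ in $\GL(V)$. Without this reduction you are left trying to classify all graded automorphisms of the invariant ring directly, which is what you correctly sense is hard. Also, your step (i) appeal to Theorem \ref{thm:finitebad} is not by itself enough to pin down admissibility for the explicit ranges; one needs the explicit list from \cite[\S 11]{SchLiftingDOs}, which is what the paper cites.
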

 
 \begin{remark}\label{rem:nolift}
 The conditions in the theorem are mostly those needed to guarantee that the representations are admissible (see \cite[\S 11]{SchLiftingDOs}). However, $(V,G)$ is admissible and we have an element of $\Aut_\ql(Z)$ which does not lift in the following cases.
 \begin{enumerate}
\item $(2n\C^n,\SL_n)$ and $(2n(\C^n)^*,\SL_n)$, $n\geq 2$.
\item $(4\C^7,\GG_2)$.
\item $(5\C^8,\BB_3)$.
\end{enumerate}
 \end{remark}
 
If $G^0$ is a torus  there is no problem in lifting.
 
 \begin{theorem}\label{thm:intro:liftingtori}
 Suppose that $V$ is $2$-principal and that $G^0$ is a torus. Then every algebraic (resp.\ holomorphic) automorphism of $Z$ lifts to an algebraic (resp.\ holomorphic) automorphism of $V$ which is $\sigma$-equivariant for some $\sigma$.
 \end{theorem}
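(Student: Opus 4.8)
The plan is to reduce everything to the case $G=G^{0}=T$ a torus, where $\pi\colon V\to Z$ turns out to be the characteristic space (universal torsor, or total coordinate space following Cox) of $Z$, and then to use functoriality of that construction. First the reductions: since $G^{0}$ is a torus, $2$-principality of $V$ gives admissibility (condition (1) of Definition \ref{def:admissible} implies condition (2)), so Theorems \ref{thm:stratapreserve}, \ref{thm:intro:reducequasilinear} and \ref{thm:intro:lifting} all apply, and we may assume $V$ has TPIG; by the Remark after Theorem \ref{thm:stratapreserve} we reduce to $\phi(\pi(0))=\pi(0)$. For the holomorphic statement, Theorem \ref{thm:intro:reducequasilinear} connects $\phi$ by a holomorphic family to some $\phi_{0}\in\Aut_\ql(Z)$, which is algebraic, and Theorem \ref{thm:intro:lifting} then produces a $\sigma$-equivariant biholomorphic lift of $\phi$ as soon as $\phi_{0}$ has a $\sigma$-equivariant linear lift. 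So in both cases it suffices to lift algebraic automorphisms of $Z$ fixing $\pi(0)$ to $\sigma$-equivariant algebraic automorphisms of $V$, with quasilinear automorphisms receiving linear lifts.

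Suppose first $G=T$. A $T$-weight basis of $V$ presents $\O(V)$ as a polynomial ring graded by $\hat T:=X^{*}(T)$, with degree-$0$ part $\O(V)^{T}=\O(Z)$. Because $V$ is $2$-principal, $V_\pr$ is open in $V$ with complement of codimension $\geq2$, $T$ acts freely on $V_\pr$, and $Z\setminus Z_\pr$ also has codimension $\geq2$; hence $Z_\pr$ is smooth with $\operatorname{Cl}(Z)=\operatorname{Pic}(Z_\pr)=\hat T$ and $\bigoplus_{\chi\in\hat T}\Gamma\bigl(Z,\O_{Z}(\chi)\bigr)=\O(V)$, i.e.\ $\pi\colon V\to Z$ is the characteristic space of $Z$. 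The key step is the standard functoriality of characteristic spaces: an algebraic automorphism $\psi$ of $Z$ acts on $\operatorname{Cl}(Z)=\hat T$, defining $\sigma_{0}\in\Aut(T)$, and lifts — uniquely up to the torus $\Hom(\operatorname{Cl}(Z),\C^{*})=T$ — to an algebraic automorphism $\Psi$ of $V$ with $\Psi(tv)=\sigma_{0}(t)\Psi(v)$. If $\psi$ is quasilinear it commutes with the scaling action on $Z$, and one checks that $\Psi$ can then be chosen to commute with the scaling action on $V$, hence to be linear (the $\hat T$-valued discrepancy between $\Psi$ and scaling is locally constant in $\psi$, and it vanishes on $\Aut_\ql(Z)^{0}$ and on the finitely many ``combinatorial'' components, whose lifts are monomial).

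For general $G$, put $T=G^{0}$ and $F=G/T$; by TPIG, $F$ acts faithfully on $\quot VT$ and $Z=(\quot VT)/F$. Now $\O(V)^{T}$ is a normal domain (the monoid of exponent vectors of $T$-invariant monomials is saturated), finite over $\O(V)^{G}=(\O(V)^{T})^{F}$, with $\operatorname{Frac}(\O(V)^{T})/\operatorname{Frac}(\O(V)^{G})$ Galois of group $F$; hence any algebraic automorphism $\phi$ of $Z$ extends along this Galois extension to an automorphism $\tilde\phi$ of $\quot VT$ (which preserves the integral closure $\O(V)^{T}$), and $\tilde\phi$ normalizes the $F$-action. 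By the torus case $\tilde\phi$ lifts to $\Psi\in\Aut(V)$ normalizing $T$; and since $G$ is exactly the group of $T$-normalizing automorphisms of $V$ that descend to $F$ on $\quot VT$, the normalization of $F$ by $\tilde\phi$ forces $\Psi$ to normalize $G$. Then $\Psi$ descends to $\phi$ on $Z$ and is $\sigma$-equivariant with $\sigma=\conj_{\Psi}|_{G}$; and if $\phi$ is quasilinear so is $\tilde\phi$, so $\Psi$ may be taken linear. Feeding a linear lift of the quasilinear $\phi_{0}$ into Theorem \ref{thm:intro:lifting} settles the holomorphic case, while the construction is algebraic throughout, settling the algebraic case.

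The main obstacle is exactly the identification of $\pi\colon V\to Z$ with the characteristic space of $Z$ — the computations $\operatorname{Cl}(Z)=\hat T$ and $\mathrm{Cox}(Z)=\O(V)$ — which is where both hypotheses are used essentially: $G^{0}$ a torus makes $\O(V)^{T}$ a saturated monoid algebra, and $2$-principality makes the unstable and non-free loci have codimension $\geq2$; the lack of any comparable description when $G^{0}$ is not a torus is what underlies the doubt expressed in Remark \ref{rem:alglifting}. The remaining effort is bookkeeping: pinning down the $F$-coset of $\tilde\phi$, and verifying that the characteristic-space lift can be chosen to normalize $G$ and, in the quasilinear case, to be linear.
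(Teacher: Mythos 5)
Your characteristic-space (Cox ring) packaging of the connected torus case is, at bottom, the same computation the paper carries out in Proposition \ref{prop:reducess}: there, one proves $\HHH^1(Z_\pr,\O^*)\simeq X(S)$ and then, after fixing a basis $\chi_1,\dots,\chi_n$ of the character lattice, assembles the pullback isomorphisms $\phi^*L_{\chi_j}\simeq L_{\tau(\chi_j)}$ into a graded algebra isomorphism $\bigoplus_\chi\Gamma(Z_\pr,L_\chi)\simeq\O(V)\to\O(V)$ — exactly your $\mathrm{Cox}(Z)=\O(V)$ identification and its functoriality. Your version is cleaner in one respect: the $\sigma$-equivariance is built into the $\hat T$-grading being twisted by $\tau$, whereas the paper establishes $\sigma$-equivariance separately in the proof of the theorem, by showing (using that units of $\O(\C\times V)$ are constant) that the deformed lifts $\Psi_t=\Phi_t\circ\Phi'(0)^{-1}$ are $G$-equivariant and that $\Phi'(0)$ normalizes $G$. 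The paper's direct argument also cleanly settles linearity: since $\Phi(0)=0$ and $\phi_0$ is quasilinear, $\Phi'(0)$ is itself a lift — a point you obtain somewhat obliquely via the ``locally constant discrepancy'' remark.

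The genuine gap is in the reduction from $G$ to $G^0=T$. You assert that because $\operatorname{Frac}(\O(V)^T)/\operatorname{Frac}(\O(V)^G)$ is Galois with group $F=G/T$, any automorphism $\phi$ of $Z$ ``extends along this Galois extension'' to $\quot VT$. That is false for Galois extensions in general: take $K=\Q(\sqrt2)$, $L=K(\sqrt[4]{2})$ (Galois of degree $2$), and $\phi\in\Aut(K)$ sending $\sqrt2\mapsto-\sqrt2$; then $\phi$ cannot extend to $L$, since $-\sqrt2$ has no square root in $L\subset\R$. An extension exists precisely when $\phi_*L\simeq L$ as $K$-extensions, and that requires an argument. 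The paper supplies it geometrically (Lemma \ref{lem:reduceconn}): since $\codim(Y\setminus Y_\pr)\geq2$ over $\C$, the smooth variety $Y_\pr$ is simply connected, so $Y_\pr\to Z_\pr$ is the universal cover and local holomorphic lifts of $\phi$ over small opens of $Z_\pr$ patch together; algebraicity then follows from \cite[Lemma 5.1.1]{PopovMichor}. This use of $2$-principality through the fundamental group, not just through class groups, is the missing ingredient in your reduction.
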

 
 Kurth\cite{Kurth}   considered the lifting  problem with $G$ as above  for connected groups of automorphisms of the quotient.
 
 We now consider a strengthening of Theorem \ref{thm:intro:liftingtori}. Let $G$ be reductive and write $G^0=G_sS$ where $G_s$ is semisimple and $S$ is the connected center of  $G^0$. Assume that  $V$ is $4$-principal and let $\phi\in\Aut(Z)$. Then (see Proposition \ref{prop:reducess}) $\phi$ lifts to an automorphism of $Z_s:=\quot V{G_s}$, so the lifting problem rests entirely in the action of $G_s$.  
  
 Using ideas of Kuttler \cite{Kuttler} we establish the following result.
 
 \begin{theorem}\label{thm:genkuttler}
Suppose that  $V$ is $4$-principal and
   $(V,G_s)$ contains a copy of $\lieg_s$. Then any $\phi\in\Aut(Z)$ has a lift to $V$. 
 \end{theorem}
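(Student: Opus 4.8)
Let me think about how to prove Theorem \ref{thm:genkuttler}, following the structure laid out in the excerpt.

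The plan is to adapt Kuttler's strategy: reduce to the semisimple group $G_s$, lift $\phi$ first over the principal stratum by using the copy of $\lieg_s$ to control the relevant $G$-torsor, and then extend the lift across the complement of $V_\pr$.

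\textbf{Step 1 (reduction to $G=G_s$).} First I would pass to $G_s$. Since $V$ is $4$-principal, Proposition \ref{prop:reducess} reduces the lifting problem for $\phi\in\Aut(Z)$ to the analogous problem for the action of $G_s$ on $V$, so we may assume $G=G_s$ is semisimple and $\lieg=\lieg_s\subseteq V$. Splitting off the fixed subspace as in the remark following Theorem \ref{thm:stratapreserve} --- the summand $\lieg$ survives, since $\lieg^G=0$ --- we may assume $\phi(\pi(0))=\pi(0)$; and after dividing by the finite central kernel of $G\to\GL(V)$, which changes neither $\lieg$ nor $Z$, we may assume $V$ has TPIG. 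By Theorem \ref{thm:stratapreserve} ($4$-principal implies $3$-principal), $\phi$ preserves the stratification, and, as in Kuttler \cite{Kuttler}, the induced permutation of the strata $Z_{(H)}$ is realized by an automorphism $\sigma$ of $G$; it then suffices to produce a $\sigma$-equivariant lift.

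\textbf{Step 2 (lifting over the principal stratum; the crux).} Since $\phi(Z_\pr)=Z_\pr$ and $V$ has TPIG, both $\pi\colon V_\pr\to Z_\pr$ and its pullback $\phi^{*}(V_\pr\to Z_\pr)$ are $G$-torsors over $Z_\pr$, and a $\sigma$-equivariant lift of $\phi|_{Z_\pr}$ is exactly an isomorphism of the second torsor with the $\sigma$-twist of the first. This is where the hypothesis $\lieg_s\subseteq V$ enters, through the $G$-equivariant projection $p\colon V\to\lieg$: over the open set where $p(v)$ is regular semisimple, $v\mapsto\lie c_\lieg(p(v))$ is a $G$-equivariant map into the Cartan subalgebras of $\lieg$, which $G$ permutes transitively, so the structure group of the torsor is reduced to the normalizer of a maximal torus, and then --- off a discriminant-type hypersurface --- to the torus itself, where the torus case (Theorem \ref{thm:intro:liftingtori}) supplies the lift. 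All of these reductions are canonical, hence preserved by the stratification-preserving $\phi$, and $4$-principality provides the codimension estimates needed to pass across the non-regular and the discriminant loci and to assemble a $\sigma$-equivariant $\Phi$ over all of $V_\pr$. I expect this to be the main obstacle: making the canonical reductions precise, checking their compatibility with $\phi$ and with $\sigma$, and controlling the relevant codimensions is exactly the technical core of Kuttler's argument, which must here be carried out for a single copy of $\lieg_s$ inside a general $V$.

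\textbf{Step 3 (extension to $V$).} The $\sigma$-equivariant map $\Phi$ of Step 2 is holomorphic on $V_\pr$ --- and algebraic when $\phi$ is, since the reductions above and Theorem \ref{thm:intro:liftingtori} preserve algebraicity --- and the complement of $V_\pr$ has codimension $\geq 4\geq 2$ in $V\cong\C^N$; by Hartogs' theorem (resp.\ normality of the affine space $V$) it extends to $\Phi\colon V\to V$, and since $\pi\circ\Phi=\phi\circ\pi$ holds on the dense set $V_\pr$ it holds everywhere, so $\Phi$ is a lift of $\phi$. Running the same construction for $\phi\inv$ and composing then shows, as in Remark \ref{rem:lifting}, that one may take $\Phi$ to be a $\sigma$-equivariant biholomorphism.
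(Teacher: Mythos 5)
Your Step 1 reduction matches the paper's (use Proposition \ref{prop:reducess} to pass to $G_s$, Lemma \ref{lem:injective} to assume faithfulness), but Steps 2 and 3 take a fundamentally different route from the paper's proof, and Step 2 has a gap that your proposal does not close.

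The paper's argument does not try to reduce the structure group of the $G$-torsor geometrically. Instead it is commutative-algebraic: one forms $M_\lieg=\O(V)\otimes\lieg$, $M_V=\O(V)\otimes V$, $M=M_V/M_\lieg$, and the associated sheaves $\E$, $\F$, $\G$ on $Z$; the copy of $\lieg$ enters precisely because it gives a direct-sum decomposition $\F\simeq\E\oplus\F'$, and $4$-principality gives depth $\geq 3$ of $M$ along $V\setminus V_\pr$. Pulling back by $\phi$ and taking cohomology over $V_\pr$, a Nakayama argument plus a clever functor applied to the analogous sequence for $\phi\inv$ yields that $\Gamma(V,\pi^\#\phi^*\F')$ is projective, hence free by Quillen--Suslin. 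Freeness trivializes the associated $\GL(V')$-bundle over $V_\pr$; then Lemma \ref{lem:injective} realizes the torsor $P'=P\times_{Z_\pr}\phi^*P$ as a pull-back of $\GL(V')\to\GL(V')/G$, which extends to $V$ and is trivial by Raghunathan. The section of $P'$ is the lift.

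The gap in your Step 2: the reduction to $N(T)$ (and then to $T$) via $v\mapsto\lie{c}_\lieg(p(v))$ is available only on the open set where $p(v)$ is regular semisimple, and its complement is a divisor in $V$ (the vanishing of the discriminant of $p(v)$), i.e., codimension $1$. The codimension hypothesis in $4$-principality controls only $\codim(V\setminus V_\pr)$ and says nothing about this discriminant locus. So the extension across the non-regular/discriminant loci that you invoke is not supplied by $4$-principality or by Hartogs, and it is not clear that the structure-group reduction can be patched to all of $V_\pr$. You flag this as "the main obstacle" and "the technical core," which is honest, but the proposal does not resolve it, and the paper's proof avoids this obstacle entirely by arguing with modules of sections rather than with a reduction of structure group. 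Also note that the theorem as stated produces only \emph{a} lift; you should not aim directly at a $\sigma$-equivariant lift, which is obtained afterwards via Proposition \ref{prop:liftsimpliesdiff} and Theorem \ref{thm:intro:lifting} (see Remark \ref{rem:lifting}).
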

 
 Note that we don't claim that the lifting is $\sigma$-equivariant or an automorphism. By Remark \ref{rem:lifting}, however,  there is a $\sigma$-equivariant biholomorphic lift and $\phi$ permutes the strata via the action of $\sigma$ on conjugacy classes.

 The following result is quite surprising. It says that 
 $Z$ often determines $V$ and $G$.  

 \begin{corollary}\label{cor:surprising}
Let $G_i\subset  \GL(V_i)$ where each $(V_i,G_i)$ satisfies the conditions of Theorem \ref{thm:genkuttler}.    Let $Z_i$ denote $\quot {V_i}{G_i}$ and suppose that there is an algebraic isomorphism $\psi\colon Z_1\to Z_2$. Then there is a linear isomorphism $V_1\simeq V_2$ inducing an isomorphism of $G_1$ with $G_2$.
\end{corollary}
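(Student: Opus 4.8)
The plan is to deduce the corollary from Theorem~\ref{thm:genkuttler} and Remark~\ref{rem:lifting} by passing to a product, and then to extract the required linear isomorphism from the derivative at the origin of the resulting lift.

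Set $V:=V_1\oplus V_2$ and $G:=G_1\times G_2\subset\GL(V)$, so $\quot VG=Z_1\times Z_2$. One first checks that $(V,G)$ again satisfies the hypotheses of Theorem~\ref{thm:genkuttler}: its principal isotropy group is the product of those of the $(V_i,G_i)$, hence trivial (each $(V_i,G_i)$ is $2$-principal and $G_i\subset\GL(V_i)$); since $(Z_1\times Z_2)_\pr=(Z_1)_\pr\times(Z_2)_\pr$ one has $\codim(V\setminus V_\pr)\geq\min_i\codim(V_i\setminus(V_i)_\pr)\geq 4$, so $V$ is $4$-principal; and since each $\lieg_{i,s}$ occurs in $V_i$ as a $(G_i)_s$-module, $V$ contains $\lieg_{1,s}\oplus\lieg_{2,s}=\lieg_s$ as a $G_s$-module (note $G_s=(G_1)_s\times(G_2)_s$). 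Given $\psi$, form the involution $\Psi\in\Aut(Z_1\times Z_2)$, $\Psi(z_1,z_2)=(\psi\inv(z_2),\psi(z_1))$. By Theorem~\ref{thm:stratapreserve} (valid since $V$ is $4$-principal, hence $3$-principal), $\Psi$ preserves the stratification of $Z_1\times Z_2$, and inspecting products of strata shows this forces $\psi$ to carry each stratum of $Z_1$ onto a stratum of $Z_2$; in particular $\psi$ carries the bottom (fixed-point) stratum of $Z_1$ onto that of $Z_2$, so $\dim V_1^{G_1}=\dim V_2^{G_2}$. Composing $\psi$ with the automorphism of $Z_2$ induced by the $G_2$-equivariant translation $w\mapsto w-c$ of $V_2$, where $c\in V_2^{G_2}$ satisfies $\pi_2(c)=\psi(\pi_1(0))$ — which changes neither the hypotheses nor the conclusion of the corollary — I may assume $\psi(\pi_1(0))=\pi_2(0)$, whence $\Psi(\pi(0))=\pi(0)$.

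By Theorem~\ref{thm:genkuttler}, $\Psi$ lifts to $V$; being an involution it equals its own inverse, so Remark~\ref{rem:lifting} applies and gives a $\sigma$-equivariant biholomorphic lift $\widehat\Phi\colon V\to V$ of $\Psi$ for some $\sigma\in\Aut(G)$, with $\Psi$ permuting the strata of $Z_1\times Z_2$ via the action of $\sigma$ on conjugacy classes of isotropy groups. Since $\Psi$ interchanges the two factors — it sends the stratum with isotropy class $(G_1\times 1)$ (the points $(z_1,z_2)$ with $z_1$ a fixed point of $G_1$ and $z_2$ principal) to the one with class $(1\times G_2)$ — $\sigma$ carries the class of the normal subgroup $G_1\times 1$ to that of $1\times G_2$; as both are normal this forces $\sigma(G_1\times 1)=1\times G_2$ and $\sigma(1\times G_2)=G_1\times 1$. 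In particular $G_1\cong G_2$, and $\sigma(g_1,g_2)=(\sigma_2(g_2),\sigma_1(g_1))$ for isomorphisms $\sigma_1\colon G_1\to G_2$ and $\sigma_2\colon G_2\to G_1$.

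Now differentiate at $0$. Since $\widehat\Phi$ is $\sigma$-equivariant, $\widehat\Phi(0)\in V^G$; since $\widehat\Phi$ lifts $\Psi$ and $\Psi(\pi(0))=\pi(0)$, also $\pi(\widehat\Phi(0))=\pi(0)$; and $V^G\cap\pi\inv(\pi(0))=\{0\}$, as a nonzero $G$-fixed vector is separated from $0$ by a $G$-invariant linear function. Hence $\widehat\Phi(0)=0$, so $L:=\widehat\Phi'(0)$ lies in $\GL(V)$ and is $\sigma$-equivariant. Write $V_i=V_i^{G_i}\oplus\widetilde V_i$ with $\widetilde V_i$ the sum of the nontrivial $G_i$-isotypic components. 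Because $L$ is $\sigma$-equivariant and $\sigma$ carries $G_1\times 1$ onto $1\times G_2$, comparing isotypic components gives $L(\widetilde V_1)=\widetilde V_2$ and shows that $L|_{\widetilde V_1}$ intertwines the representation of $G_1$ on $\widetilde V_1$ with that of $G_2$ on $\widetilde V_2$ via $\sigma_1$; as both representations are faithful, $L|_{\widetilde V_1}$ conjugates $G_1$ onto $G_2$. Choosing any linear isomorphism $\ell\colon V_1^{G_1}\to V_2^{G_2}$ (possible since the dimensions agree) and putting $\Phi:=(L|_{\widetilde V_1})\oplus\ell\colon V_1\to V_2$ yields a linear isomorphism with $\Phi g_1\Phi\inv=\sigma_1(g_1)$ for all $g_1\in G_1$, i.e.\ $\Phi G_1\Phi\inv=G_2$. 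The step requiring the most care is the analysis of $\sigma$ in the third paragraph — checking that it genuinely interchanges the two normal factors, so that one gets an honest isomorphism $G_1\cong G_2$ — together with the routine but necessary verification that $(V_1\oplus V_2,G_1\times G_2)$ inherits $4$-principality and a copy of $\lieg_s$; everything else is formal. (Alternatively, one can avoid the product and simply rerun the proofs of Theorems~\ref{thm:intro:reducequasilinear}--\ref{thm:genkuttler} and Remark~\ref{rem:lifting} directly for the isomorphism $\psi$, with only notational changes.)
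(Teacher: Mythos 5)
Your proof is correct and follows essentially the same route as the paper's: pass to $V=V_1\oplus V_2$, $G=G_1\times G_2$, form the involution $\Psi(z_1,z_2)=(\psi\inv(z_2),\psi(z_1))$, normalize so it fixes $\pi(0)$, invoke Theorem~\ref{thm:genkuttler} and Remark~\ref{rem:lifting} to get a $\sigma$-equivariant biholomorphic lift, observe that $\sigma$ must swap the factors of $G$, and then read off the linear isomorphism from the derivative at the origin. The only cosmetic difference is at the last step: the paper notes that the lift carries $V_1\times(0)$ onto $(0)\times V_2$ so that $\widehat\Phi'(0)$ directly restricts to the desired linear isomorphism $V_1\to V_2$, whereas you handle the $G$-fixed part by splitting $V_i=V_i^{G_i}\oplus\widetilde V_i$ and patching in an arbitrary $\ell$ on $V_1^{G_1}$ — both are fine, and your write-up is otherwise somewhat more explicit (verification of $4$-principality, the $\widehat\Phi(0)=0$ argument, and the normality argument for $\sigma$).
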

 
 \begin{proof}
 Let $V=V_1\oplus V_2$ and $G=G_1\times G_2$. Then $\quot VG\simeq Z_1\times Z_2$ and     $(V,G)$ satisfies the hypotheses of Theorem \ref{thm:genkuttler}. We have an automorphism $\phi$ of $Z_1\times Z_2$ which sends $(z_1,z_2)$ to $(\psi\inv(z_2),\psi(z_1))$. Since $\phi$ induces an automorphism of $V_1^{G_1}\times V_2^{G_2}$, $\psi$ sends $V_1^{G_1}$ isomorphically onto $V_2^{G_2}$ and we may arrange that $\psi$ sends $0\in V_1^{G_1}$ to $0\in V_2^{G_2}$. Then $\phi$ preserves $\pi(0)$. By Theorem  \ref{thm:genkuttler} and Remark \ref{rem:lifting},   $\phi$  lifts to a holomorphic automorphism $\Phi\colon V\to V$ such that $\Phi(gv)=\sigma(g)\Phi(v)$ for $g\in G$, $v\in V$ where $\sigma$ is an automorphism of $G_1\times G_2$. Since $\Phi$ sends closed orbits to closed orbits and  lies over $\phi$, which permutes the strata, $\Phi$ sends $(V_1 )_\pr \times(0)$  isomorphically onto  $(0)\times (V_2)_\pr$, hence it sends $V_1\times(0)$ isomorphically onto $(0)\times V_2$   and clearly $\sigma$ has to interchange the factors of $G_1\times G_2$. Now $\Phi'(0)$ induces a $\sigma$-equivariant linear isomorphism of $V_1$ with $V_2$. 
  \end{proof}

\begin{remark} Of course, a similar corollary holds for $2$-principal actions where $G^0$ is a torus. In this case we get the desired result even if there is only a holomorphic isomorphism of $Z_1$ and $Z_2$.
\end{remark}
  
We generalize the results of Kuttler for  multiples of  the adjoint representation of a semisimple group and obtain a best possible result.  Let $V=\oplus_{i=1}^d r_i\lieg_i$ where the $\lieg_i$ are  simple Lie algebras. Let $G_i$ denote the adjoint group of $\lieg_i$ and let $G$ denote the product of the $G_i$. We assume that $r_i\geq 2$ for all $i$ and that $r_i\geq 3$ if $\lieg_i\simeq\liesl_2$.   These conditions are necessary and sufficient for $(V,G)$ to be $2$-principal and for any $\phi\in\Aut(Z)$ to be strata preserving  \cite[Theorem 1.2, Proposition 3.1]{SchVectorFields}. 
\begin{theorem}\label{thm:adjoint}
Let $V=\oplus_{i=1}^d r_i\lieg_i$  be as above. Then any $\phi\in\Aut(Z)$ lifts to $V$.
\end{theorem}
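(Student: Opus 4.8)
The plan is to reduce Theorem~\ref{thm:adjoint} to the framework already established in the earlier theorems and then to supply the one genuinely new input, namely that the relevant quasilinear lifting problem is solvable for multiples of the adjoint representation. First I would check that $V=\oplus_i r_i\lieg_i$ satisfies the hypotheses needed to invoke Theorem~\ref{thm:intro:reducequasilinear} and Theorem~\ref{thm:intro:lifting}; by the cited results of \cite{SchVectorFields} the stated conditions on the $r_i$ make $(V,G)$ $2$-principal and strata-preserving, and one should verify (presumably via \cite[\S 11]{SchLiftingDOs} or a direct slice computation) that $V$ is in fact admissible, $3$-principal, or $2$-principal and orthogonal, so that Theorem~\ref{thm:intro:lifting} applies. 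Note that $(V,G_s)=(V,G)$ already contains a copy of $\lieg=\lieg_s$ since each $r_i\ge 2$, so in principle Theorem~\ref{thm:genkuttler} would apply directly; but since we want the sharp bounds (including $r_i\ge 3$ only for $\liesl_2$, not the $4$-principal hypothesis of Theorem~\ref{thm:genkuttler}), the argument must be run by hand in the spirit of Kuttler rather than quoted as a black box.

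Given a $\phi\in\Aut(Z)$, by the Remark after Definition~\ref{def:admissible} and the reduction in Theorem~\ref{thm:intro:reducequasilinear} we may assume $\phi(\pi(0))=\pi(0)$ and that there is a holomorphic family $\phi_t$ with $\phi_0\in\Aut_\ql(Z)$ and $\phi_1=\phi$. By Theorem~\ref{thm:intro:lifting} the only remaining point is that $V$ has the lifting property, i.e.\ every element of $\Aut_\ql(Z)$ lifts to $\GL(V)$; combined with the fact that $\phi$ is \emph{algebraic}, Proposition~\ref{prop:liftsimpliesdiff} (cf.\ Remark~\ref{rem:lifting}) will then give an algebraic lift, not merely a biholomorphic one. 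So the heart of the matter is: identify $\Aut_\ql(Z)$ for $V=\oplus_i r_i\lieg_i$ and exhibit a linear lift for each of its components. The connected component $\Aut_\ql(Z)^0$ lifts automatically by the last sentence of Theorem~\ref{thm:intro:lifting}, so it suffices to handle the component group. Here I would use Kuttler's strategy: the grading of $\O(V)^G$ together with the block structure $V=\oplus_i r_i\lieg_i$ forces any quasilinear automorphism, up to the identity component, to be built from (a) permutations of isomorphic simple factors $\lieg_i$, (b) the action of $\GL_{r_i}$ (or at least $O(r_i)$, via the Killing form) on the multiplicity space of each $\lieg_i$, and (c) outer automorphisms of the $\lieg_i$. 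Each of these is visibly realized by a linear transformation of $V$ normalizing $G$: permutations and $\GL_{r_i}$ act on the $r_i$ copies, outer automorphisms act diagonally on each block. The task is to prove that these exhaust the component group of $\Aut_\ql(Z)$, which is where the invariant-theoretic analysis of the generators of $\O(\oplus_i r_i\lieg_i)^G$ — in particular the quadratic invariants coming from the Killing forms and the way they pin down the multiplicity spaces — does the real work.

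The main obstacle I anticipate is precisely step (c)-into-the-component-group analysis: showing that $\Aut_\ql(Z)$ has no "exotic" components beyond the expected ones $S_{(\text{iso.\ factors})}\ltimes\big(\prod_i \GL_{r_i}\big)\cdot\Out(G)$. Kuttler handles this for $r\ge 2$ by exploiting the abundance of low-degree invariants when the multiplicity is large; the delicate case is $\lieg_i\simeq\liesl_2$, which is exactly why the hypothesis is strengthened to $r_i\ge 3$ there — with $r_i=2$ the quadratic invariants (the Killing form gives a single symmetric form on $\C^{r_i}$, i.e.\ on $\C^2$) are too few to rigidify the multiplicity space and one picks up extra quasilinear automorphisms that need not lift. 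So I would isolate the $\liesl_2$ case, verify that $r_i\ge 3$ restores enough quadratic and quartic invariants to control $\Aut_\ql(Z)$, and then run the general (and easier, because $\dim\lieg_i\ge 8$) argument uniformly for the remaining simple types. Once the structure of $\Aut_\ql(Z)$ is pinned down and each generator lifted linearly, assembling the lift of the original $\phi$ is then just an application of Theorem~\ref{thm:intro:lifting} and Proposition~\ref{prop:liftsimpliesdiff}.
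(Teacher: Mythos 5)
Your proposal takes a fundamentally different route from the paper's, and that route has a gap that the paper is specifically structured to avoid. You want to invoke Theorem~\ref{thm:intro:reducequasilinear} to deform $\phi$ to a quasilinear automorphism and then lift that; but Theorem~\ref{thm:intro:reducequasilinear} requires $V$ to be \emph{admissible}, i.e.\ that $\pi_*\colon\Diff^k(V)^G\to\Diff^k(Z)$ is surjective for all $k$, and this is \emph{not} established in the present paper for $V=\oplus_i r_i\lieg_i$. The paper says so explicitly just after stating Theorem~\ref{thm:adjoint}: ``In \cite{SchAdjoint} we prove that $V$ is admissible, hence elements of $\HAut(Z)$ have $\sigma$-equivariant lifts to $\HAut(V)$.'' So your reduction step is not available here, and the conclusion of Theorem~\ref{thm:adjoint} is deliberately weaker than what the quasilinear route would give (the lift is not claimed to be $\sigma$-equivariant or an automorphism). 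Your reading of the hypothesis $r_i\ge 3$ for $\lieg_i\simeq\liesl_2$ is also off: the paper states these bounds are exactly what makes $(V,G)$ $2$-principal and every $\phi\in\Aut(Z)$ strata-preserving, not a device to rigidify the multiplicity spaces via quadratic invariants.

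The paper's actual argument stays entirely inside the homological/sheaf-theoretic framework of \S\ref{sec:kuttler}. Since $V$ need not be $4$-principal (precisely in the troublesome cases $3\liesl_2,\,4\liesl_2,\,2\liesl_3,\,2\lie{so}_5,\,2\liesl_4$), the complex \eqref{eq:3star} cannot be shown exact on $V_\pr$ with codimension-$\ge 4$ complement directly. Instead the paper classifies all strata of codimension $\le 5$ (Lemmas~\ref{lem:sl2}--\ref{lem:codim3}, Corollaries~\ref{cor:finiteH}--\ref{lem:rankT}, Proposition~\ref{prop:classifS}), shows the corresponding isotropy groups are tori or finite (Lemma~\ref{lem:codim}), reduces to the case where $\phi$ preserves each of these strata (Corollary~\ref{cor:preserveS}), and uses Proposition~\ref{prop:locallift} to produce local $\sigma$-equivariant holomorphic lifts over them. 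This lets one prove exactness of \eqref{eq:3star} on $V^\dag$, the preimage of the strata of codimension $\le 5$, via a completion/faithful-flatness argument (Proposition~\ref{prop:Zariskiopen} and the final proof). Since $\codim(V\setminus V^\dag)\ge 4$, the rest of the \S\ref{sec:kuttler} argument (projectivity of $\Gamma(V,\pi^\#\phi^*\F')$, triviality of the extended principal bundle by Raghunathan) then applies verbatim. None of this appears in your proposal, which instead defers the heart of the problem to an unproven structure theorem for the component group of $\Aut_\ql(Z)$. Even granting admissibility, that classification is itself a substantial piece of work that you only sketch; the paper sidesteps it entirely.
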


 The $G$-module $V$ is not $4$-principal if any $r_i\lieg_i$ is $3\liesl_2$, $4\liesl_2$,  $2\liesl_3$,   $2\lie{so}_5$ or $2\liesl_4$. We have to carefully analyze these cases in order to reduce to the arguments of Theorem  \ref{thm:genkuttler}. Again, the lifting is not necessarily $\sigma$-equivariant or an automorphism. In \cite{SchAdjoint} we prove that $V$ is admissible, hence elements of $\HAut(Z)$ have $\sigma$-equivariant lifts to $\HAut(V)$.

Our results lead to consequences for actions of compact Lie groups   (see \cite{SchLifting}). Let $K$ be a compact Lie group  and $W$ a real $K$-module. Let $W/K$ be the orbit space where we say that a function $f\colon W/K\to \R$ is $C^\infty$ if it pulls back to an element of $C^\infty(W)$ (necessarily $K$-invariant). Then we can define the notion of a smooth automorphism of $W/K$. (See \S \ref{sec:compact} for more details.)\ Let $G=K_\C$ be the complexification of $K$ and set $V=W\otimes_\R\C$. We say that $W$ is \emph{admissible\/} if $V$ is $2$-principal.

\begin{theorem}\label{thm:compact}
Let $W$ be an admissible $K$-module  and let $\phi\colon W/K\to W/K$ be a smooth automorphism. If $V$ has the lifting property, then there is a   lift $\Phi\in \Diffeo(W)$ of $\phi$ and a group automorphism $\sigma$ of $K$ such that $\Phi(kw)=\sigma(k)\Phi(w)$ for every $k\in K$ and $w\in W$. 
\end{theorem}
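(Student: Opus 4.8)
The strategy is to complexify, apply the lifting property on the algebraic/holomorphic side, and then descend back to the real picture by an averaging-and-uniqueness argument. First I would recall the comparison between the real and complex quotients: for $W$ a real $K$-module with complexification $V = W\otimes_\R\C$ and $G = K_\C$, the inclusion $W\hookrightarrow V$ induces a homeomorphism of $W/K$ onto the real points $Z(\R)$ of $Z = \quot VG$ compatible with the two quotient maps, and under this identification a smooth (resp.\ real-analytic) function on $W/K$ is the same thing as (the restriction of) a suitable function on $Z$. The key input here is that a smooth automorphism $\phi$ of $W/K$ extends to a holomorphic automorphism of (a neighborhood in, or all of) $Z$; this is exactly the sort of statement furnished by the cited companion paper \cite{SchLifting}, so I would invoke it to produce $\widetilde\phi\in\HAut(Z)$ with $\widetilde\phi|_{Z(\R)} = \phi$, and after the reduction in the Remark following Theorem \ref{thm:stratapreserve} we may assume $\widetilde\phi(\pi(0)) = \pi(0)$. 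Since $V$ is $2$-principal (as $W$ is admissible) and — crucially — $V$ has the lifting property, Theorems \ref{thm:intro:reducequasilinear} and \ref{thm:intro:lifting} apply: $\widetilde\phi$ admits a $\sigma$-equivariant biholomorphic lift $\widetilde\Phi\colon V\to V$ with $\widetilde\Phi(gv) = \sigma(g)\widetilde\Phi(v)$ for some algebraic automorphism $\sigma$ of $G$.

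The remaining work is to arrange that $\widetilde\Phi$ (or a modification of it) restricts to a diffeomorphism of $W$ and that $\sigma$ restricts to an automorphism of $K$. For the second point, $K$ is a maximal compact subgroup of $G$, and $\sigma(K)$ is again maximal compact, hence conjugate to $K$ by some $g_0\in G$; replacing $\widetilde\Phi$ by $v\mapsto \widetilde\Phi(g_0 v)$ and $\sigma$ by $\Ad(g_0)\circ\sigma$ — which changes neither the lifting relation nor equivariance — we may assume $\sigma(K) = K$, so that $\sigma|_K$ is a group automorphism of $K$ and $\sigma$ is its complexification. For the first point, I would average: since $W$ is $\sigma$-equivariantly only conjugate-preserved, consider the map $\Phi_0\colon W\to W$ obtained by composing $\widetilde\Phi$ with the natural $K$-invariant real structure; more concretely, because $\widetilde\Phi$ covers $\widetilde\phi$ which preserves $Z(\R) = W/K$, $\widetilde\Phi$ maps the fiber $\pi^{-1}(z)$ to $\pi^{-1}(\widetilde\phi(z))$, and for $z\in W/K$ both fibers meet $W$ in a single $K$-orbit (the unique closed orbit); one then shows that $\widetilde\Phi$ can be chosen to carry the real form $W$ into $W$. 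The cleanest route is to use the Kempf–Ness / polar decomposition description of $W\subset V$: $W$ is characterized inside $V$ as the fixed-point set of an antiholomorphic involution $\tau$ commuting with $K$, and since $\widetilde\Phi$ is equivariant and covers a map preserving the real quotient, $\widetilde\Phi^{-1}\circ\tau\circ\widetilde\Phi$ is again such an involution inducing the identity on $Z(\R)$; by the uniqueness part of the theory of lifts (the lift over a given $\phi$ with $\Phi(0)=0$ is unique up to the $G$-action, cf.\ the linearization argument preceding Theorem \ref{thm:intro:reducequasilinear}) this forces $\widetilde\Phi$ to intertwine $\tau$ with itself after a further adjustment by an element of $G$, hence $\widetilde\Phi(W) = W$. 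Setting $\Phi := \widetilde\Phi|_W$ gives the desired $\sigma$-equivariant diffeomorphism, with smoothness and bijectivity inherited from $\widetilde\Phi$ being biholomorphic.

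I expect the main obstacle to be precisely this last descent step: passing from a holomorphic $\sigma$-equivariant lift on $V$ to a genuine diffeomorphism of the real form $W$ intertwining $\sigma|_K$. The equivariance and the fact that $\widetilde\Phi$ covers a real-quotient-preserving map give strong control on fibers over $Z(\R)$, but turning "$\widetilde\Phi$ permutes the real closed orbits" into "$\widetilde\Phi$ preserves $W$ globally" requires a uniqueness statement for equivariant lifts strong enough to pin down the antiholomorphic structure — this is the technical heart and is exactly where the careful bookkeeping with Kempf–Ness sets and the conjugation $\tau$ is needed. Everything else (the real-complex quotient dictionary, maximal-compactness of $\sigma(K)$, invoking Theorems \ref{thm:intro:reducequasilinear} and \ref{thm:intro:lifting}) is either standard or already established earlier in the paper.
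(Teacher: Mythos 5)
Your plan has the right skeleton for the \emph{quasilinear} part (complexify, lift via the lifting property for $V$, descend to the real form by a conjugation/uniqueness argument), and the descent step you sketch — define $\overline{\Phi}(v)=\overline{\Phi(\bar v)}$, observe that $\Phi^{-1}\overline\Phi$ lifts the identity and hence is multiplication by an element of $N_G(K)$, then adjust to force $\Phi(W)=W$ — is essentially what Proposition \ref{prop:cxlift} does. Likewise, conjugating so that $\sigma(K)=K$ by using maximal compactness matches the paper.

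However, the opening step is a genuine gap. You propose to extend the \emph{smooth} automorphism $\phi$ of $W/K$ to a \emph{holomorphic} automorphism $\widetilde\phi\in\HAut(Z)$ and then invoke Theorems \ref{thm:intro:reducequasilinear} and \ref{thm:intro:lifting}. This cannot work: a smooth automorphism of $W/K$ is typically not even real-analytic, so there is no holomorphic extension of $\phi$ to $Z$ (locally or globally), and \cite{SchLifting} does not furnish one — that reference is about smooth lifting in the real category. If this extension existed, your argument would go through, but the failure of this single step blocks the whole plan. The paper avoids the issue by working in stages: first it uses Losik's result \cite[Theorem 2.2]{LosikLift} to produce a \emph{smooth} isotopy $\phi_t$ from a quasilinear $\phi_0$ to $\phi$; since $\phi_0$ is quasilinear it is polynomial, hence extends canonically to $\Aut_\ql(Z)$, and only this piece is complexified (Proposition \ref{prop:cxlift}, then Lemma \ref{lem:KtoG}) to produce $\Phi_0\in N_{\GL(W)}(K)$; the remaining isotopy $\phi_0^{-1}\circ\phi_t$, which is smooth but in no way holomorphic, is then lifted by the smooth (real) isotopy lifting theorem of \cite{SchLifting}, not by the holomorphic Theorem \ref{thm:isolifting}. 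Your proposal also leaves the final descent vague ("one then shows that $\widetilde\Phi$ can be chosen to carry $W$ into $W$"); the paper's proof of Proposition \ref{prop:cxlift} shows this requires a nontrivial analysis of $N_G(K)$ and a quadratic-form argument to rule out $\Phi(W)=W_1+iW_2$ with $W_2\neq 0$, so this is not merely bookkeeping.
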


It is possible that $W$ has the lifting property (obvious definition) even if $V$ does not, but we don't know of an example.
Related results on differentiable lifting  can be found in \cite{BierstoneLifting}, \cite{LosikLift}, \cite{SchLifting} and \cite{StrubLocal}.

 Here is an outline of the paper. In \S \ref{sec:liftableautoms} we consider when an automorphism $\phi$ of $Z$ can be deformed to a quasilinear automorphism and prove Theorem \ref{thm:intro:reducequasilinear}. In \S \ref{sec:liftingcomplex} we establish Theorem \ref{thm:intro:lifting} on holomorphic equivariant liftings. In \S \ref{sec:examples} we establish Theorem \ref{thm:citlifting} and Remark \ref{rem:nolift} (our examples).  In \S \ref{sec:torus} we establish Theorem \ref{thm:intro:liftingtori}, the result on actions of tori,  and in \S \ref{sec:kuttler} we establish Theorem  \ref{thm:genkuttler} on lifting for representations containing $\lieg$.  In \S\ref{sec:adjoint}  we consider the generalized adjoint case and establish Theorem \ref{thm:adjoint} and in \S \ref{sec:compact} we establish Theorem \ref{thm:compact} on the actions of compact groups.

\medskip
\noindent {\sc Acknowledgement}

\medskip
We thank the members of the Mathematics Department of the Ruhr-Universit\"at Bochum for their hospitality while this research was being performed. We also thank the referee for a careful reading of our paper which has led to numerous improvements and corrections.

    \section{Deformable and quasilinear automorphisms}\label{sec:liftableautoms}
Let   $V$ be a $G$-module. We have a $\C^*$-action on $\quot VG$ where $t\cdot\pi(v)=\pi(tv)$, $t\in\C^*$, $v\in V$. One can also see this   as follows.  Let $p_1,\dots,p_d$ be homogeneous generators of $\O(V)^G$ and let $p=(p_1,\dots,p_d)\colon V\to\C^d$. Let $Y$ denote the image of $p$. Then we can identify $Y$ with $Z=\quot VG$. If $e_i$ is the degree of $p_i$, then we have a $\C^*$-action on $Y$ where $t\in\C^*$ sends  $(y_1,\dots,y_d)\in Y$ to $(t^{e_1}y_1,\dots,t^{e_d}y_d)$. The isomorphism $Y\simeq Z$ identifies the two $\C^*$-actions.
  
  Suppose that $\phi\in\HAut(Z)$ and $\phi(\pi(0))=\pi(0)$. We say that $\phi$ is \emph{deformable\/} if $\phi_0(z):=\lim_{t\to0}t\inv\cdot\phi(t\cdot z)$ exists for every $z\in Z$. Note that if all the $p_i$ have the same degree, then the limit exists and is just an ordinary derivative.  
 
Let $S=\bigoplus S_k$ denote the graded ring $\O(Z)$ and let $F_k=S_k+S_{k+1}+\dots$. . Let $\Hol(Z)$ (resp.\ $\Hol(V)$) denote the holomorphic functions on $Z$ (resp.\ $V$).  

\begin{remarks}\label{rem:smoothphit}
\begin{enumerate}
\item Let $\phi\in\Aut(Z)$ where $\phi(\pi(0))=\pi(0)$. Then  $\phi$ is deformable if and only if $\phi^*S_k\subset F_k$ for all $k\geq 0$. In this case the family $\phi_t$, $t\in\C^*$,  extends to an algebraic  family $\phi_t$, $t\in\C$. 
\item Let $\phi\in\HAut(Z)$  where $\phi(\pi(0))=\pi(0)$. Then $\phi$ is deformable if and only if $\phi^*S_k\subset S_k\cdot\Hol(Z)$ for all $k\geq 0$. In this case the family $\phi_t$, $t\in\C^*$, extends to a holomorphic family $\phi_t$, $t\in\C$. 
\end{enumerate}
\end{remarks}

We say that a differential operator $P\in\Diff(Z)$ is \emph{homogeneous of degree $\ell$\/} if it sends elements of $S_m$ to $S_{m+\ell}$ for all $m$. An automorphism $\phi\in\Aut(Z)$ acts on $\Diff^k(Z)$ sending $P$ to $\phi_*(P):=(\phi\inv)^*\circ P\circ\phi^*$. If $\phi\in\HAut(Z)$, then $\phi$ acts on the
 holomorphic differential operators $\HDiff^k(Z)$ of order $k$ on $Z$ by the same formula. 
 If $V$ is admissible, we have a surjection 
  $$
  \HDiff^k(V)^G=\Hol(V)^G\otimes_{\O(V)^G}\Diff^k(V)^G\to \Hol(Z)\otimes_{\O(Z)}\Diff^k(Z)=\HDiff^k(Z).
  $$
  
The following implies Theorem \ref{thm:intro:reducequasilinear}.

\begin{theorem}\label{thm:admissibleimpliesdiff}
Let $V$ be admissible and let $\phi\in\HAut(Z)$ where $\phi(\pi(0))=\pi(0)$. Then $\phi$ is deformable and $\phi_0\in\Aut_\ql(Z)$.
\end{theorem}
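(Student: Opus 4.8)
The plan is to prove first that $\phi$ is \emph{deformable}, and then, almost for free, that the limit $\phi_0$ is quasilinear. For the second part: once $\phi_0(z)=\lim_{t\to0}t\inv\cdot\phi(t\cdot z)$ exists, substituting $t\mapsto ts$ in the defining limit gives $\phi_0(s\cdot z)=s\cdot\phi_0(z)$, so $\phi_0$ commutes with the $\C^*$-action and $\phi_0^*$ preserves the grading of $\O(Z)$. The hypotheses of the theorem apply verbatim to $\phi\inv$, which also fixes $\pi(0)$, so (by the argument below, applied to $\phi\inv$) $\phi\inv$ is deformable as well. Since deformability of an automorphism $\psi$ fixing $\pi(0)$ amounts to $\psi^*$ preserving the filtration of $\O(Z)$ by the ideals $F_k=S_k\oplus S_{k+1}\oplus\cdots$, in the sense $\psi^*F_k\subseteq F_k\cdot\Hol(Z)$ for all $k$ (Remark \ref{rem:smoothphit}(2)), a short computation with leading (lowest-degree) terms identifies $(\phi\inv)_0^*$ as a two-sided inverse of $\phi_0^*$; hence $\phi_0^*$ is an invertible graded algebra automorphism and $\phi_0\in\Aut_\ql(Z)$. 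Everything therefore comes down to deformability of $\phi$ (and of $\phi\inv$, handled identically).

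Writing $Z\subseteq\C^d$ via homogeneous generators $p_1,\dots,p_d$ of $\O(Z)$ of degrees $e_1,\dots,e_d$, the $i$-th coordinate of $t\inv\cdot\phi(t\cdot z)$ is $\sum_{k\ge0}t^{k-e_i}(\phi^*p_i)_k(z)$, where $(\phi^*p_i)_k\in S_k$ is the degree-$k$ homogeneous component of the expansion of $\phi^*p_i$ about $\pi(0)$. So $\phi$ is deformable unless some generator $p$, of degree $m$ say, has $(\phi^*p)_\ell\ne0$ for some $\ell<m$. Assume this and take $\ell$ minimal; since $\phi(\pi(0))=\pi(0)$ we have $1\le\ell<m$. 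The strategy is to detect the nonzero invariant $(\phi^*p)_\ell\in S_\ell$ by a differential operator on $Z$ of degree $-\ell$ and order $\le\ell$, and then to conjugate it by $\phi$ so as to manufacture a differential operator on $Z$ of order $\le\ell$ with a nonzero homogeneous component of degree $-m<-\ell$ — which admissibility forbids.

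To do this, pick a multi-index $\alpha$ with $|\alpha|=\ell$ for which the coefficient of $x^\alpha$ in the degree-$\ell$ polynomial $(\phi^*p)_\ell\in\O(V)^G$ is nonzero, so that $\ptt^\alpha\bigl((\phi^*p)_\ell\bigr)$ is a nonzero constant, and apply the Reynolds operator to $\ptt^\alpha\in\Diff^\ell(V)$ to obtain $\widetilde P\in\Diff^\ell(V)^G$; this is again homogeneous of degree $-\ell$ of order $\le\ell$, and since $D\mapsto D(f)$ is $G$-equivariant when $f$ is invariant we get $\widetilde P\bigl((\phi^*p)_\ell\bigr)=\ptt^\alpha\bigl((\phi^*p)_\ell\bigr)\ne0$. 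Set $P:=\pi_*(\widetilde P)\in\Diff^\ell(Z)$, still of degree $-\ell$ and order $\le\ell$, with $P\bigl((\phi^*p)_\ell\bigr)\ne0$, and let $Q:=\phi_*(P)=(\phi\inv)^*\circ P\circ\phi^*\in\HDiff^\ell(Z)$; conjugation by the algebra automorphisms $\phi^*,(\phi\inv)^*$ preserves the order. Using $\phi(\pi(0))=\pi(0)$, the minimality of $\ell$ (whence $(\phi^*p)_j=0$ for $j<\ell$), and that $P$ lowers degree by exactly $\ell$ (so $P((\phi^*p)_j)\in S_{j-\ell}$ vanishes at $\pi(0)$ for $j>\ell$), one computes $Q(p)|_{\pi(0)}=P(\phi^*p)|_{\pi(0)}=\widetilde P\bigl((\phi^*p)_\ell\bigr)\ne0$. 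Since $p\in S_m$, the value of $Q(p)$ at $\pi(0)$ is exactly its degree-$0$ part, namely the degree-$(-m)$ homogeneous component of $Q$ applied to $p$; so $Q$ has a nonzero component of degree $-m$. But $V$ admissible means $\pi_*\colon\HDiff^\ell(V)^G\to\HDiff^\ell(Z)$ is surjective and degree-preserving, and every element of $\HDiff^\ell(V)^G$, like every element of $\HDiff^\ell(V)$, is a convergent sum of homogeneous components of degrees $\ge-\ell$ (as $\ptt^\beta$ has degree $-|\beta|$); hence so is every element of $\HDiff^\ell(Z)$, forcing the degree-$(-m)$ component of $Q$ to vanish. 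This contradiction shows $\phi$ is deformable.

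The step I expect to be the main obstacle — indeed the only place admissibility is genuinely used — is this last degree bound: a differential operator on $Z$ of order $\le\ell$ cannot lower the grading of $\O(Z)$ by more than $\ell$. It follows at once from the surjectivity and degree-compatibility of $\pi_*\colon\HDiff^\ell(V)^G\to\HDiff^\ell(Z)$ supplied by admissibility, together with the trivial corresponding bound on $V$; but it fails in general — for $\Z/2$ acting on $\C$ the operator $\ptt_y$ on the quotient line has order $1$ and degree $-2$ — so the real content here is imported through the admissibility hypothesis from \cite{SchLiftingDOs}. The remaining ingredients (passing to $\phi\inv$, the manipulations of the $\C^*$-limit and of leading terms, and tracking orders and degrees through $\pi_*$ and $\phi_*$) are routine.
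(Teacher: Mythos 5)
Your proof is correct and follows essentially the same route as the paper: both detect a hypothetical low-degree term of $\phi^*p$ with a homogeneous invariant constant-coefficient operator of order $\le\ell$ and degree $-\ell$, conjugate it by $\phi$, and derive a contradiction from the fact that admissibility forces $\HDiff^\ell(Z)=\pi_*(\HDiff^\ell(V)^G)$ to contain no homogeneous component of degree below $-\ell$. The only cosmetic differences are that the paper builds the detecting operator as the element of $\O(V^*)^G$ dual to $\pi^*h_\ell$ rather than as a Reynolds average of $\ptt^\alpha$, and it frames the final contradiction by evaluating $(\phi_*\pi_*P)(f)$ at $\pi(0)$ rather than by stating the degree bound abstractly.
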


\begin{proof}    
Let $f\in S_k$ and suppose that $\phi^*f=h_\ell+F_{\ell+1}\HH(Z)$ where $\ell<k$, $0\neq h_\ell\in S_\ell$. Let $P\in \O(V^*)^G$ be dual to $\pi^*h_\ell$ under a choice of basis for $V$ (and hence of $\O(V)$ and $\O(V^*)$). We may consider $P$ as   an invariant constant coefficient differential operator  of  order $\ell$. Then $P(\pi^*h_\ell)$ is a nonzero constant and we can arrange that $P(\pi^*h_\ell)=1$. Now $\phi^*((\phi_*\pi_*P)(f))=(\pi_*P)(\phi^*f)$ and $(\pi_*P)(\phi^*f)=1$ at $\pi(0)$, by construction.  Since $\phi_*\pi_*P$ has order at most $\ell$ (actually $\ell$ by \cite[Corollary 5.10]{SchLiftingDOs}),  it equals $\pi_*Q$ where   $Q=\sum h_iQ_i$ and the $h_i$ are in $\Hol(V)^G$ and the $Q_i$ are in $\Diff^\ell(V)^G$. But each $Q_i$  is a sum of terms of degree at least $-\ell$. Hence   $\pi^*(\phi_*\pi_*P(f))=\sum_i h_iQ_i(\pi^*(f))$ where $Q_i(\pi^*(f))$ is a sum of terms of degree at least $k-\ell>0$. Thus $Q_i(\pi^*(f))$ vanishes at $0$ and   $(\phi_*\pi_*P)(f)$ vanishes at $\pi(0)$ which is a contradiction. Thus $\phi$ is deformable. Since $\phi\inv$ is also deformable, we see that $\phi_0$ is invertible, hence $\phi_0\in\Aut_\ql(Z)$.
\end{proof}

\begin{corollary}\label{cor:gradedisomorphism}
Let $G_i\subset\GL(V_i)$, $i=1$, $2$. Assume that the $V_i$ are admissible. Set $Z_i=\quot {V_i}{G_i}$ and suppose that there is a holomorphic isomorphism $\psi\colon Z_1\to Z_2$. Then there is an algebraic  isomorphism preserving the gradings of $\O(Z_1)$ and $\O(Z_2)$.
\end{corollary}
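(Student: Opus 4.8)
The goal is Corollary \ref{cor:gradedisomorphism}: given admissible $V_1, V_2$ and a \emph{holomorphic} isomorphism $\psi\colon Z_1 \to Z_2$, produce an \emph{algebraic} grading-preserving isomorphism. The plan is to extract a quasilinear automorphism of a product quotient from $\psi$ by the same trick used in Corollary \ref{cor:surprising}, and then appeal to Theorem \ref{thm:admissibleimpliesdiff} to see that the quasilinear piece is algebraic.

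First I would form $V = V_1 \oplus V_2$ and $G = G_1 \times G_2$, so that $Z := \quot VG \simeq Z_1 \times Z_2$, and $V$ is admissible because each $V_i$ is (admissibility of a direct sum of modules of a product group follows from admissibility of the factors, as noted before Theorem \ref{thm:finitebad}; more simply, $2$-principality and surjectivity of $\pi_*$ are inherited). Next I would use $\psi$ to define $\phi \in \HAut(Z)$ by $\phi(z_1, z_2) = (\psi\inv(z_2), \psi(z_1))$; this is a holomorphic automorphism of $Z$. As in the proof of Corollary \ref{cor:surprising}, $\phi$ induces a holomorphic automorphism of $V_1^{G_1} \times V_2^{G_2} = V^G$, so $\psi$ restricts to a biholomorphism $V_1^{G_1} \to V_2^{G_2}$; composing $\psi$ with a translation (a linear, grading-preserving operation on the linear spaces $V_i^{G_i}$, harmless for the conclusion) we may assume $\psi$ sends $0$ to $0$, hence $\phi(\pi(0)) = \pi(0)$.

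Now Theorem \ref{thm:admissibleimpliesdiff} applies: $\phi$ is deformable and $\phi_0 := \lim_{t\to 0} t\inv \cdot \phi(t \cdot{}) \in \Aut_\ql(Z)$. Because $\phi$ has the block-swap form above and the $\C^*$-action on $Z_1 \times Z_2$ is the product of the two $\C^*$-actions, $\phi_0$ has the same block form $\phi_0(z_1,z_2) = ((\psi_0)\inv(z_2), \psi_0(z_1))$ for a single quasilinear map, where $\psi_0 := \lim_{t\to 0} t\inv \cdot \psi(t\cdot{})\colon Z_1 \to Z_2$. A quasilinear automorphism of an affine variety is by definition a graded automorphism of the coordinate ring and in particular algebraic; restricting $\phi_0$ to the factors, $\psi_0\colon Z_1 \to Z_2$ is an algebraic isomorphism, and it preserves the gradings of $\O(Z_1)$ and $\O(Z_2)$ since it intertwines the $\C^*$-actions by construction. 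That is exactly the desired conclusion.

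The one point needing a little care — and the main obstacle — is the block-diagonal/anti-diagonal bookkeeping: one must check that $\Aut_\ql(Z_1 \times Z_2)$ elements of the swap form $\phi$ produce, after passing to $\phi_0$, genuinely a map between $Z_1$ and $Z_2$ (rather than merely an automorphism of the product that mixes the factors in some unexpected graded way). This is handled by observing that $\phi$ already has the swap form before deformation and that taking $t\inv \cdot \phi(t\cdot{})$ is compatible with the product decomposition of the $\C^*$-action, so the limit inherits the swap form; then reading off the $Z_1 \to Z_2$ component gives the algebraic graded isomorphism. No new estimates are required beyond what Theorem \ref{thm:admissibleimpliesdiff} already provides.
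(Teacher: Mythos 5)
Your proposal matches the paper's own proof step for step: form $V=V_1\oplus V_2$ and $G=G_1\times G_2$, define the swap automorphism $\phi(z_1,z_2)=(\psi\inv(z_2),\psi(z_1))$, normalize so that $\phi$ fixes $\pi(0)$, apply Theorem \ref{thm:admissibleimpliesdiff} to get $\phi_0\in\Aut_\ql(Z)$, and observe that the anti-diagonal block form is preserved by the $\C^*$-deformation so $\phi_0$ yields a graded isomorphism $\O(Z_1)\to\O(Z_2)$. One small slip in wording: a translation on $V_2^{G_2}$ is neither linear nor grading-preserving, but as you yourself note it is harmless, since the grading property is needed only for the limit $\psi_0$, not for $\psi$.
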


\begin{proof}
Let $V=V_1\oplus V_2$ and $G=G_1\times G_2$. Then $Z:=\quot VG\simeq Z_1\times Z_2$.   Let $\phi\colon Z\to Z$ send $(z_1,z_2)$ to $(\psi\inv(z_2),\psi(z_1))$ for $z_1\in Z_1$, $z_2\in Z_2$. As in the proof of Corollary \ref{cor:surprising},  we may assume  that $\phi$ fixes the image of $0\in V_1\oplus V_2$. Then $\phi$ is deformable, hence there is a graded isomorphism of $\O(Z_1)\otimes\O(Z_2)$ which interchanges the two factors.
\end{proof}

\begin{example}\label{ex:notadmissible}
 Let $(V_1,G_1):=(2n\C^{n-1},\SL_{n-1})$ and $(V_2,G_2):=(2n\C^{n+1},\SL_{n+1})$ where $n\geq 4$.   The quotient $Z_1$ is isomorphic to the decomposable elements  in $\wedge^{n-1}(\C^{2n})$ while $Z_2$ is isomorphic to the decomposable elements of $\wedge^{n+1}(\C^{2n})$. We have a canonical isomorphism $Z_1\simeq Z_2$. The generators of $\O(V_1)^{G_1}$ have degree $n-1$ while those of $\O(V_2)^{G_2}$ have degree $n+1$. The representation $V_1$ is admissible while $V_2$ is not  (but almost!).  Both representations are $4$-principal.  Now consider $(V,G)=(V_1\oplus V_2,G_1\times G_2)$. Then the quotient is $Z_1\times Z_2$ and we have an automorphism $\phi$ which interchanges the $Z_i$.  It clearly is not deformable because of the difference in degrees of the generators.   Thus Theorem \ref{thm:admissibleimpliesdiff} can fail if the representation is not admissible and Corollary \ref{cor:gradedisomorphism} can fail if one of the representations in question is not admissible.  Also, Theorem \ref{thm:genkuttler}  and Corollary \ref{cor:surprising} can fail when  $\lieg_s$ is not a subrepresentation of $V$.
  \end{example}

   Let $N_{\GL(V)}(G)$ denote the normalizer of $G$ in $\GL(V)$.
   
 \begin{proposition}\label{prop:normalizer}
 Let $\phi\in\Aut_\ql(Z)$ and suppose that $\phi$ lifts to $\Phi\in\End(V)$ where $V$ is $2$-principal and $\phi$ preserves the principal stratum of $Z$. Then 
 \begin{enumerate}
\item There is a morphism $\sigma\colon G\to G$ such that $\Phi(gv)=\sigma(g)\Phi(v)$, $g\in G$, $v\in V$.
\item $\Phi\in N_{\GL(V)}(G)$.
\item $\sigma(g)=\Phi\circ g\circ \Phi\inv$, $g\in G$,  so that $\sigma\in\Aut(G)$. 
\item If $\phi$ is the identity, then $\Phi$ is multiplication by an element $g\in G$.
\end{enumerate}
 \end{proposition}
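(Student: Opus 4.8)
The plan is to exploit that $\Phi$, being a linear lift of a quasilinear automorphism preserving the principal stratum, must carry generic closed orbits to generic closed orbits, and to extract the group automorphism from this. First I would show that $\Phi$ is invertible: since $\phi$ is an automorphism of $Z$ and $V$ is $2$-principal, the generic fiber of $\pi$ is a single (principal, here trivial up to the ineffective kernel) closed orbit, and $\Phi$ maps $\pi\inv(z)$ onto $\pi\inv(\phi(z))$; on the principal stratum these fibers are single orbits of the same dimension, so $\Phi$ restricted to $V_\pr$ is dominant, hence $\Phi$ is dominant and, being linear, invertible. Next, for $v\in V_\pr$ the orbit $Gv$ is closed and $\Phi(Gv)$ lies in the single closed orbit $\pi\inv(\phi(\pi(v)))$, so $\Phi(Gv)$ is a full $G$-orbit; thus $G\Phi(v)=\Phi(Gv)$, which says $\Phi\circ g\circ\Phi\inv$ maps $\Phi(v)$ into $G\Phi(v)$ for every $g\in G$.

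From here I would define, for each $g\in G$, the element $\sigma(g):=\Phi\circ g\circ\Phi\inv\in\GL(V)$ and show it lies in $G$. The previous step shows $\sigma(g)$ stabilizes every principal orbit setwise; since principal orbits cover a dense open set and $G$ is reductive (so $G$ is closed in $\GL(V)$ and the action is algebraic), the subgroup of $\GL(V)$ stabilizing all these orbits is contained in $G$ — more precisely, $\sigma(g)$ normalizes the action in the sense that $\sigma(g)v\in Gv$ for $v$ in a dense set, and a standard argument (the map $\GL(V)\times V_\pr\to V\times V$, $(A,v)\mapsto(v,Av)$, together with the fact that $\{(v,gv)\}$ is closed in $V_\pr\times V_\pr$) forces $\sigma(g)\in G$. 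This simultaneously gives (2), since $\Phi g\Phi\inv\in G$ for all $g$ means $\Phi\in N_{\GL(V)}(G)$, and (3), since $\sigma$ is then visibly a homomorphism $G\to G$ which is bijective with inverse $h\mapsto\Phi\inv h\Phi$, hence $\sigma\in\Aut(G)$. Statement (1) is immediate from the definition of $\sigma$: $\Phi(gv)=\Phi g\Phi\inv\Phi(v)=\sigma(g)\Phi(v)$. For (4), if $\phi=\id$ then $\Phi$ maps each fiber $\pi\inv(z)$ to itself; on a principal orbit $Gv$ this means $\Phi(v)\in Gv$, and since $\Phi$ is $\sigma$-equivariant for the resulting $\sigma$, writing $\Phi(v_0)=g_0v_0$ for one principal $v_0$ and using equivariance plus density of $Gv_0$-type points pins down $\Phi=g_0$ (one checks $\sigma=\conj_{g_0}$ is forced and $\Phi(gv_0)=\sigma(g)g_0v_0=g_0gv_0$, so $\Phi$ agrees with multiplication by $g_0$ on a dense set, hence everywhere).

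The main obstacle I anticipate is the clean deduction that $\sigma(g)=\Phi g\Phi\inv$ actually lands in $G$ rather than merely in the normalizer of $G$ or in some larger group stabilizing the generic orbits — i.e., the step where we pass from "$Av\in Gv$ for all $v$ in a dense open set" to "$A\in G$." This requires that the principal isotropy group is trivial (which $2$-principal guarantees after dividing by the ineffective kernel, by \cite[Remark 2.5]{SchVectorFields} as noted after Definition \ref{def:admissible}), so that the map $G\to Gv$ is an isomorphism for principal $v$ and the element of $G$ realizing $Av=gv$ is unique and varies algebraically in $v$; one then gets a morphism from a dense open subset of $V_\pr$ to $G$ which must be constant because, composing with the action, $A$ is independent of $v$. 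I would also need to be slightly careful that the ineffective kernel is handled correctly, but as the paper already reduces admissible (hence $2$-principal) representations to the TPIG case this is not a genuine difficulty.
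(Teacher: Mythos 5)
Your plan shares the overall strategy of the paper (extract $\sigma$ from the action on principal orbits, pass to $N_{\GL(V)}(G)$ via TPIG, handle (4) as a special case), but two of the load-bearing steps do not hold up as written, and fixing them amounts to redoing the proof in the paper's order.

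The first is the invertibility of $\Phi$, which you place at the start and use for everything afterwards. You write that $\Phi$ maps $\pi\inv(z)$ \emph{onto} $\pi\inv(\phi(z))$ and that equality of fiber dimensions makes $\Phi|_{V_\pr}$ dominant. But the hypothesis $\pi\circ\Phi=\phi\circ\pi$ only gives $\Phi(\pi\inv(z))\subset\pi\inv(\phi(z))$; a linear lift could a priori collapse each fiber to a lower-dimensional subset of the target orbit, and equality of the ambient orbit dimensions does not preclude this. So "dominant, hence invertible" is not justified. The paper sidesteps this by proving (1) \emph{first}, which needs no invertibility at all: for $v$ principal, $\Phi(gv)$ and $\Phi(v)$ lie in the single closed orbit $\pi\inv(\phi\pi(v))$, so there is a unique $\Psi(v,g)\in G$ with $\Phi(gv)=\Psi(v,g)\Phi(v)$; this $\Psi$ is a morphism $V_\pr\times G\to G$, it extends over $V\times G$ because $V$ is $2$-principal, and linearity of $\Phi$ forces $\Psi(tv,g)=\Psi(v,g)$, hence $\Psi(v,g)=\Psi(0,g)=:\sigma(g)$ for all $v$. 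Only \emph{then} does the paper show $\Ker\Phi$ is $G$-stable (a consequence of (1)), write $V=\Ker\Phi\oplus V'$ with $V'$ a $G$-complement, and derive a contradiction from $v_1+v_2\in V_\pr$ with $v_1\in\Ker\Phi$.

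The second gap is the constancy-in-$v$ issue, which recurs both in your passage from "$Av\in Gv$ for $v$ in a dense set" to "$A\in G$" and in (4). You correctly identify the uniqueness of $g_v$ with $Av=g_vv$ (from TPIG) and that $v\mapsto g_v$ is a morphism, but the reason you offer for constancy ("composing with the action, $A$ is independent of $v$") is circular, and in (4) your argument only shows $\Phi$ agrees with $g_0$ on the single orbit $Gv_0$, which is not dense in $V$ and need not span it. The paper's mechanism is again the $2$-principal extension plus homogeneity: the morphism $\tau\colon V_\pr\to G$ extends to $V\to G$, linearity of $\Phi$ (or of $A$) forces $\tau(tv)=\tau(v)$, and therefore $\tau\equiv\tau(0)$. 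Without this trick, neither "$\sigma(g)\in G$" nor statement (4) is actually established. In short, reversing the order (invertibility before equivariance) and replacing the extension-plus-homogeneity argument with density heuristics leaves genuine holes; the paper's ordering is not cosmetic.
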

 
 \begin{proof}
 We know that $\Phi$ sends the fiber of $\pi$ over $z\in Z$ to the fiber over $\phi(z)$. Consider a fiber which is a principal orbit $Gv$. Then $\Phi$ sends $Gv$ to the principal orbit $G\Phi(v)$. Thus $\Phi(gv)=\Psi(v,g) \Phi(v)$ where $\Psi(v,g)\in G$ is unique.  Now  $V_\pr\times_{Z_\pr}V_\pr\simeq V_\pr\times G$ so that  $\Psi\colon V_\pr\times G\to G$ is a morphism. Since $V$ is $2$-principal, $\Psi$ extends to a morphism of $V\times G$ to $G$.  By construction, $\Psi(v,g)=\Psi(tv,g)$ for any $t\in\C^*$. Hence $\Psi(v,g)=\sigma(g):=\Psi(0,g)$ for all principal $v$. It follows that   $\Phi(gv)=\sigma(g) \Phi(v)$  for all $v\in V$ and we have (1).  
 
It follows from (1) that  $\Ker\Phi$ is $G$-stable. Write $V=\Ker\Phi\oplus V'$ where $V'$ is $G$-stable. Let $v_1+v_2\in\Ker\Phi\oplus V'$. Then $v_1+v_2\in V_\pr$ if and only if $\Phi(v_2)\in V_\pr$ if and only if $v_2\in V_\pr$, in which case the principal orbits $G(v_1+v_2)$ and $Gv_2$ coincide. Since   $Gv_2\subset V'$ this forces $v_1=0$. Hence $\Ker\Phi=0$ and   $\Phi\in\GL(V)$.  Now for $v$ principal we have that $\Phi(gv)=(\Phi\circ g\circ \Phi\inv)\Phi(v)$, $g\in G$, so that $\sigma(g)=\Phi\circ g\circ\Phi\inv$. Hence we have (2) and (3).

Suppose that $\phi$ is the identity. Then for $v$ principal there is a unique $\tau(v)\in G$ such that $\Phi(v)=\tau(v)v$. Arguing as above, $\tau$ extends to a morphism $V\to G$ and  $\Phi(v)=\tau(0)v$ for all $v\in V$ and we have (4).
  \end{proof}

\begin{remark}\label{rem:conjbyg} Let $v\in V$ and $g\in G$.
We may change $\Phi$ to $\Phi_g$ where $\Phi_g(v)=\Phi(gv)$. 
Then for $h\in G$ we have
$$
\Phi_g(hv)=\Phi(ghg\inv  gv)=\sigma(g)\sigma(h)\sigma(g)\inv\Phi_g(v).
$$
Thus we  change  $\sigma$ by an inner automorphism of $G$. Hence if $\sigma$ is inner, we can arrange that $\Phi$ is $G$-equivariant. In general, we can arrange that $\sigma$ is a diagram automorphism (if $G$ is semisimple).
\end{remark}

\begin{corollary}
Let $V$ be admissible, $3$-principal or $2$-principal and orthogonal. Then $V$ has the lifting property  if and only if   $N_{\GL(V)}(G)$ maps onto $\Aut_\ql(Z)$.
\end{corollary}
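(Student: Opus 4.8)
The statement is an "if and only if" about when the $G$-module $V$ (admissible, $3$-principal, or $2$-principal and orthogonal) has the lifting property, namely whether the natural map $N_{\GL(V)}(G)\to\Aut_\ql(Z)$ is onto. One direction is immediate: if $\Phi\in N_{\GL(V)}(G)$, then $\Phi$ descends to an automorphism of $Z=\quot VG$ which, being induced by a linear map, is graded, i.e., lies in $\Aut_\ql(Z)$; and conversely any quasilinear $\phi$ that lifts to some $\Phi\in\GL(V)$ has, by Proposition \ref{prop:normalizer}(2)--(3), $\Phi\in N_{\GL(V)}(G)$ (here we use that $\phi\in\Aut_\ql(Z)$ automatically preserves the principal stratum under our hypotheses, by Theorem \ref{thm:stratapreserve}, and that $2$-principality is in force). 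So the content is really to match up "$V$ has the lifting property" — every element of $\Aut_\ql(Z)$ lifts to $\GL(V)$ — with the surjectivity of this normalizer map.

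\textbf{Forward direction.} Suppose $V$ has the lifting property. Take any $\phi\in\Aut_\ql(Z)$. Being graded, $\phi$ fixes $\pi(0)$, and by the hypothesis it preserves the stratification (Theorem \ref{thm:stratapreserve}), in particular the principal stratum. By the lifting property there is a lift $\Phi\in\GL(V)$ of $\phi$. Since $V$ is $2$-principal and $\phi$ preserves the principal stratum, Proposition \ref{prop:normalizer} applies: parts (2) and (3) give $\Phi\in N_{\GL(V)}(G)$ and that $\Phi$ induces the expected $\sigma\in\Aut(G)$. Thus $\phi$ is in the image of $N_{\GL(V)}(G)$, and since $\phi$ was arbitrary the map is onto.

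\textbf{Reverse direction.} Suppose $N_{\GL(V)}(G)\to\Aut_\ql(Z)$ is onto. Let $\phi\in\Aut_\ql(Z)$. Then $\phi$ is the image of some $\Phi\in N_{\GL(V)}(G)$; but $\Phi\in N_{\GL(V)}(G)\subset\GL(V)$ is by definition a linear lift of $\phi$. Hence every quasilinear automorphism lifts to $\GL(V)$, which is exactly the lifting property. Combining the two directions gives the corollary.

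\textbf{Where the work sits.} In fact both implications are short once Proposition \ref{prop:normalizer} and Theorem \ref{thm:stratapreserve} are in hand; the only point that needs a word of care is that an element of $\Aut_\ql(Z)$ does preserve the principal stratum, so that Proposition \ref{prop:normalizer} is applicable — this is where the hypothesis "admissible, $3$-principal, or $2$-principal and orthogonal" enters, via Theorem \ref{thm:stratapreserve}. There is no real obstacle beyond bookkeeping: the corollary is essentially a repackaging of Proposition \ref{prop:normalizer} together with the definition of the lifting property.
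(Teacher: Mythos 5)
Your proof is correct and matches the paper's intent: the paper states this corollary with no written proof, leaving it as an immediate consequence of Proposition~\ref{prop:normalizer} together with Theorem~\ref{thm:stratapreserve} (to ensure $\phi\in\Aut_\ql(Z)$ preserves the principal stratum) and the definition of the lifting property, which is exactly the argument you give.
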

 
 \begin{proposition}\label{prop:autql0}
 Let $V$ be admissible, $3$-principal or  $2$-principal and orthogonal. Then  $\GL(V)^G$ maps onto  $\Aut_\ql(Z)^0$.
 \end{proposition}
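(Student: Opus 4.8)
The plan is to pass to Lie algebras. Write $L:=\GL(V)^G$ for the centralizer of $G$ in $\GL(V)$. Since $G$ is reductive, $V$ decomposes into isotypic components, so $\End(V)^G\cong\bigoplus_i M_{m_i}(\C)$ and $L\cong\prod_i\GL_{m_i}(\C)$ is connected. Every $g\in L$ commutes with $G$, hence $g^*$ preserves $\O(V)^G=\O(Z)$, and being linear it induces a grading-preserving automorphism $\rho(g)$ of $Z$. This defines a morphism of algebraic groups $\rho\colon L\to\Aut_\ql(Z)$, whose image is a closed connected subgroup of $\Aut_\ql(Z)^0$. So it suffices to prove that the differential $d\rho\colon\liegl(V)^G\to\Lie\Aut_\ql(Z)$ is surjective: then $\rho(L)$ and $\Aut_\ql(Z)^0$ are closed connected subgroups with the same Lie algebra, hence equal.

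Next I would make both sides concrete. An element of $\Lie\Aut_\ql(Z)$ is a derivation of $\O(Z)$ preserving the grading, i.e.\ a vector field on $Z$ homogeneous of $\C^*$-weight $0$; and for $A\in\liegl(V)^G=\End(V)^G$ the image $d\rho(A)$ is $\pi_*(\xi_A)$, where $\xi_A\colon v\mapsto Av$ is the corresponding linear, hence $G$-invariant, vector field on $V$. So the statement reduces to: every weight-$0$ vector field $\eta$ on $Z$ equals $\pi_*(\xi_A)$ for some $A\in\End(V)^G$. For this I would use that under each of the three hypotheses the restriction map $\pi_*\colon\operatorname{Vect}(V)^G\to\operatorname{Vect}(Z)$ is surjective --- for admissible $V$ this is the $k=1$ case of the surjectivity of $\pi_*$ on differential operators built into admissibility (Definition~\ref{def:admissible}), via the natural splitting $\Diff^1=\Diff^0\oplus\operatorname{Vect}$ and the identification $\O(V)^G=\O(Z)$, while in the $3$-principal and $2$-principal orthogonal cases it is \cite[Theorems 1.1 and 1.3]{SchVectorFields}. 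Given weight-$0$ $\eta$, choose $\xi\in\operatorname{Vect}(V)^G$ with $\pi_*\xi=\eta$ and decompose $\xi=\sum_{d\ge-1}\xi^{(d)}$ into its $\C^*$-weight components. Since the scaling action on $V$ commutes with $G$, each $\xi^{(d)}$ is again $G$-invariant; since $\pi^*\colon\O(Z)\hookrightarrow\O(V)$ is graded, $\pi_*$ is strictly weight-preserving; so comparing weights in $\eta=\sum_d\pi_*(\xi^{(d)})$ forces $\pi_*(\xi^{(0)})=\eta$. But a $G$-invariant vector field on $V$ of weight $0$ is exactly a linear $G$-equivariant one, i.e.\ $\xi^{(0)}=\xi_A$ with $A\in\End(V)^G$, so $\eta=\pi_*(\xi_A)=d\rho(A)$.

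I expect the only genuinely nontrivial input to be the surjectivity of $\pi_*$ on vector fields under these hypotheses, which is imported (from admissibility, following \cite{SchLiftingDOs}, and from \cite{SchVectorFields}); once that is in hand, everything else --- connectedness of $\GL(V)^G$, $\rho$ being an algebraic morphism, the $\C^*$-weight bookkeeping, and the standard fact that a morphism of algebraic groups surjective on Lie algebras is surjective onto the identity component --- is routine. The one small point to handle carefully is that the $\C^*$-weight decomposition of a $G$-invariant polynomial vector field on $V$ stays inside $\operatorname{Vect}(V)^G$ and that $\pi_*$ drops no weights, since this is what legitimizes extracting the weight-$0$ part.
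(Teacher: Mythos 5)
Your proof is correct and takes essentially the same route as the paper: both reduce to surjectivity of $\pi_*$ on $\Diff^1$ (vector fields), observe that $\pi_*$ preserves the $\C^*$-grading so that the weight-zero part $\End(V)^G$ maps onto the weight-zero derivations of $\O(Z)=\Lie\Aut_\ql(Z)$, and then pass from Lie algebras to identity components. The paper states this more tersely (compressing the weight-extraction step into "$\pi_*$ preserves degrees"), whereas you spell out the connectedness of $\GL(V)^G$ and the weight bookkeeping explicitly, but the underlying argument is the same.
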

 
 \begin{proof}
 Under the hypothesis,  $\pi_*\colon\Diff^1(V)^G\to\Diff^1(Z)$ is surjective \cite[Theorem 1.3]{SchVectorFields}. Since $\pi_*$ preserves the degrees of   differential operators it  sends the Lie algebra of $\GL(V)^G$ which is   $\End(V)^G$ (the degree zero derivations)  onto the Lie algebra of degree zero derivations of $\O(Z)$ which is the Lie algebra of $\Aut_\ql(Z)$.
  \end{proof}

We end this section with the observation that if  $\phi$ and $\phi\inv$ have lifts, then $\phi$ is deformable.

\begin{proposition}\label{prop:liftsimpliesdiff}
Suppose that $\phi\in\HAut(Z)$ and $\phi(\pi(0))=\pi(0)$ where $\phi$ is stratification preserving and $V$ is $2$-principal.  
Suppose that $\phi$ lifts to $\Phi$ and that $\phi\inv$ also lifts.
Then $\phi$ is deformable, $\phi_0\in\Aut_\ql(Z)$
and $\phi_0$ lifts to $\Phi'(0)\in N_{\GL(V)}(G)$.
  \end{proposition}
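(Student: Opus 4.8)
The plan is to prove deformability of $\phi$ by exploiting the lift $\Phi$ directly, rather than the differential-operator machinery of Theorem \ref{thm:admissibleimpliesdiff} (which would require admissibility). Since $\Phi$ lifts $\phi$ and $\phi$ fixes $\pi(0)$, we have $\pi(\Phi(0)) = \phi(\pi(0)) = \pi(0)$, so $\Phi(0)$ lies in the closed orbit $\pi\inv(\pi(0))$; after replacing $\Phi$ by $\Phi_g$ for a suitable $g$ as in Remark \ref{rem:conjbyg} we may assume $\Phi(0) = 0$. First I would write the Taylor expansion $\Phi = \Phi_1 + \Phi_2 + \cdots$ of the holomorphic map $\Phi$ at $0$ into homogeneous pieces $\Phi_j$ of degree $j$, and set $\Phi_0 := \Phi'(0) = \Phi_1$. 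The goal is to show $\Phi_0$ is invertible and lies over a quasilinear automorphism; once that is known, $\phi_0 := \lim_{t\to 0} t\inv\cdot\phi(t\cdot z)$ is computed by conjugating $\Phi$ by the scaling $t$ and taking the limit $t\to 0$: explicitly, $t\inv\Phi(tv) \to \Phi_1(v)$, and since $\pi(t\inv\Phi(tv)) = t\inv\cdot\pi(\Phi(tv)) = t\inv\cdot\phi(t\cdot\pi(v))$, passing to the limit gives $\pi(\Phi_1(v)) = \phi_0(\pi(v))$, so $\Phi_0$ is a linear lift of a well-defined map $\phi_0$, which is therefore in $\Aut_\ql(Z)$ as soon as it is invertible.

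The crux is thus to show $\Phi_0 = \Phi'(0)$ is invertible and normalizes $G$. Here I would invoke Proposition \ref{prop:normalizer}: but that proposition presumes we already have a map $\phi_0 \in \Aut_\ql(Z)$ lifted by $\Phi_0 \in \End(V)$. So the logical order must be: (i) show that $\Phi_0$ is a lift of \emph{some} continuous (indeed holomorphic) self-map $\phi_0$ of $Z$; this follows from the limit computation above combined with the fact that the limits exist on the principal stratum, where $\pi$ is a fiber bundle, and then extend. Actually the cleanest route is to observe that $\pi\circ\Phi_0$ is a holomorphic $G$-invariant map $V \to Z$ homogeneous of the right weights, hence factors as $\phi_0\circ\pi$ for a holomorphic $\phi_0\colon Z\to Z$; (ii) similarly $\phi\inv$ lifts to some $\Psi$ with $\Psi(0)=0$, and its linearization $\Psi_0 = \Psi'(0)$ lifts a holomorphic map $\psi_0$; (iii) since $\Phi\circ\Psi$ lifts $\phi\circ\phi\inv = \mathrm{id}$ and fixes $0$, its linearization $\Phi_0\Psi_0$ lifts $\phi_0\circ\psi_0$; by Proposition \ref{prop:normalizer}(4) applied to the lift $\Phi_0\Psi_0$ of the automorphism $\phi_0\psi_0$ — provided $\phi_0\psi_0$ is an \emph{automorphism}, which requires knowing $\phi_0$ and $\psi_0$ are mutually inverse — we get a contradiction with noninvertibility. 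To avoid circularity I would instead argue: $\phi$ stratification-preserving means $\phi$ preserves $Z_\pr$; the limit $\phi_0$ then also preserves $Z_\pr$ (a limit of maps preserving an open dense stratum, using that $\phi_0$ has a holomorphic inverse candidate $\psi_0$ forcing $\phi_0$ dominant hence its image meets $Z_\pr$), and dominance of $\phi_0$ forces $\Phi_0$ to have image not contained in any proper $G$-stable subvariety meeting the generic fiber structure, whence $\Ker\Phi_0 = 0$ by the kernel argument in the proof of Proposition \ref{prop:normalizer}(2).

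Concretely, here is the step order I would follow. \textbf{Step 1:} normalize so $\Phi(0) = 0$, $\Psi(0)=0$ for lifts $\Phi$ of $\phi$ and $\Psi$ of $\phi\inv$. \textbf{Step 2:} for the homogeneous degree-one part $\Phi_0$ of $\Phi$, show $\pi\circ\Phi_0 = \phi_0\circ\pi$ for a holomorphic $\phi_0\colon Z\to Z$ with $\phi_0(z) = \lim_{t\to 0} t\inv\cdot\phi(t\cdot z)$, using the weighted $\C^*$-action on $Z$ and invariant theory to descend $\pi\circ\Phi_0$; likewise $\Psi_0$ lifts $\psi_0$. \textbf{Step 3:} show $\phi_0\circ\psi_0 = \mathrm{id} = \psi_0\circ\phi_0$: the map $\Phi_0\Psi_0$ is the degree-one part of $\Phi\circ\Psi$ which lifts the identity, so its descent is $\phi_0\psi_0$; but also $\Phi\circ\Psi$ has degree-one part equal to the identity only if... here I need that the degree-one part of a composition is the composition of degree-one parts, which holds since both fix $0$; and $\Phi\Psi$ lifting $\mathrm{id}$ does not immediately make $\Phi\Psi$ linear, so instead descend: $\phi_0\psi_0 = \lim t\inv(\phi\phi\inv)(t\cdot) = \mathrm{id}$. \textbf{Step 4:} now $\phi_0 \in \HAut(Z)$ with inverse $\psi_0$, and $\phi_0$ preserves the grading (being homogeneous by construction), so $\phi_0\in\Aut_\ql(Z)$; moreover $\phi_0$ preserves $Z_\pr$ since $\phi$ does and $\phi_0$ permutes strata by being a graded automorphism fixing dimensions. \textbf{Step 5:} apply Proposition \ref{prop:normalizer} to $\phi_0\in\Aut_\ql(Z)$ and its linear lift $\Phi_0\in\End(V)$: parts (1)–(3) give $\Phi_0\in N_{\GL(V)}(G)$ and an automorphism $\sigma$ of $G$. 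The main obstacle I anticipate is Step 3 — pinning down that the two linearizations compose to the identity without presupposing $\phi_0$ invertible — which I expect to handle cleanly via the limit formula $\phi_0\psi_0(z) = \lim_{t\to 0} t\inv\cdot(\phi(\phi\inv(t\cdot z)))$ once deformability of $\phi$ (hence existence of the limit) and of $\phi\inv$ are both in hand; the secondary subtlety is checking that $\pi\circ\Phi_0$ genuinely descends through $\pi$, which reduces to the standard fact that a $G$-invariant homogeneous holomorphic map into $Z$ with the correct weights factors through $\pi$ because $\O(Z)$ embeds in $\Hol(Z)$ and the weighted generators $p_i$ pull back appropriately.
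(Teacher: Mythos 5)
Your overall strategy — defining $\Phi_t(v)=t^{-1}\Phi(tv)$, computing $\phi_0(z)=\pi(\Phi'(0)v)$, using the lift of $\phi^{-1}$ to get that $\phi_0$ is invertible, and then invoking Proposition \ref{prop:normalizer} — is the same as the paper's, and your Steps 2--5 essentially recover it (modulo some unnecessary worrying about circularity: the paper simply observes that $\phi^{-1}$ is also deformable, and invertibility of $\phi_0$ follows).

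However, there is a genuine gap at the very start, in the claim that one may take $\Phi(0)=0$, and this claim is what makes the whole limit computation $t^{-1}\Phi(tv)\to\Phi'(0)v$ converge (if $\Phi(0)\neq 0$, the family diverges as $t\to 0$). First, you write that $\Phi(0)$ lies ``in the closed orbit $\pi^{-1}(\pi(0))$''; but $\pi^{-1}(\pi(0))$ is the entire null cone, not the closed orbit, and a priori a holomorphic lift $\Phi$ can send $0$ to any point of the null cone. Second, your proposed fix — replacing $\Phi$ by $\Phi_g$ as in Remark \ref{rem:conjbyg} — does nothing here, since $\Phi_g(0)=\Phi(g\cdot 0)=\Phi(0)$ for all $g$; moreover that remark is stated for a $\Phi$ already known to be $\sigma$-equivariant and in $\GL(V)$, which is not yet available. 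The paper instead cites Kuttler's Lemma 19, which asserts that a lift $\Phi$ of an automorphism $\phi$ sends closed orbits to closed orbits; applied to the closed orbit $\{0\}$, and using that the only closed orbit in the null cone is $\{0\}$, one gets $\Phi(0)=0$. This is a real ingredient, not a formality, and your argument does not supply a substitute for it. (A secondary, smaller point: in your Step 4 the assertion that $\phi_0$ ``permutes strata by being a graded automorphism fixing dimensions'' is not justified for a merely $2$-principal $V$; the paper's reason is that $\phi$ and $\phi^{-1}$ are stratification preserving, hence so is the limit $\phi_0$.)
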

 
 \begin{proof} By \cite[Lemma 19]{Kuttler} $\Phi$ sends closed orbits to closed orbits. Hence $\Phi(0)=0$. Set  $\Phi_t(v) =t\inv\Phi(tv)$ for $t\in \C^*$, $v\in V$. Then $\Phi_t$ covers $\phi_t$ where $\phi_t(z)=t\inv\cdot\phi(t\cdot z)$, $z\in Z$. 
Let $z=\pi(v)\in Z$. Then
$$
\phi_0(z):=\lim_{t\to 0}t\inv\cdot\phi(t\cdot z)=\pi(\lim_{t\to 0}t\inv\Phi(tv))=\pi(\Phi'(0)(v))
$$
Hence $\phi$ is deformable to $\phi_0$ and $\phi_0$ lifts to $\Phi'(0)$. Since $\phi\inv$ is also deformable, we see that  $\phi_0\in\Aut_\ql(Z)$. Since $\phi$ and $\phi\inv$ are stratification preserving, so is $\phi_0$. By Proposition \ref{prop:normalizer}, $\Phi'(0)\in N_{\GL(V)}(G)$.
 \end{proof}

  \section{Lifting holomorphic isotopies}\label{sec:liftingcomplex}

Now we have to show   lifting in case $\phi$ is deformable and $\phi_0$ lifts to a $\sigma$-equivariant element of $\GL(V)$. We use results of Heinzner \cite{Heinzner91} and Heinzner and Kutzschebauch \cite{HKOka} to obtain a     lift of $\phi_0\inv\phi$ to $\HAut(V)^G$. It follows that   there is a biholomorphic $\sigma$-equivariant lift of $\phi$.

Let $X$ and $Y$ be complex analytic $G$-varieties. Let $K$ be a maximal compact subgroup of $G$.  Let
$\HH(X,Y)$ denote the holomorphic maps from $X$ to $Y$  and let  $\HH(X,Y)^G$ denote the $G$-invariant
ones. Similarly, we have $\HH(X,Y)^K$. Since  $K$ is Zariski dense in $G$  we have the following.

\begin{lemma}\label{k-invars-g-invars} $\HH(X,Y)^G=\HH(X,Y)^K$.
\end{lemma}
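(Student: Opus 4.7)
The plan is to establish both inclusions, with the nontrivial direction resting on the fact that $G$ is the complexification of $K$. The inclusion $\HH(X,Y)^G \subset \HH(X,Y)^K$ is immediate from $K \subset G$.

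For the reverse inclusion, I fix $f \in \HH(X,Y)^K$ and $x \in X$, and consider the two holomorphic maps $h_1, h_2 \colon G \to Y$ given by $h_1(g) = f(g\cdot x)$ and $h_2(g) = g\cdot f(x)$ (interpreting ``$G$-invariance'' as $G$-equivariance between $G$-spaces; if $Y$ carries the trivial action one instead compares $g \mapsto f(g \cdot x)$ to the constant map $f(x)$, and the argument is identical). The $K$-invariance of $f$ says exactly that $h_1|_K = h_2|_K$, and the goal is to upgrade this to $h_1 = h_2$ on all of $G$.

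The key step is analytic continuation from $K$. Because $\lieg = \liek \oplus i\liek$, the subgroup $K$ sits inside $G$ as a totally real submanifold of maximal real dimension $\dim_\C G$, and near each $k_0 \in K$ one can find holomorphic coordinates on $G$ in which $K$ is the standard real form. Composing with local holomorphic charts on $Y$ and applying the classical identity principle for a holomorphic function that vanishes on a maximally totally real submanifold, one obtains $h_1 = h_2$ on the connected component of $G$ containing $k_0$.

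To reach the remaining components I would invoke the standard structural fact that for a complex reductive group with maximal compact subgroup $K$ the inclusion induces a bijection $\pi_0(K) \tosim \pi_0(G)$, so $K$ meets every connected component of $G$; the totally real argument then applies componentwise and gives $h_1 = h_2$ throughout $G$, i.e., $f \in \HH(X,Y)^G$. The only mild obstacle is that $Y$ is an arbitrary (possibly singular) complex analytic variety rather than an affine space, but this is handled by passing to local holomorphic charts on $Y$ and reducing to the classical scalar statement of the identity principle.
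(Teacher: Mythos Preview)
Your argument is correct. The paper's own proof is a single clause: ``Since $K$ is Zariski dense in $G$ we have the following.'' Your proof unpacks this in the analytic category via the identity principle for totally real submanifolds of maximal dimension, which is the right way to make the statement rigorous for holomorphic maps. Note that Zariski density \emph{alone} does not force uniqueness of holomorphic functions (e.g., $\Z$ is Zariski dense in $\C$ but $\sin(\pi z)$ vanishes there), so the paper is implicitly relying on the same totally real structure you make explicit; your version is thus more complete, not merely longer.

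Two small points. First, your aside about interpreting $\HH(X,Y)^G$ as equivariant maps is correct but could be stated once and for all: with the natural action $(g\cdot f)(x)=g\,f(g^{-1}x)$ the $G$-fixed maps are exactly the $G$-equivariant ones, so there is no real case split. Second, the phrase ``local holomorphic charts on $Y$'' is not quite right when $Y$ is singular; the clean fix is to embed $Y$ locally as an analytic subset of some $\C^N$ and apply the scalar identity principle to the coordinate functions of $h_1-h_2$ there.
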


Let $U$ be a $K$-stable open subset of
$X$. We say that $U$ is {\em orbit convex\/} if for every $Z\in i\liek$ and $x\in U$, if
$\exp(Z)x\in U$, then $\exp(tZ)x\in U$ for $0\leq t\leq 1$. 

\begin{proposition}[(\cite{Heinzner91})] \label{prop:Kisenough}Let $U\subset V$ be $K$-invariant and orbit convex  where $V$ is a
$G$-module. Let $Y$ be a complex analytic $G$-variety. Then restriction gives an isomorphism
of $\HH(GU,Y)^G$ with $\HH(U,Y)^K$.
\end{proposition}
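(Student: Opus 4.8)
The plan is to deduce this from the characterization of $G$-orbit structures via a Cartan-type decomposition together with Heinzner's extension theory. The statement to prove is Proposition~\ref{prop:Kisenough}: for $U\subset V$ a $K$-invariant orbit convex open set in a $G$-module $V$ and $Y$ a complex analytic $G$-variety, restriction $\HH(GU,Y)^G\to\HH(U,Y)^K$ is an isomorphism. First I would set up the key geometric input: the polar-type decomposition $G=K\exp(i\liek)$ (from $\lieg=\liek\oplus i\liek$ as real vector spaces), so that every element of $GU$ can be written as $g\cdot u=k\exp(iZ)\cdot u$ with $k\in K$, $Z\in\liek$, $u\in U$. The point is that orbit convexity is exactly the condition that makes the map $\liek\times U\to GU$, $(Z,u)\mapsto \exp(iZ)\cdot u$, well behaved — specifically it guarantees that if $\exp(iZ)u\in U$ then the whole segment $\exp(itZ)u$ lies in $U$, which is what lets a $K$-invariant holomorphic map on $U$ be propagated unambiguously across the $G$-orbits.

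Next I would address injectivity, which is the easy half: if $f\in\HH(GU,Y)^G$ vanishes (i.e. restricts to a fixed reference map) on $U$, then since $U$ is a totally real slice in the appropriate sense and $GU$ is its $G$-saturation, $G$-invariance forces $f$ to be determined on all of $GU$ by its values on $U$; more carefully, $f$ and the putative extension agree on $U$, are both $G$-invariant, hence agree on $GU=G\cdot U$. For surjectivity — the substantive part — given $h\in\HH(U,Y)^K$, I would define $\tilde h$ on $GU$ by $\tilde h(g\cdot u):=g\cdot h(u)$. The content is showing this is (a) well defined and (b) holomorphic. Well-definedness requires: if $g_1u_1=g_2u_2$ with $u_i\in U$, then $g_1 h(u_1)=g_2 h(u_2)$, i.e. $h(u_2)=g_2\inv g_1 h(u_1)$; writing $g_2\inv g_1=k\exp(iZ)$ one reduces via $K$-invariance of $h$ to the case $u_2=\exp(iZ)u_1$, and here orbit convexity says $\exp(itZ)u_1\in U$ for all $t\in[0,1]$, so one can use a connectedness/analytic-continuation argument along this segment — the function $t\mapsto h(\exp(itZ)u_1)$ versus $t\mapsto \exp(itZ)h(u_1)$ agree at $t=0$ and, by the identity theorem applied after complexifying the parameter, agree throughout, giving the relation at $t=1$. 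Holomorphicity of $\tilde h$ then follows because locally near any point of $GU$ one has a holomorphic slice presentation $g\cdot u$ with $g$ and $u$ depending holomorphically (in the real-analytic/CR sense made holomorphic by the $G$-action) on coordinates, and $g\cdot h(u)$ is a composition of holomorphic maps; alternatively one invokes Heinzner's result \cite{Heinzner91} directly in the form that $GU$ carries a complex structure making $U\hookrightarrow GU$ the inclusion of an orbit-convex domain and $GU$ a "universal complexification" relative to which $K$-invariant holomorphic maps extend uniquely to $G$-invariant ones.

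The main obstacle I expect is exactly the holomorphicity across orbits of different types: near points of $U$ whose $K$-orbit (and hence $G$-orbit, by $K$-invariance of the slice data) has higher-dimensional stabilizer, the naive parametrization $(g,u)\mapsto gu$ is not a submersion, so one cannot simply say "$\tilde h$ is holomorphic because it's a composite." The standard way around this — and what I would cite rather than reprove — is Heinzner's construction in \cite{Heinzner91}: one shows $GU$ (a priori only a topological/real-analytic object) admits a unique $G$-invariant complex structure extending that of $U$, characterized by the universal property that holomorphic $G$-maps out of $GU$ correspond to holomorphic $K$-maps out of $U$; orbit convexity is precisely the hypothesis under which this complexification $GU$ embeds as an open $G$-stable subset of $V$ (so no abstract complexification is needed). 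With that cited, the proof reduces to the bookkeeping above: define $\tilde h(gu)=g\,h(u)$, check well-definedness using $G=K\exp(i\liek)$ and orbit convexity to run the segment-continuation argument, note $K$-density (Lemma~\ref{k-invars-g-invars}) to pass freely between $K$- and $G$-invariance, and conclude. I would keep the write-up short, leaning on \cite{Heinzner91} for the complex-analytic heart and giving only the orbit-convexity/well-definedness verification in detail.
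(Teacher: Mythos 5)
The paper gives no proof of this proposition; it simply cites \cite{Heinzner91}, which is also what your proposal ultimately does, so there is no argument in the paper to compare against. Your supplementary sketch is the standard one, but one step as written would not go through. In the well-definedness check you assert that $t\mapsto h(\exp(itZ)u_1)$ and $t\mapsto \exp(itZ)\cdot h(u_1)$ agree at $t=0$ and therefore, after complexifying $t$, agree on $[0,1]$ by the identity theorem. Agreement at a single point does not trigger the identity theorem. What makes the argument run is that for $t$ purely imaginary and small, $\exp(itZ)$ lies in $K$, so $K$-equivariance of $h$ already forces the two maps to agree along a nondegenerate segment of the imaginary $t$-axis; only then does the identity theorem, applied in the open set $\{t\in\C:\exp(itZ)u_1\in U\}$ whose connected component of $0$ contains $[0,1]$ by orbit convexity, deliver agreement at $t=1$. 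Equivalently, since $h$ is holomorphic its differential is $\C$-linear, so the infinitesimal $\liek$-equivariance of $h$ upgrades automatically to $i\liek$-equivariance, and the two curves solve the same first-order ODE with the same initial value. Either way orbit convexity enters exactly where you say, to keep the real segment $[0,1]$ in the domain. For the holomorphicity of the resulting extension across nonprincipal orbits you are right to defer to \cite{Heinzner91}.
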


 Let $V$ be a $G$-module. Choose a $K$-invariant hermitian inner product on $V$. Then the \emph{Kempf-Ness set $\M$\/} of $V$ consists of the points $v\in V$ such that $T_v(Gv)$ is perpendicular to $v$. This set possesses some remarkable properties (see \cite{SchTopAlgQuots}). In particular,  $\M\to Z$ is surjective, proper and the fibers are $K$-orbits. Every closed orbit $Gv$ has a closest point to the origin which is necessarily in $\M$.

\begin{lemma}[(\cite{HKOka})]\label{lem:orbit-convex-exist} Let $V$ be a $G$-module. Then any $K$-stable subset of $\M$ has a neighborhood basis of orbit convex  subsets  $U$ such that $GU$ is $G$-saturated.
\end{lemma}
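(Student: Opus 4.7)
The proof is in the spirit of Kempf--Ness theory and the slice analysis around the moment map zero set $\M$. The central technical tool is the convexity of the norm squared along $\exp(i\liek)$-orbits: for any $v\in V$ and $Y\in\liek$, the function $t\mapsto \|\exp(itY)v\|^2$ is convex in $t\in\R$ (indeed a positive linear combination of real exponentials $e^{2t\lambda_j}$), and $\M$ is exactly the set of points at which the derivative at $t=0$ vanishes for every such $Y$ --- equivalently, where the norm on the orbit $\exp(i\liek)v$ attains its minimum at $v$.

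To produce the neighborhoods, I would proceed as follows. Fix a $K$-stable set $A\subset\M$ and a $K$-invariant open neighborhood $W$ of $A$ in $V$, to be shrunk later. Using the Cartan-type decomposition $G=K\exp(i\liek)$ and the fact that $\M$ meets every closed orbit in a single $K$-orbit, I would parametrize points sufficiently close to $A$ in the form $k\exp(iY)v_0$ with $v_0\in\M$ near $A$, $k\in K$, and $Y\in\liek$ small. A candidate for $U$ is then a sublevel set $\{F<\epsilon\}$ of a $K$-invariant function $F$ built from $\|\cdot\|^2$ together with a $K$-invariant distance to $A$, arranged so that along each path $t\mapsto\exp(itY)x$ the function $F$ is convex. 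Convexity then guarantees that if both endpoints $x$ and $\exp(iY)x$ lie in $U$, the entire segment $\exp(itY)x$, $t\in[0,1]$, lies in $U$; this is precisely orbit convexity.

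For the $G$-saturation of $GU$, I would exploit that $\pi|_\M\colon\M\to Z$ is a proper surjection with $K$-orbit fibers. Given the orbit convex $K$-stable $U$ produced above, the image $\pi(U)\subset Z$ is open, and I would check that every closed orbit lying over $\pi(U)$ already has its Kempf--Ness $K$-orbit inside $U$. By properness of $\pi|_\M$ and continuity of the assignment $z\mapsto\M\cap\pi\inv(z)$, this can be arranged by shrinking $W$ in advance, yielding $\pi\inv(\pi(U))\subset GU$ and hence $GU=\pi\inv(\pi(U))$.

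The hard part is arranging orbit convexity and $G$-saturation simultaneously while still forming a neighborhood basis. Orbit convexity forces $U$ to be cut out by functions that are convex along directions in $\exp(i\liek)$, but $\|\cdot\|^2$ alone yields only the single global orbit convex family of sets $\{\|v\|^2<c\}$. The technical heart of the argument is therefore to localize near $A$ without destroying this convexity, achieved via the slice-type decomposition around $\M$ combined with a careful choice of $K$-invariant plurisubharmonic data.
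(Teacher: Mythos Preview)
The paper does not supply its own proof of this lemma; it is quoted directly from Heinzner--Kutzschebauch \cite{HKOka} and used as a black box. So there is nothing in the paper to compare your argument against.

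That said, your plan is a reasonable outline of the ideas behind the original result. The convexity of $t\mapsto\|\exp(itY)v\|^2$ is indeed the engine of orbit convexity, and the properness of $\pi|_\M$ is what makes $G$-saturation work. However, as you yourself flag at the end, what you have written is a \emph{plan}, not a proof: the crucial step---constructing a $K$-invariant function $F$ that is simultaneously convex along every $\exp(i\liek)$-direction \emph{and} localizes near an arbitrary $K$-stable $A\subset\M$---is only asserted, not carried out. The global norm-squared gives convexity but no localization; adding a ``distance to $A$'' term will in general destroy convexity along $i\liek$. The actual argument in \cite{HKOka} handles this via a careful slice construction near $\M$ (their local model and the associated Stein neighborhoods), and filling in that step is the real content of the lemma. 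If you want a self-contained proof rather than a citation, that is where the work lies.
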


The following result is a holomorphic analogue of the isotopy lifting theorem of \cite{SchLifting}.

 \begin{theorem}\label{thm:isolifting} Assume that $V$ is admissible, $3$-principal or $2$-principal and orthogonal.  Assume that  we have a holomorphic family $\phi_t$ of elements of $\HAut(Z)$, $t\in\C$, where $\phi_0$ is the identity. Then there are $\Phi_t\in\HAut(V)^G$, $t\in [0,1]$, such that $\Phi_t$ lifts $\phi_t$ for each $t\in [0,1]$.
\end{theorem}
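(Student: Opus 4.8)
The plan is to differentiate the isotopy, lift the resulting time‑dependent holomorphic vector field from $Z$ to $V$, and integrate it; the only substantial point is completeness of the lifted flow over the parameter interval $[0,1]$. After dividing out the ineffective part of the action we may assume $G\subset\GL(V)$, so that $2$‑principality — which holds in all three cases of the hypothesis — forces trivial principal isotropy and makes $\pi\colon V_\pr\to Z_\pr$ an analytic principal $G$‑bundle. Differentiating, $\xi_t:=\bigl(\partial_s\phi_s|_{s=t}\bigr)\circ\phi_t^{-1}$ is a holomorphic vector field on $Z$ (a holomorphic derivation of $\Hol(Z)$) depending holomorphically on $t$, and $\phi_t$ is exactly its flow from time $0$; this flow is defined on all of $Z$ for every $t$.

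Next I would lift $\xi_t$ to $V$. Under the hypotheses $\pi_*\colon\Diff^1(V)^G\to\Diff^1(Z)$ is surjective by \cite[Theorem~1.3]{SchVectorFields}; since $\pi_*$ respects the splitting $\Diff^1=(\text{functions})\oplus(\text{vector fields})$ and is already onto functions, it carries the $G$‑invariant algebraic vector fields on $V$ onto the algebraic vector fields on $Z$. Fixing finitely many $G$‑invariant algebraic vector fields $P_1,\dots,P_m$ on $V$ whose images $Q_i:=\pi_*P_i$ generate the $\O(Z)$‑module of vector fields on $Z$, every holomorphic $\xi=\sum_i f_iQ_i$ with $f_i\in\Hol(Z)$ lifts to the $G$‑invariant holomorphic vector field $\sum_i(\pi^*f_i)P_i$; here I use flatness of the analytic structure sheaf over the algebraic one, exactly as in the passage to $\HDiff^k(V)^G\to\HDiff^k(Z)$ in \S\ref{sec:liftableautoms}. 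Choosing the coefficients of $\xi_t$ holomorphically (or at least continuously) in $t$, we obtain a $G$‑invariant holomorphic vector field $\eta_t$ on $V$ with $\pi_*\eta_t=\xi_t$.

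Let $F_t$ be the flow of $\eta_t$ from time $0$. Where defined, $F_t$ is a $G$‑equivariant local biholomorphism with $\pi\circ F_t=\phi_t\circ\pi$. I claim $F_t$ is defined on all of $V_\pr$ for $t\in[0,1]$. Since $\phi_t$ preserves $Z_\pr$ (Theorem~\ref{thm:stratapreserve}), a trajectory of $\eta_t$ starting in $V_\pr$ stays in $V_\pr$; fixing $v\in V_\pr$, set $c(t)=\phi_t(\pi(v))$, a compact holomorphic arc in $Z_\pr$, pull the principal $G$‑bundle $V_\pr\to Z_\pr$ back along $c$, and trivialize it over $[0,1]$. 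In this trivialization $F_t(v)$ lies over $c(t)$, so it is $(t,g(t))$ for a path $g(t)$ in $G$ with $g(0)=e$; because $\eta_t$ is $G$‑invariant, its vertical part along $c$ is a time‑dependent right‑invariant vector field on $G$, so $g$ satisfies a linear equation $\dot g=B_tg$ with $B_t\in\lieg$ depending continuously on $t\in[0,1]$ — and such an equation has a solution on all of $[0,1]$. Running the flow backward gives $F_t^{-1}$ likewise on all of $V_\pr$, so $F_t$ is a biholomorphism of $V_\pr$ lifting $\phi_t|_{Z_\pr}$. Finally $V_\pr$ is the complement of a subvariety of codimension $\geq 2$ in the smooth variety $V$, so each coordinate of $F_t$ and of $F_t^{-1}$ extends holomorphically across it; the extensions are mutually inverse (being so on the dense set $V_\pr$), hence $F_t$ extends to $\Phi_t\in\HAut(V)$, and $G$‑invariance together with $\pi\circ\Phi_t=\phi_t\circ\pi$, holding on $V_\pr$, holds on $V$. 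Thus $\Phi_t\in\HAut(V)^G$ lifts $\phi_t$.

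The main obstacle is the completeness claim: nothing a priori prevents the flow of a holomorphic vector field from running off to infinity in finite time, and the fibers of $\pi$ are noncompact. What makes it work is that $G$‑invariance of $\eta_t$ reduces the fiberwise evolution, over a fixed compact arc in $Z_\pr$, to a linear matrix ODE on the group $G$, which is solvable for all time; this is the holomorphic analogue of the completeness step in the smooth isotopy‑lifting theorem of \cite{SchLifting}, and one may equally organize this non‑escape using the orbit‑convex $G$‑saturated neighbourhoods of Lemma~\ref{lem:orbit-convex-exist} and the reduction $\HH(GU,Y)^G\simeq\HH(U,Y)^K$ of Proposition~\ref{prop:Kisenough}. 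Everything else — the differentiation, the order‑one lifting, which is precisely \cite[Theorem~1.3]{SchVectorFields}, and the removable‑singularity extension — is routine.
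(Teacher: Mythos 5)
Your proof is correct, and it takes a genuinely different route from the paper's at the one nontrivial step, which is completeness of the lifted flow on the interval $[0,1]$. Both arguments begin identically: differentiate the isotopy to get a time-dependent holomorphic vector field on $Z$, then lift it to a $G$-invariant holomorphic vector field on $V$ using the surjectivity of $\pi_*$ on order-one operators (the paper invokes \cite[Theorem~1.3, Remark~2.4]{SchVectorFields} to get a single $G$-invariant field $B$ on $\C\times V$; your generators-and-coefficients argument works but the holomorphic dependence on $t$ is cleanest if one packages $\xi_t$ as a single vector field on the Stein space $\C\times Z$ and lifts it directly, which is in effect what the cited results give). Where you diverge is in proving that the flow does not escape to infinity in finite time. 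The paper works on all of $V$ at once: it takes an arbitrary $v_0$ on a closed orbit, puts it in the Kempf--Ness set $\M$, observes that the preimage in $\M$ of the compact arc $\{\phi_t(\pi(v_0))\}$ is compact, chooses an orbit-convex $G$-saturated neighborhood of it (Lemma~\ref{lem:orbit-convex-exist}), uses Heinzner's theorem (Proposition~\ref{prop:Kisenough}) to promote the $K$-invariant local flow to a $G$-invariant one on the $G$-saturation, and iterates in steps of length $1/n$. You instead restrict to $V_\pr$, use the principal $G$-bundle structure $V_\pr\to Z_\pr$ (valid after dividing out the ineffective kernel, which is legitimate here), pull the bundle back along the compact arc $c(t)=\phi_t(\pi(v))$, trivialize over $[0,1]$, and show that the fiberwise dynamics is a time-dependent left-invariant ODE $\dot g = g X(t)$ on $G$ (you wrote it as right-invariant $\dot g=B_tg$, but either convention gives global solvability), whose solution exists on all of $[0,1]$; then you extend the resulting biholomorphism from $V_\pr$ across the codimension-$\geq 2$ complement by the second Riemann removable singularity theorem. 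Your route is more elementary---it needs neither the Kempf--Ness set nor the Heinzner/Heinzner--Kutzschebauch machinery---and exploits $2$-principality more directly; the paper's route avoids the Hartogs extension step by establishing the flow on $V$ itself. Both are sound.
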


\begin{proof}
There is a holomorphic (complex) time dependent vector field $A(t,z)$ such that the integral of $A$ from $0$ to $t$ gives back $\phi_t$.   By \cite[Theorem 1.3, Remark 2.4]{SchVectorFields}, there is a holomorphic $G$-invariant vector field $B$ on $\C\times V$ which covers $A$. We show that  the integral $\Phi_t$ of $B$ exists for $0\leq t\leq 1$, hence  $\Phi_t$  lifts $\phi_t$, $t\in[0,1]$. 

Let $v_0\in V$ such that $Gv_0$ is closed. We may assume that $v_0\in\M$. Let $B_a$ denote $(t,v)\mapsto B(a+t,v)$, $a\in[0,1]$. Let $L$ denote the inverse image in $\M$ of $\phi_t(\pi(v_0))$, $t\in[0,1]$.  Then $L$ is compact  and we can integrate $B_a$ on $L$ for $t\in[0, 2\epsilon)$ for some $\epsilon>0$ and any $a\in[0,1]$. Choose an orbit convex neighborhood $U$ of $L$, where $GU$ is $G$-saturated, such  that we can integrate any $B_a$ along $[0,\epsilon]$ for $v\in U$. The flow  $\Phi^a_t$ of $B_a$ on $[0,\epsilon]\times U$ is $K$-invariant and holomorphic for any fixed $t$. Then $\Phi^a_t$ extends to a $G$-invariant flow on $[0,\epsilon]\times GU$  by Proposition \ref{prop:Kisenough}  and the extension is clearly the flow of $B_a$ on $[0,\epsilon]\times GU$.   Let $1/n<\epsilon$.
  Then $v_t:=\Phi^0_t(v_0)$ covers $\phi_t(\pi(v_0))$, $t\in[0,1/n]$, and it lies in $GU$ since $GU$ is $G$-saturated.
 Thus we can apply $\Phi^{1/n}_t$ to $v_{1/n}$ for $t\in[0,1/n]$ and we end up with $v_{2/n}\in GU$ which lies above $\phi_{2/n}(\pi(v_0))$. Thus the flow $\Phi_t$ of $B$ exists in a neighborhood of $v_0$ for $t\in[0,2/n]$. Continuing inductively we see that the flow $\Phi_t$ exists in a neighborhood of $v_0$ for $t\in[0,1]$.  But $v_0$ is arbitrary in $\M$ so that $\Phi_t$, $t\in[0,1]$, exists in an orbit convex neighborhood $U$ of $\M$ such that $GU$ is $G$-saturated. Then $\Phi_t$ extends to $GU=V$ and $\Phi_t$ is a lift of $\phi_t$ to $\HAut(V)^G$.
 \end{proof}

\begin{proof}[Proof of Theorem  \ref{thm:intro:lifting}] Use Theorem    \ref{thm:isolifting} and  Propositions \ref{prop:normalizer},   \ref{prop:autql0} and \ref{prop:liftsimpliesdiff}.
\end{proof}

 \section{Some examples}\label{sec:examples}
 
\begin{lemma}\label{lem:selfdual}
Let $V$ be an   $\SL_n$-module, $n\geq 3$, and let $\Phi\in\GL(V)$ such that $\Phi$ normalizes the image $H$ of $\SL_n$ in $\GL(V)$. If $V$ is not self-dual, then $\Phi$ induces an inner automorphism of $H$.
\end{lemma}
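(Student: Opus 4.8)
The plan is to use the structure of automorphisms of $H = \SL_n/(\text{center})$ together with the fact that conjugation by $\Phi$ must carry the $G$-module $V$ to itself (up to the twist by $\sigma := \conj_\Phi$). Recall that $\Out(\SL_n)$ for $n \geq 3$ is $\Z/2\Z$, generated by the transpose-inverse automorphism $\theta(g) = (g^{-1})^T$, which at the level of representations sends an irreducible $W$ to its dual $W^*$. So the first step is: since $\Phi$ normalizes $H$, it induces an automorphism $\sigma$ of $H$, and hence (pulling back) a new $H$-module structure on $V$, call it $V^\sigma$. The equation $\Phi(gv) = \sigma(g)\Phi(v)$ says precisely that $\Phi \colon V \to V^\sigma$ is an isomorphism of $H$-modules.

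Next I would decompose $V$ into isotypic components for $\SL_n$. The automorphism $\sigma$ is either inner, in which case we are done, or it differs from the identity in $\Out(\SL_n)$ by $\theta$; in the latter case $V^\sigma \cong V^*$ as $H$-modules (dualizing each irreducible factor). Therefore if $\sigma$ is not inner, we obtain $V \cong V^*$, i.e. $V$ is self-dual, contradicting the hypothesis. Hence $\sigma$ must be inner, which is the conclusion. The one point requiring a little care is the passage "$\sigma$ non-inner $\Rightarrow$ $V^\sigma \cong V^*$": one should note that any automorphism of $H$ is the composition of an inner one with (at most) $\theta$, that inner automorphisms leave the isomorphism class of every $H$-module unchanged, and that $\theta$ acts on the irreducible $\SL_n$-modules by $W \mapsto W^*$; since taking duals commutes with direct sums, $V^\theta \cong V^*$ for any $V$. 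Combining, $V^\sigma \cong V$ if $\sigma$ is inner and $V^\sigma \cong V^*$ otherwise.

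The main (and essentially only) obstacle is a bookkeeping subtlety rather than a conceptual one: $H$ is the image of $\SL_n$ in $\GL(V)$, which need not be all of $\SL_n$ — it could be a quotient by a subgroup of the center. One must check that $\Out(H) \to \Out(\SL_n)$ behaves well, i.e. that an automorphism of $H$ still corresponds, via pullback of representations of the covering group $\SL_n$, to either the identity or $\theta$ on isomorphism classes of $\SL_n$-modules that factor through $H$. This is fine because any automorphism of $H$ lifts to an automorphism of the universal cover $\SL_n$ (as $n \geq 3$ so $\SL_n$ is simply connected and the center is characteristic), and the analysis above applies to that lift; the induced map on $H$-modules is what we need. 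Once this is in hand the argument is immediate.
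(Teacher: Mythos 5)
Your proof is correct and follows essentially the same route as the paper: both reduce to the fact that $\Out(\SL_n) \cong \Z/2$ for $n \geq 3$, with the nontrivial class acting on representations by dualization, so that a $\Phi$ inducing an outer automorphism would force $V \cong V^*$. The paper phrases the dualization step in terms of highest weight vectors while you phrase it as $V^\sigma \cong V^*$, but this is the same observation; the only (harmless) slip is that $\SL_n$ is simply connected for all $n\geq 2$, not just $n\geq 3$ -- the latter restriction is what makes $\Out$ nontrivial and non-self-dual modules exist.
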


\begin{proof}
We may assume that $\Phi$ induces a diagram automorphism. If the automorphism is outer, then it sends every highest weight vector of $V$ to a highest weight vector of  $V^*$. Since $V$ is not self-dual,  the highest weights of $V^*$ are not all weights of  $V$. Hence $\Phi$ must induce an inner automorphism of $H$.
\end{proof}

Here are the  examples which arise in Theorem \ref{thm:citlifting} and  Remark \ref{rem:nolift}.

\begin{example} \label{ex:g2} (See \cite{SchCIT} and \cite[\S 5]{SchG2}.) Let $n\geq 4$. Then 
 $(V,G)=(n\C^7,\GG_2)$   is admissible. Since $V\simeq (\C^n)^*\otimes\C^7$ has a $\GL_n$-action commuting with the $\GG_2$-action,  $\GL_n$ acts on the invariants. We have generating  invariants $\alpha_{ij}$, $\beta_{ijk}$ and $\gamma_{ijkl}$ whose spans transform by the $\GL_n$-representations $\Sym^2(\C^n)$, $\wedge^3(\C^n)$ and $\wedge^4(\C^n)$, respectively. We use the notation $\psi_q^p$ or $\psi_q^p(\C^n)$ to denote the Cartan component of $\Sym^p(\wedge^q(\C^n))$.  

There are three ways to get a highest weight element of   $\psi^2_4(\C^n)$.  There is  $(\gamma_{1234})^2$, the determinant $\det(\alpha_{ij})_{i,j=1}^4$ and 
$$
\sum_{i,j=1}^4(-1)^{i+j}\alpha_{ij}\hat\beta_i\hat\beta_j.
$$
Here $\hat\beta_i=\beta_{klm}$ where $\{i,k,l,m\}=\{1,2,3,4\}$ and $k<l<m$. We will denote these elements as $\lambda(\psi_4^2(\gamma^2))$, $\lambda(\psi_4^2(\alpha^4))$ and $\lambda(\psi_4^2(\alpha\beta^2))$, respectively. The notation is meant to denote  a highest weight element of $\psi_4^2$ which is of the indicated degrees in the invariants.

From    \cite[\S 5]{SchG2}  there is a relation
\begin{equation}\label{eq:g2:eq1}
\lambda(\psi_4^2(\gamma^2))+\lambda(\psi_4^2(\alpha\beta^2))-\lambda(\psi_4^2(\alpha^4))=0.
\end{equation}
Since this is a relation of highest weight vectors, we actually have a whole $\GL_n$-representation of relations. Now assume that $n=4$. Then the  relations are generated by the one above (we have a hypersurface). Let $\phi$ act on $\O(Z)$ such that it fixes  the $\alpha_{ij}$  and $\beta_{ijk}$ and sends $\gamma_{1234}$ to its negative. If $\phi$ lifts to a linear $\Phi$, then we can assume that $\Phi$ centralizes $\GG_2$ since $\GG_2$ has no outer automorphisms. Hence $\Phi\in\GL(V)^G=\GL_4$. But $\Phi$ acts trivially on $\Sym^2(\C^4)$, hence it is plus or minus the identity. Then $\Phi$ fixes $\gamma_{1234}$, showing that $\phi$ does not lift.

Suppose that $n>4$. Then  we pick up more relations. For example, we have a relation
\begin{equation}\label{eq:g2:eq2}
\lambda(\psi_1\psi_5(\beta^2))+\lambda(\psi_1\psi_5(\alpha\gamma))=0.
\end{equation}
 Let $\phi\in\Aut_\ql(Z)$. The action of $\phi$ on $\Sym^2(\C^n)$ has to normalize the action of $\SL_n$ since $\SL_n$ times the scalars is   $\Aut_\ql(Z)^0$. By Lemma \ref{lem:selfdual}, $\phi$ induces an inner automorphism of $\SL_n$. Thus, modulo the image of an element of $\SL_n$ acting on $n\C^7$, we can assume that $\phi$ centralizes the action of $\SL_n$, hence $\phi$ acts as a scalar on each of the types of invariants. Changing $\phi$ by the image of a scalar in $\GL_n$,  we   can assume that $\phi$ acts trivially on $\Sym^2(\C^n)$.  The  relation \eqref{eq:g2:eq1} then shows that  $\phi$ fixes each $\beta_{ijk}$ or sends each $\beta_{ijk}$ to its negative. 
 In the latter case  we can cover this  by choosing $-I\in\GL_n$, so we may assume that $\phi$ fixes the $\beta_{ijk}$. Then \eqref{eq:g2:eq2} shows that $\phi$ acts trivially on the $\gamma_{ijkl}$. Hence we have a lift of $\phi$.
\end{example}

\begin{example} \label{ex:b3} (See \cite[\S 5]{SchG2}.)
Let $(V,G)=(n\C^8,\BB_3)$. Then $V$ is admissible for $n\geq 5$.  The invariants are generated by inner products $\delta_{ij}$ and skew symmetric functions $\epsilon_{ijkl}$ where $\epsilon_{1234}\in\wedge^4(\C^8)^{\BB_3}\subset \Sym^4(4\C^8)^{\BB_3}$. For $n=5$ we have a single generating relation
\begin{equation}\label{eq:b3:eq1}
\lambda(\psi_5^2(\delta^5))=\lambda(\psi_5^2(\delta\epsilon^2)).
\end{equation}
As in the $\GG_2$ case, the automorphism of $Z$ which  fixes the $\delta_{ij}$ and sends each $\epsilon_{ijkl}$ to its negative does not lift.  Suppose that  $n\geq 6$. Then we pick up the relation
\begin{equation}
\lambda(\psi_2\psi_6(\epsilon^2))+\lambda(\psi_2\psi_6(\delta^2\epsilon))=0.
\end{equation}
As in the $\GG_2$ case, the relations allow  one to show that any $\phi$ lifts.
\end{example}

\begin{example}
Let $(V,G)=(2n\C^n,\SL_n)$, $n\geq 2$.  Then $V$ is admissible. The generators are determinants of $n$ vectors and transform by the representation $W:=\wedge^n(\C^{2n})$ of $\GL_{2n}$. There is a natural $\SL_{2n}$-invariant bilinear form $\langle\, ,\rangle$ on $W$ which sends $\alpha$, $\beta$ to $\alpha\wedge\beta\in\wedge^{2n}(\C^{2n})\simeq\C$.  
The bilinear form corresponds to an isomorphism $\psi$ of $W$ with $W^*$ where $\psi(\alpha)(\beta)=\langle \alpha,\beta\rangle$. For the standard basis $e_i$ for $\C^{2n}$   let $\tau\colon \C^{2n}\to (\C^{2n})^*$ send $e_i$ to $e_i^*$ for each $i$. Recalling that $g\in\SL_{2n}$ acts on $(\C^{2n})^*$ by inverse transpose, which we denote by $\sigma(g)$, we have that $\tau(gv)=\sigma(g)\tau(v)$ for $v\in\C^{2n}$.  
  We have an induced mapping (also called $\tau$) from $W=\wedge^n(\C^{2n})$ to $W^*=\wedge^n((\C^{2n})^*)$ and composing with $\psi\inv$ we have an isomorphism $\phi$ of $W$ which is $\sigma$-equivariant. It is easy to see that $\phi$ preserves $Z$ which is    the  set of decomposable vectors of $W$. If $\phi$ lifts, then it lifts to   $\Phi\in\GL((\C^{2n})^*\otimes\C^n)$ where $\Phi$ normalizes $\SL_n$. By Lemma \ref{lem:selfdual}, we may assume that $\Phi$ centralizes $\SL_n$, hence $\Phi\in\GL_{2n}$. Thus conjugation by $\Phi$ induces an inner automorphism of  $\SL_{2n}$ and $\Phi$ cannot cover $\phi$ since conjugation by $\phi$ induces $\sigma$.  
  \end{example}

\begin{example}
Let $(V,G)=(k\C^n,\SO_n)$, $k\geq n\geq 3$. Then $V$ is admissible. We have inner product invariants $\alpha_{ij}$ whose span transforms as $\Sym^2(\C^k)$ under the  action of $\GL_k$ and determinant invariants $\beta_{i_1,\dots,i_n}$ whose span transforms as $\wedge^n(\C^k)$. Let $\phi$ be quasilinear. As before, we can assume that the action of $\phi$ centralizes that of $\SL_k$ so that $\phi$ acts on the invariants $\alpha_{ij}$ and $\beta_{i_1,\dots,i_n}$ by scalars. We may reduce to the case that $\phi$ acts as the identity on $S^2(\C^k)$. We have the  highest weight relation 
\begin{equation}\label{eq:son:eq1}
\lambda(\psi_n^2(\alpha^n))-\lambda(\psi_n^2(\beta^2))=0.
\end{equation}
Now \eqref{eq:son:eq1}
 implies that $\phi$ acts on the $\beta_{i_1,\dots,i_n}$ as    $\pm 1$. Consider an element of $\Orth_n\setminus\SO_n$. It fixes the $\alpha_{ij}$  and sends the $\beta_{i_1,\dots,i_n}$  to their negatives. Hence $\phi$ lifts.
\end{example}
\begin{example}
Let $(V,G)=(k\C^n,\Orth_n)$ where $k>n$. Then $V$ is admissible. We only have generators $\alpha_{ij}$ and arguing as above one sees that $\Aut_\ql(Z)$ is the image of $\GL_k$.
\end{example}

\begin{example}
Let $(V,G)=(k\C^{2n},\Sp_{2n})$, $k\geq 2n+2$ and $n\geq 2$. Then $V$ is admissible. The generators $\alpha_{ij}$ are skew in $i$ and $j$ and transform by the representation  $\wedge^2(\C^k)$ of $\GL_k$. Since $k\geq 6$ one can show as above that $\Aut_\ql(Z)$ is the image of $\GL_k$. 
\end{example}

\begin{example}
Let $(V,G)=(k\C^n+\ell(\C^n)^*,\SL_n)$ where $k+\ell\geq 2n$ and $k+\ell>2n$ if $k\ell=0$. Then $V$ is admissible. We leave the cases where $k\ell=0$ or $k<n$ or $\ell<n$ to the reader and now consider the case where $k$, $\ell\geq n$. We use symbols $\psi$ and ${\overline \psi}$ to differentiate between the representations of $\GL_k$ and $\GL_\ell$. Now $\GL_k\times\GL_\ell$ acts on the quotient $Z\subset\C^k\otimes\C^\ell\oplus \wedge^n(\C^k)\oplus\wedge^n(\C^\ell)$ where $\O(V)^G$ has corresponding generators $\alpha_{ij}$ (contractions), $\beta_{i_1,\dots,i_n}$ (determinants of elements of $k\C^n$) and $\gamma_{j_1,\dots,j_n}$ (determinants of elements of $\ell(\C^n)^*$). If $k\neq\ell$ then by Lemma \ref{lem:selfdual}  any $\phi\in\Aut_\ql(Z)$ induces an inner automorphism of  $\SL_k\times\SL_\ell$.  If $k=\ell$ we have a $\phi$ which interchanges the copies of $\SL_k=\SL_\ell$. But this automorphism obviously lifts. Thus we may assume that $\phi$ induces an inner automorphism and changing $\phi$ by an element of $\SL_k\times\SL_\ell$ we may assume that $\phi$ commutes with $\GL_k\times\GL_\ell$. Then $\phi$ acts as a scalar on our  three representations spaces. We have a relation 
\begin{equation}\label{eqsl:eq1}
\lambda(\psi_n\overline{\psi}_n(\beta\gamma))-\lambda(\psi_n\overline{\psi}_n(\alpha^n))=0.
\end{equation}
This shows that the scalars $a$, $b$ and $c$ on the $\alpha_{ij}$, etc.\ satisfy the relation $a^n=bc$. Let $b'$ and $c'$ be $n$th roots of $b$ and $c$. Let $b'$ (resp.\ $c'$) act by multiplication on $\C^k$ (resp.\ $\C^\ell$). Then we get the desired action on the invariants of type $\beta$ and $\gamma$ and we act by $a:=b'c'$ on the $\alpha_{ij}$ where $a^n=bc$. But $b'c'$ is an arbitrary $n$th root of $bc$. Hence $\phi$ lifts.
\end{example}

\begin{example}
Let $(V,G)=(k\C^n+\ell(\C^n)^*,\GL_n)$, $k$, $\ell>n$. Then the same ideas as for $\SL_n$ show that any $\phi\in\Aut_\ql(Z)$ lifts.
\end{example}

We have handled all the cases considered in Theorem \ref{thm:citlifting} and Remark \ref{rem:nolift}. Without burdening the reader with any more calculations we state the following proposition without proof. We denote by $R_j$ the   $\SL_2$-module  $S^j(\C^2)$.   

\begin{proposition}
Let $V$ be a $G$-module, $V^G=0$,  where $G=\SL_2$. We assume that $V\neq 4R_1$.
\begin{enumerate}
\item Suppose that  $Z$ is a hypersurface singularity. Then any $\phi\in\Aut_\ql(Z)$ lifts. 
\item Suppose that $V=2R_3$ (the unique case where $Z$ is a complete intersection defined by 2 equations). Then any $\phi\in\Aut_\ql(Z)$ lifts.
\end{enumerate}
 \end{proposition}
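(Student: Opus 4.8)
The plan is to imitate the case analysis of Section~\ref{sec:examples}. Write $V=\bigoplus_i\C^{a_i}\otimes R_{n_i}$ with the $n_i$ distinct and positive; since $V^G=0$ there is no summand $R_0$. By Schur's lemma $\GL(V)^{\SL_2}=\prod_i\GL_{a_i}$, and since $\SL_2$ has no outer automorphisms this, together with the image of $\SL_2$ in $\GL(V)$, is the full normalizer $N_{\GL(V)}(\SL_2)$. In addition $\GL_2$ acts on each binary form factor $R_{n_i}=S^{n_i}(\C^2)$, commuting with $\prod_i\GL_{a_i}$ and acting on $\O(V)^{\SL_2}$ through a character; its image in $\GL(V)$ lies in $N_{\GL(V)}(\SL_2)$ but is handy for producing lifts of ``sign'' automorphisms. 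Given $\phi\in\Aut_\ql(Z)$, it fixes $\pi(0)$ and preserves the principal (open) stratum, so, the modules at hand being $2$-principal, any linear lift of $\phi$ must lie in $N_{\GL(V)}(\SL_2)$ by Proposition~\ref{prop:normalizer}. Thus it is enough to show that $N_{\GL(V)}(\SL_2)\to\Aut_\ql(Z)$ is onto, and since the $\SL_2$-modules with $V^G=0$ whose quotient is a hypersurface (resp.\ a complete intersection defined by two equations) form a short classical list --- the latter being $2R_3$ alone, the former including $R_5$ and $R_6$ but not $4R_1$ --- this is a finite verification.

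For the hypersurface case (1) the mechanism is the following. In each relevant case the single defining relation of $\O(V)^{\SL_2}$ can be written $y_m^2=P(y_1,\dots,y_{m-1})$, where $y_1,\dots,y_m$ are homogeneous generators with $\deg y_1\le\cdots\le\deg y_m$, the $y_1,\dots,y_{m-1}$ are algebraically independent, and $Z\to\Spec\C[y_1,\dots,y_{m-1}]$ is the resulting double cover branched over $\{P=0\}$. This is visible from the classical syzygies --- for $R_5$ it is $I_{18}^2=P(I_4,I_8,I_{12})$, for $R_6$ it is $I_{15}^2=P(I_2,I_4,I_6,I_{10})$, and similarly in the reducible cases the top generator comes from a ``covariant-squared'' syzygy --- and it is precisely here that $4R_1$, whose relation is the Pl\"ucker quadric, fails to be of this shape and must be excluded. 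Now for $\phi\in\Aut_\ql(Z)$: since $\deg y_i<\deg y_m$ for $i<m$, the grading forces $\phi(y_i)\in\C[y_1,\dots,y_{m-1}]$ for $i<m$, so $\phi$ restricts to a graded automorphism $\psi$ of $\C[y_1,\dots,y_{m-1}]$; writing $\phi(y_m)=c\,y_m+q$ with $c\in\C^*$, $q\in\C[y_1,\dots,y_{m-1}]$ and comparing the parts linear in $y_m$ in the identity $\phi(y_m)^2=\psi(P)$ gives $q=0$, hence $\phi(y_m)=c\,y_m$ and $\psi(P)=c^2P$. Thus $\Aut_\ql(Z)$ is identified with the group of pairs $(\psi,c)$, $\psi$ a graded automorphism of $\C[y_1,\dots,y_{m-1}]$ with $\psi^*P=c^2P$; one then checks, case by case, that $c$ is realized by a scalar or an element of $\GL_2$ and $\psi$ by an element of $\prod_i\GL_{a_i}$ together with a scalar, i.e.\ that $P$ has no graded symmetry outside the image of $N_{\GL(V)}(\SL_2)$.

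The complete intersection case (2), $V=2R_3$, is handled by the same scheme with the two relations in place of one. Here $\GL(V)^{\SL_2}=\GL_2$ acts on the multiplicity space and, together with the $\GL_2$ acting on the cubic variables, supplies plenty of lifts; the seven generators of $\O(2R_3)^{\SL_2}$ split into $\GL_2$-subrepresentations (the two discriminants and the mixed invariants), the two relations are again of double-cover type in the top-degree generators, and the analysis of $\Aut_\ql(Z)$ reduces, exactly as in the examples of Section~\ref{sec:examples}, to matching the induced action on generators with an element of the normalizer.

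The main difficulty is the per-case bookkeeping: for each module on the finite list one must write down the generators and defining relation(s) of $\O(V)^{\SL_2}$, confirm the double-cover shape, and then prove the rigidity statement that the branch polynomial $P$ (resp.\ the pair of relations for $2R_3$) admits no graded automorphism outside the image of $N_{\GL(V)}(\SL_2)$. For the irreducible modules $R_5,R_6$ this rigidity is classical; the more delicate cases are the reducible modules, where the generators carry nontrivial $\prod_i\GL_{a_i}$-actions and one must argue, as in the examples of Section~\ref{sec:examples} following Example~\ref{ex:g2}, that after absorbing an element of $\prod_i\GL_{a_i}$ the residual freedom on each type of invariant is exhausted by scalars and by $\GL_2$.
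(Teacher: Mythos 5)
The paper states this proposition \emph{without proof} (``Without burdening the reader with any more calculations we state the following proposition without proof''), so there is no argument in the source to compare yours against; the evaluation is of your proposal on its own terms.

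Your reduction is in the spirit of Section~\ref{sec:examples}: by Proposition~\ref{prop:normalizer}, using $2$-principality and Theorem~\ref{thm:stratapreserve}, the question becomes whether $N_{\GL(V)}(\SL_2)=\bigl(\prod_i\GL_{a_i}\bigr)\cdot\SL_2$ surjects onto $\Aut_\ql(Z)$, and the lever you use to decide this is the defining relation(s). The structural observation --- that if the single relation is $y_m^2=P(y_1,\dots,y_{m-1})$ with $y_m$ the unique generator of top degree, then any $\phi\in\Aut_\ql(Z)$ restricts to a graded automorphism $\psi$ of $\C[y_1,\dots,y_{m-1}]$ and sends $y_m$ to $c\,y_m$ with $\psi^*P=c^2P$ --- is correct, and it correctly diagnoses why $4R_1$ (the Pl\"ucker quadric, six generators of equal degree) must be excluded.

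However, as it stands this is a plan, not a proof; the load-bearing steps are all deferred. You never write down (i) the finite list of $\SL_2$-modules with $V^G=0$ and hypersurface quotient, nor do you justify the uniqueness of $2R_3$ among two-equation complete intersections; (ii) the assertion that in each reducible hypersurface case the relation is a covariant-squared syzygy with a unique top-degree generator is cited only for $R_5$ and $R_6$ and left as ``similarly'' for the rest, even though this is exactly the structural hypothesis your argument needs and exactly what fails for $4R_1$; and (iii) the crucial rigidity --- that every pair $(\psi,c)$ with $\psi^*P=c^2P$ actually comes from $N_{\GL(V)}(\SL_2)$ --- is explicitly left as ``one then checks, case by case,'' which is precisely the content of the proposition. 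For part (2), $V=2R_3$, the reduction is described but not carried out at all: the seven generators, the two relations, and the matching of $\Aut_\ql(Z)$ with the normalizer are not exhibited. The outline is plausible and consistent in style with the worked examples of Section~\ref{sec:examples}, but until (i)--(iii) are actually done you have reduced the proposition to a finite computation rather than proved it.
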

 
 \begin{problem}
 Does one always have lifting for  irreducible  admissible representations of $\SL_2$?
 \end{problem}
 
 \section{Tori}\label{sec:torus}

 Let $V$ be a $2$-principal $G$-module. Let $Z$ and $Z_\pr$ be as usual and let $Y$ denote $\quot V{G^0}$. Then $Z=Y/H$ where $H:=G/G^0$. Let $Y_\pr$ denote the inverse image of $Z_\pr$ in $Y$.   
  
 \begin{lemma}\label{lem:reduceconn}
Let $Y$, etc.\ be as above.  Then any $\phi\in\Aut(Z)$ has a lift $\Phi\in\Aut(Y)$.  
\end{lemma}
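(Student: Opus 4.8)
The plan is to descend the finite quotient $Z = Y/H$ down to $Y$, exploiting that $H = G/G^0$ is a finite group and that $Y$ is itself a quotient of the vector space $V$ by the reductive group $G^0$. First I would note that the $H$-action on $Y$ and the $\C^*$-action on $Y$ commute (both are induced by linear actions on $V$, the $G$-action and scalar multiplication, which commute), so the grading on $\O(Y) = \O(V)^{G^0}$ is $H$-stable and $\O(Z) = \O(Y)^H$. The map $Y \to Z$ is the quotient by a finite group. An automorphism $\phi\in\Aut(Z)$ pulls back to an automorphism $\phi^*$ of $\O(Y)^H$, and the task is to extend this to an $H$-semilinear automorphism of $\O(Y)$, i.e., to find $\Phi^*\in\Aut(\O(Y))$ restricting to $\phi^*$ on invariants and normalizing the $H$-action.

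The key step is to use $2$-principality. Because $V$ is $2$-principal, $\codim_Y(Y\setminus Y_\pr)\geq 2$ and on $Y_\pr$ the $H$-action is free (the principal isotropy group of $G$ is trivial after reducing, or at least the point is that over $Z_\pr$ the cover $Y_\pr\to Z_\pr$ is an honest $H$-torsor — this follows from \cite[Remark 2.5]{SchVectorFields} and the definition of admissibility/principal stratum as used in the excerpt; more precisely over $Z_\pr$ the closed $G$-orbits have isotropy exactly the principal isotropy group, so the $H=G/G^0$-cover is étale there). So over $Z_\pr$ we have a Galois cover with group $H$, and $\phi$ restricts to an automorphism $\phi_\pr$ of $Z_\pr$. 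The étale $H$-cover $Y_\pr\to Z_\pr$ is classified by the $\pi_1$-representation, or concretely by the algebra $\O(Y_\pr)$ as an $H$-Galois extension of $\O(Z_\pr)$. I would argue that the pullback cover $\phi_\pr^*(Y_\pr\to Z_\pr)$ is isomorphic to $Y_\pr\to Z_\pr$ as an $H$-cover, which gives an $H$-semilinear automorphism $\Phi_\pr$ of $Y_\pr$ lifting $\phi_\pr$. The reason the pullback cover is isomorphic to the original is the crucial point, and I expect that is where one invokes that $Y$ is normal and $Y\setminus Y_\pr$ has codimension $\geq 2$, so that $\O(Y)=\O(Y_\pr)$: one can phrase the lift purely algebraically as follows.

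Here is the cleanest route. Let $A=\O(Z)$, $B=\O(Y)$, so $B$ is a normal domain, finite over $A$, with $B^H=A$ and $\operatorname{Frac}(B)/\operatorname{Frac}(A)$ Galois with group $H$ (faithfulness of $H$ on $Y$ follows since $G\to\GL(V)$ is, after reduction, faithful and $G^0$ is the connected component). Then $B$ is the integral closure of $A$ in $\operatorname{Frac}(B)$. An automorphism $\phi^*$ of $A$ extends to $\operatorname{Frac}(A)$, and then extends (non-canonically) to $\operatorname{Frac}(B)$ precisely because $\operatorname{Frac}(B)/\operatorname{Frac}(A)$ is \emph{Galois}: any automorphism of the base field lifts to the Galois closure, and here $\operatorname{Frac}(B)$ is already Galois over $\operatorname{Frac}(A)$. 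Concretely, pick a primitive element $\theta$ for $\operatorname{Frac}(B)/\operatorname{Frac}(A)$ with minimal polynomial $p(T)\in\operatorname{Frac}(A)[T]$; since the extension is Galois, $p$ splits in $\operatorname{Frac}(B)$, and $p^{\phi^*}$ (apply $\phi^*$ to coefficients) also splits — its roots are a permutation of a coset, but more simply $p^{\phi^*}$ has the same splitting field which is $\operatorname{Frac}(B)$, so choosing a root of $p^{\phi^*}$ in $\operatorname{Frac}(B)$ defines a lift $\widetilde{\Phi}^*$ of $\phi^*$ to $\operatorname{Frac}(B)$. This $\widetilde{\Phi}^*$ normalizes $H$ (an automorphism of a field extension always normalizes the Galois group, conjugating it by $\widetilde\Phi^*$ corresponds to the action on $\operatorname{Aut}$). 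Finally $\widetilde{\Phi}^*$ preserves $B$ because $B$ is the integral closure of $A$ in $\operatorname{Frac}(B)$ and $\widetilde{\Phi}^*$ sends $A$ to $A$ and elements integral over $A$ to elements integral over $A$; hence $\widetilde\Phi^*$ restricts to an automorphism $\Phi^*$ of $B=\O(Y)$, dual to the desired $\Phi\in\Aut(Y)$.

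The main obstacle is making sure that $\operatorname{Frac}(Y)/\operatorname{Frac}(Z)$ really is Galois with group exactly $H$ and that $B=\O(Y)$ is the full integral closure of $A=\O(Z)$ in $\operatorname{Frac}(B)$ — both should follow from $Y$ being normal (as a quotient of a smooth affine variety by a reductive group) and $H$ acting faithfully on $Y$ (which is where one uses that $V$ is $2$-principal, via \cite[Remark 2.5]{SchVectorFields}, so the ineffective kernel is already inside $G^0$). Once these standard facts are in hand, the Galois-descent argument above produces $\Phi$, and it is automatically $\sigma'$-semilinear for the induced automorphism $\sigma'$ of $H$ — this semi-equivariance over $H$ is exactly what is needed to feed into the torus case of Theorem \ref{thm:intro:liftingtori} in the next lemmas. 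I would also remark that in the holomorphic category the same argument applies verbatim using $\HAut$ and $\Hol$, since $Y\to Z$ is still a finite branched cover and the normality argument is local-analytic.
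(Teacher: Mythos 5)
Your overall shape is right, and you correctly pinpoint the crux: showing that $\phi$ lifts across the \'etale $H$-cover $Y_\pr\to Z_\pr$. But the algebraic justification you offer for this step has a genuine gap. It is \emph{not} true that an automorphism of a field $K$ always extends to a finite Galois extension $L/K$; this fails even when $L$ is the fraction field of the integral closure of a normal ring $A$ with $\operatorname{Frac}(A)=K$. Concretely, take $A=\C[t]$, $B=\C[y]$ with $y^2=t$, so $L=\C(y)$ is Galois of degree $2$ over $K=\C(t)$ with group $H=\{\pm1\}$, and let $\phi^*\colon t\mapsto t+1$. Your claim that ``$p^{\phi^*}$ has the same splitting field, which is $\operatorname{Frac}(B)$'' is exactly the assertion at issue, and it is false here: $T^2-(t+1)$ does not split in $\C(y)$, since $\sqrt{t+1}\notin\C(\sqrt t)$. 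Geometrically this is the fact that the double cover $y\mapsto y^2$ of $\C^*$ is not preserved by the translation $z\mapsto z+1$ of the base, precisely because $\C^*$ is not simply connected. So being Galois is not enough; you need a reason the pullback cover is the \emph{same} cover.

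What actually makes the lemma true, and what the paper uses, is topological. Since $V$ is $2$-principal, $Y\setminus Y_\pr$ has complex codimension $\geq2$, hence real codimension $\geq4$, and the paper deduces from this that $Y_\pr$ is \emph{simply connected}. Hence $Y_\pr\to Z_\pr$ is the universal cover, so \emph{any} self-map of $Z_\pr$ lifts automatically; the paper realizes this by patching local holomorphic lifts over $Z_\pr$ and then invoking \cite[Lemma~5.1.1]{PopovMichor} to see that the resulting holomorphic $\Phi_\pr$ is algebraic. It then extends $\Phi_\pr$ over all of $Y$ (using codimension $\geq2$ and normality, much as you argue) and shows $\Phi$ is invertible by comparing with a lift of $\phi\inv$ and noting that the composite is a deck transformation. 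Your normality and integral-closure observations are sound and are exactly the right tool for the extension and invertibility steps; but the first step --- the very existence of a lift over $Y_\pr$ --- genuinely needs the simple-connectivity input, and that input is not visible at the level of the function field extension alone.
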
  

 \begin{proof}
Since $Y_\pr\to Z_\pr$ is a finite cover, there are local holomorphic  lifts    of $\phi$ over small open subsets of $Z_\pr$. Since $Y\setminus Y_\pr$ has real codimension four in $Y$, $Y_\pr$ is simply connected and we can patch together the local lifts to give a global holomorphic lift $\Phi_\pr$ on $Y_\pr$. By \cite[Lemma 5.1.1]{PopovMichor}, $\Phi_\pr$ is algebraic, and $\Phi_\pr$ extends to a morphism $\Phi\colon Y\to Y$ covering $\phi$.  Similarly, there is a morphism $\Psi$ covering $\phi\inv$. Then $\Phi\circ\Psi$ and $\Psi\circ\Phi$ have to be   deck transformations, i.e.,  elements of $H$. Thus $\Phi\in\Aut(Y)$.
 \end{proof}
  
Let $G_s$  be the semisimple part of $G^0$ and let $S$ be the connected center of $G^0$.

 \begin{proposition}\label{prop:reducess}
Let $V$ be $2$-principal and let $\phi\in\Aut(Z)$. Then $\phi$ lifts to an automorphism $\Phi\in\Aut(Y)$ where $Y=\quot V{G_s}$.  
\end{proposition}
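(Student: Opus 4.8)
The plan is to strip the two non‑semisimple pieces off $G$ one at a time. By Lemma \ref{lem:reduceconn} we may replace $G$ by $G^0$: that lemma already lifts $\phi$ through the finite quotient $\quot V{G^0}\to\quot VG$, so it is enough to lift automorphisms of $\quot V{G^0}$ to $\Aut(Y)$, where $Y:=\quot V{G_s}$. So assume henceforth that $G=G^0=G_sS$ is connected, put $Z=\quot VG$, and let $T:=G/G_s$, which is a torus (it is $S$ modulo the finite group $S\cap G_s$). Since $G_s$ is normal in $G$, $T$ acts on $Y$ with $\quot YT=Z$, and the remaining content is the torus‑quotient lifting statement: every $\phi\in\Aut(Z)$ lifts to $\Aut(Y)$. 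This parallels Theorem \ref{thm:intro:liftingtori} with the affine variety $Y$ in place of a vector space, and the point is that $Y$ carries all the structure that argument needs.

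Here is how I would run it. Write $Y_\pr$ for the preimage in $Y$ of the principal stratum $Z_\pr$; just as in the proof of Lemma \ref{lem:reduceconn}, $2$‑principality of $V$ forces $\codim(Z\setminus Z_\pr)\geq 2$ and $\codim(Y\setminus Y_\pr)\geq 2$ (both are images of $V\setminus V_\pr$ under maps whose generic fibre dimensions are $\dim G$, resp.\ $\dim G_s$). Over $Z_\pr$ the $G_s$‑orbits in question are closed — a closed $G$‑orbit restricts to a closed orbit of the normal reductive subgroup $G_s$ — with finite isotropy, and $T$ acts on $Y_\pr$ with a fixed finite stabilizer $N$, namely the common image in the abelian group $T$ of the mutually conjugate principal isotropy groups of $V$; replacing $T$ by $T':=T/N$ makes $Y_\pr\to Z_\pr$ a principal $T'$‑bundle. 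Such a bundle is Zariski‑locally trivial (tori are special), so $\phi|_{Z_\pr}$ has local algebraic lifts to $Y_\pr$, any two of which differ over an overlap by a morphism to $T'$; the \v{C}ech obstruction to gluing them into a single lift $\Phi_\pr\colon Y_\pr\to Y_\pr$ lies in $H^1(Z_\pr,\underline{T'})\cong\operatorname{Pic}(Z_\pr)\otimes_{\Z}X_*(T')$. But $\operatorname{Pic}(Z)=0$: $Z$ is a normal affine variety carrying the good contracting $\C^*$‑action $t\cdot\pi(v)=\pi(tv)$ whose only fixed point is $\pi(0)$, so any line bundle may be flowed to that fixed point and is thus trivial. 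Since $Z\setminus Z_\pr$ has codimension $\geq 2$ in the normal variety $Z$, also $\operatorname{Pic}(Z_\pr)=0$, the obstruction vanishes, and $\Phi_\pr$ exists. By \cite[Lemma 5.1.1]{PopovMichor} $\Phi_\pr$ is algebraic, and since $Y$ is normal with $\codim(Y\setminus Y_\pr)\geq 2$ it extends to a morphism $\Phi\colon Y\to Y$ covering $\phi$. Running the same construction on $\phi\inv$ produces $\Psi\colon Y\to Y$ covering $\phi\inv$; then $\Psi\circ\Phi$ and $\Phi\circ\Psi$ cover $\operatorname{id}_Z$, hence preserve every fibre of $Y\to Z$, hence restrict on $Y_\pr$ to $T'$‑translations, which are invertible. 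Therefore $\Phi\in\Aut(Y)$, and composing with the reduction of Lemma \ref{lem:reduceconn} yields the asserted lift.

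The main obstacle is the gluing step — the vanishing of $\operatorname{Pic}(Z_\pr)$ (and of the relevant $H^1$) that lets the local $T'$‑lifts be patched into one. This is the torus‑case heart of the matter, and it is precisely where the $2$‑principal hypothesis ($\codim\geq 2$) and the contracting $\C^*$‑action on $Z$ are both used in an essential way. A secondary point to pin down carefully is that over $Z_\pr$ one genuinely has a $T'$‑torsor after dividing out the generic finite stabilizer: this rests on the standard facts that a closed $G$‑orbit restricts to a closed $G_s$‑orbit and that the generic $T$‑stabilizer on $Y_\pr$ is a single fixed subgroup of $T$ (the conjugates of a principal isotropy group have equal image in the abelian $T$). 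Neither is deep, but both must be verified for the argument to go through.
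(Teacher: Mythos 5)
The key vanishing claim in your argument is false, and it is precisely where the real content of the proposition lies. You assert $\Pic(Z_\pr)=0$, deducing it from $\Pic(Z)=0$ (contracting $\C^*$-action) together with $\codim(Z\setminus Z_\pr)\geq 2$. That deduction runs backwards: removing a codimension-$\geq 2$ closed subset of the normal variety $Z$ does give $\operatorname{Cl}(Z)\cong\operatorname{Cl}(Z_\pr)\cong\Pic(Z_\pr)$ (the last because $Z_\pr$ is smooth), but $\Pic(Z)$ sits inside $\operatorname{Cl}(Z)$ only as a subgroup, and the cokernel is generally nonzero when $Z$ is singular. So $\Pic(Z)=0$ tells you nothing about $\Pic(Z_\pr)$. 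A concrete counterexample within the hypotheses of the proposition: $G_s=\{e\}$, $S=\C^*$, $V=\C^4$ with weights $1,1,-1,-1$. This is $2$-principal, $Z$ is the affine cone over a smooth quadric, $\Pic(Z)=0$, but $\Pic(Z_\pr)=\operatorname{Cl}(Z)\cong\Z$. The $\C^*$-bundle $V_\pr\to Z_\pr$ is nontrivial, so the gluing obstruction you identify does \emph{not} vanish, yet the lift exists — so your mechanism cannot be the right one.

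What the paper actually proves is that $\Pic(Z_\pr)\cong X(S)$, the character lattice of $S$, and this computation hinges on a fact you never invoke: $Y=\quot V{G_s}$ is \emph{factorial} (because $G_s$ is semisimple, so $\O(V)^{G_s}$ is a UFD), whence $\operatorname{Cl}(Y_\pr)=0$ and every line bundle on $Z_\pr$ pulls back trivially with an $S$-linearization, forcing it to be some $L_\chi$. Since $\Pic(Z_\pr)$ is nonzero whenever $S$ is, the right question is not whether the obstruction class vanishes but how $\phi^*$ acts on $\Pic(Z_\pr)\cong X(S)$ — it acts by some $\tau\in\GL(X(S))$ — and one then builds a $\tau$-\emph{twisted} lift, not a $T'$-equivariant one, by choosing isomorphisms $\phi^*L_{\chi_j}\cong L_{\tau(\chi_j)}$ on a basis, tensoring, and assembling the resulting algebra automorphism of $\bigoplus_\chi\Gamma(Z_\pr,L_\chi)=\O(Y)$. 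Your overall framework (reduce to connected $G$, view $Y_\pr\to Z_\pr$ as a torus torsor, analyze the \v{C}ech obstruction) is the right picture, but without the factoriality input and without allowing the twist by $\tau$ the argument does not close.
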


 \begin{proof}   Lemma \ref{lem:reduceconn} allows us to reduce   to the case that $G$ is connected. We may assume that $S$ is nontrivial. Possibly dividing out by a finite kernel   we may assume that $S$ acts effectively on $Y$, in which case the principal isotropy group is trivial. Let $Z_0:=Z\setminus Z_\pr$,   let $Y_0$ denote the inverse image of $Z_0$ in $Y$ and set $Y_\pr:=Y\setminus Y_0$. Then $\pi\colon Y_\pr\to Z_\pr$ is a principal $S$-bundle and $Y_0$ and $Z_0$ have complex codimension two in $Y$ and $Z$, respectively. Let $D$ be an irreducible hypersurface in $Y_\pr$ and let $\overline{D}$ be its closure in $Y$. Since $Y$ is factorial, $\overline{D}$ is the zero set of an irreducible element of $\O(Y)$. Hence the divisor class group of $Y_\pr$ is trivial which implies that $\HHH^1(Y_\pr,\O^*)$ is trivial.
 
Let $L\in\HHH^1(Z_\pr,\O^*)$ be a line bundle. Then the pull back $\pi^*L\in\HHH^1(Y_\pr,\O^*)$ is trivial and has an action of $S$.    The $S$-action on $Y_\pr\times\C$ sends $(y,\lambda)$ to $(s\inv y,c(y,s)\lambda)$, $s\in S$, $y\in Y_\pr$, $\lambda\in\C$ where $c(y,s)$ is a morphism  $Y_\pr\times S\to\C^*$ which sends $Y_\pr\times\{e\}$ to $1\in\C^*$. From \cite[Theorem 3]{Rosenlicht} and  \cite[Proposition 16.3]{HumphLinAlg} we see that $c(y,s)$ is a character $\chi(s)$ of $S$ which    is independent of $y\in Y_\pr$.   Let $\C_\chi$ denote a copy of $\C$ where $S$ acts via $\chi$. Then $L\simeq L_\chi:=Y_\pr\times^S\C_\chi$ and $\HHH^1(Z_\pr,\O^*)\simeq X(S)$, the character group of $S$.

Now $\phi^*L_\chi$ is isomorphic to a line bundle $L_{\tau(\chi)}$ for some $\tau(\chi)\in X(S)$. Using additive notation for the group structure of $X(S)$ 	 we see that $\tau(\chi_1+\chi_2)=\tau (\chi_1)+\tau(\chi_2)$ for $\chi_1$, $\chi_2\in X(S)$. Thus $\phi$ acts on $\HHH^1(Z_\pr,\O^*)\simeq X(S)$ by $\tau\in\GL_n(\Z)$ where $n$ is the rank of $S$. Let $\chi_1,\dots,\chi_n$ be a basis of $X(S)$.
For $j=1,\dots,n$ we have an isomorphism $\psi_j$ of $\phi^*L_{\chi_j}$ with $L_{\tau(\chi_j)}$. From $\psi_j$ we canonically obtain an isomorphism, denoted $\psi_j^{-1}$, of $\phi^*L_{-\chi_j}=(\phi^*L_{\chi_j})\inv$ with $L_{-\tau(\chi_j)}$.
Let $\chi=\sum n_j\chi_j$. Then define 
$$
\psi_\chi=\bigotimes_{j=1}^n \psi_j^{n_j}\colon (\phi^*L_\chi=\bigotimes_{j=1}^n (\phi^*L_{\chi_j})^{n_j})\to (L_{\tau(\chi)}=\bigotimes_{j=1}^n (L_{\tau(\chi_j)})^{n_j}).
$$
Then the isomorphisms $\psi_\chi$  give us an algebra isomorphism 
$$
\psi\colon \bigoplus_{\chi\in X(S)}\Gamma(Z_\pr,\phi^*L_\chi) \simeq \bigoplus_{\chi\in X(S)}\Gamma(Z_\pr,L_{\tau(\chi)})=\bigoplus_{\chi\in X(S)}\Gamma(Z_\pr,L_\chi).
$$

Let $\O(Y)_\chi$ denote the covariants of type $\chi$, that is, $f\in\O(Y)_\chi$ if $f(s\inv y)=\chi(s)f(y)$ for $y\in Y$ and $s\in S$. Then 
$$
\O(Y)=\bigoplus_{\chi\in X(S)}\O(Y)_\chi=\bigoplus_{\chi\in X(S)}\O(Y_\pr)_\chi=\bigoplus_{\chi\in X(S)}\Gamma(Z_\pr,L_\chi).
$$ 
Let $L_\chi(z)$ denote the fiber of $L_\chi$ at $z$. Then for $z\in Z_\pr$ we have
$$
\O(\pi\inv(z))\simeq\bigoplus_{\chi\in X(S)} L_\chi(z) \text{ and } \O(\pi\inv(\phi(z)))\simeq\bigoplus_{\chi\in X(S)} (\phi^*L_\chi)(z).
$$
Thus $\psi$ gives us a family of isomorphisms of the fibers $\pi\inv(z)$ and $\pi\inv(\phi(z))$   parameterized by $Z_\pr$. Hence we have a lift $\Phi_\pr$ of $\phi$ to $Y_\pr$ and $\Phi_\pr$ canonically extends to a lift  $\Phi$ of $\phi$ to $Y$. By construction, $\Phi^*$ sends $\O(Y)_\chi$ isomorphically onto $\O(Y)_{\tau(\chi)}$ for all $\chi\in X(S)$. Hence $\Phi\in\Aut(Y)$. 
\end{proof}
  
 \begin{proof}[Proof of Theorem \ref{thm:intro:liftingtori}] We are in the case that $G^0$ is a torus. Let $\phi\in\Aut(Z)$. We may assume that $\phi(\pi(0))=\pi(0)$. By  Proposition \ref{prop:reducess}, $\phi$ lifts to  $\Phi\in\Aut(V)$ and we also have a lift of $\phi\inv$. As in  Proposition \ref{prop:liftsimpliesdiff}, $\Phi(0)=0$ and $\phi$ is deformable to $\phi_0\in\Aut_\ql(Z)$ and $\phi_0$ lifts to $\Phi'(0)\in N_{\GL(V)}(G)$.
Set $\Phi_t=t\inv\circ\Phi\circ t$, $t\in \C^*$, and set $\Phi_0=\Phi'(0)$. Set $\Psi_t:=\Phi_t\circ\Phi'(0)\inv$.   
 Let $v\in V_\pr$. Then $\Psi_t(gv)=\gamma(t,v,g)\cdot \Psi_t(v)$ where $\gamma\colon\C\times V_\pr\times G\to G$ is a morphism. As usual, $\gamma$ extends to a morphism $\C\times V\times G\to G$. Since $\gamma(0,v,g)=g$ for all  $v$ and $g$,  $g\inv\gamma(t,v,g)$ always lies in $G^0$. For $g$ fixed, $(t,v)\mapsto g\inv\gamma(t,v,g)$ is  a morphism $\C\times V\to G^0$. Since the only units of $\O(\C\times V)$ are constant, $g\inv\gamma(t,v,g)$ is independent of $t$ and $v$.  It follows that $\gamma(t,v,g)=g$ for all $t$, $v$ and $g$. Hence all the $\Psi_t$ are $G$-equivariant and our original $\Phi$ is $\sigma$-equivariant.
  
 If we start out with $\phi\in\HAut(Z)$ we may assume that $\phi(\pi(0))=\pi(0)$ and then we know that $\phi$ is deformable to a $\phi_0\in\Aut_\ql(Z)$, which, by Proposition \ref{prop:reducess} and Proposition \ref{prop:normalizer}, lifts to a  $\sigma$-equivariant automorphism of $V$. Then by Theorem \ref{thm:intro:lifting}, $\phi$ has a $\sigma$-equivariant   lift to $\HAut(V)$.
 \end{proof}

Later we will need a local version of lifting.
 \begin{proposition}\label{prop:locallift}
 Assume that $V$ is a $2$-principal $G$-module where $G^0$ is a torus. Let $B$ and $B'$ be neighborhoods of $z_0:=\pi(0)$ and let $\phi\colon B\to B'$ be biholomorphic where $\phi(z_0)=z_0$.
 Then, after perhaps shrinking $B$ and $B'$, we can find a $\sigma$-equivariant biholomorphic lift $\Phi$ of $\phi$.
 \end{proposition}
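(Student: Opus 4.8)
The plan is to run the proof of Theorem~\ref{thm:intro:liftingtori} in a shrinking neighborhood of $z_0$: deform $\phi$ to a quasilinear automorphism $\phi_0$, lift $\phi_0$ by the global result, and then lift the remaining ``unipotent'' part $\phi_0\inv\phi$ by a local version of the holomorphic isotopy lifting theorem. First I would note that $V$ is admissible: condition (1) of Definition~\ref{def:admissible} is the hypothesis, and it implies condition (2) when $G^0$ is a torus by \cite[10.4]{SchLiftingDOs}. So Theorems~\ref{thm:admissibleimpliesdiff} and~\ref{thm:isolifting} are available, and I only need their local content.

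\emph{Local deformability.} I claim $\phi$ is deformable with $\phi_0\in\Aut_\ql(Z)$. The proof of Theorem~\ref{thm:admissibleimpliesdiff} uses $\phi$ only through its germ at $z_0$: for $f\in S_k$ it examines the lowest homogeneous term of $\phi^*f$, and if that term $h_\ell$ has degree $\ell<k$ it reaches a contradiction by feeding the invariant constant-coefficient operator $P$ dual to $\pi^*h_\ell$ through $\phi_*$ and observing that $\phi_*\pi_*P$ is again $\pi_*Q$ with $Q$ of order $\ell$. Since $\pi_*\colon\HDiff^k(V)^G\to\HDiff^k(Z)$ is surjective and $\Hol(Z)$ is flat over $\O(Z)$, the associated map of sheaves is surjective, so $\phi_*\pi_*P$ is \emph{locally} near $z_0$ of the form $\pi_*Q$; the degree bookkeeping producing the contradiction is unchanged. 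Applying this to $\phi$ and to $\phi\inv$ shows that $\phi_0(z):=\lim_{t\to0}t\inv\cdot\phi(t\cdot z)$ exists for every $z\in Z$ and defines a graded automorphism of $\O(Z)$, i.e.\ $\phi_0\in\Aut_\ql(Z)$ --- and $\phi_0$ is a genuine automorphism of all of $Z$, even though $\phi$ is only a germ.

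\emph{Lifting.} By Theorem~\ref{thm:intro:liftingtori} the algebraic automorphism $\phi_0$ has a $\sigma$-equivariant lift $\Phi_0\in\Aut(V)$ with $\Phi_0(0)=0$ (by \cite[Lemma 19]{Kuttler}). Set $\psi:=\phi_0\inv\circ\phi$. Since $\phi_0$ commutes with the $\C^*$-action, $\psi_t(z):=t\inv\cdot\psi(t\cdot z)=\phi_0\inv\bigl(t\inv\cdot\phi(t\cdot z)\bigr)$ tends to $\phi_0\inv(\phi_0(z))=z$ as $t\to0$, so $\psi$ is a germ of a biholomorphism fixing $z_0$ and $\{\psi_t\}$ is a holomorphic family of such germs with $\psi_0=\operatorname{id}$, $\psi_1=\psi$, all defined on a common small neighborhood of $z_0$ for $t$ near $[0,1]$. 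Now I reproduce the proof of Theorem~\ref{thm:isolifting} near $0\in V$: write $\psi_t$ as the flow from $0$ to $t$ of a holomorphic time-dependent vector field $A(t,z)$ near $z_0$, lift $A$ to a $G$-invariant holomorphic vector field on a neighborhood of $0\in V$ using the sheaf surjectivity of $\pi_*$ on first-order operators and \cite[Remark 2.4]{SchVectorFields}, and integrate it along the compact pieces of the Kempf--Ness set $\M$ lying over the paths $t\mapsto\psi_t(\pi(v_0))$, $v_0\in\M$ near $0$, extending the $K$-equivariant flow on an orbit-convex neighborhood to a $G$-equivariant flow on a saturated one via Lemma~\ref{lem:orbit-convex-exist} and Proposition~\ref{prop:Kisenough}. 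As $\M$ over $z_0$ is just $\{0\}$, after shrinking $B$ all these curves remain in a fixed $G$-saturated neighborhood $\Omega$ of $0$, and we obtain $\Psi_t\in\HAut(\Omega)^G$, $t\in[0,1]$, lifting $\psi_t$. Then $\Phi:=\Phi_0\circ\Psi_1$ is a $\sigma$-equivariant biholomorphism of a neighborhood of $0$ onto a neighborhood of $0$ covering $\phi_0\circ\psi=\phi$; shrinking $B$ and $B'$ accordingly finishes the proof.

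The step I expect to be the main obstacle is this localization itself: one must verify carefully that the arguments of Theorems~\ref{thm:admissibleimpliesdiff} and~\ref{thm:isolifting} use $\phi$ only through its germ at $z_0$ (respectively, its behaviour near $0\in V$) together with the surjectivity of $\pi_*$ \emph{as a morphism of sheaves}, and that the Heinzner--Kutzschebauch ingredients (existence of orbit-convex saturated neighborhoods, extension of $K$-invariant holomorphic maps to $G$-invariant ones) survive restriction to an arbitrarily small neighborhood of $0$ --- which they do, being themselves local around points of $\M$. A minor point, already settled by the deformability step, is that the isotopy $\psi_t$ really extends holomorphically across $t=0$ to the identity.
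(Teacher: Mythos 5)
Your proof is correct and follows essentially the same route as the paper: deform the germ $\phi$ to a global quasilinear $\phi_0$ via the argument of Theorem~\ref{thm:admissibleimpliesdiff}, lift $\phi_0$ using Theorem~\ref{thm:intro:liftingtori}, shrink $B$ so that $\phi_0\inv\phi_t(B)\subset B'$, and then lift the resulting isotopy by the local form of the Heinzner--Kutzschebauch argument from Theorem~\ref{thm:isolifting}. The paper states these steps more tersely, while you spell out explicitly why the arguments localize (germ-level sufficiency, sheaf surjectivity of $\pi_*$, the orbit-convexity being local near $0\in\M$), but the underlying decomposition and key lemmas are identical.
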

 
 \begin{proof}   Define $\phi_t$ as usual. Then, since $V$ is admissible, the proof of Theorem \ref{thm:admissibleimpliesdiff}   shows that $\phi_0:=\lim_{t\to 0}\phi_t$ exists. Since all the $\phi_t$ preserve $z_0$, we can shrink $B$ so that $\phi_t(B)\subset B'$, $t\in[0,1]$. We know that  $\phi_0$ lifts to $N_{\GL(V)}(G)$, hence we can reduce to the case that $\phi_0$ is the identity. Then the argument of Theorem \ref{thm:isolifting} shows that we have an equivariant biholomorphic lift $\Phi_t$ of $\phi_t$, $0\leq t\leq 1$.   Then $\Phi_1$ is the required lift of $\phi$.
 \end{proof}

\section{The method of Kuttler}\label{sec:kuttler}

We  give a proof of Theorem \ref{thm:genkuttler}. Let $V$ be a $4$-principal $G$-module which we may assume is  faithful.  Let $V_0$ denote $V\setminus V_\pr$. By Proposition \ref{prop:reducess}, lifting for elements of $\Aut(Z)$ reduces  to establishing lifting for the action of $G_s$ where $G_s$ is the semisimple part of $G^0$. Since $(V,G)$ is $4$-principal, it is easy to see that $(V,G_s)$ is also $4$-principal. Hence we may assume that $G$ is connected semisimple. In Theorem \ref{thm:genkuttler} we assume that
\begin{equation}\label{assumption:lieg}\tag{*}
V=\lieg\oplus V', \text{ i.e.,  $V$ contains a copy of the adjoint representation of $G$.} 
\end{equation}
\begin{lemma}\label{lem:injective}
The mapping $G\to\GL(V')$ is injective.
\end{lemma}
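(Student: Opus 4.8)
The plan is to show that the kernel $N$ of the map $G\to\GL(V')$ is trivial by using the hypothesis that $V$ is $4$-principal, hence in particular has finite (indeed, since it is faithful and $2$-principal, trivial) principal isotropy groups. First I would observe that $N$ is a closed normal subgroup of the connected semisimple group $G$, and that $G$ acts faithfully on $V=\lieg\oplus V'$ by assumption. Since $N$ acts trivially on $V'$, its action on $V$ is through its action on the adjoint summand $\lieg$ alone.

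Next I would exploit the structure of the adjoint representation. The kernel of $G\to\GL(\lieg)$ (the adjoint action) is the center $Z(G)$, which is finite since $G$ is semisimple. So $N\cap Z(G)$ acts trivially on all of $V=\lieg\oplus V'$, and faithfulness of $V$ forces $N\cap Z(G)=\{e\}$. If $N$ were nontrivial it would therefore be a nontrivial closed normal subgroup of $G$ meeting the center trivially; being normal in a semisimple group, $N$ is itself (up to finite center) a product of some of the simple factors of $G$, and in particular $N$ has positive dimension and its own nontrivial center lies inside $Z(G)$ — unless $N$ is trivial. This is the contradiction I would aim for: a nontrivial normal subgroup of a semisimple group has nontrivial intersection with the center, so $N\cap Z(G)\neq\{e\}$, contradicting the previous paragraph.

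Alternatively, and perhaps more cleanly, I would argue directly with isotropy groups: pick $v'\in V'_{\mathrm{pr}}$ (the image in $V'$ of a generic point, or rather a point over the principal stratum), so that $N\subset G_{v'}$ since $N$ fixes $v'$; as $(V,G)$ is $4$-principal and faithful it is $2$-principal, hence has trivial principal isotropy group, and one checks that the projection $V\to V'$ maps $V_{\mathrm{pr}}$ into the locus where $G$ acts with isotropy contained in a principal isotropy group of $V$ — so $N$ is contained in a conjugate of the trivial principal isotropy group, forcing $N=\{e\}$. The cleanest route is probably just the representation-theoretic one: $N$ is normal in semisimple $G$, so $N$ is (almost) a subproduct of simple factors, and if $N\neq\{e\}$ then $N$ contains a nontrivial central element of $G$ which then acts trivially on $V$, contradicting faithfulness.

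The main obstacle, and the only real subtlety, is making precise the step ``a nontrivial closed normal subgroup $N$ of a connected semisimple group $G$ has $N\cap Z(G)\neq\{e\}$'': one reduces to the case where $N$ is connected (its identity component is still normal, hence a product of simple factors of $\tilde G$ projected down, and is nontrivial if $N$ is positive-dimensional) and handles the remaining case where $N$ is finite by noting a finite normal subgroup of a connected group is central, hence $N\subset Z(G)$ outright. With $N\cap Z(G)\neq\{e\}$ in hand, any nontrivial element of that intersection acts trivially on $\lieg$ via $\mathrm{Ad}$ and trivially on $V'$ by definition of $N$, hence trivially on $V$, contradicting faithfulness; therefore $N=\{e\}$ and $G\to\GL(V')$ is injective.
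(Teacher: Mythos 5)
Your main ``representation-theoretic'' route rests on the claim that a nontrivial closed normal subgroup $N$ of a connected semisimple group $G$ satisfies $N\cap Z(G)\neq\{e\}$, and this is simply false. Take $G=\PSL_2\times\PSL_2$: then $Z(G)=\{e\}$, yet $N=\PSL_2\times\{e\}$ is a nontrivial closed normal subgroup with $N\cap Z(G)=\{e\}$. Your explanation (``$N$ is up to finite center a product of simple factors... and its own nontrivial center lies inside $Z(G)$'') breaks down precisely because a simple factor of $G$ can be adjoint (e.g.\ $\PSL_n$) and hence centerless; there is no reason for $N$ to have nontrivial center at all. A further symptom that something is wrong: your argument never uses the $4$-principal (or even $2$-principal) hypothesis, only faithfulness of $V$. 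But faithfulness alone cannot suffice --- take $G=\PSL_2$, $V=\lieg$, $V'=0$: then $V$ is faithful and the kernel of $G\to\GL(V')$ is all of $G$. This example is excluded only because $\lieg$ alone is not $2$-principal, so that hypothesis must enter the proof somewhere.

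The paper's proof does use $2$-principality, and in the place where your argument has a hole. If $\dim K>0$ (where $K$ is the kernel), then since $K^0$ is a connected normal subgroup of the connected semisimple $G$, it contains a whole simple factor $G_1$. That factor acts on $V=\lieg\oplus V'$ trivially on $V'$ (by definition of $K$) and trivially on $\lieg_j$ for $j\neq 1$; so as a $G_1$-module $V$ is $\lieg_1\oplus(\text{trivial})$, a single copy of the adjoint representation plus a trivial summand. This representation has codimension-one strata (equivalently, its principal isotropy group is a maximal torus, positive-dimensional), so $V$ fails to be $2$-principal --- contradiction. Once $K$ is known to be finite, your argument for the finite case (finite normal in connected implies central; central acts trivially on $\lieg$ by $\Ad$ and on $V'$ by definition of $K$; faithfulness forces $K=\{e\}$) is exactly right and matches the paper. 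Your ``alternative'' isotropy-group sketch is in the right spirit but, as written, asserts without justification that $G_{v'}$ is contained in a (trivial) principal isotropy group of $V$; the correct version of that idea is that $G_1\subset K\subset G_{v'}$ for every $v'$ forces the principal isotropy of $(V,G)$ to contain the principal isotropy of $(\lieg_1,G_1)$, i.e.\ a maximal torus, contradicting FPIG. You would need to supply that reasoning to make the sketch into a proof.
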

\begin{proof}
Let $K$ be the kernel. If $\dim K>0$, then $K$ contains a simple component of $G$ whose representation on $\lieg$ has codimension one strata. Hence $V$ is not $2$-principal and we have a contradiction. Thus $K$ is finite and central. Since $K$ acts trivially on $\lieg$ we must have that $K=\{e\}$. 
\end{proof}

Let $\lieA(Z)$  denote the vector fields on $Z$ (derivations of $\O(Z)$) and define $\lieA(V)$ similarly.
Since the elements of $\lieg$ act on $V$ as vector fields, we have an inclusion $\tau\colon\lieg\to\O(V)\otimes V\simeq\lieA(V)$. Thus we have an injection (also called $\tau$) from $M_\lieg:=\O(V)\otimes\lieg$ to $M_V:=\O(V)\otimes V$. Let $M$ denote $M_V/M_\lieg$.

\begin{lemma}\label{lem:depth3}
The module $M$  has depth $3$ relative to the ideal of $V_0$.   
\end{lemma}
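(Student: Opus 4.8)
The plan is to compute the depth of $M$ via the short exact sequence
\begin{equation}\label{eq:ses}
0\to M_\lieg\xrightarrow{\tau} M_V\to M\to 0
\end{equation}
together with knowledge of depths/codimensions coming from the hypothesis that $V$ is $4$-principal and $V\supset\lieg$. First I would record that $M_V=\O(V)\otimes V$ is free, hence (as $V$ is normal, indeed smooth) has depth equal to $\dim V$ relative to any ideal, in particular relative to the ideal $\I$ of $V_0=V\setminus V_\pr$; since $\codim V_0\geq 4$ this depth is $\geq 4$. Similarly $M_\lieg=\O(V)\otimes\lieg$ is free of the same flavor, with depth $\geq 4$ along $\I$. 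The long exact sequence in local cohomology $H^j_\I(-)$ applied to \eqref{eq:ses} then gives $H^j_\I(M)=0$ for $j\leq 2$ automatically from $H^j_\I(M_V)=0$, $j\leq 3$, and $H^{j+1}_\I(M_\lieg)=0$, $j+1\leq 3$; so $\operatorname{depth}_\I M\geq 3$ is the ``free'' part of the statement. The real content is the reverse inequality $\operatorname{depth}_\I M\leq 3$, equivalently $H^3_\I(M)\neq 0$, equivalently $H^4_\I(M_\lieg)\to H^4_\I(M_V)$ is not injective — or it may be cleaner to phrase everything in terms of the cokernel sheaf on $V$ and its behavior along $V_0$.

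For the upper bound I would localize at a generic point of an irreducible component of $V_0$ of codimension exactly $4$ (such a component exists since $V$ is $4$-principal but, under the mild assumptions in play, typically not $5$-principal — and in any case the statement ``depth $3$'' is about there being a component where depth drops to exactly $3$). Passing to the slice representation at a closed orbit over that generic point, Luna's slice theorem reduces the local structure of $V_0\subset V$ near such a point to the analogous picture for the slice representation $(N_v,H)$ of a non-principal isotropy group $H$; there the fixed-point stratum is small codimension and one can see concretely that the map $\tau$ fails to be ``nice'' — precisely, the cokernel $M$ acquires a torsion/embedded phenomenon forcing depth exactly $3$ rather than $4$. Concretely I expect: the image of $\tau$ is the subsheaf of vector fields tangent to $G$-orbits, its saturation differs from $\tau(M_\lieg)$ exactly along $V_0$, and $M_V/M_\lieg$ has a submodule supported on (a codimension-$4$ piece of) $V_0$, which is exactly what caps the depth at $3$. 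Kuttler's original argument for $r\lieg$ proceeds along these lines, and the role of assumption \eqref{assumption:lieg} is that $\tau$ is then a split injection of the sort where $M_\lieg$ sits as ``one copy'' of the adjoint worth of relations, so the quotient $M$ is genuinely a torsion-plus-rank-$(\dim V-\dim G)$-free module whose depth is governed by $V_0$.

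The main obstacle I anticipate is establishing the sharp upper bound $\operatorname{depth}_\I M\leq 3$, i.e. producing a nonvanishing class in $H^3_\I(M)$. The lower bound is formal homological algebra from freeness of $M_V$ and $M_\lieg$ plus $\codim V_0\geq 4$; the upper bound requires genuinely understanding how $\tau$ degenerates along $V_0$, and this is where one must invoke the slice theorem and a case-by-case or structural analysis of slice representations of non-principal isotropy groups — exactly the kind of input that Theorem \ref{thm:genkuttler}'s hypotheses ($4$-principal, $V\supset\lieg$) are tailored to supply. A secondary technical point is making sure the reduction to $G$ connected semisimple (already carried out before the lemma) is compatible with the depth computation, so that ``$G_s$'' versus ``$G$'' does not affect $M$; but since $M$ only depends on the $G$-module $V$ and the embedding $\lieg\hookrightarrow\lieA(V)$, and the $4$-principality is inherited, this is routine. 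I would therefore structure the proof as: (i) the long-exact-sequence lower bound; (ii) reduction to a generic point of a codimension-$4$ component of $V_0$ via Luna; (iii) the slice computation exhibiting the depth-$3$ obstruction, closing with $H^3_\I(M)\neq 0$.
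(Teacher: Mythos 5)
Your part (i) — the lower bound $\operatorname{depth}_\I M\geq 3$ via the long exact sequence in local cohomology applied to $0\to M_\lieg\to M_V\to M\to 0$, using that $M_\lieg$ and $M_V$ are free and $\codim V_0\geq 4$ — is exactly the paper's proof, and it is the whole proof. (One small slip: the depth of a free $\O(V)$-module along $\I$ is the grade of $\I$, i.e.\ $\codim V_0$, not $\dim V$; but your conclusion ``$\geq 4$'' is the one that matters and it is correct.)

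The bulk of your proposal, parts (ii) and (iii), is aimed at establishing the sharp upper bound $H^3_\I(M)\neq 0$. This is not in the paper and not needed: the lemma is stated informally, and the proof as well as all subsequent uses (showing $M=M^{**}$, which needs depth $\geq 2$, and showing $\HHH^1(V_\pr,\pi^\#\G)\simeq H^2_\I(M)=0$, which needs depth $\geq 3$) require only the lower bound. Worse, the claimed exactness of the bound is not true in general. If $\codim V_0>4$ (nothing rules this out under the standing hypotheses; $4$-principal is a one-sided condition), then $M_\lieg$ and $M_V$ have depth $>4$ along $\I$ and the same long exact sequence gives $\operatorname{depth}_\I M\geq 4$. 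So the program in (ii)--(iii) of producing a nonzero class in $H^3_\I(M)$ by localizing at a generic point of a codimension-$4$ component of $V_0$ can simply fail for want of such a component, and the slice analysis you sketch would be chasing a statement that isn't there. Your instinct that the upper bound was the hard part was, in effect, a signal that it wasn't being asserted: the lemma should be read as ``depth at least $3$,'' and then your part (i) alone is a complete and correct proof.
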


\begin{proof} The free $\O(V)$-modules $M_\lieg$ and $M_V$ have depth 4 with respect to the ideal of $V_0$, hence $M$ has depth at least 3.  
\end{proof} 

 Set $E:=(M_\lieg)^G$ and $F:=(M_V)^G=\lieA(V)^G$. Then  $\tau(E)\subset F$. The image of $\tau$ consists of invariant vector fields tangent to the $G$-orbits.   We have a morphism $\pi_*\colon \lieA(V)^G\to\lieA(Z)$ which just restricts an element of $\lieA(V)^G$ to $\O(V)^G=\O(Z)$. By   \cite[Theorem 8.9]{SchLiftingDOs} we have that 
\begin{equation}\label{assumption:exact}\tag{**}
0\to E\xrightarrow\tau F\xrightarrow{\pi_*}\lieA(Z)\to 0 \text{ is exact.}
\end{equation}

  Let $\E$, $\F$ and $\G$ be the sheaves of $\O_Z$-modules corresponding to $E$, $F$ and $\lieA(Z)$, respectively. Let $\F'$ be the sheaf corresponding to $(\O(V)\otimes V')^G$, so we have that $\F\simeq\E\oplus\F'$. The sections of $\E$ over $Z_\pr$ are the sections of the vector bundle $V_\pr\times^G\lieg$  so that $\E$ is locally free on $Z_\pr$. Similarly, $\F$ is locally free on $Z_\pr$ and the quotient $\G$ is locally free as well.
  
  Let $L$ be a finite $\O(Z)$-module and let $\LL$ denote the corresponding sheaf on $Z$, which we assume to be locally free on $Z_\pr$.   Let $\pi^*\LL$ denote $(\pi|_{V_\pr})^*\LL$. Then
 $\pi^*\LL$ is locally free on $V_\pr$, hence reflexive, and $N:=\Gamma(V_\pr,\pi^*\LL)$ is a finitely generated reflexive $\O(V)$-module. (See \cite[Remark 13]{Kuttler}. The argument only needs that $\codim V\setminus V_\pr\geq 2$.)\  Let $\pi^\#\LL$ denote the  coherent sheaf of $\O_V$-modules corresponding to $N$. For any Zariski open subset $U$ of $V$, $\Gamma(U,\pi^\#\LL)=\Gamma(U\cap V_\pr,\pi^*\LL)$ so  that $\pi^\#\LL=i_*\pi^*\LL$ where $i\colon V_\pr\to V$ is inclusion.    
 
  Since $\pi|_{V_\pr}$ is flat we have a complex of coherent sheaves  
\begin{equation}\label{eq:exactpisharp}
0\to\pi^\#\E\to\pi^\#\F\to\pi^\#\G\to 0
\end{equation}
which is  exact on $V_\pr$.  
Since $V_\pr\times^G\lieg$ is locally trivial over $Z_\pr$, $\pi^*\E$ is locally the sheaf of sections of $V_\pr\times\lieg$. Thus
the global sections of $\pi^\#\E$ are a  reflexive module over $\O(V_\pr)=\O(V)$ which agrees with $M_\lieg$ on $V_\pr$, hence the global sections are $M_\lieg$. Similarly, the global sections of $\pi^\#\F$ are $M_V$.  The global sections of $\pi^\#\G$ are a reflexive module which agrees with the locally free module $M$ over $V_\pr$. Hence the global sections are the double dual $M^{**}$. Since $M$ has depth 3 (2 will do) with respect to the ideal of $V_0$ we have that $M=M^{**}$ (\cite[Lemma 8.3]{SchLiftingDOs}). Hence $\pi^\#\G$ corresponds to $M$ and \eqref{eq:exactpisharp} is exact on $V$.
  
Let $\phi\in\Aut(Z)$. Then applying pull back by $\phi$ we obtain an exact sequence  
\begin{equation}\label{eq:exactphi}
0\to \phi^*\E\to \phi^*\F\xrightarrow{\pi_*} \phi^*\G\to 0.
\end{equation}
We alter the map $\pi_*$ in \eqref{eq:exactphi} by composing it with    $d\phi\inv\colon T_{\phi(z)}Z\to T_z(Z)$, $z\in Z$. On vector fields composing with $d\phi\inv$  is the automorphism   $A\mapsto (\phi\inv)_*A:=\phi^*\circ A\circ (\phi\inv)^*$, $A\in\lieA(Z)$, and $(\phi\inv)_*$ gives an isomorphism of $\phi^*\lieA(Z)$ with $\lieA(Z)$.   Thus we have a complex 
\begin{equation}\label{eqn:exactsheaf}
0\to\pi^\#\phi^*\E\to\pi^\#\phi^*\F\to\pi^\#\G\to 0 
\end{equation}
which is exact on $V_\pr$. The key idea, following  \cite{Kuttler}, is to show that $\Gamma(V_\pr,\pi^\#\phi^*\F')=\Gamma(V,\pi^\#\phi^*\F')$  is a free $\O(V)$-module.  

 Now we take cohomology over $V_\pr$ to get an exact sequence
 $$
0\to\Gamma(V,\pi^\#\phi^*\E)\to\Gamma(V,\pi^\#\phi^*\F)\to M\to\HHH^1(V_\pr,\pi^\#\phi^*\E)\to\HHH^1(V_\pr,\pi^\#\phi^*\F)\to \HHH^1(V_\pr,\pi^\#\G)
 $$
 where the last module is zero since $M$ has depth 3 with respect to the ideal of $V_0$.  
The map $\HHH^1(V_\pr,\pi^\#\phi^*\E)\to\HHH^1(V_\pr,\pi^\#\phi^*\F)$ is a surjective morphism of finitely generated $\O(V)$-modules  (\cite[Remark 13]{Kuttler}). Since $\pi^\#\phi^*\F$ contains $\pi^\#\phi^*\E$ as a direct summand,  localizing at   points of $V$ and applying Nakayama's lemma we see that $\HHH^1(V_\pr,\pi^\#\phi^*\E)\to\HHH^1(V_\pr,\pi^\#\phi^*\F)$ has to be injective. It follows that $\Gamma(V,\pi^\#\phi^*\F)$ maps onto $M$. Thus we have exact sequences of finite $\O(V)$-modules
 
 \begin{equation}\label{eq:firstglobal}
 0\to M_\lieg\to M_V\to M\to 0 \text{ and}
 \end{equation}
  \begin{equation}\label{eq:secondglobal}
 0\to\Gamma(V,\pi^\#\phi^*\E)\to\Gamma(V,\pi^\#\phi^*\F)\to M\to 0.
 \end{equation}
 Let 
$$
\Psi\colon M_V\oplus\Gamma(V,\pi^\#\phi^*\F)\to M\to 0 
$$
be the difference of the two surjections.

\begin{lemma}
We have an exact sequence 
\begin{equation}\label{eq:first}
0\to\Gamma(V,\pi^\#\phi^*\E)\to\Ker\Psi\to M_V\to 0.
\end{equation}
\end{lemma}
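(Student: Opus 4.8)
The plan is to observe that $\Psi$ is the difference of two surjections onto the same module $M$, applied to the direct sum of their respective sources, so that the kernel of $\Psi$ fibers over each summand via a natural short exact sequence. Concretely, consider the projection $p\colon M_V\oplus\Gamma(V,\pi^\#\phi^*\F)\to M_V$ restricted to $\Ker\Psi$. First I would check surjectivity of this restriction: given $m\in M_V$ with image $\bar m\in M$ under the surjection of \eqref{eq:firstglobal}, use the surjection in \eqref{eq:secondglobal} to pick $n\in\Gamma(V,\pi^\#\phi^*\F)$ mapping to $\bar m$; then $(m,n)\in\Ker\Psi$ and $p(m,n)=m$. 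So $p$ maps $\Ker\Psi$ onto $M_V$.

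Next I would identify the kernel of $p|_{\Ker\Psi}$. An element $(0,n)$ lies in $\Ker\Psi$ exactly when $n$ is sent to $0\in M$ by the surjection of \eqref{eq:secondglobal}, i.e.\ exactly when $n\in\Gamma(V,\pi^\#\phi^*\E)$ by the exactness of \eqref{eq:secondglobal}. Hence $\Ker(p|_{\Ker\Psi})\simeq\Gamma(V,\pi^\#\phi^*\E)$, realized as the subset $\{0\}\oplus\Gamma(V,\pi^\#\phi^*\E)$. Assembling these two observations gives the short exact sequence \eqref{eq:first}, namely
\begin{equation*}
0\to\Gamma(V,\pi^\#\phi^*\E)\to\Ker\Psi\xrightarrow{\,p\,}M_V\to 0.
\end{equation*}

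There is essentially no obstacle here: this is the standard fact that if $a\colon A\to C$ and $b\colon B\to C$ are surjections of modules, then the kernel of $a-b\colon A\oplus B\to C$ surjects onto $A$ (and onto $B$) with kernel $\Ker b$ (respectively $\Ker a$). The only thing to be careful about is that all maps in \eqref{eq:firstglobal} and \eqref{eq:secondglobal} are $\O(V)$-linear, which they are by construction, so $\Ker\Psi$ is an $\O(V)$-submodule and the sequence \eqref{eq:first} is a sequence of $\O(V)$-modules; finite generation of $\Ker\Psi$ then follows since it sits between the finite modules $\Gamma(V,\pi^\#\phi^*\E)$ and an extension by $M_V$. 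The real work — proving that $\Gamma(V,\pi^\#\phi^*\F')$ is free, which is the point of introducing $\Psi$ — comes afterward and is not part of this lemma.
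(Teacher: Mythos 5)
Your proof is correct and is essentially the paper's argument, which also realizes $\Gamma(V,\pi^\#\phi^*\E)$ inside $\Ker\Psi$ as $\{0\}\oplus\Gamma(V,\pi^\#\phi^*\E)$ and invokes the exactness of \eqref{eq:secondglobal} to show the quotient projects isomorphically onto $M_V$. You have just unpacked the standard fiber-product argument that the paper leaves compressed into one sentence.
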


\begin{proof}
In \eqref{eq:first} we have 
$$
\Gamma(V,\pi^\#\phi^*\E)\to \Gamma(V,\pi^\#\phi^*\F)\subset  M_V\oplus\Gamma(V,\pi^\#\phi^*\F).
$$
Then exactness of  \eqref{eq:secondglobal} implies that  $\Ker\Psi/\Gamma(V,\pi^\#\phi^*\E)$ maps isomorphically onto $M_V$ via projection. 
\end{proof}

Since $M_V$ is a projective $G$ and $\O(V)$-module, we can find a $G$-equivariant $\O(V)$-module splitting of \eqref{eq:first}. Applying the same reasoning to $\phi\inv$ we have an   exact sequence
$$
0\to\Gamma(V,\pi^\#(\phi\inv)^*\E)\to\Gamma(V,\pi^\#(\phi\inv)^*\F)\to M\to 0
$$
and a split exact sequence
\begin{equation}\label{eq:second}
0\to\Gamma(V,\pi^\#(\phi\inv)^*\E)\to\Ker\widetilde\Psi\to M_V\to 0
\end{equation}
where 
$$
\widetilde\Psi\colon M_V\oplus\Gamma(V,\pi^\#(\phi\inv)^*\F)\to M\to 0
$$
is the difference of the two relevant  surjections.

For the rest of our argument   we only use that $\codim V\setminus V_\pr\geq 2$. Consider  a $G$ and  $\O(V)$-module $L$. Let $\LL$ be the  sheaf of $\O_Z$-modules corresponding to $L^G$. We assume that $\LL$ is locally free on $Z_\pr$. Then we have the reflexive module $\Gamma(V,\pi^\#\phi^*\LL)$ and $L\to \Gamma(V,\pi^\#\phi^*\LL)$  is a functor $\mathbb T$. If $L=\Gamma(V,\pi^\#(\phi\inv)^*\E)$, then $L^G\simeq\Gamma(Z,(\phi\inv)^*\E)$ and ${\mathbb T}(L)\simeq M_\lieg$, both isomorphisms being canonical. Similarly,  $\mathbb T$ sends $\Gamma(V,\pi^\#(\phi\inv)^*\F)$ to $M_V$. Thus applying $\mathbb T$ to \eqref{eq:second} we obtain a split exact sequence
\begin{equation}\label{eq:third}
0\to M_\lieg\to\Ker\Psi\to\Gamma(V,\pi^\#\phi^*\F)\to 0.
\end{equation}
From the split exact sequences \eqref{eq:first} and \eqref{eq:third} we see that
$$
\Ker\Psi\simeq M_\lieg\oplus M_{V'}\oplus\Gamma(V,\pi^\#\phi^*\E)\simeq M_\lieg\oplus\Gamma(V,\pi^\#\phi^*\E)\oplus\Gamma(V,\pi^\#\phi^*\F')
$$
where $M_{V'}=\O(V)\otimes V'$.
It follows that for any $v\in V$, the minimal number of generators of  the  $\O_{V,v}$-module $\Gamma(V,\pi^\#\phi^*\F')\otimes_{\O(V)}\O_{V,v}$  is $\dim V'$. Thus  
$\Gamma(V,\pi^\#\phi^*\F')$ is a projective $\O(V)$-module, hence free by Quillen and Suslin. 
 
   Now $P:=V_\pr\to Z_\pr$ is a principal $G$-bundle as is $\phi^*P$. The pull-back $P':=(\pi|_{V_\pr})^*\phi^*P$ is  a principal $G$-bundle over $V_\pr$. We have associated vector bundles $P\times^G V'$, $\phi^*P\times^G V'$ and $P'\times^G V'$. It is standard that the vector  bundle $P'\times^GV'$ is trivial if and only if the principal bundle $P'\times^G\GL(V')$ is trivial. The sections of $P\times^GV'$  and $\F'$ over $Z_\pr$ are the same, the sections of $\phi^*P\times^G V'$ and $\phi^*\F'$ over $Z_\pr$ are the same and the sections of $P'\times^GV'$ and $\pi^\#\phi^*\F'$  over $V_\pr$ are the same. We have shown  that $\Gamma(V,\pi^\#\phi^*F')$ is a free $\O(V)$-module, hence   $P'\times^GV'$ is a trivial vector bundle and $P'\times^G\GL(V')$ is a trivial principal bundle. Now $G\to\GL(V')$ is injective and it follows  that
$P'$ is the pull-back of the principal $G$-bundle $\GL(V')\to\GL(V')/G$ via a morphism $V_\pr\to \GL(V')/G$. Since $\GL(V')/G$ is affine the morphism extends to $V$. Hence $P'$ extends to a principal $G$-bundle $P''$ over $V$. By \cite{Raghunathan} $P''$ is trivial, hence so is $P'$. Now $P=V_\pr$, so that    $P'=P\times_{Z_\pr}\phi^*P\simeq P\times G$. Thus $P'$ has a section. This is the same thing as a mapping $\Phi$ of $P$ to $\phi^*P$ which respects  the fibers of the maps to $Z_\pr$.   Since $\phi^*P$ has fiber $\pi\inv(\phi(\pi(v)))$ at $\pi(v)$, $\Phi$ is a map of $V_\pr$ to $V_\pr$ which is a lift of $\phi$. Then $\Phi$ extends to a morphism of $V$ to $V$ which lies over $\phi$.  
     This completes the proof of Theorem \ref{thm:genkuttler}.

 \section{Multiples of the adjoint representation}\label{sec:adjoint}  
We give a proof of Theorem \ref{thm:adjoint}. Recall that $V=\oplus_{i=1}^d r_i\lieg_i$ where  $r_i\geq2$ unless $\lieg_i=\liesl_2$, in which case $r_i\geq 3$. Let $G_i$ denote the adjoint group of $\lieg_i$ and set $G=G_1\times\dots\times G_d$. Let $Z_i$ denote $\quot{(r_i\lieg_i)}{G_i}$. Then $Z=Z_1\times\dots\times Z_d$. We cannot just apply Theorem \ref{thm:genkuttler} because $\pi\inv(Z\setminus Z_\pr)$ can have codimension less than four. If $S$ is a stratum of $Z$ with
$\codim S\geq 6$, we show that $\codim\pi\inv(S)\geq 4$  so we have to worry about the strata of codimension at most $5$. We  classify them and show that we can reduce to the case that  they are fixed  by our automorphisim $\phi$ of $Z$. The corresponding isotropy groups are tori or finite, so we can use Proposition \ref{prop:locallift}  to find local holomorphic lifts of $\phi$ over these strata. This enables us to show that the sequence (\ref{eqn:exactsheaf}) is exact on $V^\dag$ where $V^\dag$ is the inverse image of the strata of $Z$ of codimension at most 5 (so $V^\dag$ is open). Since $\codim (V\setminus V^\dag)\geq 4$, the rest of the proof of Theorem \ref{thm:genkuttler} goes through.
  
The strata of $Z$ are products $S_1\times\dots\times S_d$ where $S_i$ is a stratum of  $Z_i$. We will see that, unless   $S_i$ is the principal stratum of $Z_i$, it  has codimension at least three. Thus the strata of $Z$ of codimension at most five are the product of an $S_i$ of codimension at most five with the principal strata of the other factors. 
   
   We begin with the case where $V=r\lieg$, $\lieg$ is simple, and $G$ is the adjoint group of $\lieg$ (i.e., $d=1$).  Let  $S=Z_{(H)}$ be a stratum with associated slice representation $(W,H)$.  We have an $H$-module decomposition $W=W^H\oplus W'$ where $W'$ is an $H$-module.   
   We have the equality of $H$-modules: $V=W\oplus\lieg/\lieh$. Since $V=r\lieg$ we get the formula
   $$
   W=(r-1)\lieg\oplus\lieh.
   $$  
   Note that $W$ is orthogonal. It follows from the slice theorem that the codimension of $S$ in $Z$ is $\dim\quot{W'}H$ and that the codimension of $\pi\inv(S)$ is the codimension of the null cone $\NN(W')$ in $W'$.

The case where $\lieg=\liesl_2$ is trivial and is summed up in the following lemma, which we leave to the reader. For $H=\C^*$ we denote by $\nu_j$ the one-dimensional module of weight $j\in\N$. We let $\theta$ denote a   trivial module of any dimension.
   
    \begin{lemma}\label{lem:sl2}
   Let $V=r\liesl_2$ and $G=\PSL_2$ where $r\geq 3$. Then there are two non principal strata and slice representations as follows.
   \begin{enumerate}
\item $(W,H)=((r-1)(\nu_1+\nu_{-1})+\theta,\C^*)$. We have $\codim S=2r-3$ and $\codim\pi\inv(S)=r-1$.
\item $(W,H)=(V,G) $ so $S=\{\pi(0)\}$. We have    $\codim S=3r-3$ and $\codim\pi\inv(S)=2r-1 $.
\end{enumerate}
   \end{lemma}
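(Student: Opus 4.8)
The plan is to run the Luna slice analysis for the $\PSL_2$-action on $V=r\liesl_2$. \emph{First}, I would determine which reductive subgroups of $\PSL_2$ occur as isotropy groups of closed orbits; such a subgroup $H$ must centralize some tuple $x_1,\dots,x_r\in\liesl_2$. Running down the reductive subgroups of $\PSL_2$ — the group itself, its torus $\C^*$, the normalizer of that torus, and the finite subgroups — one checks that every reductive $H\neq\{1\}$ either fixes no nonzero vector on $\liesl_2$ (so it can only centralize the zero tuple, whose isotropy is all of $\PSL_2$) or fixes exactly a line $\ell$, in which case the pointwise centralizer of $\ell$ is already the torus through $\ell$. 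Since the torus does occur (it is the centralizer of $(h,0,\dots,0)$, $h=\diag(1,-1)$) and the principal isotropy is trivial (e.g. $(h,e+f,0,\dots,0)$ has trivial stabilizer, with $e,f$ the standard nilpotents), $Z$ has exactly two non-principal strata, $S_1=Z_{(\C^*)}$ and $S_2=\{\pi(0)\}$.

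\emph{Next}, I would read off the slice representations from the $H$-module identity $W=(r-1)\lieg\oplus\lieh$ recorded above. For $S_2$, $H=\PSL_2$ and $\lieh=\lieg$, so $W=r\liesl_2=V$. For $S_1$, $H=\C^*$ is the torus and $\lieh$ is the Cartan line, a trivial $\C^*$-module; since $\liesl_2=\nu_1+\nu_{-1}+\theta$ over the root torus, $W=(r-1)(\nu_1+\nu_{-1})+\theta$. \emph{Then} I would compute the codimensions from the two slice-theorem formulas quoted in the text, $\codim_Z S=\dim\quot{W'}H$ and $\codim_V\pi\inv(S)=\codim_{W'}\NN(W')$, with $W'$ the nontrivial part of $W$. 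For $S_1$ we have $W'=(r-1)(\nu_1+\nu_{-1})=\C^{r-1}\oplus\C^{r-1}$ with $\C^*$ acting by weights $\pm1$; its invariants are generated by the pairings $x_iy_j$, so $\dim\quot{W'}{\C^*}=2(r-1)-1=2r-3$, while $\NN(W')=(\C^{r-1}\times0)\cup(0\times\C^{r-1})$ has dimension $r-1$, giving $\codim_{W'}\NN(W')=r-1$. For $S_2$ we have $W'=V$ (as $V^{\PSL_2}=0$), hence $\codim_Z S_2=\dim Z=\dim V-\dim G=3r-3$, the generic isotropy being finite for $r\ge2$.

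The only step that is not a formality is the dimension of $\NN(V)$ entering $\codim_V\pi\inv(S_2)$. Here I would invoke the Hilbert--Mumford criterion: an $r$-tuple lies in the null cone exactly when all its entries lie in the nilradical $\C e$ of a common Borel, so $\NN(V)$ is the image of the collapsing map $G\times^B\C^r\to V$, where $B$ is the Borel stabilizing the line $\C e$; this total space fibers over $G/B=\CP^1$ with fibre $\C^r$, so $\dim\NN(V)=r+1$ and $\codim_V\pi\inv(S_2)=3r-(r+1)=2r-1$. This yields all the asserted data; the remaining assertions (that these are all the strata, that $W$ is orthogonal, and so on) are immediate from the representation theory of $\PSL_2$ used above.
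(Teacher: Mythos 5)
Your proof is correct. The paper explicitly leaves this lemma to the reader ("The case where $\lieg=\liesl_2$ is trivial and is summed up in the following lemma, which we leave to the reader"), so there is no in-paper argument to compare against, but yours is the expected one: classifying isotropy groups of closed orbits via Richardson's reductive-subalgebra criterion (only $\PSL_2$, $\C^*$, and $\{1\}$ occur), reading off slice representations from $W=(r-1)\lieg\oplus\lieh$, computing $\codim S=\dim\quot{W'}H$ directly, and computing $\codim\pi\inv(S)=\codim\NN(W')$ via the collapsing map $G\times^B\lie n^r\to\NN(V)$, which is generically injective off the zero section so that $\dim\NN(V)=r+1$.
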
 
      
 For the next few results we assume that $\rank\lieg\geq 2$.   Let $S$ be a stratum with slice representation $(W=W^H\oplus W',H)$. 
      If $H$ is finite, the codimension  of $\pi\inv(S)$ is $\dim W'$, else 
from   \cite[Corollary 10.2]{SchLifting} we have the estimate
 $$
\codim\pi\inv(S)= \codim\NN(W')\geq \frac 12(\dim\quot {W'}H+\rank H+\mu(W'))
 $$
 where $\mu(W')$ is the multiplicity of the zero weight for the maximal torus of $H^0$.

  \begin{lemma}\label{lem:codim}
Let $S$ be a stratum  with slice representation $(W,H)$. 
 \begin{enumerate}
\item If $\codim S\geq 6$, then $\codim\pi\inv(S)\geq 4$.
\item If $\codim S\leq 5$, then $H^0$ is a torus.
\end{enumerate}
   \end{lemma}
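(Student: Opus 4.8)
We analyze the slice representation $(W,H)$ of a stratum $S = Z_{(H)}$ of $Z = \quot{(r\lieg)}G$ using the two formulas from the slice theorem: $\codim S = \dim\quot{W'}H$ and $\codim\pi\inv(S) = \codim\NN(W')$, together with the identity $W = (r-1)\lieg\oplus\lieh$ (so $W' = (r-1)(\lieg/\lieh)\oplus(\lieh/\liez)$ after removing the $H$-fixed part, where $\liez$ is the center of $\lieh$). The two parts of the lemma are really the contrapositives of each other in disguise: if $H^0$ were \emph{not} a torus, I want to show $\codim\pi\inv(S)\geq 4$ — and a fortiori, since $\codim S\leq\codim\pi\inv(S)$ for orthogonal $W'$ (indeed $W'$ is orthogonal here), also $\codim S\geq 4$; this is not quite $(1)$, so I will need to push harder to get the bound $\codim S\geq 6$ separately in the non-torus case, or rather prove both statements by a single case analysis on $H$.

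\textbf{Step 1: reduce to bounding $\dim\quot{W'}H$ from below when $H^0$ is not a torus.} Suppose $H^0$ is not a torus; then $H^0$ contains a simple factor, say with Lie algebra $\lie m$ of rank $\geq 1$. The inequality from \cite[Corollary 10.2]{SchLifting},
\[
\codim\NN(W')\ \geq\ \tfrac12\bigl(\dim\quot{W'}H + \rank H + \mu(W')\bigr),
\]
shows it suffices to bound the right-hand side. Since $W'\supset (r-1)(\lieg/\lieh)$ and $r\geq 2$, and since $\lieg/\lieh$ is a nonzero $H$-module (as $\rank\lieg\geq 2$ forces $\dim\lieg/\lieh>0$ unless $H=G$, a case handled separately), I expect $\dim\quot{W'}H$ and $\mu(W')$ to already be reasonably large. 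The cleanest route is: $W'$ contains $\lieh/\liez$ as the adjoint representation of $\lieh$, which contributes $\rank(\lieh_{ss})$ to $\mu(W')$ (the Cartan subalgebra has trivial $H^0$-action weights) and contributes $\rank(\lieh_{ss})$ to $\dim\quot{W'}H$ (the Chevalley restriction gives that many basic invariants). Combined with the contribution of $(r-1)(\lieg/\lieh)$ and with $\rank H \geq \rank\lieh$, a bookkeeping argument should yield $\codim\NN(W')\geq 4$ whenever $\lieh_{ss}\neq 0$, which is exactly part $(1)$'s hypothesis-free version restricted to non-torus $H$; and it should simultaneously give $\dim\quot{W'}H\geq 6$ in that case — this is the direction that proves the contrapositive of $(2)$.

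\textbf{Step 2: handle the small/degenerate cases by hand.} When $H = G$ (so $S = \{\pi(0)\}$), we have $W' = V = r\lieg$ and $\codim S = \dim Z = r\cdot\rank\lieg$, $\codim\pi\inv(S) = \codim\NN(r\lieg)$; for $\rank\lieg\geq 2$ and $r\geq 2$ both are large (at least $4$ and at least $6$), so this case is immediate. The genuinely delicate cases are the "small" reductive subgroups $H$ with $\lieh_{ss}$ of rank $1$, i.e.\ $\lieh\simeq\liesl_2\oplus(\text{torus})$: here one must check that the $\liesl_2$-module $\lieg/\lieh$ contributes enough, using $r\geq 2$. I expect a short argument: $\lieg/\lieh$ as an $\liesl_2$-module has dimension $\geq 2$, and $\quot{W'}H$ of $(r-1)(\lieg/\lieh)\oplus\lie{sl}_2$ has dimension growing with $r$, so $\codim S\geq 6$ and $\codim\pi\inv(S)\geq 4$ follow for $r\geq 2$.

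\textbf{The main obstacle.} The hard part will be making Step 1's bookkeeping uniform across all reductive subgroups $H\subseteq G$ with $\lieh$ non-toral, without enumerating subgroups case by case — in particular controlling $\mu(W')$ and $\dim\quot{W'}H$ simultaneously from below in terms of $\rank\lieh_{ss}$ and $\dim(\lieg/\lieh)$. The cleanest tool is the observation that $W'$ always contains the \emph{adjoint representation of $H^0$} (from the $\lieh/\liez$ summand) plus $r-1\geq 1$ copies of $\lieg/\lieh$; the adjoint summand alone forces $\mu\geq\rank\lieh_{ss}$ and contributes $\rank\lieh_{ss}$ basic invariants, and that is typically already enough. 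If a residual handful of low-rank exceptions survives, they can be dispatched individually. I would present the proof by first disposing of the finite-$H$ case (trivial: $\codim\pi\inv(S)=\dim W'\geq (r-1)\dim(\lieg/\lieh)\geq 4$ once $\dim\lieg/\lieh\geq 2$, and $\codim S=\dim\quot{W'}H$ is likewise large), then running the inequality above for positive-dimensional non-toral $H$, and finally noting that $(2)$ is the contrapositive: if $\codim S\leq 5$ then $H^0$ cannot have a semisimple part, so $H^0$ is a torus.
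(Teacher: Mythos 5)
Your high-level setup (use the identity $W=(r-1)\lieg\oplus\lieh$, split $W=r\lieh\oplus(r-1)\lieg/\lieh$, estimate $\codim\NN(W')$ and $\dim\quot{W'}H$) is the right frame, but the substance has two real gaps.

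\textbf{Part (1).} You never use the hypothesis $\codim S\geq 6$, which is precisely what makes this part short. If $H$ is finite then $\codim\NN(W')=\dim W'=\dim\quot{W'}{H}=\codim S\geq 6$; if $H$ is not finite then $\rank H\geq 1$, and the inequality from \cite[Corollary 10.2]{SchLifting} with $\dim\quot{W'}H\geq 6$ and $\mu\geq 0$ already gives $\codim\NN(W')\geq 7/2$, hence $\geq 4$. Your substitute bound ``$\dim W'\geq(r-1)\dim(\lieg/\lieh)$'' for finite $H$ is false as stated: $W'$ is only the nontrivial $H$-summand of $W$, and for $H$ a finite subgroup of a maximal torus a large $H$-fixed part of $\lieg$ drops out, so $W'$ need not contain $(r-1)\lieg$.

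\textbf{Part (2).} The inequality from \cite[Corollary 10.2]{SchLifting} points the wrong way here: it gives a lower bound on $\codim\NN(W')$ \emph{in terms of} $\dim\quot{W'}H$, whereas (2) asks for a lower bound on $\dim\quot{W'}H=\codim S$ itself. So ``Step 1'' cannot be made to run. The genuinely delicate case is $\lieh_{\mathrm{ss}}\simeq\liesl_2$, and your adjoint-summand bookkeeping gives only $\rank\lieh_{\mathrm{ss}}=1$ to $\dim\quot{W'}H$, nowhere near $6$. (You also undercount: $W=r\lieh\oplus(r-1)(\lieg/\lieh)$ contains $r\geq 2$ copies of $\lieh$, not one.) The missing idea, which the paper's proof relies on, is that the slice representation $(W',H)$ of $(r\lieg,G)$ cannot have a codimension-one stratum in its quotient, since $Z$ has none \cite[Proposition 3.1]{SchVectorFields}. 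Because $(2\liesl_2,\SL_2)$ \emph{does} have a codimension-one stratum, $W'$ must strictly contain $2\liesl_2$ as an $\SL_2$-module, and orthogonality then forces the extra part to have one of the shapes $R_{2m+1}$ or $R_m\otimes\nu_\alpha+R_m\otimes\nu_{-\alpha}$ with $m\geq 1$, each of which pushes $\dim\quot{W'}H$ to $\geq 6$. For $\rank\lieh_{\mathrm{ss}}\geq 2$ the two copies of $[\lieh,\lieh]$ alone already give $\dim\quot{W'}H\geq 6$. Without the no-codimension-one-strata constraint your uniform bookkeeping will not close the rank-one case.
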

 
 \begin{proof}  For (1) note that $\codim \pi\inv(S)\geq 6$ if $H$ is finite, else $\rank H\geq 1$ and our estimate above gives that $\codim\pi\inv(S)\geq 7/2$, hence $\codim\pi\inv(S)\geq 4$. For (2), suppose that  $H$ has semisimple rank at least two. Then the action of $H$ on $W'$ contains at least twice   $[\lieh,\lieh]$ which implies that $\dim\quot{W'}H\geq 6$. If $H$ has semisimple rank 1, then $W'$, as $\SL_2$-module, contains more than $2\liesl_2$ since this representation  has   codimension one strata. 
Thus, at a minimum, we have an orthogonal representation of $\SL_2$ times a torus $T$  
of the form $2\liesl_2+R_{2m+1}$, $m\geq 1$, or of the form $2\liesl_2+R_{m}\otimes\nu_\alpha+R_m\otimes\nu_{-\alpha}$, $m\geq 1$. Recall that $R_m=S^m(\C^2)$ and we denote by $\nu_\alpha$  the one-dimensional representation where $T$ acts by the character $\alpha$. Thus we get a quotient of dimension at least 6. Hence $H^0$ is a torus and we have (2).
   \end{proof}

We call a stratum $S$ \emph{subprincipal\/} if  its slice representation $(W,H)$ is nontrivial and the  closed orbits of $(W,H)$  are either fixed points or principal. Then $S$ is not in the closure of any stratum except the principal one. From \cite{Kuttler} we borrow the following.

\begin{lemma} Let $V=r\lieg$ be as above and let $T$ denote a maximal torus of $G$.
 Let $S$ be a subprincipal stratum. Then $H$ is conjugate to a subgroup of $T$ which is finite or a one-dimensional torus.
\end{lemma}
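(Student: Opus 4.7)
From the slice theorem, $V \simeq W \oplus \lieg/\lieh$ as $H$-modules, so the identification $V = r\lieg$ yields $W \simeq (r-1)\lieg \oplus \lieh$. The plan is to extract constraints on $H$ from the subprincipal hypothesis, which says that every closed $H$-orbit in $W$ has isotropy either the principal isotropy of $(W,H)$ or all of $H$. Since $G$ is adjoint (so $Z(G)=\{e\}$) and $r\geq 2$, the principal $H$-isotropy on $W$ is trivial: for generic $v\in\lieg$ the $H$-stabilizer is $H\cap T_v$ for the Cartan $T_v$ through $v$, and pairs of generic Cartans intersect in $Z(G)=\{e\}$. So the subprincipal hypothesis becomes: every closed $H$-orbit in $W$ has isotropy $\{e\}$ or $H$.

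First I would show that $H^0$ is a torus. If $[\lieh,\lieh]\neq 0$, pick $x\in\lieh$ regular semisimple in $\lieh$; then $(0,\ldots,0,x)\in(r-1)\lieg\oplus\lieh$ has closed $H$-orbit, and its isotropy contains a maximal torus of $H^0$, which is a proper positive-dimensional subgroup of $H^0$, violating the trivial/$H$ dichotomy. Next, writing $T_H:=H^0$ and assuming $\dim T_H\geq 2$, I would pick a root $\alpha$ of $G$ with $\alpha|_{T_H}\neq 0$ (such exists since $T_H\not\subset Z(G)$) together with nonzero $v_\pm\in\lieg_{\pm\alpha}$. The $T_H$-orbit of $w:=v_++v_-$, viewed in one copy of $\lieg\subset(r-1)\lieg\subset W$, is the closed hyperbola $\{x_+x_-=v_+\otimes v_-\}$ inside $\lieg_\alpha\oplus\lieg_{-\alpha}$, with isotropy $\ker(\alpha|_{T_H})$, a positive-dimensional proper subtorus of $T_H$. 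This is intermediate between $\{e\}$ and $H$, a contradiction. Hence $\dim T_H\leq 1$.

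To conclude $H\subset T$ for some maximal torus of $G$, it suffices to show $H$ is abelian, since any abelian reductive subgroup of a reductive group lies in a maximal torus (all elements are semisimple, and an abelian collection of commuting semisimple elements extends to a torus). Because $\dim T_H\leq 1$, any $h\in H$ acts on $T_H$ through either the identity or inversion. If inversion occurs, then for $0\neq x\in\lieh$ the element $(0,\ldots,0,x)\in W$ has $H$-orbit $\{(0,\ldots,0,\pm x)\}$ and isotropy exactly $T_H$, again intermediate, which is impossible. Hence $H\subset C_G(T_H)$, which is a Levi subgroup containing $T_H$ in its center.

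It remains to force $H/H^0$ into the torus direction inside $C_G(T_H)$; I expect this to be the main obstacle. The idea is to unpack the subprincipal dichotomy applied to test vectors of the form $(v,0,\ldots,0)\in(r-1)\lieg$ with $v\in\lieg^h\setminus\{0\}$: such a vector has $H$-isotropy containing $h$, hence equal to $H$, forcing $v\in\lieg^H$ and therefore $\lieg^h=\lieg^H$ for every non-identity $h\in H$. This freeness of $H$ on $\lieg/\lieg^H$ (combined with $H$ reductive inside the Levi $C_G(T_H)$) will then force commutativity of $H$, because two non-commuting semisimple elements of a reductive group necessarily share a non-trivial common fixed vector in a non-fixed part of the adjoint representation. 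Once $H$ is abelian it embeds in a maximal torus $T\supset T_H$ of $G$, and by step two this torus intersects $H$ in a subgroup of dimension $\leq 1$, giving the statement.
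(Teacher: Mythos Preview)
The paper does not give its own proof of this lemma; it simply borrows the statement from Kuttler \cite{Kuttler}. So there is nothing to compare against directly, and I will just assess your argument.

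Your reduction to $H^0$ a torus of dimension at most one is correct, and the argument that $H$ centralizes $T_H=H^0$ (when $\dim T_H=1$) via the test vector $x\in\lieh$ is fine. The real gap is in the final step, where you claim $H$ is abelian.

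First, in the test-vector argument ``$v\in\lieg^h\setminus\{0\}$ has $H$-isotropy containing $h$, hence equal to $H$'', you are implicitly using that $Hv$ is closed. When $\dim T_H=1$ this fails for $v$ of nonzero $T_H$-weight; you only get $\lieg^h\cap\lieg^{H^0}=\lieg^H$ for $h\neq e$.

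Second, even granting $\lieg^h=\lieg^H$ for all $h\neq e$ (the finite case), your stated reason --- ``two non-commuting semisimple elements necessarily share a non-trivial common fixed vector in a non-fixed part of the adjoint representation'' --- is neither precise nor true as a general principle: freeness of a finite group on the complement of its fixed space does not force commutativity (think of $Q_8$ on $\C^2$).

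The correct finishing observation is much simpler. Pick any $h\in H\setminus\{e\}$; since $h$ is semisimple (because $H^0$ is a torus and $H/H^0$ is finite, so no nontrivial unipotents) and commutes with $T_H$, it lies in a maximal torus $T_1\supset T_H$ of $G$. Then $\Lie(T_1)\subset\lieg^h\cap\lieg^{H^0}$, and the subprincipal dichotomy (applied to vectors in $\lieg^{H^0}$, whose $H$-orbits are finite hence closed) gives $\Lie(T_1)\subset\lieg^H$. Therefore $H\subset C_G(T_1)=T_1$. This simultaneously gives $H$ abelian and $H\subset T$, with no separate ``freeness implies abelian'' step needed.
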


\begin{corollary} \label{cor:finiteH}Suppose that $\codim S\leq 5$ and that $H$ is finite.  
\begin{enumerate}
\item  If $S$ is subprincipal, then it has codimension $4$.
\item  If $S$ is not subprincipal, it has codimension $5$ and is contained in the closure of  a subprincipal stratum $S'$ of codimension four  where the associated isotropy group $H'$ is finite.
\end{enumerate}
\end{corollary}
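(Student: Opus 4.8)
The plan is to reduce the statement to combinatorics of the root system of $\lieg$, settle (1) with the lemma above together with a parity argument, and then deduce (2) by climbing the stratification. First I would record the codimension formula: at a point of $S$ with finite nontrivial isotropy $H$ the slice representation is $W=(r-1)\lieg$ as an $H$-module (since $\lieh=0$), and writing $W=W^H\oplus W'$, $\lieg=\lieg^H\oplus\lieg'$ we have $W'=(r-1)\lieg'$; as $H$ is finite, $\NN(W')=\{0\}$, so by the slice theorem $\codim_Z S=\dim W'=(r-1)\dim\lieg'$. The next point is that $\dim\lieg'\ge 4$. Because $G$ is adjoint, $H$ acts faithfully on $\lieg$, so some $h\in H$ acts nontrivially; being semisimple of finite order it may be conjugated into $T$, and then $\lieg^{\langle h\rangle}=\liet\oplus\bigoplus_{\alpha\in\Delta_0}\lieg_\alpha$, where $\Delta_0=\{\alpha\in\Delta:\alpha(h)=1\}$ is a proper, symmetric, closed subset of the set $\Delta$ of roots. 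If $\Delta\setminus\Delta_0$ were equal to $\{\pm\alpha\}$ for a single pair $\pm\alpha$, then—since $\Delta$ is irreducible of rank $\ge 2$—there would be a root $\beta\ne\pm\alpha$ with $\langle\alpha,\beta^\vee\rangle>0$, hence $\alpha-\beta\in\Delta$; since $\beta,\alpha-\beta\ne\pm\alpha$, both lie in $\Delta_0$, forcing $\alpha=\beta+(\alpha-\beta)\in\Delta_0$, a contradiction. Therefore $|\Delta\setminus\Delta_0|\ge 4$, and since $\lieg^H\subseteq\lieg^{\langle h\rangle}$ this gives $\dim\lieg'\ge 4$. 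Consequently $\codim S\ge 4(r-1)$, so the hypothesis $\codim S\le 5$ forces $r=2$ and hence $\codim S=\dim\lieg'\in\{4,5\}$.

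To prove (1), assume $S$ is subprincipal. By the lemma above, $H$ is then conjugate into $T$, so $\lieg'=\bigoplus_{\alpha\notin\Delta_H}\lieg_\alpha$ with $\Delta_H=\{\alpha\in\Delta:\alpha|_H\equiv 1\}$; in particular $\dim\lieg'$ is even, and combined with $\dim\lieg'\in\{4,5\}$ this gives $\dim\lieg'=4$, i.e.\ $\codim S=4$.

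To prove (2), assume $S$ is not subprincipal. Then its slice representation $(W',H)$ has a closed orbit which is neither a fixed point nor principal; its isotropy group, being a subgroup of $H$, is finite, and it is nontrivial because the faithful linear $H$-action on $W'$ has trivial principal isotropy. That orbit yields a non-principal stratum whose closure strictly contains $\overline S$. Now choose $S'$ maximal, with respect to the closure order, among the non-principal strata whose closure strictly contains $\overline S$. Its isotropy $H'$ is again a finite subgroup of $H$, and $S'$ is subprincipal: otherwise the slice representation at $S'$ would, in the same way, produce a non-principal stratum with closure strictly larger than $\overline{S'}$, contradicting the choice of $S'$. Since $\codim S'<\codim S\le 5$, part (1) applies to $S'$ and gives $\codim S'=4$; hence $\codim S=5$, and $S'$ is the stratum required in the statement.

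The main obstacle I anticipate lies in the bookkeeping behind (2): to run the maximality argument one must know that every stratum whose closure contains $\overline S$ has finite isotropy—which is automatic since, by the slice theorem at $S$, such an isotropy group is conjugate to a subgroup of $H$—and that the successive slice representations encountered are faithful finite-group actions, so that \emph{non-principal} and \emph{nontrivial isotropy} mean the same thing. For the concrete algebras in which $V$ fails to be $4$-principal, namely when some $r_i\lieg_i$ equals $2\liesl_3$, $2\lie{so}_5$ or $2\liesl_4$, this can in addition be verified directly against the explicit list of small-codimension strata of the relevant quotients.
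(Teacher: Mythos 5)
Your proof is correct and follows essentially the same route as the paper's: use the slice formula $W=(r-1)\lieg\oplus\lieh$ with $\lieh=0$, conjugate into $T$ and use $\pm\alpha$-pairing, then climb the closure order for (2). There are two small but worthwhile differences. First, for (1) the paper argues parity (codimension is even) and then rules out codimension $2$ by an adjacent-simple-root argument; you instead prove directly that $\dim\lieg'\geq 4$ for \emph{any} nontrivial finite $H$ via a closed-symmetric-root-subset argument, which both forces $r=2$ and rules out codimensions $\leq 3$ in one stroke (and applies even when $H\not\subset T$, unlike the paper's argument). Second, for (2) the paper is terse — it produces an $S'$ of smaller codimension with finite isotropy and then immediately invokes (1) — whereas you make explicit the necessary maximality induction showing that one can take $S'$ subprincipal, and you verify that the intermediate isotropy groups stay finite and that faithfulness of $H$ on $W'$ guarantees a genuinely intermediate isotropy group exists. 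These refinements fill real (if minor) gaps in the paper's wording; the substance is the same.
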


\begin{proof}
For (1) we know that $H$ is contained in a maximal torus $T$ of $G$. The action of $H$ on $(r-1)\lieg$ (which is the slice representation up to trivial factors) acts on the root spaces $\lieg_\alpha$ via the roots $\alpha$ applied to $H$ (thought of as characters of $T$). Since the roots appear in pairs $\pm\alpha$, the dimension of $W'$ is even, hence so is the codimension of $S$. If this codimension is two, then $r=2$ and all the simple roots have value one on $H$, except for a pair $\pm \alpha$. But then there is an adjacent simple root $\beta$ such that $\alpha+\beta$ is a root and $\alpha+\beta$ applied to $H$ is nontrivial. Hence we have a contradiction, which gives us (1).

For (2) we know that $S$ corresponds to  a non principal  slice representation of a finite group, hence there is a   stratum $S'$ of codimension less than 5 corresponding to a finite group.  By (1) $S'$ has codimension 4,  hence we have (2).
\end{proof}
   \begin{corollary}\label{cor:codim2strata}
  Let $V=r\lieg$ be as above. Then $Z$ has no codimension two strata. For any codimension three stratum we have    $(W,H)=(2\nu_1+2\nu_{-1}+\theta,\C^*)$.
      \end{corollary}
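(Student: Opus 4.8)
The strategy is to run through the possible slice representations $(W,H)$ attached to a stratum $S$ of $Z=\quot{(r\lieg)}{G}$ of low codimension and use the structure results already established, namely Lemma \ref{lem:codim} (which forces $H^0$ to be a torus when $\codim S\le 5$), Corollary \ref{cor:finiteH} (which rules out small codimension when $H$ is finite), and the identity $W=(r-1)\lieg\oplus\lieh$ of $H$-modules. Recall that $\codim S = \dim\quot{W'}H$ where $W=W^H\oplus W'$. The case $\lieg\simeq\liesl_2$ is disposed of separately by Lemma \ref{lem:sl2}, where the only non-principal strata have codimensions $2r-3\ge 3$ and $3r-3\ge 6$; since $r\ge 3$ there is no codimension-two stratum and the only possible codimension-three stratum occurs for $r=3$, $(W,H)=(2(\nu_1+\nu_{-1})+\theta,\C^*)$, which already has the asserted form. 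So assume $\rank\lieg\ge 2$.

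First I would treat the case where $H$ is finite: by Corollary \ref{cor:finiteH}, a subprincipal stratum with finite $H$ has codimension exactly $4$, and a non-subprincipal one has codimension $5$; in either case $\codim S\notin\{2,3\}$, so finite $H$ contributes nothing. Hence we may assume $H^0$ is a positive-dimensional torus; by the subprincipal lemma (and since any stratum of codimension $\le 5$ whose slice representation has a non-principal, non-fixed closed orbit would be in the closure of a strictly smaller non-principal stratum, forcing codimension $\ge 6$ by an induction on the stratification) we reduce to the case that $H$ is contained in a maximal torus $T$ of $G$ and $H^0$ is a one-dimensional torus $\C^*$. Then $\C^*\subset T$ acts on $(r-1)\lieg$ by the roots $\alpha$ evaluated on $\C^*$, pairing $\lieg_\alpha$ with $\lieg_{-\alpha}$, so $W'$ is a sum of weight spaces $\nu_j\oplus\nu_{-j}$ with $j\ne 0$ (with multiplicities dictated by how many roots take the value $\pm j$ on $\C^*$, counted $r-1$ times), and $\dim\quot{W'}{\C^*}$ equals the number of such unordered pairs. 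In particular $\codim S$ is even, which already kills the codimension-two possibility: if $\codim S=2$ then exactly one pair $\pm\alpha$ of roots is non-trivial on $\C^*$ and $r=2$, but by the connectedness of the Dynkin diagram there is an adjacent simple root $\beta$ with $\alpha+\beta$ a root and $(\alpha+\beta)|_{\C^*}\ne 0$, producing a second pair — contradiction. This is exactly the argument already used in Corollary \ref{cor:finiteH}(1), applied here to the torus case.

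It remains to pin down the codimension-three strata. Since $\codim S$ is even whenever $H^0=\C^*$, no codimension-three stratum can have $H^0$ a torus of rank one, and we have already excluded finite $H$ and tori of rank $\ge 2$ (which give $\dim\quot{W'}H\ge 6$) for such small codimension when $\rank\lieg\ge 2$. Thus for $\rank\lieg\ge 2$ there are \emph{no} codimension-three strata at all, and the only ones arise when $\lieg\simeq\liesl_2$, $r=3$, where Lemma \ref{lem:sl2}(1) gives precisely $(W,H)=(2\nu_1+2\nu_{-1}+\theta,\C^*)$, i.e.\ $(2\nu_1+2\nu_{-1}+\theta,\C^*)$ in the notation $2\nu_1+2\nu_{-1}=2(\nu_1+\nu_{-1})$. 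Assembling: $Z$ has no codimension-two stratum, and every codimension-three stratum has slice representation $(2\nu_1+2\nu_{-1}+\theta,\C^*)$.

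**Main obstacle.** The delicate point is the reduction to $H\subset T$ with $H^0$ one-dimensional (or $H$ finite): one must be sure that no genuinely "larger" isotropy can sneak in at codimension $\le 5$. This is handled by Lemma \ref{lem:codim}(2) together with the subprincipal lemma and the observation that a non-subprincipal small-codimension stratum sits in the closure of a strictly smaller non-principal stratum. Everything else — the parity argument and the adjacency-of-simple-roots trick — is a direct transcription of the reasoning already in place for Corollary \ref{cor:finiteH}.
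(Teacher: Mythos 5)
Your proof contains a genuine error in the central dimension count, and this error propagates into a false conclusion that happens (coincidentally) to still yield the corollary's statement, but for the wrong reason.

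You assert that for $H^0=\C^*$ acting on $W'=\bigoplus_i(\nu_{a_i}\oplus\nu_{-a_i})$, the quotient dimension $\dim\quot{W'}{\C^*}$ ``equals the number of such unordered pairs,'' and you deduce that $\codim S$ is even. This is wrong. The categorical quotient of $k\nu_1\oplus k\nu_{-1}$ by $\C^*$ is the determinantal variety of rank-$\leq 1$ $k\times k$ matrices, which has dimension $2k-1$, i.e.\ $\dim W'-1$; more generally, $\dim\quot{W'}{\C^*}=\dim W'-1$ since the generic orbit is one-dimensional. So $\codim S$ is \emph{odd}, not even, in this situation. (Compare Lemma~\ref{lem:sl2}: there $\dim W'=2(r-1)$ and $\codim S=2r-3=\dim W'-1$.) The oddness still happens to exclude $\codim S=2$, so your first conclusion survives, but the reason you gave is backwards.

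The real damage is to your treatment of codimension three. From $\codim S=2k-1$ one gets $\codim S=3$ precisely when $k=2$, so codimension-three strata with $H^0=\C^*$ are perfectly possible for $\rank\lieg\geq 2$; indeed Lemma~\ref{lem:codim3} shows $(2\liesl_3,\PSL_3)$ has exactly such a stratum. Your claim ``for $\rank\lieg\ge 2$ there are \emph{no} codimension-three strata at all'' is therefore false, and the fact that you landed on the correct slice representation $(2\nu_1+2\nu_{-1}+\theta,\C^*)$ is an accident of the $\liesl_2$ case giving the same answer.

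The paper's proof avoids all of this by a short chain: a codimension-two stratum is subprincipal (there are no codimension-one strata, by \cite[Proposition 3.1]{SchVectorFields}), hence by the Kuttler lemma $H$ is finite or $H\subset T$ with $H^0=\C^*$; Corollary~\ref{cor:finiteH} rules out the finite case; for $H=\C^*$ the argument of Corollary~\ref{cor:finiteH}(1) produces at least two pairs of nontrivial roots, so $\dim W'\geq 4$ and $\codim S\geq 3$. Then a codimension-three stratum is subprincipal (now using that there are no codimension-two strata), finite $H$ is again excluded, and a subprincipal orthogonal faithful $\C^*$-module has all weights $\pm 1$; the equation $2k-1=3$ forces $W'=2\nu_1+2\nu_{-1}$. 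You should rework your argument along these lines, fixing the quotient-dimension formula and dropping the incorrect parity and nonexistence claims.
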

  
  \begin{proof}
  We know that $Z$ has no codimension one strata \cite[Proposition 3.1]{SchVectorFields}. If $S$ is of codimension two, it is subprincipal. By Corollary \ref{cor:finiteH} $H$ cannot be finite, so we have that $H=\C^*\subset T$. As in the proof above, there are at least two pairs of roots $\pm\alpha$, $\pm\beta$ which are non trivial on $H$. Hence $W$ has at least four nonzero weights and $\codim S\geq 3$. Thus there are no codimension two strata. If $S$ has codimension three, then it must be subprincipal, hence the isotropy group is $\C^*$. But a subprincipal orthogonal faithful representation of $\C^*$ has to have  weights in pairs $\pm 1$.  
\end{proof}

  \begin{lemma} \label{lem:codim3}
   Let $V=r\lieg$ be as above, but allow   $\rank G=1$ in which case $r\geq 3$. Suppose that there is a stratum $S$ of codimension three. Then $(r\lieg,G)=(2\liesl_3,\PSL_3)$ or $(3\liesl_2,\PSL_2)$.
    \end{lemma}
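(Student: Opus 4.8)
The plan is to treat the rank-one and higher-rank cases separately. If $\rank\lieg=1$, then $\lieg=\liesl_2$, $G=\PSL_2$ and $r\geq 3$, and Lemma~\ref{lem:sl2} tells us the two non-principal strata have codimensions $2r-3$ and $3r-3$. Since $r\geq 3$ gives $3r-3\geq 6$ and $2r-3\geq 3$, with $2r-3=3$ only when $r=3$, a codimension-three stratum forces $(r\lieg,G)=(3\liesl_2,\PSL_2)$. So the work is all in the case $\rank\lieg\geq 2$, which I address as follows.

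By Corollary~\ref{cor:codim2strata}, a codimension-three stratum $S$ has slice representation $(W,H)=(2\nu_1+2\nu_{-1}+\theta,\C^*)$; in particular $H$ is a one-dimensional subtorus of a maximal torus $T$ of $G$, say $H=\lambda(\C^*)$ for a primitive cocharacter $\lambda$. I would then recover the $H$-module structure of the slice from the identity $W=(r-1)\lieg\oplus\lieh$: the fixed part absorbs $\lieh$, the Cartan, and the root spaces killed by $\lambda$, while the non-fixed part is $W'=(r-1)\bigoplus_{\langle\alpha,\lambda\rangle\neq 0}\lieg_\alpha$, with $\lambda(\C^*)$ acting on $\lieg_\alpha$ with weight $\langle\alpha,\lambda\rangle$. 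Matching $W'$ against $2\nu_1+2\nu_{-1}$ gives two constraints: (a) $\langle\alpha,\lambda\rangle\in\{-1,0,1\}$ for every root $\alpha$, and (b) $(r-1)\,p=2$, where $p$ is the number of pairs $\{\alpha,-\alpha\}$ of roots not orthogonal to $\lambda$.

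The next step is to pin down $\lambda$ using (a). Choosing the positive system so that $\langle\alpha_i,\lambda\rangle\geq 0$ for all simple roots, (a) gives $\langle\alpha_i,\lambda\rangle\in\{0,1\}$, and evaluating (a) on the highest root, whose simple-root coefficients are all $\geq 1$, forces exactly one simple root $\alpha_{i_0}$ to have $\langle\alpha_{i_0},\lambda\rangle=1$; hence $\lambda=\varpi^\vee_{i_0}$ and $\alpha_{i_0}$ occurs with coefficient $\leq 1$ in every root, so $p=\dim\lieg_1$ for the grading $\lieg=\lieg_{-1}\oplus\lieg_0\oplus\lieg_1$ attached to $\varpi^\vee_{i_0}$. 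Now split on the two solutions of (b). If $r-1=2$ (so $r=3$) then $p=1$, so the unique positive root through $\alpha_{i_0}$ is $\alpha_{i_0}$ itself, meaning $\alpha_{i_0}+\alpha_j$ is never a root, i.e.\ node $i_0$ is isolated in the Dynkin diagram — impossible for $\lieg$ simple of rank $\geq 2$. If $r-1=1$ (so $r=2$) then $p=2$, and running over the minuscule nodes and the corresponding $\dim\lieg_1$ — $k(\ell+1-k)$ for $\AA_\ell$, $2\ell-1$ for $\BB_\ell$, $\binom{\ell+1}{2}$ for $\CC_\ell$, $\binom{\ell}{2}$ or $2\ell-2$ for $\DD_\ell$, $16$ and $27$ for $\EE_6,\EE_7$, and none for $\EE_8,\FF_4,\GG_2$ — the equation $\dim\lieg_1=2$ has the single solution $\lieg=\liesl_3$. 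Thus $(r\lieg,G)=(2\liesl_3,\PSL_3)$, and together with the rank-one case this proves the lemma.

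The main obstacle I expect is organizing the weight count cleanly: extracting constraints (a) and (b) from the precise slice data in Corollary~\ref{cor:codim2strata}, reducing $\lambda$ to a minuscule fundamental coweight, and ruling out $p=1$. Once that structural part is in place, checking that $\dim\lieg_1=2$ occurs only for $\liesl_3$ is a short finite verification over the classification of minuscule gradings.
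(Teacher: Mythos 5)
Your proof is correct, and it follows essentially the same route as the paper: dispatch rank one via Lemma~\ref{lem:sl2}; for rank at least two use Corollary~\ref{cor:codim2strata} to pin the slice as $(2\nu_1+2\nu_{-1}+\theta,\C^*)$; observe that the grading condition forces exactly one simple root to be nontrivial on $H$; and then count nonzero root spaces against $(r-1)p=2$. The one cosmetic difference is in the final step: the paper rules out everything except $\liesl_3$ with a short ad hoc adjacency argument (the distinguished simple root must be an end node, $\alpha+\beta$ must be the only positive root through it), whereas you recognize the grading as cominuscule and read $\dim\lieg_1$ off the classification of cominuscule nodes. Both are valid; yours is a bit more systematic and makes the $r=3$, $p=1$ exclusion explicit, while the paper treats that case more tersely.
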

    
\begin{proof} Lemma \ref{lem:sl2} takes care of the case where $G=\PSL_2$, so assume that $\rank\lieg\geq 2$. Corresponding to $S$ we have  $H=\C^*$ which acts with nonnegative weights on the positive root spaces.  If there were two simple roots $\alpha$ and $\beta$ which are nontrivial on $H$, then either $\alpha+\beta$ is a root and is nontrivial  on $H$  or $\alpha+\gamma$ is nontrivial where $\gamma$ is a third simple root. Hence only one simple root $\alpha$ is nontrivial on $H$. Then the other root not vanishing on $H$ is of the form $\alpha+\beta$ where $\beta$ is adjacent to $\alpha$. We clearly must have $r=2$, else there are more than 4 nonzero weight spaces of $H$.
  Moreover, the root $\alpha$ must be adjacent to at most one other simple root   $\beta$ and $\alpha+\beta$ must be the only positive root of $\lieg$ for which $\alpha$ and $\beta$ have positive coefficients. Hence the highest root is $\alpha+\beta$, i.e., $\lieg=\liesl_3$. Finally, one only has to check that $2\liesl_3$ has the desired slice representation.
     \end{proof}

\begin{remark}\label{rem:codim3}
Examining the slice representations of $3\liesl_2$ and $2\liesl_3$ one sees the following where $S$ denotes the codimension three stratum.
\begin{enumerate}
\item If $V=3\liesl_2$, then $S$ contains only a stratum of codimension $6$ in its closure (the image of the fixed point).  
\item If $V=2\liesl_3$, then $S$ contains a stratum of codimension $4$ in its closure (the image of the points with isotropy group the maximal torus) and the next stratum has codimension $6$ (corresponding to an isotropy group of semisimple rank $1$). The closure of $S$ is all the non principal strata of $Z$.
\end{enumerate}
\end{remark}

\begin{corollary}\label{cor:sl3torus}
Suppose that   $\codim S=4$. Then either
\begin{enumerate}
\item $S$ is subprincipal and $H$ is finite, or
\item $H$ is the maximal torus of $\PSL_3$, $V=2\liesl_3$ and $S$ is  not subprincipal.
\end{enumerate}
\end{corollary}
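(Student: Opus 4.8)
The plan is to decide the alternative according to whether $S$ is subprincipal. First I would reduce to $\rank\lieg\ge 2$: if $\rank\lieg=1$ then $V=r\liesl_2$ with $r\ge 3$, and by Lemma~\ref{lem:sl2} every non-principal stratum of $\quot VG$ has codimension $2r-3$ or $3r-3$, so there is no stratum of codimension $4$ and the assertion is vacuous. From now on all the preceding lemmas of this section apply.

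Suppose $S$ is subprincipal. By the lemma on subprincipal strata borrowed from \cite{Kuttler}, $H$ is conjugate into a maximal torus $T$ of $G$ and is either finite or a connected one-dimensional torus. In the second case I would write $H=\C^*\subset T$ and use the $H$-module identity $W=(r-1)\lieg\oplus\lieh$: the subalgebra $\lieh$ and the Cartan subalgebra lie in the zero weight space, while the roots $\alpha$ with $\alpha|_H\ne 0$ occur in pairs $\pm\alpha$, so $W'$ is a sum of weight spaces occurring in $\pm$ pairs, $\dim W'$ is even, and hence $\codim S=\dim\quot{W'}{\C^*}=\dim W'-1$ is odd, contradicting $\codim S=4$. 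Therefore $H$ is finite, which is alternative (1).

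Suppose instead $S$ is not subprincipal. Then its slice representation $(W,H)$ admits a closed orbit that is neither a fixed point nor principal, and via the Luna slice this produces a non-principal stratum $S'\ne S$ of $\quot VG$ with $S\subset\overline{S'}$, so $\codim S'<\codim S=4$. Since $\quot VG$ has no strata of codimension $1$ or $2$ (by \cite[Proposition~3.1]{SchVectorFields} and Corollary~\ref{cor:codim2strata}), we get $\codim S'=3$. By Lemma~\ref{lem:codim3} the presence of a codimension-three stratum forces $(V,G)=(2\liesl_3,\PSL_3)$ or $(3\liesl_2,\PSL_2)$; the latter has no codimension-four stratum by Lemma~\ref{lem:sl2}, so $(V,G)=(2\liesl_3,\PSL_3)$. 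Finally, Remark~\ref{rem:codim3}(2) says that in this case the non-principal strata of $\quot VG$ are exactly one of codimension $3$, one of codimension $4$ whose isotropy group is the maximal torus, and one of codimension $6$; hence $S$ is the codimension-four stratum and $H$ is the maximal torus of $\PSL_3$. This is alternative (2), and $S$ is not subprincipal by the hypothesis of this case.

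The bookkeeping with codimensions is routine; the step needing care is the parity argument in the subprincipal case, which relies on the borrowed lemma giving a \emph{connected} isotropy group $H=H^0=\C^*$. If $H$ had a nontrivial finite part acting on a zero-$H^0$-weight summand of $W'$, then $\dim W'$ need not be even and the parity conclusion would fail; so it is essential that the subprincipal structure theorem yields $H=\C^*$ outright rather than merely $H^0=\C^*$.
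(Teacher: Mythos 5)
Your proof is correct and follows the same route as the paper's: split on whether $S$ is subprincipal; in the subprincipal case invoke the Kuttler lemma to get $H=\C^*$ or $H$ finite and rule out $\C^*$ by noting $\dim W'$ is even (equivalently, $\dim W'=5$ is impossible), and in the non-subprincipal case place $S$ in the closure of a codimension-$3$ stratum and apply Lemma~\ref{lem:codim3} together with Remark~\ref{rem:codim3}(2). The explicit $\rank\lieg=1$ reduction and the step establishing $S\subset\overline{S'}$ with $\codim S'=3$ spell out details the paper leaves implicit, but the argument and the cited lemmas are the same.
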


\begin{proof}
Suppose that $S$ is not subprincipal.  
Then it is in the closure of a codimension 3 stratum and Remark \ref{rem:codim3} gives us the desired result. If $S$ is subprincipal and $H$ is not finite, then $H=\C^*$ and $W'$ has dimension 5, which is impossible. Hence $H$ is finite. 
\end{proof}

\begin{corollary}\label{lem:rankT}
Suppose that $S$ has codimension five. Then $H$ has rank at most one.
\end{corollary}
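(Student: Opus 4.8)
The plan is to argue by a direct dimension count on the slice representation, proceeding by contradiction. Suppose $\codim S=5$ while $\rank H\geq 2$. By Lemma~\ref{lem:codim}(2), $H^0$ is a torus, so it is a torus of rank $k\geq 2$; after conjugating I may take $H^0=T'\subseteq T$, so that $\lieh=\liet'$ is $k$-dimensional and $T'$ acts trivially on it. The first point I would record is that, because $G$ is the adjoint group of $\lieg$, the nonzero weights of $T'$ on $\lieg/\lieg^{T'}$ are exactly the restrictions to $T'$ of the roots $\alpha$ with $\alpha|_{T'}\neq 0$; these occur in pairs $\pm\alpha$, they generate $X(T')$, and the generic $T'$-stabilizer on $\lieg/\lieg^{T'}$ is $T'\cap Z(G)=\{e\}$.

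Next, from the slice identity $W=(r-1)\lieg\oplus\lieh$ I would extract, as $T'$-modules, a decomposition $W'\cong (r-1)(\lieg/\lieg^{T'})\oplus W'_0$, where $W'_0$ is the $T'$-fixed part of $W'$ (a sum of copies of $\lieg^{T'}/\lieg^{H}$ and of $\liet'/(\liet')^{H}$). Let $m$ be the number of roots of $\lieg$ not vanishing on $T'$ and set $c=\dim W'_0$. Since the weights on the nonfixed summand form a full-rank collection of $\pm$-pairs, $0$ lies in the interior of the cone they generate, and the standard computation of torus invariants gives
\[
\codim S=\dim\quot{W'}{H}=\dim\quot{W'}{T'}=(r-1)\,m-k+c .
\]

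The argument then rests on an elementary root-system estimate for $m$. I would show that the number of roots nonconstant on a rank-$k$ subtorus is smallest when $T'$ is maximal and $\lieg$ has the fewest roots, which forces $m\geq k(k+1)$, with equality only for $\lieg=\liesl_3$ and $T'$ maximal, and that for $k=2$ the next possible value of $m$ is $8$. Then $(r-1)m-k\geq k^{2}\geq 4$, and this exceeds $5$ whenever $k\geq 3$ or $r\geq 3$. This leaves the case $k=2=r$, where $\codim S=m-2+c$: if $m\geq 8$ this is at least $6$; if $m=6$ then $\lieg=\liesl_3$ with $T'$ maximal, and a short check shows $c$ is then automatically even, so $\codim S=4+c\in\{4,6,8,\dots\}$. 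In no case is $\codim S=5$, so $\rank H\leq 1$. I note as a sanity check that the value $\codim S=4$ which does occur here is exactly the stratum of Corollary~\ref{cor:sl3torus}(2), and that a disconnected $H$ only increases $c$.

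The hard part will be the root-system bookkeeping: checking uniformly over all Dynkin types the lower bound on the number of roots nonconstant on a rank-$k$ subtorus, and handling the single borderline algebra $\liesl_3$ — as well as the possibility of a disconnected $H$ — carefully enough to rule out codimension exactly $5$.
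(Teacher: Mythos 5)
Your approach is genuinely different from the paper's. The paper proves this corollary by a short structural argument that leverages the preceding classification results: if $H$ had rank $k\geq 2$, then inside the slice representation $(W',H)$ one can pick a weight vector $\nu_\alpha$ and find a closed orbit in $\nu_\alpha\oplus\nu_{-\alpha}$ with isotropy of rank $k-1\geq 1$; this produces a stratum $S'$ of codimension $3$ or $4$ with positive-dimensional isotropy and with $S\subset\overline{S'}$. Lemma~\ref{lem:codim3}, Remark~\ref{rem:codim3} and Corollary~\ref{cor:sl3torus} show that the only codimension-$3$ or codimension-$4$ strata with positive-dimensional isotropy live in $3\liesl_2$ or $2\liesl_3$ and none of them has a codimension-$5$ stratum of rank $\geq 2$ in its closure, giving the contradiction. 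This costs almost nothing because the hard combinatorics was already absorbed into the earlier lemmas. Your route instead computes $\codim S$ directly from the slice identity $W=(r-1)\lieg\oplus\lieh$, giving $\codim S=(r-1)m+c-k$, and then rules out the value $5$ by a uniform lower bound on the number $m$ of roots nonconstant on a rank-$k$ subtorus together with a parity observation for the boundary case $\liesl_3$. The dimension count itself is correct (the restrictions of nonvanishing roots generate $X(T')$ because $G$ is adjoint, so the generic $T'$-stabilizer is finite and $\dim\quot{W'}{H}=\dim W'-k$), and the parity claim for $c$ in the $\liesl_3$ case holds because $H\subset N_G(T)$ forces $W^T\cong\liet\otimes\C^2$ with $H/T$ acting only on the first factor, so $c=2(\dim\liet-\dim\liet^{H/T})$.

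The one real gap is exactly the one you flag: the estimate ``$m\geq k(k+1)$, and for $k=2$ either $\lieg=\liesl_3$ with $T'=T$ or $m\geq 8$'' is asserted but not proved, and establishing it uniformly over all Dynkin types and all rank-$k$ subtori (including those not cut out by roots) is a nontrivial piece of root-system bookkeeping — essentially the same kind of case analysis the paper already carried out to prove Lemma~\ref{lem:codim3} and Corollary~\ref{cor:sl3torus}, which is precisely what the paper's proof is designed to reuse rather than redo. A small inaccuracy: equality $m=k(k+1)$ holds for $\liesl_{k+1}$ with $T'=T$ for every $k$, not only for $\liesl_3$; this does not affect your argument since $k^2\geq 9$ already exceeds $5$ for $k\geq 3$, but the claim should be stated for $k=2$ only. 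Overall your route is viable and more self-contained, at the cost of re-deriving combinatorial facts the paper gets for free; the paper's route is shorter precisely because it treats the codimension-$5$ case as a consequence of the codimension-$3$ and codimension-$4$ classifications rather than attacking it head-on.
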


\begin{proof} 
By Lemma \ref{lem:codim}, $H^0$ is a torus.
Suppose that  $H$ has rank $k>1$. Let $\alpha$ be a character of the $H^0$-action on $W'$ with associated one-dimensional weight space $\nu_\alpha$. Then we have an $H^0$-stable subrepresentation $\nu_\alpha+\nu_{-\alpha}$ of $W'$ and there is a closed orbit in the subrepresentation with isotropy group of rank $k-1$. 
This corresponds to a stratum $S'$ of codimension 3 or 4.  By Lemma \ref{lem:codim3},   Remark \ref{rem:codim3} and Corollary \ref{cor:sl3torus} there are no cases with 
an $S$ and $S'$ with positive dimensional isotropy. Hence $H$ has rank at most one.
\end{proof}

 \begin{proposition}\label{prop:classifS} Suppose that $\codim S$ is $4$ or $5$ and that $H$ is finite. Then $\codim S=4$, $V=2\lie{so}_5$ and $H=\pm 1$ acting by scalar multiplication on $W'=\C^4$.
Suppose that $S$ has codimension $5$. Then $H=\C^*$ and we are in one of the following cases.
\begin{enumerate}
\item $V=4\liesl_2$. Then $S$ is subprincipal and the only  stratum  in its closure  has codimension $9$. 
\item $V=2\lie{so}_5$ and $W'=3\nu_1+3\nu_{-1}$. Then $S$ is subprincipal and the maximal stratum in its closure is that of the maximal torus and has codimension $6$.
\item $V=2\lie{so}_5$ and $W'=2\nu_1+2\nu_{-1}+\nu_2+\nu_{-2}$.  Then $S$ is not subprincipal, it lies in the closure of the stratum of codimension $4$ and  the maximal stratum in the closure of $S$ is that of the maximal torus and has codimension $6$.
\item $V=2\liesl_4$. Then $S$ is subprincipal and the maximal stratum in its closure   has codimension $8$.  
\end{enumerate}
\end{proposition}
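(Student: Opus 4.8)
The plan is to combine the reductions already assembled with a hands-on inspection of root systems. By Corollary~\ref{cor:finiteH}, a codimension-$5$ stratum with finite $H$ can only sit inside the closure of a subprincipal codimension-$4$ stratum with finite isotropy; so for the finite case it suffices to classify the subprincipal strata $S$ with $H$ finite and $\codim S=4$, and then to check that no such closure contains a codimension-$5$ stratum. For the remaining (codimension-$5$) case, Lemma~\ref{lem:codim}(2) and Corollary~\ref{lem:rankT} give that $H^0$ is a torus of rank at most one; by Kuttler's lemma on subprincipal strata (quoted above) a subprincipal $S$ has $H\subset T$ with $H$ finite or a one-dimensional torus, and a non-subprincipal codimension-$5$ $S$ will be forced (by its position in the closure of a strictly shallower stratum, which by the earlier results on small-codimension strata must be the codimension-$4$ stratum of $2\lie{so}_5$) to have $H\subset T$ as well. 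Thus in all cases $H=\C^*$ or $H$ finite, and $H\subset T$.

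\emph{Finite isotropy.} Here $\lieh=0$, so $W=(r-1)\lieg$ as an $H$-module; writing $\lieg=\lieg^H\oplus\lieg'$ with $\lieg'=(\lieg^H)^\perp$ we get $W'=(r-1)\lieg'$ and $\codim S=(r-1)\dim\lieg'$. Since $H\subset T$, we have $\lieg^H=\liet\oplus\bigoplus_{\alpha|_H=1}\lieg_\alpha$ and $\dim\lieg'=2m$, where $m$ counts the positive roots nontrivial on $H$. Subprincipality forces $H$ to act freely on $W'\setminus\{0\}$; since $G$ is adjoint, $H$ acts faithfully on $\lieg'$, and freeness then forces each root nontrivial on $H$ to restrict injectively to $H$, so $H$ is cyclic. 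Since $S$ comes from a closed orbit, $H=Z_G(\lieg^H)=\bigcap_{\alpha|_H=1}\Ker\alpha$. Now $2m(r-1)=4$ leaves $(m,r)\in\{(1,3),(2,2)\}$. If $m=1$, the root-string argument used in the proof of Lemma~\ref{lem:codim3} forces $\rank\lieg=1$, hence $(r\lieg,G)=(3\liesl_2,\PSL_2)$, which has no codimension-$4$ stratum by Lemma~\ref{lem:sl2}. If $(m,r)=(2,2)$, the set $\Phi_0$ of roots trivial on $H$ is a proper full-rank subsystem of $\Phi$ with exactly four roots in its complement, and $\bigcap_{\alpha\in\Phi_0}\Ker\alpha$ must be finite and nontrivial; running through the (iterated Borel--de Siebenthal) full-rank subsystems, the only solution is $\lieg=\lie{so}_5$ with $\Phi_0$ the long-root subsystem, which forces $H=\{\pm1\}$ acting by $-1$ on $\lieg'=\C^4$ — the asserted stratum, which indeed occurs. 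Its slice representation is $(\C^4,\{\pm1\})$ with $\{\pm1\}$ acting by scalars, so $\quot{\C^4}{\{\pm1\}}$ has only the strata $\{0\}$ and its complement; hence $\overline S$ consists of $S$ and one deeper stratum, of codimension $8$, and carries no codimension-$5$ stratum. This disposes of the finite case.

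\emph{Codimension $5$, $H=\C^*$.} Now $H$ is the image of a primitive cocharacter $\lambda$, $\lieg^H$ is the degree-zero part of the $\lambda$-grading, and $H=Z_G(\lieg^H)=\bigcap_{\langle\alpha,\lambda\rangle=0}\Ker\alpha$; for this to be one-dimensional, the parabolic attached to $\lambda$ must be \emph{maximal} (a single node removed). With $\lieg_{>0}$ its nilradical, the nonzero $\C^*$-weights on $W'=(r-1)\lieg'$ occur in signed pairs, so $\codim S=\dim W'-1=2(r-1)\dim\lieg_{>0}-1=5$, i.e.\ $(r-1)\dim\lieg_{>0}=3$, so $(\dim\lieg_{>0},r)\in\{(1,4),(3,2)\}$. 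For $\dim\lieg_{>0}=1$ we get $\lieg=\liesl_2$, $r=4$ (case (1)), whose closure is described by Lemma~\ref{lem:sl2}. For $\dim\lieg_{>0}=3$ we get $r=2$, and inspecting nilradicals of maximal parabolics shows the only simple Lie algebras with a $3$-dimensional one are $\lie{so}_5$ (either simple root removed) and $\liesl_4$ (an end node removed). Reading root heights in $\lieg_{>0}$ gives $W'=3\nu_1+3\nu_{-1}$ for $\liesl_4$ (case (4)) and for $\lie{so}_5$ with the long simple root removed (case (2)), and $W'=2\nu_1+2\nu_{-1}+\nu_2+\nu_{-2}$ for $\lie{so}_5$ with the short one removed (case (3)); in the last case the $\C^*$-orbits meeting $\nu_2\oplus\nu_{-2}$ are closed with stabilizer $\{\pm1\}$, so $S$ is not subprincipal, and by the preceding classification it lies in the closure of the codimension-$4$ stratum of $2\lie{so}_5$. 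In each case the remaining claims about $\overline S$ follow by analyzing $\quot WH$ together with the slices at the deeper strata of $Z$ — identifying which strata have $S$ in their closure (equivalently, at whose points $S$ reappears in the Luna slice) and which of these has smallest codimension — exactly in the style of Remark~\ref{rem:codim3}.

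I expect the real effort to be the root-system bookkeeping of the two middle paragraphs (confirming the four displayed $(V,G)$ exhaust the numerical constraints), and, case by case, the slice-representation analysis pinning down precisely which strata occur in $\overline S$ and which is the shallowest among them; that last point is the main obstacle.
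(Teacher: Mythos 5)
Your overall strategy matches the paper's: reduce to $H\subset T$ with $H$ finite or $H^0\simeq\C^*$, translate the codimension constraint into a bound on the number of roots nontrivial on $H$, and enumerate the possible $(r,\lieg)$ by root-system inspection. The numerical bookkeeping $(r-1)\dim\lieg_{>0}=3$ and the list of maximal parabolics with $3$-dimensional nilradical ($\lie{so}_5$ twice, $\liesl_4$ once) do recover the paper's candidates, and the weight computations for the $\lie{so}_5$ parabolics are correct. However, there are three points where the argument as written has gaps that the paper fills in differently.

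\emph{Maximality of the parabolic.} You assert ``$H=Z_G(\lieg^H)=\bigcap_{\langle\alpha,\lambda\rangle=0}\Ker\alpha$; for this to be one-dimensional, the parabolic attached to $\lambda$ must be maximal.'' But $\lieg^H$ equals the $\lambda$-degree-zero part $\lieg^{H^0}$ only if $H=H^0$, which is part of what is being proved; if $H/H^0\neq 1$, then $\lieg^H\subsetneq\lieg^{H^0}$ and the identity you rely on does not fix the parabolic. The paper instead proves maximality by a limit argument: a point $v$ with $\pi(v)\in S$ lies in $r\lie P_\lambda$; if $\lie P_\lambda\subsetneq\lie P_\mu$ with $\lie P_\mu$ maximal, then $\lim_{t\to 0}\mu(t)\cdot v$ exists, has isotropy containing $\mu(\C^*)$, and since $Gv$ is closed this forces $\mu(\C^*)$ conjugate to $\lambda(\C^*)$, so $\lie P_\lambda$ was already maximal. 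This is the step you skip, and it is not a cosmetic one.

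\emph{Dimension count.} Your equality $(r-1)\dim\lieg_{>0}=3$ again presumes $H=H^0$; otherwise $W'$ can include weight spaces trivial on $H^0$ but not on $H/H^0$, and the count only gives $(r-1)\dim\lieg_{>0}\le 3$. The paper formulates the bound as ``at most $6$ nonzero weights of $H^0$ on $(r-1)\lieg$'' and then eliminates the strict-inequality cases (e.g.\ $2\liesl_3$) by its explicit root-system checks (1)--(4) and the resulting slice-representation analysis. Your argument would need to dispose of these cases as well before concluding $H=\C^*$.

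\emph{Closures and subprincipality.} You defer the identification of the strata in $\overline S$ to ``analyzing $\quot WH$ together with the slices at deeper strata,'' acknowledging this as the main obstacle. In the paper this is carried out concretely: for $2\lie{so}_5$ it lists the isotropy groups (maximal torus $T$; $\SL_2\times\C^*$ with two different slice modules) and computes the maximal proper slice representations of each, showing that in both $\C^*$-cases the maximal torus stratum of codimension $6$ is the shallowest stratum in the closure, that $(2\nu_1+2\nu_{-1}+\nu_2+\nu_{-2},\C^*)$ is not subprincipal and sits under the codimension-$4$ stratum, and that no codimension-$5$ stratum with finite isotropy appears. A parallel enumeration gives the codimension-$8$ statement for $2\liesl_4$. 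These verifications are what certify the itemized claims of the proposition and should be written out.

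Your treatment of the finite case (via full-rank closed root subsystems $\Phi_0$ and $H=Z_G(\lieg^H)$) is a genuinely different and arguably cleaner route than the paper's, which reduces to the candidates already produced by its root-system inspections and then checks each. If you pursue it, you should justify $H=Z_G(\lieg^H)$ explicitly (it holds because, for $r\ge 2$, the generic isotropy of $G$ on $(\lieg^H)^r$ is $Z_G(\lieg^H)$), and verify that the Borel--de~Siebenthal enumeration really exhausts all pairs $(\Phi_0,\lieg)$ with $|\Phi\setminus\Phi_0|=4$ and $\bigcap_{\Phi_0}\Ker\alpha$ finite nontrivial; the paper sidesteps this by bounding the number of simple roots nontrivial on $H$.
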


\begin{proof} We first consider the case where $\codim S=5$ and $H$ has rank $1$.
We use some of the same techniques as \cite{Kuttler}. Let $v\in V$ such that $\pi(v)\in S$. Then there is a $1$-parameter subgroup $\lambda\colon\C^*\to G$ such that $v_0:=\lim_{t\to 0}\lambda(t)\cdot v$ exists where $Gv_0$ is closed and $G_{v_0}$ is conjugate to $H$. Applying an element of $G$ we may assume that $G_{v_0}=H$. Since $\lambda$ fixes $v_0$, $\lambda$ is a $1$-parameter subgroup of $H$ where $H^0=\C^*$. Let $\lie P_\lambda$ denote the parabolic subalgebra corresponding to $\lambda$. Thus $\lie P_\lambda=\{x\in \lieg\mid \lim_{t\to 0}\lambda(t)\cdot x$ exists$\}$ so that $v\in r\lie P_\lambda$, by definition. Hence $\pi\inv(S)\subset G\cdot r\lie P_\lambda$. Now $\lie P_\lambda\subset \lie P_\mu$ where $\lie P_\mu$ is a maximal parabolic corresponding to  a 1-parameter subgroup   $\mu$ of $G$. Note that any point $v$ of $r\lie P_\mu$ has the property that $\lim_{t\to 0}\mu(t)\cdot v\in V^\mu$.  

Now let $v\in V^\lambda$  be on a closed orbit with isotropy group $H$. Since $V^\lambda\subset r\lie P_\lambda\subset r\lie P_\mu$, $\mu(t)\cdot v$ has limit a point where the isotropy group contains $\mu(\C^*)$. But $Gv$ is closed, so that $\mu(\C^*)$ must be conjugate to $\lambda(\C^*)$. Hence we see that $\lie P_\lambda$ was already maximal.

Let $(W,H)$ be the slice representation of $S$ with nontrivial part $W'$.   We may assume that the positive roots pair positively with $\lambda$.  Let $\alpha_1,\dots,\alpha_\ell$ be the simple roots of $\lieg$. Then all the root spaces $\{\lieg_{-\alpha_i}\}_{i\in I}$ lie in $\lie P_\lambda$ where $I$ has cardinality $\ell-1$. 
The kernel of the  $\{\alpha_i\}_{i\in I}$ is $H^0\simeq \C^*$.

Now we calculate the slice representation of $H$, restricted to $H^0$. But, up to trivial factors,  this is just $(r-1)\lieg$, considered as a representation of $H^0$. Thus this representation has at most 6 nonzero weights. But it is easy, by inspecting root systems, to show the following.
\begin{enumerate}
\item Let $\lieg=\AA_4$ and consider the roots restricted to a candidate for $\lambda$ (i.e., three of the simple roots are trivial on $\lambda$). Then the action of $\lambda$ on $\lieg$ has at least four pairs of nonzero weights.
\item Let $\lieg=\BB_3$, $\CC_3$, $\BB_4$ or $\CC_4$. Then the action of $\lambda$ on $\lieg$ has at least five pairs of nonzero weights.
\item Let $\lieg=\DD_4$. Then the action of $\lambda$ on $\lieg$ has at least six pairs of nonzero weights.
\item Let $\lieg=\GG_2$. Then the action of $\lambda$ on $\lieg$ has at least five pairs of nonzero weights.
 \end{enumerate}
Clearly, if $\lieg$ has a subroot system of any type above and the missing root is in the subroot system, then any $\lambda$ has too many nonzero weights. 
This allows one to   eliminate $r\lieg$ for   all Lie algebras of rank at least three, except for $\liesl_4$. For all Lie algebras of rank 2 and for $\liesl_4$, any $\lambda$ has too many nonzero weights if one considers $3$ or more copies of $\lieg$. For example, any $\lambda$ for $\liesl_3$ has at least four pairs of nonzero weights.  Lemma \ref{lem:sl2} takes care of multiples of $\liesl_2$.  Hence we are reduced to two copies of the Lie algebras of types   $\lie{so}_5$ and $\liesl_4$.   

The most straightforward thing to do is to find all the slice representations of our candidates. We can use the techniques of \cite[Lemma 3.8]{SchCoregular}.  We call a slice representation of some $(V',G')$ \emph{maximal proper\/} if it is not a slice representation of any slice representation of $(V',G')$ except for $(V',G')$ itself. Consider the case of $2\lie{so}_5$. Any isotropy group of a nonzero closed orbit in  $2\lie{so}_5$ occurs in the slice representation of  the isotropy group of a nonzero closed orbit in $\lie{so}_5$.
Thus we have to consider the following groups and slice representations.

\begin{enumerate}
\addtocounter{enumi}{4}
\item The maximal torus $T$ acting on the root spaces (plus a trivial representation) where the positive root spaces are $\lieg_\alpha$, $\lieg_\beta$, $\lieg_{\alpha+\beta}$ and $\lieg_{\alpha+2\beta}$.
\item The group with finite cover $\SL_2\times\C^*$ and slice representation $2\liesl_2+ R_1\otimes\nu_1+R_1\otimes\nu_{-1}+\nu_2+\nu_{-2}+\theta$. 
\item The group with finite cover $\SL_2\times\C^*$ and slice representation $2\liesl_2+R_2\otimes\nu_1+R_2\otimes\nu_{-1}+\theta$.
\end{enumerate}
With one exception, the maximal proper slice representations in cases (6) and  (7) are (5) or a slice representation of (5) with isotropy group $\C^*$. The exception is the slice representation $(2\liesl_2+2R_1,\SL_2)$ in (6) corresponding to a stratum of codimension 7. Its unique  maximal proper slice representation   is again a slice representation  of (5). For (5) the maximal proper  slice representations   are   $(3\nu_1+3\nu_{-1},\C^*)$ and $(2\nu_1+2\nu_{-1}+\nu_{-2}+\nu_{-2},\C^*)$. The second case is not subprincipal and has the nontrivial  slice representation $(\C^4+\theta,\pm 1)$ of a codimension four stratum. There is no stratum of codimension five with a finite isotropy group. Our claims for the case of $2\lie{so}_5$ now follow easily. For $2\liesl_4$ one finds similarly that there are two subprincipal strata $S_1$ and $S_2$, corresponding to $(3\nu_1+3\nu_{-1},\C^*)$ and $(4\nu_1+4\nu_{-1},\C^*)$ and these are slice representations of the maximal torus. The maximal stratum in the closure of $S_1$  corresponds to a  $2$-torus and has codimension $8$. This concludes the proof for the case where $\codim S=5$ and $H^0=\C^*$.

If $\codim S=5$ and the isotropy group is finite, then by Corollary \ref{cor:finiteH} we have a stratum $S'$ of codimension 4 which is subprincipal. Thus we need only consider the case where $\codim S=4$ and $H$ is a finite subgroup of the maximal torus $T$ of $G$. Applying the roots to $H$ we see that we must have at most two positive roots $\gamma$ such that $\gamma$ is nontrivial on $H$. If there is only one simple root nontrivial on $H$, then   (1)--(4) of this proof show that we need only consider $2\liesl_4$, $2\lie{so}_5$, $2\liesl_3$ and $3\liesl_3$. For $r\liesl_3$   there are no finite nontrivial isotropy groups and for the other cases we have calculated that only $2\lie{so}_5$ gives us a stratum of codimension 4 with finite isotropy group. If two simple roots $\alpha$ and $\beta$ are nontrivial on $H$ and $G$ has rank at least three, then there is a third simple root $\gamma$ such that $\alpha+\gamma$ or $\beta+\gamma$ is a root giving  at least six roots nontrivial on $H$. So the rank is at most two. One easily sees that $\GG_2$ is not a possibility, so there are no new cases to consider. This completes the proof.
\end{proof}

Let $V=\oplus_{i=1}^d r_i\lieg_i$, $G$ and $Z=Z_1\times\dots\times Z_d$ be as in the beginning of this section. Let $\phi\in\Aut(Z)$. Then by  \cite{SchVectorFields} we know that $\phi$  permutes the strata of the same codimension. The strata of $Z$ are products $S_1\times\dots\times S_d$ where $S_i$ is a stratum of $Z_i$. If $S:=S_{i_1}\times\dots\times S_{i_r}$ is a product of strata where $S_{i_j}\subset Z_{i_j}$, we let $S'$ denote the product of $S$ with the principal strata of the remaining $Z_k$. We say that a stratum $S$ of $Z$ is \emph{simple\/} if it is $S_i'$ for some $S_i\subset Z_i$, otherwise we say that $S$ is \emph{composite}. By Corollary \ref{cor:codim2strata} the strata of codimension at most 5 are simple, hence they are permuted by $\phi$. 

\begin{lemma}
Let $S$ be a stratum of codimension $c$ at most $8$. Then $S$ is simple if and only if $\phi(S)$ is simple.
\end{lemma}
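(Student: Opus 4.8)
The plan is to analyze the codimension of a composite stratum and show that it must exceed $8$, so that all strata of codimension at most $8$ are automatically simple, making the claimed equivalence trivial (since $\phi$ maps strata to strata of the same codimension, and if every codimension-$\le 8$ stratum is simple the statement is vacuously true on both sides). So the real content is a lower bound on codimensions of composite strata. First I would recall that the strata of each $Z_i$ are products of the principal stratum with a nonprincipal stratum, and by Corollary \ref{cor:codim2strata} every nonprincipal stratum of a single $Z_i = \quot{(r_i\lieg_i)}{G_i}$ has codimension at least three; more precisely, codimension-three strata exist only for $\liesl_3$ (with $r_i=2$) and $\liesl_2$ (with $r_i=3$) by Lemma \ref{lem:codim3} and Lemma \ref{lem:sl2}. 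A composite stratum $S = S_1 \times \dots \times S_d$ (under the product decomposition $Z = Z_1 \times \dots \times Z_d$) is, by definition, one where at least two of the $S_i$ are nonprincipal in their factor.

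Next I would bound $\codim S = \sum_i \codim S_i$ for composite $S$. Since at least two indices contribute a nonprincipal factor, and each such factor contributes codimension $\ge 3$, we immediately get $\codim S \ge 6$. To push past $8$ I would argue that at most one of the nonprincipal factors can have codimension exactly $3$, or more carefully enumerate: if exactly two factors are nonprincipal, with codimensions $c_1, c_2 \ge 3$, then $\codim S = c_1 + c_2$; to have $\codim S \le 8$ we would need $\{c_1,c_2\}$ with $c_1 + c_2 \le 8$, i.e. $(3,3), (3,4), (3,5), (4,4)$. Each of these forces very restrictive data on the pairs $(r_i \lieg_i, G_i)$: a codimension-$3$ factor forces $(r_i\lieg_i, G_i) \in \{(3\liesl_2,\PSL_2),(2\liesl_3,\PSL_3)\}$, and codimension $4$ or $5$ factors are classified by Corollary \ref{cor:sl3torus} and Proposition \ref{prop:classifS}. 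The point, however, is that I do not actually need to exhibit such composite strata or rule them out: I only need that they all have codimension $\ge 6$, and that $\phi$ permutes strata of equal codimension.

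Wait — I should reread the claim. The lemma asserts: for $S$ of codimension $c \le 8$, $S$ is simple iff $\phi(S)$ is simple. So the honest approach is: (i) $\phi$ maps strata to strata of the same codimension \cite{SchVectorFields}; (ii) therefore it suffices to show that among strata of a fixed codimension $c \le 8$, being simple is a property that either all share or that is preserved — but this is not automatic. So the real plan must be: show that if $S$ is a \emph{simple} stratum of codimension $c \le 8$, then it is one of an explicitly short list of types (coming from the single-factor analysis: Lemma \ref{lem:sl2}, Corollary \ref{cor:codim2strata}, Corollary \ref{cor:sl3torus}, Proposition \ref{prop:classifS}, plus codimensions $6,7,8$), and that composite strata of codimension $\le 8$ either do not exist or are distinguishable from simple ones. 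The cleanest route: prove that every composite stratum has codimension $\ge 6$ and, in each case where a composite stratum of codimension $\le 8$ could exist (the combinations $(3,3),(3,4),(3,5),(4,4)$ above, with the factor types pinned down by the earlier classification results), one checks that no simple stratum of the same codimension is isomorphic-as-a-stratum to it — or more simply, $\phi$ being strata-preserving combined with a normal-form/rigidity argument shows $\phi$ cannot carry a simple stratum to a composite one.

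The main obstacle will be exactly this last point: ruling out that $\phi$ sends a simple stratum to a composite one of the same codimension. I expect the resolution is that $\phi$ preserves more than just codimension — it preserves the closure relations among strata (since $\phi$ is a homeomorphism), and the poset of strata below a simple stratum looks different from the poset below a composite one. For instance, by Remark \ref{rem:codim3} and Proposition \ref{prop:classifS}, the closures of the low-codimension simple strata contain very specific chains (a codimension-$3$ simple stratum from $2\liesl_3$ has a codimension-$4$ stratum then a codimension-$6$ stratum in its closure), whereas a composite stratum $S_i' \cap S_j'$-type set has in its closure the products of the separate closures, producing a different Hasse diagram (in particular a $\Z^2$-like lattice rather than a chain). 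So the plan's final step: invoke that $\phi$ preserves inclusions of stratum closures, enumerate (using the cited classification results) the finitely many simple strata of codimension $\le 8$ and the finitely many possible composite ones, and observe the closure posets do not match, hence $\phi$ preserves simplicity. I would present this as a finite case-check referencing Lemmas \ref{lem:sl2}, \ref{lem:codim3}, Corollaries \ref{cor:codim2strata}, \ref{cor:sl3torus}, \ref{lem:rankT} and Proposition \ref{prop:classifS}, flagging that the bookkeeping is routine once the classification of low-codimension strata in a single factor is in hand.
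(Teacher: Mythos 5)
Your first instinct (that composite strata all have codimension $>8$, making the lemma vacuous) is false, and you correctly back away from it: two nonprincipal factors each contribute at least $3$, so composite strata of codimension $6$, $7$, or $8$ can exist. After that correction, your general plan --- use the fact that $\phi$ is a homeomorphism preserving closure relations among strata to distinguish simple from composite --- is the right idea, but your implementation diverges from the paper's and leaves a real gap.

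You propose to compare the \emph{downward} closure posets (the Hasse diagram of strata contained in $\overline S$), asserting that a simple stratum's poset is a chain while a composite one's is a $\Z^2$-like lattice, and then to verify this by a finite case-check which you declare routine. That dichotomy is not self-evident, and you never actually verify it; since the poset of strata inside a single $Z_i$ factor is itself not generally a chain, the claim needs real work. The paper instead looks \emph{upward}: a composite stratum $S=(S_1\times R_2)'$ of codimension $\le 8$ is the maximal-dimensional stratum in $\overline{S_1'}\cap\overline{R_2'}$ where $S_1'$, $R_2'$ are \emph{simple} strata of codimension at most $5$. Those are already known (just before the lemma) to be permuted by $\phi$, say $\phi(S_1')=S_i'$ and $\phi(R_2')=R_j'$. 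Then $\phi(S)$ is the maximal stratum in $\overline{S_i'}\cap\overline{R_j'}$, and it is composite exactly when $i\ne j$. This is the one genuinely nontrivial step, which your sketch does not isolate: ruling out $i=j$. The paper does this concretely --- if $i=j$ and $\codim S_1=3$, Remark \ref{rem:codim3} would force $R_j'$ into $\overline{S_i'}$, and applying $\phi\inv$ would put $R_2'\subset\overline{S_1'}$, absurd since $R_2$ and $S_1$ live in different factors --- and then handles the $(4,4)$ case similarly. So the upward argument both avoids your unverified poset dichotomy and makes the finite check genuinely short. Your approach might be salvageable, but as written it asserts a structural claim it does not prove, and it misses the specific obstruction the paper addresses with Remark \ref{rem:codim3}.
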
 
 \begin{proof} 
 If $S$ is composite, then without loss of generality we have that $S=(S_1\times R_2)'$  where $S_1$ is a stratum of $Z_1$ and  $R_2$ is a stratum of $Z_2$. The codimensions of the strata add up to $c\geq 6$.  
Suppose that $\codim S_1=3$. Then for some $i$, $\phi(S_1')=S_i'$ where $\codim S_i=3$  in $Z_i$. Similarly, $\phi(R_2')=R_j'$ where $\codim R_j=c-3$.   If  $i=j$, then by Remark \ref{rem:codim3}   we would have that $R_j'$ is in the closure of $S_i'$. Applying $\phi\inv$ we would get that $R_2'$ is in the closure of $S_1'$ which is absurd. Hence $i\neq j$. Now $(S_1\times R_2)' $ is the maximal dimensional stratum in $\overline{S_1'}\cap\overline{R_2'}$ and similarly for $(S_i\times R_j)'$. It follows that $\phi$ sends $(S_1\times R_2)'$ onto $(S_i\times R_j)'$. Hence $S$ composite implies that $\phi(S)$ is composite. Using the same argument for $\phi\inv$ we get the converse. The remaining case is the product of two strata of codimension four which is handled similarly.
 \end{proof}

 \begin{corollary}\label{cor:preserveS}
 Let $\phi\in\Aut(Z)$. Then there is a $\sigma$-equivariant $\Psi\in\GL(V)$ inducing $\psi\in\Aut(Z)$ such that $\psi\inv\circ\phi$ preserves each stratum of codimension at most $5$. 
 \end{corollary}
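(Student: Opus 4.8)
The plan is to show that the action of $\phi$ on the finite set of strata of $Z$ of codimension at most $5$ is of a purely combinatorial nature — namely, that it permutes the simple factors $Z_i$ among isomorphic ones — and then to realise that permutation by a single element of $\GL(V)$. The formal input is the following: by \cite{SchVectorFields}, $\phi$ permutes the strata of each fixed codimension; by the previous lemma it preserves simplicity of strata of codimension at most $8$; and, being an automorphism of the variety $Z$, it preserves the closure‑containment order on strata, and therefore also the property of a stratum being subprincipal (a coatom of the stratification poset). Hence $\phi$ permutes the simple codimension‑$\le 5$ strata while respecting codimension, the closure relations among them, and subprincipality.

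Next I would feed in the classification assembled above. By Corollary \ref{cor:codim2strata}, Lemma \ref{lem:sl2}, Lemma \ref{lem:codim3}, Corollaries \ref{cor:finiteH}, \ref{cor:sl3torus}, \ref{lem:rankT} and Proposition \ref{prop:classifS}, a simple stratum $S_i'$ of codimension $\le 5$ forces $r_i\lieg_i$ to be one of $3\liesl_2$, $4\liesl_2$, $2\liesl_3$, $2\lie{so}_5$, $2\liesl_4$; and for each of these five types the list of its own codimension‑$\le 5$ strata, together with their codimensions, their subprincipality, and the codimensions of the strata immediately below them, is rigid and pairwise distinct across the five types (the presence of a codimension‑$3$ stratum singles out $2\liesl_3$ and $3\liesl_2$, which are then separated by whether a codimension‑$4$ stratum lies in its closure, cf.\ Remark \ref{rem:codim3}; a codimension‑$5$ stratum has the codimension of the next smaller stratum equal to $9$, $8$ or $6$ according as the factor is $4\liesl_2$, $2\liesl_4$ or $2\lie{so}_5$; the $2\lie{so}_5$‑factors are the only ones with a codimension‑$4$ subprincipal stratum with finite isotropy). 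Moreover, for $i\ne j$ the simple strata $S_i'$ and $S_j'$ are closure‑incomparable, since the $j$‑th coordinate of $\overline{S_i'}$ is all of $Z_j$ whereas that of $S_j'$ avoids the principal stratum of $Z_j$; thus the factor a codimension‑$\le 5$ stratum ``comes from'' is encoded in the closure relations and is respected by $\phi$.

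Combining these observations, $\phi$ induces a permutation $\rho$ of $\{1,\dots,d\}$ with $r_i\lieg_i\simeq r_{\rho(i)}\lieg_{\rho(i)}$ for all $i$ (set $\rho(i)=i$ whenever $Z_i$ has no non‑principal stratum of codimension $\le 5$), and for each such $i$ it carries the codimension‑$\le 5$ strata of $Z_i$ to those of $Z_{\rho(i)}$; since within each of the five types every (codimension, subprincipality) class contains at most one stratum of codimension $\le 5$, this correspondence is exactly the one induced by any isomorphism $r_i\lieg_i\simeq r_{\rho(i)}\lieg_{\rho(i)}$. Fixing such isomorphisms (the identity when $\rho(i)=i$), I would let $\Psi\in\GL(V)$ be the resulting permutation of the summands of $V=\bigoplus_i r_i\lieg_i$. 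Then $\Psi$ normalises $G=\prod_i G_i$, it is $\sigma$‑equivariant for $\sigma$ the corresponding permutation automorphism of $G$, it descends to a quasilinear $\psi\in\Aut(Z)$, and by construction $\psi$ agrees with $\phi$ on every codimension‑$\le 5$ stratum; hence $\psi\inv\circ\phi$ fixes each such stratum setwise, as required.

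The step I expect to be the real work is the second one: checking carefully, from the already‑established classification, that codimension together with subprincipality and the closure relations among codimension‑$\le 5$ strata genuinely separate the five admissible types and pin down $\phi$'s action on those strata to be the one coming from a permutation of isomorphic factors — including keeping track of the occasional composite stratum sitting in the closure of one of them. The construction of $\Psi$ and $\sigma$ afterwards, and the reduction to this situation at the start, are routine.
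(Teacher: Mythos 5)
Your proposal is correct and follows essentially the same route as the paper: both proofs use the classification of codimension-$\leq 5$ strata (Lemma~\ref{lem:sl2}, Lemma~\ref{lem:codim3}, Remark~\ref{rem:codim3}, Corollaries~\ref{cor:finiteH}, \ref{cor:sl3torus}, Proposition~\ref{prop:classifS}) together with $\phi$-invariance of codimension, simplicity (for codimension $\leq 8$) and closure relations to conclude that $\phi$ acts on these strata by a permutation of isomorphic simple factors $r_i\lieg_i$, which is then realized by a $\sigma$-equivariant linear $\Psi$ permuting the summands of $V$. The paper organizes this as an induction that peels off one $\phi$-orbit of strata at a time (starting with a codimension-$3$ stratum and following the chain of closures), rather than fixing the permutation $\rho$ of $\{1,\dots,d\}$ in a single pass as you do, but the underlying mechanism is the same.
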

 
 \begin{proof}  First suppose that $S=S_1'$ where $S_1$ has codimension $3$ and $r_1\lieg_1=2\liesl_3$. Then there is a stratum $R_1$ of codimension $4$ in $\overline{S_1}$. Now $\phi(S)=S_i'$ where $S_i$ has codimension $3$.  If $i\neq 1$ we may suppose that $i=2$. Then $\phi(R_1')$ is a simple stratum of codimension 4 which is in the closure of $S_2'$. The only possibility is that $\phi(R_1')$ is $R_2'$ where $R_2$ is a codimension 4 stratum of $Z_2$. It follows from Remark \ref{rem:codim3} that   $r_2\lieg_2=2\liesl_3$. Continuing inductively  and perhaps renumbering the factors of $\lieg$, we find that $r_i\lieg_i\simeq 2\liesl_3$ for $1\leq i\leq k$ where $\phi(S_i')=S_{i+1}'$ and $\phi(R_i')=R_{i+1}'$ for $1\leq i<k$ and $\phi(S_k')=S_1'$ and $\phi(R_k')=R_1'$. Here the $S_i$ have codimension 3 in $Z_i$ and the $R_i$ have codimension 4.  In case that $\phi(S_1')=(S_1')$ we have that $\phi$ also preserves  $R_1'$. There is an obvious $\sigma$-equivariant element $\Psi\in \GL(V)$ such that the induced mapping $\psi$ permutes  the $S_i'$ and $R_i'$ as $\phi$ does, $1\leq i\leq k$. Moreover, $\psi$ is the identity on all other strata of codimension at most $5$.  Thus $\psi\inv\circ\phi$ preserves each $S_i'$ and $R_i'$ for $1\leq i\leq k$. We could have that there is no codimension four stratum in $Z_1$, in which case we have $r_1\lieg_1\simeq 3\liesl_2$ and we continue as before. The 
 other possibilities for $S_1$ follow the same pattern using the information about the  strata of codimension at most $8$ in Remark \ref{rem:codim3} and Proposition  \ref{prop:classifS}. Thus we may reduce to the case that $\phi$ preserves each stratum $S_i'$ of codimension at most 5 for $1\leq i\leq k$. One then proceeds by the obvious induction.
 \end{proof}
 
Let $E$, $F$, $M$, $\E$, $\F$, $\G$, etc.\ be as in \S \ref{sec:kuttler}. Then we have condition  (*) and by \cite[Proposition 9.3, Theorem 10.7]{SchLifting} we also have (**).   
Recall that $V^\dag$ is the inverse image of the strata of codimension at most 5 and that $V\setminus V^\dag$ has codimension 4. Our aim now is to establish that 
\begin{equation}\label{eq:3star}\tag{***}
0\to\pi^\#\phi^*\E\to\pi^\#\phi^*\F\to\pi^\#\G\to 0
\end{equation}
is exact on  $V^\dag$ and that $\pi^\#\phi^*\E$ and $\pi^\#\phi^*\F$ are locally free on $V^\dag$. 
 Suppose that we have this. Then by  \cite[Remark 13]{Kuttler}, $\HHH^1(V^\dag,\pi^\#\phi^*\E)$ and $\HHH^1(V^\dag,\pi^\#\phi^*\F)$ are finitely generated. Taking cohomology in (***) over $V^\dag$ and using  the fact that $M$ has depth $3$ for the ideal of $V\setminus V^\dag$  it follows   that  \eqref{eq:secondglobal} is exact as well as the analogous result with $\phi$ replaced by $\phi\inv$. One concludes as before  that $\Gamma(V,\pi^\#\phi^*\F')$ is a free $\O(V)$-module. This implies that there is a lift $\Phi$ of $\phi$ over $V_\pr$, hence over $V$. 
 
     \begin{lemma}\label{lem:Mreflexive}
 The module $M$ is reflexive.
 \end{lemma}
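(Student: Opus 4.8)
The plan is to run, in this weaker setting, the same reflexivity argument already used in \S\ref{sec:kuttler}. Since $M_\lieg$ and $M_V$ are free $\O(V)$-modules and $\tau\colon M_\lieg\to M_V$ is injective, $M$ carries the two-term free resolution $0\to M_\lieg\xrightarrow{\tau}M_V\to M\to 0$, and $M$ is locally free on $V_\pr$, whose complement $V_0=V\setminus V_\pr$ has codimension $\geq 2$ because $V$ is $2$-principal. By \cite[Lemma 8.3]{SchLiftingDOs} — equivalently, $M$ agrees with the direct image of the locally free sheaf $M|_{V_\pr}$, hence is reflexive, precisely when it has depth $\geq 2$ along the ideal of $V_0$ — it suffices to prove $\operatorname{depth}_{\O(V)_\mathfrak p}M_\mathfrak p\geq 2$ for every prime $\mathfrak p$ of $\O(V)$ containing the ideal of $V_0$.

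Any such $\mathfrak p$ has $\operatorname{ht}\mathfrak p\geq\codim V_0\geq 2$. If $\operatorname{ht}\mathfrak p\geq 3$, then localizing the resolution at $\mathfrak p$ and using that $(M_\lieg)_\mathfrak p$ and $(M_V)_\mathfrak p$ are free over the regular local ring $\O(V)_\mathfrak p$ of dimension $\operatorname{ht}\mathfrak p$, the depth lemma gives $\operatorname{depth}M_\mathfrak p\geq\operatorname{ht}\mathfrak p-1\geq 2$. So the only primes needing attention are those of height exactly $2$, i.e. (since $V_0$ has codimension $\geq 2$) the generic points of the codimension-two irreducible components of $V_0$, and the heart of the proof is to identify these.

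Here the stratum classification of this section does all the work. Each factor $Z_i$ has no stratum of codimension $\leq 2$ (Corollary \ref{cor:codim2strata}, Lemma \ref{lem:sl2}), hence neither does $Z$, so $V_0=\bigcup_S\pi\inv(S)$ with $\codim\pi\inv(S)=\codim\NN(W'_S)\geq 2$; moreover $\codim\pi\inv(S)\geq 3$ whenever $\codim S\geq 4$ — for $\codim S\geq 6$ this is Lemma \ref{lem:codim}(1), and for $\codim S\in\{4,5\}$ one reads off the finitely many possibilities for $(W',H)$ from Corollary \ref{cor:finiteH}, Corollary \ref{cor:sl3torus} and Proposition \ref{prop:classifS} and checks $\codim\NN(W')\geq 3$ in each ($\NN(W')=\{0\}$, of codimension $\dim W'\geq 4$, when $H$ is finite; otherwise a null cone of $\C^*$ or of a rank-two torus, of codimension $\geq 3$). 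Consequently every codimension-two component of $V_0$ is an irreducible component of $\overline{\pi\inv(S)}$ for $S$ a codimension-three stratum; in the product case $S=S_i\times\prod_{j\neq i}(Z_j)_\pr$ for a codimension-three stratum $S_i\subset Z_i$, whose slice representation is $(W,H)=(2\nu_1+2\nu_{-1}+\theta,\C^*)$ by Corollary \ref{cor:codim2strata} (or Lemma \ref{lem:sl2} when $r_i\lieg_i=3\liesl_2$). Since $\NN(W')=\{x=0\}\cup\{y=0\}$, a general point $v$ of such a component corresponds under the slice theorem to a point of $N\cong W$ lying over the deepest stratum of $\quot NH$ and having $W'$-component $(0,0,y_1,y_2)$ with $(y_1,y_2)\neq 0$ (or $(x_1,x_2,0,0)$), whose $H$-stabilizer is trivial; hence $G_v$ is finite.

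It follows that at a general point $v$ of a codimension-two component of $V_0$ the fundamental vector fields $\xi\cdot v$, $\xi\in\lieg$, are linearly independent, so the matrix representing $\tau$ (whose columns are these vector fields) has a $(\dim\lieg)\times(\dim\lieg)$ minor that does not vanish at $v$, hence is a unit in $\O(V)_\mathfrak p$ for the generic point $\mathfrak p$ of that component. Therefore $\tau_\mathfrak p\colon(M_\lieg)_\mathfrak p\to(M_V)_\mathfrak p$ is a split injection of free $\O(V)_\mathfrak p$-modules, so $M_\mathfrak p$ is free and in particular has depth $2$. Together with the height $\geq 3$ case this shows $M$ has depth $\geq 2$ along the ideal of $V_0$, and hence $M$ is reflexive. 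The only real obstacle — everything else being the depth bookkeeping of \S\ref{sec:kuttler} — is the identification of the codimension-two part of $V_0$ with preimages of codimension-three strata carrying the single explicit $\C^*$-slice representation above, whose generic points have finite isotropy; this is exactly where the low-codimension classification is indispensable.
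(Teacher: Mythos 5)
Your proof is correct and takes essentially the same approach as the paper: both rest on the observations that $M$ is locally free wherever $\lieg_v=0$ (since $\tau$ is then a split injection of free modules) and that, by the low-codimension stratum classification together with the identification of the codimension-three slice representation $(2\nu_1+2\nu_{-1}+\theta,\C^*)$, the locus of positive-dimensional isotropy has codimension at least $3$ in $V$. The paper packages this by setting $V^\flat=\{v\in V:\lieg_v=0\}$, checking $\codim(V\setminus V^\flat)\geq 3$, and invoking the depth argument from Lemma \ref{lem:depth3} over this larger open set; you run the equivalent prime-by-prime height/depth computation, but the mathematical content is identical.
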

 
 \begin{proof}
Suppose that  $M=M^{**}$ over an open subset $V^\flat\subset V$ whose complement has codimension 3. Then we see as before that $M$ has depth 2 with respect to the ideal of $V\setminus V^\flat$ so that $M\to M^{**}$ is an isomorphism. Set $V^\flat:=\{v\in V\mid \lieg_v= 0\}$. Over the open set $V^\flat$,  $M$ is  locally free, hence isomorphic to $M^{**}$, so  we need only prove that $V\setminus V^\flat$ has codimension at least 3. Now the inverse image of the strata of $Z$ of codimension at least four has codimension three in $V$, so we only need worry about the inverse image of the strata of codimension three. Using the slice theorem, it is enough to show that $W^\flat$ has codimension three in $W$ where $W$ is an $H=\C^*$ module such that $W'=2\nu_1+2\nu_{-1}$. But $\lieh$ only vanishes at the origin of $W'$.  
 \end{proof}

  \begin{proposition}\label{prop:Zariskiopen}
 Let $V$ be a $2$-principal $G$-module and $S$ a stratum of $Z$ with isotropy group $H$ where $H^0$ is a torus. Let   $v\in\pi\inv(S)$ and let $\phi\in\Aut(Z)$ such that $\phi$ preserves $S$.  
 \begin{enumerate}
 \item There is a neighborhood $N_z$ of $z:=\pi(v)\in Z$ (classical topology) and a biholomorphic lift $\Phi$ of $\phi$ sending $\pi\inv(N_z)$  onto  $\pi\inv(\phi(N_z))$.
 \item Let $v'=\Phi(v)$. Let $U$ (resp.\ $U'$) be a Zariski open neighborhood of $v$ (resp.\ $v'$).  Then   $\pi\inv(\phi(\pi(U)))$ (resp.\ $\pi\inv(\phi\inv(\pi(U'))))$ contains a neighborhood of $v'$ (resp.\ $v$) in the Zariski topology.   
 \end{enumerate}
  \end{proposition}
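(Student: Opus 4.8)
The plan is to prove (1) first, using the local lifting machinery already available, and then derive (2) as a fairly formal consequence of (1) together with properness of $\pi$ on the Kempf--Ness set. For (1), the key point is that near a point $v\in\pi\inv(S)$ the $G$-action looks, up to an \'etale slice, like the slice representation $(W,H)$ at a nearby closed orbit, where $H^0$ is a torus by hypothesis. First I would pass to an \'etale slice: by Luna's slice theorem there is a locally closed $H$-stable subvariety $\Sigma\subset V$ through a point $v_0$ on the closed orbit in $\overline{Gv}$, with $G\times^H\Sigma\to V$ \'etale onto a saturated neighborhood, and this identifies a neighborhood of $z=\pi(v)$ in $Z$ with a neighborhood of the image of $v_0$ in $\quot WH$. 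Since $V$ is $2$-principal, one checks that $W$ is $2$-principal as an $H$-module (this is standard for slice representations). Because $H^0$ is a torus, Proposition \ref{prop:locallift} applies to $(W,H)$: after shrinking, $\phi$ (read through the slice identification, which makes sense because $\phi$ preserves $S$ and hence preserves the slice-strata locally) has a $\sigma$-equivariant biholomorphic lift on the $W$-side. Pulling this lift back through the \'etale slice and spreading it over $G$-orbits gives a biholomorphic lift $\Phi$ of $\phi$ defined on $\pi\inv(N_z)$ for a suitable classical neighborhood $N_z$, mapping it biholomorphically onto $\pi\inv(\phi(N_z))$. The mild subtlety here is checking that $\phi$, which a priori lives only on $Z$, really does induce a well-defined biholomorphism of the local quotient $\quot WH$ fixing the relevant point; this uses that $\phi$ is strata-preserving (Theorem \ref{thm:stratapreserve}, or directly the hypothesis that $\phi$ preserves $S$) together with the fact that the local model of $Z$ near $z$ is literally $\quot WH$ near a point.

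For (2), let $v'=\Phi(v)$ as in (1). Suppose $U'$ is a Zariski-open neighborhood of $v'$; I want to show $\pi\inv(\phi\inv(\pi(U')))$ contains a classical (indeed Zariski) neighborhood of $v$. The map $\Phi\inv$ from (1) is a biholomorphism $\pi\inv(\phi(N_z))\to\pi\inv(N_z)$ lifting $\phi\inv$, so $\Phi\inv(U'\cap\pi\inv(\phi(N_z)))$ is an open (classical) neighborhood of $v$ contained in $\pi\inv(\phi\inv(\pi(U')))$, giving the classical-topology version immediately. To upgrade to the Zariski topology one uses that $\pi$ is a good quotient: the set $\pi\inv(\phi\inv(\pi(U')))$ is $G$-saturated, so it suffices to show its image $\phi\inv(\pi(U'))$ contains a Zariski-open neighborhood of $z=\pi(v)$ in $Z$. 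Now $\pi(U')$ is a constructible $G$-saturated-type set, and more to the point, since $\pi$ is an open map onto its image restricted to $V_\pr$ and more generally $\pi$ sends saturated open sets to open sets, $\pi(U')$ contains a Zariski-open neighborhood of $\pi(v')$; applying the algebraic automorphism $\phi\inv$ of $Z$ then yields a Zariski-open neighborhood of $z$. Pulling back by $\pi$ finishes the argument. The symmetric statement with $\phi$ and $v'$ in place of $\phi\inv$ and $v$ is identical, using $\Phi$ instead of $\Phi\inv$.

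The main obstacle I expect is purely bookkeeping around the slice theorem in (1): one must ensure that the local identification of $Z$ near $z$ with $\quot WH$ near a point is compatible with the $\C^*$-action used in Proposition \ref{prop:locallift} and with the $G$- (resp.\ $H$-)structure, and that "$\phi$ preserves $S$" translates into "$\phi$ locally fixes the base point of $\quot WH$ and preserves its stratification" so that Proposition \ref{prop:locallift} is genuinely applicable. Once the slice normalization is set up correctly, the rest is formal: part (1) is a direct citation of Proposition \ref{prop:locallift} through the slice, and part (2) is elementary openness of good quotients combined with the biholomorphism from (1). No new hard input is needed beyond what has already been established in \S\ref{sec:torus}.
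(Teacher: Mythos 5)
Your treatment of part (1) is essentially the paper's argument: both pass through the (holomorphic) slice theorem at $z$ and $\phi(z)$, observe that $\phi$ preserving $S$ means the two local models are equivariantly biholomorphic, and reduce to Proposition~\ref{prop:locallift}, which applies because $H^0$ is a torus. The paper is slightly more explicit in introducing the equivariant biholomorphism $\Psi$ of fibers and the induced $\psi\colon N_{\phi(z)}\to N_z$ with $\psi(\phi(z))=z$, then lifting $\psi\circ\phi$ (a local biholomorphism fixing $z$) and composing with $\Psi^{-1}$; your ``read $\phi$ through the slice identification'' is the same normalization, just less explicitly packaged. No issues there.

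Part (2) has a gap. After you correctly observe that $\Phi^{-1}\bigl(U'\cap\pi^{-1}(\phi(N_z))\bigr)$ is a classical neighborhood of $v$ inside $\pi^{-1}(\phi^{-1}(\pi(U')))$, your upgrade to the Zariski topology is not justified. You claim that ``$\pi(U')$ contains a Zariski-open neighborhood of $\pi(v')$'' because ``$\pi$ is an open map onto its image restricted to $V_\pr$'' and ``$\pi$ sends saturated open sets to open sets.'' Neither hypothesis applies: $U'$ is an arbitrary Zariski-open neighborhood of $v'$, not $G$-saturated, and $v'$ need not lie in $V_\pr$ (the stratum $S$ is not assumed principal, and even then $v\in\pi^{-1}(S)$ need not be on the closed orbit). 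The image $\pi(U')$ is merely a constructible subset of $Z$, with no a priori reason to be a Zariski neighborhood of $\pi(v')$. The correct route, which is what the paper does, is to work directly with the set $C:=\pi^{-1}(\phi^{-1}(\pi(U')))\subset V$: it is constructible by Chevalley, and the classical lift puts $v$ in its classical interior. For a constructible set, having a point in its classical interior implies having it in its Zariski interior (since the Zariski closure of $V\setminus C$ coincides with its classical closure, so missing a classical ball around $v$ forces $v\notin\overline{V\setminus C}$). This yields the Zariski-open neighborhood of $v$ contained in $C$, which is the desired conclusion; the symmetric statement follows using $\Phi$ in place of $\Phi^{-1}$. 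You should replace your saturation/openness step with this constructibility argument.
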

 
  \begin{proof}
Suppose that we have (1). Let $U$ be a neighborhood of $v$ in the Zariski topology. Then $\Phi(\pi\inv(N_z)\cap  U)\subset \pi\inv(\phi(N_z\cap  \pi( U)))$ so that $v'=\Phi(v)$ is an interior point of the constructible set $C:=\pi\inv(\phi(\pi(U)))$. Removing the boundary   we obtain a Zariski open neighborhood of $v'$ contained in $C$. Using $\Phi\inv$ we get the analogous property for neighborhoods of $v'$. Hence we only have to produce the local lift $\Phi$ of $\phi$. 

Since $\phi$ preserves $S$, the holomorphic slice theorem gives us an  equivariant biholomorphism $\Psi$ of $\pi\inv(N_{\phi(z)})$  with $\pi\inv(N_z)$  where  $N_z$ is a neighborhood of $z$ and   $N_{\phi(z)}$ is a neighborhood of $\phi(z)$. Let  $\psi\colon N_{\phi(z)}\to N_z$ be the induced map. We can arrange that $\psi(\phi(z))=z$.    By  Proposition \ref{prop:locallift} we can lift $\psi\circ\phi$ to a biholomorphic map over a    neighborhood $N_z'$ of $z$. Composing with $\Psi\inv$ we find a local lift of $\phi$ as required, and we have (1).
  \end{proof}

 \begin{proof}[Proof of Theorem \ref{thm:adjoint}] We can reduce  to the case that $\phi$ preserves each stratum of codimension at most 5. We show that (***) is exact on $V^\dag$.
Let  $\LL$ be one of $\O_Z$, $\E$, $\F$ or $\G$. Let $v\in V^\dag$ and define 
$$
\widetilde\LL_v=\lim_{U\ni v}\LL(\pi(V_\pr\cap U))
$$
where $U$ ranges over Zariski open subsets of $V$ containing $v$. Set $\O^G_v:=\widetilde{(\O_Z)}_v$. Then  $\widetilde\LL_v$ is an $\O_v^G$-module.
Recall that $i\colon V_\pr\to V$ is inclusion and that $\pi^\#\LL=i_*(\pi|_{V_\pr})^*\LL$.  Since $V\setminus V_\pr$ has codimension two in $V$, $i_*\O_{V_\pr}\simeq \O_V$.  Then $(\pi^\#\LL)_v$ is
$$
\O_{V,v}\otimes_{\O^G_v}(\lim_{U\ni v}\LL(\pi(V_\pr\cap U)))=\O_{V,v}\otimes_{\O^G_v}\widetilde\LL_v.
$$

Let $\Phi$ be as in Proposition \ref{prop:Zariskiopen} and set $v':=\Phi(v)$. Since $\phi$ preserves $Z_\pr$,  Proposition \ref{prop:Zariskiopen}(2) shows that pull-back by $\phi$ induces an isomorphism $\O^G_{v'}\to \O^G_v$ and
we have that
$(\widetilde{\phi^*\LL})_v\simeq \O_v^G\otimes_{\O_{v'}^G}\widetilde\LL_{v'}$.  Hence $(\pi^\#\phi^*\LL)_v
\simeq\O_{V,v}\otimes_{\O^G_{v'}}\widetilde\LL_{v'}$. Now we can tensor with the completions $\widehat{\O}_{V,v}$ of $\O_{V,v}$ and $\widehat{\O}_{V,v'}$ of $\O_{V,v'}$ where  $\Phi$ gives us an isomorphism $\hat\Phi\colon \widehat{\O}_{V,v'}\to\widehat{\O}_{V,v}$. We have completions
$$
(\widehat{\pi^\#\phi^*\LL})_v\simeq \widehat{\O}_{V,v}\otimes_{\O^G_{v'}}\widetilde\LL_{v'}\text{ and } (\widehat{\pi^\#\LL})_{v'}\simeq \widehat{\O}_{V,v'}\otimes_{\O^G_{v'}}\widetilde\LL_{v'}.
$$   
Hence  
$$
(\widehat{\pi^\#\phi^*\LL})_v\simeq \widehat{\O}_{V,v}\otimes_{\widehat{\O}_{V,v'}}(\widehat{\pi^\#\LL})_{v'}.
$$
Since completion is faithfully flat, we find that the sequence (***) is exact at $v$ if and only if the sequence
\begin{equation}\label{eq:lasteqn}
0\to\pi^\#\E\to\pi^\#\F\to\pi^\#\G\to 0
\end{equation}
is exact at $v'$ (recall that $\phi^*\G\simeq\G$).     As we saw in \S \ref{sec:kuttler},  Lemma \ref{lem:Mreflexive} implies that \eqref{eq:lasteqn} is exact, hence so is (***).  Finally, let $\LL$ be $\E$ or $\F$. Then $\pi^\#\LL$ is a free module, and as above one shows that  
the $\O_{V,v}$-module $(\pi^\#\phi^*\LL)_v$ has the same number of minimal generators as the $\O_{V,v'}$-module  $(\pi^\#\LL)_{v'}$. Thus $\pi^\#\phi^*\LL$ is locally free over $V^\dag$, and our proof is complete.
 \end{proof}

 \section{Actions of compact groups} \label{sec:compact}
 
 Some things become simpler when we are dealing with compact group actions. Let $K$ be a compact Lie group and $W$ a real $K$-module. Let $\pi\colon W\to W/K$ denote the quotient mapping. We can put a smooth structure on   $W/K$ (see \cite{SchSmooth}), as follows. A function on $W/K$ is smooth if it pulls back to a smooth (necessarily $K$-invariant) smooth function on $W$. We can make this more concrete, as follows. Let $p_1,\dots,p_d$ be homogeneous generators of $\R[W]^K$ and let $p=(p_1,\dots,p_d)\colon W\to\R^d$. Now $p$ is proper and separates the $K$-orbits in $W$. Let $X$ denote $p(W)\subset\R^d$. Then $X$ is a closed semialgebraic set and $p$ induces a homeomorphism $\bar p\colon W/K\to X$. The main theorem of \cite{SchSmooth} says that $p^*C^\infty(X)=C^\infty(W)^K$ where a function $f$ on $X$ is smooth if it extends to a smooth function in a neighborhood of $X$. Now we can define the notion of a smooth mapping $X\to X$ (or $W/K\to W/K$) in the obvious way. We also see an $\R^*$-action on $X$ where $t\cdot(y_1,\dots,y_d)=(t^{e_1}y_1,\dots,t^{e_d}y_d)$ for $t\in\R^*$, $(y_1,\dots,y_d)\in X$ and $e_j$   the degree of $p_j$, $j=1,\dots,d$. Or one can just say that $\R^*$ acts on $W/K\simeq X$ where $t\cdot p(w)=p(tw)$, $t\in \R^*$, $w\in W$. We denote by $\Aut_\ql(W/K)$ the quasilinear automorphisms of $W/K$, i.e., the automorphisms which commute with the $\R^*$-action. 
 
  We have the usual stratification of $W/K$ determined by the conjugacy classes of isotropy groups of orbits. There is a unique open  stratum, the principal stratum,  which consists of the orbits with trivial slice representation. It is connected and dense in $W/K$. The fixed points are, of course, the lowest dimensional stratum.
 
  The quotient $W/K$ has many nice properties, without any special conditions on $W$. From \cite[Theorem A]{BierstoneLifting} we have 
  
  \begin{theorem}
  Let $\phi$ be a diffeomorphism of $W/K$. Then $\phi$ preserves the stratification by the irreducible components of the isotropy type strata of $W/K$. In particular, $\phi$ preserves the principal stratum and the fixed points.
  \end{theorem}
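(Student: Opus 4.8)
The plan is to show that the isotropy-type stratification of $W/K$ is \emph{intrinsic} to its smooth structure — i.e.\ reconstructible from the algebra $C^\infty(W/K)=C^\infty(W)^K$ alone — so that a diffeomorphism $\phi$, being an automorphism of this algebra, must permute the strata and hence also their irreducible components. The first step is to localize: by the differentiable slice theorem a neighbourhood of any $\pi(w)\in W/K$ is isomorphic, as a smooth space and compatibly with the stratification, to a neighbourhood of the origin in $N/H$, where $H=K_w$ and $N$ is the slice representation; and writing $N=N^H\oplus N'$ with $(N')^H=0$ one has a stratification-preserving isomorphism $N/H\cong N^H\times(N'/H)$ under which the stratum through the base point corresponds to $N^H\times\{0\}$. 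Thus it suffices to recognize, intrinsically and locally near the origin, the stratum through each point of $N'/H$ for representations $N'$ of a compact group $H$ with no nonzero fixed vector.

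The intrinsic construction is an iterated singular-locus filtration. For a closed semialgebraic set $X$ (such as $X=p(W)\cong W/K$) with its smooth structure, put $X_0=X$ and let $X_{i+1}$ be the set of points of $X_i$ near which $X_i$ is not isomorphic, as a smooth space, to a Euclidean $\R^k$ of the local dimension of $X_i$ there. Each $X_i$ is again closed semialgebraic, the filtration stabilizes, and each layer $X_i\setminus X_{i+1}$ is a smooth manifold. One then refines each layer by its connected components and, where necessary, by one further intrinsic invariant, such as the isomorphism type of the germ algebra at a point or of a transverse slice. Local topological dimension, local $C^\infty$-isomorphism type, and connected components are all unchanged by a diffeomorphism of $X$, so the resulting stratification is preserved by $\phi$.

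The substantive part is the identification of this canonical stratification with the isotropy-type stratification. Reducing via the slice theorem to the local models $N^H\times(N'/H)$, one argues inductively — the slice theorem at a nonzero point of $N'$ involves a proper subgroup of $H$, and the origin of $N'/H$ is the deepest layer of the filtration there — to show that every non-principal point is caught by the filtration strictly later than a nearby principal point, or else is separated from it by the extra invariant. This forces a classification of the pairs $(N',H)$ that can occur in small codimension: reductive-group bookkeeping in the spirit of \S\ref{sec:adjoint}, but now with $H$ allowed to be a finite reflection group, a torus, or a circle. I expect the genuine obstacle to sit exactly here: reflection and circle slices produce pieces of $W/K$ at whose distinguished points the stratum is topologically invisible — a half-space boundary, a corner, or a cone point that is itself a topological manifold point (such as $\R^2/\{\pm1\}$) — so that one must descend below the level of topology and of the cotangent space of the germ algebra, using finer features of the local $C^\infty$-structure (for instance that $C^\infty([0,\infty))_0\not\cong C^\infty(\R)_0$, together with topological invariance of corners). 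Making this complete and uniform is the technical core, and it is precisely what is carried out in the proof of \cite[Theorem~A]{BierstoneLifting}; the smooth structure on $W/K$ is treated in \cite{SchSmooth}.

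Finally, the two stated consequences follow at once from the main assertion together with the fact that a homeomorphism preserves inclusions among closures of strata. The principal stratum is the unique stratum that is open in $W/K$, so $\phi$ maps it onto itself. The union $\pi(W^K)$ of the fixed-point orbits is closed and is contained in the closure of every stratum — indeed, for $v\in W^K$ and any $w$ one has $\pi(v+tw)\to\pi(v)$ as $t\to 0$, with $\pi(v+tw)$ in the stratum of $\pi(w)$ — hence $\pi(W^K)$ is the largest union of strata contained in all those closures; since $\phi$ permutes the strata preserving their closure relations, it preserves $\pi(W^K)$ as well.
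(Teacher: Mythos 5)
The paper gives no proof of this theorem at all: it simply quotes it as \cite[Theorem~A]{BierstoneLifting}. Your proposal in the end does the same thing — you explicitly defer the technical core (the local analysis distinguishing reflection/circle slice points where the quotient is still a topological manifold) to Bierstone — so you are on the same footing as the paper, with the welcome additions of a plausible outline of Bierstone's strategy (slice reduction to $N^H\times(N'/H)$, an intrinsic iterated singular-locus filtration, honest flagging of the cases where $C^\infty$-information beyond topology is needed) and a correct, self-contained derivation of the two ``in particular'' consequences: the principal stratum is the unique open stratum, and for $v\in W^K$ the limit $\pi(v+tw)\to\pi(v)$ with $K_{v+tw}=K_w$ shows $\pi(W^K)=\bigcap_S\overline S$, both of which are invariants of a stratification-permuting homeomorphism.
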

  
 From  \cite[Theorem 2.2]{LosikLift} we have 
  
    \begin{theorem}
  Let $\phi\colon W/K\to W/K$ be a smooth isomorphism with $\phi(\pi(0))=\pi(0)$.   Then $\lim_{t\to 0}(t\inv\circ\phi\circ t)(x)$ exists for every $x\in W/K$ and the resulting map $\phi_0\colon W/K\to W/K$ is  a quasilinear isomorphism.
  \end{theorem}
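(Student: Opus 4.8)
The plan is to reduce everything to showing that $\phi$ is \emph{deformable} (i.e.\ that $\phi_0(x):=\lim_{t\to0}t\inv\cdot\phi(t\cdot x)$ exists for every $x$), exactly as in the algebraic Theorem~\ref{thm:admissibleimpliesdiff}; the quasilinearity of $\phi_0$ then comes for free by deforming $\phi\inv$ as well. The twist is that, since no admissibility hypothesis is available, deformability cannot be obtained by the differential–operator argument of \ref{thm:admissibleimpliesdiff}; it must be proved directly, in the spirit of \cite[Theorem 2.2]{LosikLift}.

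First I would pass to the concrete model: choose homogeneous generators $p_1,\dots,p_d$ of $\R[W]^K$, with $p_j$ of degree $e_j$, put $p=(p_1,\dots,p_d)\colon W\to X:=p(W)\subseteq\R^d$, and identify $W/K$ with $X$, so that $C^\infty(W/K)=C^\infty(W)^K=p^*C^\infty(X)$ by \cite{SchSmooth} and $t\cdot(y_1,\dots,y_d)=(t^{e_1}y_1,\dots,t^{e_d}y_d)$. For $f\in C^\infty(X)$ write $q:=p^*f\in C^\infty(W)^K$ and let $q=\sum_{k\ge0}q^{(k)}$ be its Taylor expansion at $0$; since $K$ preserves degree, $q^{(k)}\in\R[W]^K_k$. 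The claim I want is
\[
q_j^{(k)}=0\quad\text{for all }k<e_j\text{ and all }j,\qquad q_j:=p^*\phi^*(y_j),
\]
for any smooth automorphism fixing $\pi(0)$. Granting it, for $w$ with $p(w)=x$ one has $\bigl(t\inv\cdot\phi(t\cdot x)\bigr)_j=t^{-e_j}q_j(tw)=q_j^{(e_j)}(w)+O(t)$, which tends to $q_j^{(e_j)}(w)$, depending only on $x$; hence $\phi_0$ exists and $\phi_0^*(y_j)=q_j^{(e_j)}\in\R[W]^K_{e_j}$, so $\phi_0$ is a quasilinear endomorphism of $W/K$. Running the same argument for $\psi:=\phi\inv$ produces $\psi_0$, and comparing the lowest–degree homogeneous components of the two sides of the identity $\widetilde\phi_j\bigl(s_1(w),\dots,s_d(w)\bigr)=p_j(w)$ — obtained by pulling back $\psi^*\phi^*(y_j)=y_j$, where $\widetilde\phi_j$ is a smooth extension of $\phi^*(y_j)$ near $0\in\R^d$ and $s_i:=p^*\psi^*(y_i)$ — shows that $\phi_0^*$ and $\psi_0^*$ are mutually inverse graded automorphisms of $\R[W]^K$; thus $\phi_0\in\Aut_\ql(W/K)$. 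So everything reduces to the vanishing claim, which is the smooth counterpart of the condition $\phi^*S_k\subset F_k$ of Remarks~\ref{rem:smoothphit}.

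For the vanishing claim I would follow \cite{LosikLift}. The base case is free: if $e_j$ is minimal among the $e_i$, then either $e_j\ge2$ and $\R[W]^K$ has no nonzero element of degree in $[1,e_j)$, so $q_j^{(k)}=0$ for $1\le k<e_j$, or $e_j=1$ and already $q_j^{(0)}=q_j(0)=0$ because $\phi(\pi(0))=\pi(0)$. For larger $e_j$ I would induct on $e_j$, carrying along the analogous statement for $\psi$ (so that $s_i:=p^*\psi^*(y_i)$ vanishes to order $\ge e_i$, with a well–defined leading invariant $s_i^{(e_i)}$, for every $i$ with $e_i<e_j$). Fixing a smooth extension $\widetilde\phi_j$ of $\phi^*(y_j)$ near $0\in\R^d$ and pulling back $\psi^*\phi^*(y_j)=y_j$ by $p$ gives, identically near $0\in W$,
\[
\widetilde\phi_j\bigl(s_1(w),\dots,s_d(w)\bigr)=p_j(w).
\]
Comparing homogeneous components of degree $k<e_j$ of the two sides — the right side is homogeneous of degree $e_j$, hence contributes nothing — and feeding in the inductive information on the orders of vanishing of the $s_i$ should force $q_j^{(k)}=0$.

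The hard part is exactly this comparison: one must rule out that the composite $\widetilde\phi_j\circ(s_1,\dots,s_d)$ produces a nonzero invariant of degree $k<e_j$ on $W$. This amounts to excluding a drop of weighted degree for $\phi$ at $0$, equivalently to injectivity in each degree of the graded map induced by $\phi^*$ on $\bigoplus_k\mathcal F_k/\mathcal F_{k+1}$, where $\mathcal F_k$ is the ideal of $f$ with $p^*f$ vanishing to order $\ge k$; such a drop for $\phi$ would have to be compensated by a rise for $\phi\inv$, which $\psi^*\phi^*=\Id$ forbids — but making this precise requires the bootstrapping degree by degree, jointly for $\phi$ and $\phi\inv$. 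I expect this to be the only real obstacle. Note that, unlike in the algebraic Theorem~\ref{thm:admissibleimpliesdiff}, no admissibility hypothesis enters; the flexibility of the smooth category — concretely, Schwarz's theorem $C^\infty(W)^K=p^*C^\infty(X)$ and the composite–function results behind it — is what makes the statement unconditional. Once the vanishing claim is in hand, the conclusion follows as in the second paragraph.
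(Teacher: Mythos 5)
The paper gives no proof of this statement: it is quoted verbatim from Losik \cite[Theorem 2.2]{LosikLift}, so there is no in-paper argument to compare against. What follows therefore evaluates your proposal on its own terms.

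Your setup and the deduction from the ``vanishing claim'' $q_j^{(k)}=0$ for $k<e_j$ (where $q_j:=p^*\phi^*(y_j)$) are correct: granting that claim, $\phi_0$ exists, $\phi_0^*(y_j)=q_j^{(e_j)}\in\R[W]^K_{e_j}$, and matching the leading homogeneous parts of the identities $\widetilde\phi_j\circ(s_1,\dots,s_d)=p_j$ and $\widetilde\psi_j\circ(q_1,\dots,q_d)=p_j$ shows $\phi_0$ and $\psi_0$ are mutually inverse graded automorphisms. The genuine gap is exactly the one you flag: the vanishing claim is not a formal consequence of $\psi^*\phi^*=\Id$. The degree-$k$ constraints you extract from $\widetilde\phi_j(s_1,\dots,s_d)=p_j$ only reproduce the statement that the formal Jacobians multiply to the identity, and that does not close the induction. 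A concrete illustration: take $W=\R^2$, $K=\Z/2$ acting by $(-1)$ on $w_2$, and generators $p_1=w_1$, $p_2=w_1^2+w_2^2$ (so $X=\{y_2\ge y_1^2\}$ with weights $(1,2)$, and no relations). Writing $s_1^{(1)}=\alpha w_1$, $s_2^{(1)}=\beta w_1$, the degree-one parts of your two identities read
\begin{align}
\partial_1\widetilde\phi_1(0)\,\alpha+\partial_2\widetilde\phi_1(0)\,\beta&=1,\\
\partial_1\widetilde\phi_2(0)\,\alpha+\partial_2\widetilde\phi_2(0)\,\beta&=0,
\end{align}
which are just the $(1,1)$ and $(2,1)$ entries of $D\widetilde\phi(0)\,D\widetilde\psi(0)=\Id$. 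These equations are consistent with $\partial_1\widetilde\phi_2(0)\neq0$ (hence $q_2^{(1)}\neq0$), so the ``drop for $\phi$ compensated by a rise for $\psi$'' heuristic does not rule out the bad case formally. What does rule it out is the geometric constraint that $\phi$ carries the singular stratum $\partial X=p(W^K)=\{y_2=y_1^2\}$ to itself: differentiating $\widetilde\phi_2(y_1,y_1^2)=\widetilde\phi_1(y_1,y_1^2)^2$ at $0$ immediately forces $\partial_1\widetilde\phi_2(0)=0$.

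So the missing ingredient is precisely the stratification-preservation theorem of Bierstone, which the paper quotes immediately before the statement in question and which your proposal never invokes. The bootstrap must feed stratum-preservation (and the relations among the $p_i$) into each degree; it cannot be run on the identity $\psi^*\phi^*=\Id$ alone. This is also why the statement is unconditional where the algebraic Theorem~\ref{thm:admissibleimpliesdiff} needs admissibility: the smooth category delivers Bierstone's theorem with no hypotheses, and that — not merely Schwarz's theorem $C^\infty(W)^K=p^*C^\infty(X)$ — is the replacement for the differential-operator surjectivity. Until that input is built into your induction, the argument is an outline with a real gap at its central step.
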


 We have the complexification $G=K_\C$ acting on $V=W\otimes_\R\C$. Then our generators $p_i$ of $\R[W]^K$, considered as polynomials on $V$, generate $\O(V)^G$. 
 To obtain $\sigma$-equivariant lifts of automorphisms of $W/K$ we need to assume that $V$ is $2$-principal, in which case we say that $W$ is \emph{admissible}.
We say that an admissible $W$ has the \emph{lifting property\/} if every quasilinear isomorphism $\phi$ of $W/K$ has a  differentiable  lift $\Phi$ to  $W$.   Since $\Phi(0)=0$, one shows as before that $\Phi'(0)$ also lifts $\phi$ so if there is any lift, there is a linear one.

\begin{lemma}\label{lem:KtoG}
Let $\phi\in\Aut_\ql(W/K)$ where $W$ has the lifting property. Then there is a lift $\Phi\in N_{\GL(W)}(K)$.
\end{lemma}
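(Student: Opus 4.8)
The plan is to produce the lift as the derivative at the origin of a differentiable lift, and then to run the argument of Proposition~\ref{prop:normalizer} in this real setting by passing to the complexification. We may assume $W$ is a faithful $K$-module (divide out the ineffective kernel; this is harmless since $V:=W\otimes_\R\C$ is $2$-principal). Fix a $K$-invariant inner product on $W$; its complex-bilinear extension is a nondegenerate $G$-invariant symmetric form on $V$ ($G:=K_\C$, $K$ being Zariski dense in $G$), so $V$ is orthogonal. Let $p=(p_1,\dots,p_d)\colon V\to\C^d$ be the tuple of chosen homogeneous generators of $\O(V)^G$, so that $X:=W/K=p(W)$ sits in $Z=p(V)$ as a Zariski-dense subset and $p$ separates $K$-orbits in $W$. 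Both $\phi$ and $\phi\inv$ lie in $\Aut_\ql(W/K)$, so by the lifting property each has a differentiable lift; since $\phi$ is quasilinear it fixes $\pi(0)$, whose $\pi$-fiber is the single point $0\in W$, so these lifts vanish at $0$ and their derivatives at $0$ are linear lifts $\Phi_0,\Psi_0\in\End(W)$ of $\phi,\phi\inv$ (the scaling maps $w\mapsto t\inv\Phi(tw)$ lift $t\inv\cdot\phi\cdot t=\phi$ and converge to $\Phi'(0)$ as $t\to0$).

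Next I would complexify $\Phi_0$ to an element of $\End(V)$ commuting with complex conjugation. Since $\Phi_0$ lifts $\phi$ over $W$, it carries $K$-orbits of $W$ into $K$-orbits, so each $p_i\circ\Phi_0$ is $K$-invariant on $W$; by Zariski-density of $W$ in $V$ and of $K$ in $G$ it is then $G$-invariant on $V$, i.e.\ $p_i\circ\Phi_0\in\O(V)^G$. Hence $\Phi_0$ descends to an algebraic map $\tilde\phi\colon Z\to Z$ with $\tilde\phi|_X=\phi$; likewise $\Psi_0$ descends to $\tilde\psi$ with $\tilde\psi\tilde\phi|_X=\Id_X$, and since $X$ is dense in $Z$ we get $\tilde\psi=\tilde\phi\inv$, so $\tilde\phi\in\Aut(Z)$, and it is quasilinear because $\Phi_0$ is linear. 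By Theorem~\ref{thm:stratapreserve} ($V$ is $2$-principal and orthogonal) $\tilde\phi$ preserves the principal stratum. Now Proposition~\ref{prop:normalizer} applies to $\tilde\phi\in\Aut_\ql(Z)$ with the lift $\Phi_0\in\End(V)$: it gives $\Phi_0\in N_{\GL(V)}(G)$ --- in particular $\Phi_0$ is invertible, and commuting with conjugation it lies in $\GL(W)$ --- together with an algebraic automorphism $\sigma(g)=\Phi_0\circ g\circ\Phi_0\inv$ of $G$.

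It remains to show $\sigma(K)=K$. For $v'\in W_\pr$ the point $\sigma(k)v'=\Phi_0(k\Phi_0\inv v')$ has the same $p$-image as $v'$, hence lies in $Kv'$; so $\sigma(k)$ preserves the invariant norm on the dense set $W_\pr$, and therefore $\sigma(k)\in\Orth(W)$. Thus $\sigma(K)\subseteq\Orth(W)\cap G$, which is a compact subgroup of $G$ containing the maximal compact subgroup $K$, hence equals $K$; so $\sigma(K)\subseteq K$. Since $\sigma$ is a continuous (algebraic) automorphism of $G$, the image $\sigma(K)$ is again a maximal compact subgroup of $G$, and being contained in the compact group $K$ it must equal $K$. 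Therefore $\Phi_0 K\Phi_0\inv=K$, so $\Phi_0\in N_{\GL(W)}(K)$ is a lift of $\phi$, as required. The step I expect to require the most care is the descent of the linear lift $\Phi_0$ to an honest algebraic automorphism of $Z$ --- the point is that $\phi$ is merely smooth, so Proposition~\ref{prop:normalizer} cannot be invoked for $\phi$ directly, but its linear derivative $\Phi_0$ forces $\phi$ to extend algebraically --- together with the final passage from $N_{\GL(V)}(G)$ back to $N_{\GL(W)}(K)$, which rests on the maximality of $K$ in $G$ and the continuity of $\sigma$.
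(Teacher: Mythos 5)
Your proposal is correct and follows essentially the same route as the paper: take the linear lift $\Phi'(0)$ supplied by the lifting property, complexify it to an endomorphism of $V$ lying over an extension $\tilde\phi\in\Aut_\ql(Z)$, invoke Theorem~\ref{thm:stratapreserve} and Proposition~\ref{prop:normalizer} to put the complexified lift in $N_{\GL(V)}(G)$, and then descend to $N_{\GL(W)}(K)$. The only real divergence is at the final step: the paper concludes instantly from $K=G\cap\GL(W)$ (a $W$-preserving element of $N_{\GL(V)}(G)$ automatically normalizes this intersection), whereas you reprove this via an orthogonality/maximal-compactness argument --- a correct but longer detour; your fuller treatment of the descent of $\Phi'(0)$ to an algebraic $\tilde\phi$ on $Z$ usefully spells out what the paper leaves as ``canonical extensions.''
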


\begin{proof}
There is a lift $\Phi\in\End(W)$ of $\phi$. There are canonical extensions $\widetilde\phi$ of $\phi$ to $\Aut_\ql(\quot VG)$ and $\widetilde\Phi$ of $\Phi$ to $\End(V)$ where $\widetilde\Phi$ is a lift of $\widetilde\phi$. By Theorem \ref{thm:stratapreserve}, $\widetilde\phi$ is strata preserving and Proposition  \ref{prop:normalizer} shows that $\widetilde{\Phi}$ is invertible and normalizes $G$. It follows that $\Phi$ is invertible and normalizes  $K=G\cap\GL(W)$.
\end{proof}

The following is a variant of \cite[Corollary 2.3]{LosikLift}.
 
 \begin{theorem}\label{thm:compactlifting}
Let $\phi\colon W/K\to W/K$ be a diffeomorphism where $W$ has the lifting property. Then there is a $\sigma$-equivariant diffeomorphism $\Phi$ of $W$ lifting $\phi$ for some $\sigma\in\Aut(K)$.
\end{theorem}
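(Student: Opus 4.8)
The plan is to run the same three-step argument used for Theorems \ref{thm:intro:reducequasilinear} and \ref{thm:intro:lifting}, but with the smooth analogues of the relevant tools. First I would reduce to the case $\phi(\pi(0))=\pi(0)$: by \cite{BierstoneLifting} the diffeomorphism $\phi$ preserves the stratification of $W/K$, hence carries the fixed-point stratum $W^K$ to itself. Writing $W=W^K\oplus W'$ as $K$-modules, the map $X\in\Diffeo(W)^K$ sending $(w,w')\mapsto(\phi|_{W^K}(w),w')$ covers a diffeomorphism $\chi$ of $W/K$ which agrees with $\phi$ on $W^K$; since $X$ is $K$-equivariant, it suffices to lift $\chi^{-1}\phi$, which fixes $\pi(0)$, $\sigma$-equivariantly and then compose with $X$. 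So from now on assume $\phi(\pi(0))=\pi(0)$. By Losik's deformability theorem \cite[Theorem 2.2]{LosikLift}, the limit $\phi_0:=\lim_{t\to 0}t^{-1}\circ\phi\circ t$ exists and lies in $\Aut_\ql(W/K)$, and the family $\phi_t(x):=t^{-1}\cdot\phi(t\cdot x)$, $t\in(0,1]$, extends to a smooth family on $[0,1]$ with $\phi_1=\phi$. This plays the role of Theorem \ref{thm:admissibleimpliesdiff}.

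Next I would lift the quasilinear piece and the isotopy separately. Since $W$ has the lifting property and $\phi_0$ is quasilinear, Lemma \ref{lem:KtoG} provides a lift $\Phi_0\in N_{\GL(W)}(K)$; then $\sigma:=(k\mapsto\Phi_0 k\Phi_0^{-1})$ is an automorphism of $K$ and $\Phi_0(kw)=\sigma(k)\Phi_0(w)$, exactly as in Proposition \ref{prop:normalizer}. Now the smooth isotopy $\psi_t:=\phi_0^{-1}\circ\phi_t$ starts at the identity, so by the differentiable isotopy lifting theorem of \cite{SchLifting} (the real counterpart of Theorem \ref{thm:isolifting}) there is a smooth family $\Psi_t\in\Diffeo(W)^K$, $t\in[0,1]$, with $\Psi_0=\id$ and $\Psi_t$ covering $\psi_t$. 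Then $\Phi_0\circ\Psi_1$ covers $\phi_0\circ\psi_1=\phi$, and being the composite of a $\sigma$-equivariant and a $K$-equivariant map it is $\sigma$-equivariant; composing with the $K$-equivariant $X$ from the first step undoes the reduction and yields the required $\sigma$-equivariant lift of the original $\phi$.

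The genuinely substantive inputs are all imported (Losik's deformation, Lemma \ref{lem:KtoG} together with the lifting-property hypothesis, and the smooth isotopy lifting of \cite{SchLifting}), so the main point needing care is checking that the hypotheses under which \cite{SchLifting} lifts smooth isotopies are in force here. This I expect to be the real obstacle of the write-up: it comes down to verifying, from the fact that $W$ is admissible, that $V=W\otimes_\R\C$ is $2$-principal and orthogonal, so that the first-order surjectivity $\pi_*\colon\Diff^1(V)^G\to\Diff^1(Z)$ of \cite{SchVectorFields} holds and smooth $K$-invariant vector fields on $W$ can be used to integrate the lifted isotopy. Once that is in place, the remainder is bookkeeping with composition orders and equivariance types.
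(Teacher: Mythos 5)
Your proposal is correct and takes essentially the same route as the paper, which likewise reduces to $\phi(\pi(0))=\pi(0)$, appeals to Losik's deformability result for the isotopy $\phi_t$ from $\phi$ to a quasilinear $\phi_0$, lifts $\phi_0$ via Lemma \ref{lem:KtoG}, and then lifts $\phi_0^{-1}\circ\phi$ by the smooth isotopy lifting theorem of \cite{SchLifting}. The ``real obstacle'' you flag in the final paragraph is in fact immediate: $V$ being $2$-principal is exactly the paper's definition of $W$ being admissible, and $V=W\otimes_\R\C$ is automatically orthogonal since any $K$-invariant inner product on $W$ extends to a $G$-invariant symmetric bilinear form on $V$.
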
  

\begin{proof} 
We may assume that $\phi(\pi(0))=\pi(0)$. We have an isotopy $\phi_t$ where $\phi_1=\phi$ and $\phi_0$ is quasilinear. But   Lemma \ref{lem:KtoG} there is a  lift $\Phi_0\in N_{\GL(W)}(K)$ of $\phi_0$. Thus it suffices to equivariantly lift $\phi_0\inv\circ\phi$. But this is a consequence of the isotopy lifting theorem of \cite{SchLifting}. 
\end{proof}

 \begin{proposition}\label{prop:cxlift}
If $V$ is $2$-principal and has the lifting property, then so does $W$.
\end{proposition}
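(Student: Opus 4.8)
The plan is to complexify $\phi$, lift it using the hypothesis, and then arrange that the lift respects the real structure on $V$ coming from $W$. Since $\phi$ is quasilinear, $\phi^*$ sends each homogeneous generator $p_i$ to a smooth $K$-invariant function on $W$ that is homogeneous of the integer degree $e_i$, hence to a polynomial; so $\phi^*$ restricts to a graded automorphism of $\R[W]^K$, and its $\C$-linear extension is an element $\widetilde\phi\in\Aut_\ql(Z)$, $Z=\quot VG$, which commutes with the conjugation $v\mapsto\overline v$ on $V$ (and the induced involution on $Z$), restricts to $\phi$ on $W/K\subset Z$, and fixes $\pi(0)$. Moreover $\phi$, being a diffeomorphism, preserves the principal stratum of $W/K$ by \cite[Theorem A]{BierstoneLifting}, and one checks $(W/K)_\pr=(W/K)\cap Z_\pr$. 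By the lifting property of $V$, choose $\widetilde\Phi\in\GL(V)$ with $\pi\circ\widetilde\Phi=\widetilde\phi\circ\pi$.

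Next I would fix a $K$-invariant inner product on $W$, extend it $\C$-bilinearly to $V$, and compose with conjugation in the second variable to obtain a $K$-invariant hermitian metric $h$ on $V$ satisfying $h(\overline u,\overline v)=\overline{h(u,v)}$; let $\M$ be the associated Kempf-Ness set. Writing $\lieg=\liek\oplus i\liek$ and using that $\liek$ acts on $W$ by skew-symmetric operators, one gets $\Re\,h(w,\xi w)=0$ for all $w\in W$ and $\xi\in\lieg$, so $W\subset\M$. Since the fibers of $\M\to Z$ are single $K$-orbits, it follows that $Gw$ is closed for every $w\in W$ and that $\M\cap\pi\inv(z)\subset W$ for every $z\in W/K$. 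Because $\widetilde\phi$ is defined over $\R$, the map $\overline{\widetilde\Phi}\colon v\mapsto\overline{\widetilde\Phi(\overline v)}$ is again in $\GL(V)$ and again lifts $\widetilde\phi$, so $\overline{\widetilde\Phi}\circ\widetilde\Phi\inv$ lifts the identity of $Z$; as $V$ is $2$-principal, Proposition \ref{prop:normalizer}(4) produces $g\in G$ with $\overline{\widetilde\Phi}=g\widetilde\Phi$ (and necessarily $g\overline g=e$).

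The heart of the argument is to trivialize the cocycle $g$. Choose $w_0\in W$ lying on a principal orbit; this is possible because $W$ is Zariski dense in $V$ and $V_\pr\neq\emptyset$. Since $\widetilde\Phi$ carries closed orbits to closed orbits, $G\widetilde\Phi(w_0)=\widetilde\Phi(Gw_0)$ is the closed orbit over $\widetilde\phi(\pi(w_0))\in(W/K)_\pr$, and it meets $W$ in the $K$-orbit $\M\cap G\widetilde\Phi(w_0)$; picking a point $w_0'$ there we get $\widetilde\Phi(w_0)=g(w_0)\,w_0'$ for a unique $g(w_0)\in G$, with $w_0'$ again on a principal orbit and so with trivial isotropy. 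Applying conjugation to this identity, and using $\overline{w_0}=w_0$, $\overline{w_0'}=w_0'$, gives $\overline{\widetilde\Phi}(w_0)=\overline{g(w_0)}\,w_0'$, while $\overline{\widetilde\Phi}=g\widetilde\Phi$ gives $\overline{\widetilde\Phi}(w_0)=g\,g(w_0)\,w_0'$; comparing and cancelling $w_0'$ yields $g=\overline{g(w_0)}\,g(w_0)\inv$. Therefore $\widetilde\Phi_1:=g(w_0)\inv\widetilde\Phi\in\GL(V)$ satisfies $\overline{\widetilde\Phi_1}=\widetilde\Phi_1$, i.e.\ it commutes with conjugation and hence maps $W$ onto $W$; and it still lifts $\widetilde\phi$ because $g(w_0)\inv\in G$ acts trivially on $\O(Z)$. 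Its restriction $\Phi:=\widetilde\Phi_1|_W\in\GL(W)$ then satisfies $\pi\circ\Phi=\phi\circ\pi$ on $W$, so $\phi$ has the linear — in particular differentiable — lift $\Phi$, and $W$ has the lifting property.

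The step that will require the most care is the identification of $W$ with $\M\cap\pi\inv(W/K)$: one must verify that $W$ lies inside the Kempf-Ness set and that no Kempf-Ness point sitting over a class in $W/K$ escapes $W$ (this is where the genuine preservation of $W/K$ by $\phi$, as opposed to mere compatibility with conjugation, is used). Once that is established, the remainder is formal manipulation of the cocycle $g$ together with Proposition \ref{prop:normalizer}(4); in particular, note that no separate reduction to the case $\phi(\pi(0))=\pi(0)$ is needed, since a quasilinear automorphism automatically fixes $\pi(0)$.
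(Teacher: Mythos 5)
Your proof is essentially correct and takes a genuinely different route from the paper's. Both arguments begin by complexifying $\phi$ to $\widetilde\phi\in\Aut_\ql(Z)$, lifting to $\widetilde\Phi\in\GL(V)$, forming the conjugate lift $\overline{\widetilde\Phi}\colon v\mapsto\overline{\widetilde\Phi(\overline v)}$, and using Proposition~\ref{prop:normalizer}(4) to write $\overline{\widetilde\Phi}=g\widetilde\Phi$ with $g\in G$. From there the paper works hard inside $N_G(K)$: it first conjugates so that $\widetilde\Phi$ normalizes $K$, computes the Lie algebra of $N_G(K)$ to be $i\liez(\liek)^K\oplus\liek$, writes $g=kp$ with $k\in K$ and $p=\exp(iA)$, takes the square root $q=\exp(iA/2)$, replaces $\widetilde\Phi$ by $\widetilde\Phi q$ so that $\overline{\widetilde\Phi}=\widetilde\Phi k$ with $k\in K$, and then shows $\widetilde\Phi(W)=W$ by a positivity argument with a $K$-invariant quadratic form. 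You instead trivialize the cocycle $g$ in one stroke by evaluating at a real principal point: choosing $w_0\in W\cap V_\pr$, using the Kempf--Ness set to produce a real principal point $w_0'$ in the closed orbit over $\widetilde\phi(\pi(w_0))$, writing $\widetilde\Phi(w_0)=g(w_0)w_0'$ with $g(w_0)$ uniquely determined by trivial principal isotropy, and deducing $g=\overline{g(w_0)}\,g(w_0)\inv$, so that $\widetilde\Phi_1:=g(w_0)\inv\widetilde\Phi$ is conjugation-equivariant and still lifts $\widetilde\phi$. This is shorter and cleaner: it bypasses the analysis of $N_G(K)$, the polar/square-root manipulation, and the quadratic-form contradiction, at the cost of needing the structure of $\M$ and the identification $\M\cap\pi\inv(W/K)\subset W$, which you establish correctly.

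There is one small gap you should plug. Writing $G\widetilde\Phi(w_0)=\widetilde\Phi(Gw_0)$, and more generally using that $\widetilde\Phi$ carries $G$-orbits to $G$-orbits, presupposes that $\widetilde\Phi$ normalizes $G$ in $\GL(V)$; this is Proposition~\ref{prop:normalizer}(1)--(3), whose hypotheses require $\widetilde\phi$ to preserve $Z_\pr$. You only observe that $\phi$ preserves $(W/K)_\pr$ (Bierstone), which does not by itself give that $\widetilde\phi$ preserves the complex principal stratum. The fix is immediate and is the one the paper uses in Lemma~\ref{lem:KtoG}: $V=W\otimes_\R\C$ is an orthogonal $G$-module (extend the $K$-invariant inner product on $W$ $\C$-bilinearly), so Theorem~\ref{thm:stratapreserve} applies and $\widetilde\phi$ preserves the stratification of $Z$, hence $Z_\pr$. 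With that citation added, your argument is complete. (Also, as in the paper, one is implicitly assuming the action is faithful so that $2$-principal gives trivial principal isotropy; that is needed for the uniqueness of $g(w_0)$ and is harmless.)
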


\begin{proof}   Let $\phi\in\Aut_\ql(W/K)$. As above, $\phi$ extends to an element (which we also call $\phi$) of $\Aut_\ql(Z)$ which preserves the stratification. By hypothesis there is a lift $\Phi \in\GL(V)$ of $\phi$ and, by Proposition \ref{prop:normalizer}, $\Phi$ normalizes $G$. As in Remark \ref{rem:conjbyg}, we may change $\Phi $ by an arbitrary inner automorphism of $G$. Since $\Phi K\Phi \inv$ is conjugate to $K$ we can reduce to the case that $\Phi\in N_G(K)$.   Define $\overline{\Phi}$ by $\overline{\Phi}(v)=\overline{\Phi(\bar v))}$, $v\in V$. One similarly defines $\overline\phi$, but since $\overline\phi=\phi$ on $W/K$, we have that $\overline \phi=\phi$.
Thus $\overline \Phi $ covers $\phi$. Since $K$ is fixed by complex conjugation, $\overline \Phi $ also normalizes $K$. Then $\Phi \inv\overline \Phi $ normalizes $K$ and induces the identity on $Z$. By Proposition \ref{prop:normalizer}, $\Phi \inv \overline \Phi $ is multiplication by an element $g\in G$ and $g\in N_G(K)$. We have that
$$
\lieg=\liek\oplus i\liek=\liez(\liek)\oplus i\liez(\liek)\oplus [\liek,\liek]\oplus i[\liek,\liek]
$$
where $\liez(\liek)$ denotes the center of $\liek.$ Since no nonzero element of $i[\liek,\liek]$ can normalize $[\liek,\liek]$,  the Lie algebra of $N_G(K)$ is contained in $i\liez(\liek)\oplus\liek$. If $A\in \lie z(\liek)$ and $A$ is not fixed by $K$, then $\exp(iA)$ does not lie in $N_G(K)$. Hence the Lie algebra of $N_G(K)$ is $i\lie z(\liek)^K\oplus \liek$. Since $K$ is a maximal compact subgroup of $N_G(K)$,   our element $g$ can be written as $kp$ where $k\in K$, $p=\exp(iA)$   and $A\in \liez(\liek)^K$. Let $q=\exp(iA/2)$ and set $\Phi ':=\Phi q$. Then $\Phi '$ is still a lift of $\phi$ and $(\Phi ')\inv\overline{\Phi '}=q\inv kpq\inv=k$.  Thus $\overline{\Phi '}=\Phi 'k$.

We have reduced to the case that $\overline\Phi=\Phi k$ for some $k\in K$. Let $\widetilde W$ denote $\Phi(W)$. Then $\widetilde W$ is $K$-stable and $\pi(\widetilde W)=W/K$. Let $w\in W$. Then $\overline{\Phi(w)}=\Phi(kw)\in\widetilde W$ so that $\widetilde W$ is stable under complex conjugation.   Hence $\widetilde W=W_1+iW_2$ where $W_1$ and $W_2$ are $K$-stable subspaces of $W$. Since $\widetilde W\otimes_\R\C=V$, we must have that $W=W_1\oplus W_2$. 
Since $W_2$ has an invariant quadratic form $|\cdot|$, we have an invariant $q(w_1,w_2)=|w_2|^2$, which we may also consider as an invariant on $V$. Then $q$ induces a function  on $W/K$ which is nonnegative and  $q$ must be nonnegative on $\widetilde W$ since $\pi(\widetilde W) =W/K$. This gives a contradiction unless $W_2=0$. Hence $\Phi(W)=W$  and $\Phi$ is the desired lift of $\phi$.
\end{proof}

Theorem \ref{thm:compact} of the Introduction is a consequence of Theorem \ref{thm:compactlifting} and Proposition \ref{prop:cxlift}.

\smallskip
Many of the examples in section \S \ref{sec:examples} (Theorem \ref{thm:citlifting} and Remark \ref{rem:nolift}) apply to the case of representations of compact groups.  Let $\GG_2(\R)$ and $\BB_3(\R)$ denote the compact forms of $\GG_2$ and $\BB_3$ which act on $\R^7$ and $\R^8$, respectively. Then the representations $(4\R^7,\GG_2(\R))$, $(5\R^8,\BB_3(\R))$  and $(2\C^2,\SU_2(\C))$ do not have the lifting property. On the other hand, by Proposition \ref{prop:cxlift}, we have many examples $(W,K)$ which have the lifting property.

  

\begin{thebibliography}{KLM03}

\bibitem[Bie75]{BierstoneLifting}
Edward Bierstone, \emph{Lifting isotopies from orbit spaces}, Topology
  \textbf{14} (1975), no.~3, 245--252.

\bibitem[Got69]{GottschlingInvarianten}
Erhard Gottschling, \emph{Invarianten endlicher {G}ruppen und biholomorphe
  {A}bbildungen}, Invent. Math. \textbf{6} (1969), 315--326.

\bibitem[Hei91]{Heinzner91}
Peter Heinzner, \emph{Geometric invariant theory on {S}tein spaces}, Math. Ann.
  \textbf{289} (1991), no.~4, 631--662.

\bibitem[HK95]{HKOka}
Peter Heinzner and Frank Kutzschebauch, \emph{An equivariant version of
  {G}rauert's {O}ka principle}, Invent. Math. \textbf{119} (1995), no.~2,
  317--346.

\bibitem[Hum75]{HumphLinAlg}
James~E. Humphreys, \emph{Linear algebraic groups}, Springer-Verlag, New York,
  1975, Graduate Texts in Mathematics, No. 21.

\bibitem[KLM03]{KrieglTensor}
Andreas Kriegl, Mark Losik, and Peter~W. Michor, \emph{Tensor fields and
  connections on holomorphic orbit spaces of finite groups}, J. Lie Theory
  \textbf{13} (2003), no.~2, 519--534.

\bibitem[Kra84]{KraftBook}
Hanspeter Kraft, \emph{Geometrische {M}ethoden in der {I}nvariantentheorie},
  Aspects of Mathematics, D1, Friedr. Vieweg \& Sohn, Braunschweig, 1984.

\bibitem[Kur97]{Kurth}
Alexandre Kurth, \emph{{${\rm SL}_2$}-equivariant polynomial automorphisms of
  the binary forms}, Ann. Inst. Fourier (Grenoble) \textbf{47} (1997), no.~2,
  585--597.

\bibitem[Kut11]{Kuttler}
J.~Kuttler, \emph{Lifting automorphisms of generalized adjoint quotients},
  Transformation Groups \textbf{16} (2011), 1115--1135.

\bibitem[LMP03]{PopovMichor}
Mark Losik, Peter~W. Michor, and Vladimir~L. Popov, \emph{Invariant tensor
  fields and orbit varieties for finite algebraic transformation groups}, A
  tribute to {C}. {S}. {S}eshadri ({C}hennai, 2002), Trends Math.,
  Birkh{\"a}user, Basel, 2003, pp.~346--378.

\bibitem[Los01]{LosikLift}
M.~V. Losik, \emph{Lifts of diffeomorphisms of orbit spaces for representations
  of compact {L}ie groups}, Geom. Dedicata \textbf{88} (2001), no.~1-3, 21--36.

\bibitem[Lun73]{LunaSlice}
Domingo Luna, \emph{Slices \'etales}, Sur les groupes alg\'ebriques, Soc. Math.
  France, Paris, 1973, pp.~81--105. Bull. Soc. Math. France, Paris, M\'emoire
  33.

\bibitem[Pri67]{PrillLocal}
David Prill, \emph{Local classification of quotients of complex manifolds by
  discontinuous groups}, Duke Math. J. \textbf{34} (1967), 375--386.

\bibitem[Rag78]{Raghunathan}
M.~S. Raghunathan, \emph{Principal bundles on affine space}, C. {P}.
  {R}amanujam---a tribute, Tata Inst. Fund. Res. Studies in Math., vol.~8,
  Springer, Berlin, 1978, pp.~187--206.

\bibitem[Ros61]{Rosenlicht}
Maxwell Rosenlicht, \emph{Toroidal algebraic groups}, Proc. Amer. Math. Soc.
  \textbf{12} (1961), 984--988.

\bibitem[Sch75]{SchSmooth}
Gerald~W. Schwarz, \emph{Smooth functions invariant under the action of a
  compact {L}ie group}, Topology \textbf{14} (1975), 63--68.

\bibitem[Sch78]{SchCoregular}
\bysame, \emph{Representations of simple {L}ie groups with regular rings of
  invariants}, Invent. Math. \textbf{49} (1978), no.~2, 167--191.

\bibitem[Sch80]{SchLifting}
\bysame, \emph{Lifting smooth homotopies of orbit spaces}, Inst. Hautes
  \'Etudes Sci. Publ. Math. (1980), no.~51, 37--135.

\bibitem[Sch87]{SchCIT}
\bysame, \emph{On classical invariant theory and binary cubics}, Ann. Inst.
  Fourier (Grenoble) \textbf{37} (1987), no.~3, 191--216.

\bibitem[Sch88]{SchG2}
\bysame, \emph{Invariant theory of {$G\sb 2$} and {${\rm Spin}\sb 7$}},
  Comment. Math. Helv. \textbf{63} (1988), no.~4, 624--663.

\bibitem[Sch89]{SchTopAlgQuots}
\bysame, \emph{The topology of algebraic quotients}, Topological methods in
  algebraic transformation groups (New Brunswick, NJ, 1988), Progr. Math.,
  vol.~80, Birkh\"auser Boston, Boston, MA, 1989, pp.~135--151.

\bibitem[Sch95]{SchLiftingDOs}
\bysame, \emph{Lifting differential operators from orbit spaces}, Ann. Sci.
  \'Ecole Norm. Sup. (4) \textbf{28} (1995), no.~3, 253--305.

\bibitem[Sch13a]{SchAdjoint}
\bysame, \emph{Lifting automorphisms of quotients of adjoint representations},
  http://arxiv.org/abs/1201.6369 (2013).

\bibitem[Sch13b]{SchVectorFields}
\bysame, \emph{Vector fields and {L}una strata}, J. Pure and Applied Algebra
  \textbf{217} (2013), 54--58.

\bibitem[Str82]{StrubLocal}
Rainer Strub, \emph{Local classification of quotients of smooth manifolds by
  discontinuous groups}, Math. Z. \textbf{179} (1982), no.~1, 43--57.

\bibitem[VP89]{PopovVinberg}
{\`E}.~B. Vinberg and V.~L. Popov, \emph{Invariant theory}, Algebraic geometry,
  4 ({R}ussian), Itogi Nauki i Tekhniki, Akad. Nauk SSSR Vsesoyuz. Inst.
  Nauchn. i Tekhn. Inform., Moscow, 1989, pp.~137--314, 315.

\end{thebibliography}
 \def\cprime{$'$}
\providecommand{\bysame}{\leavevmode\hbox to3em{\hrulefill}\thinspace}
\providecommand{\MR}{\relax\ifhmode\unskip\space\fi MR }
\providecommand{\MRhref}[2]{%
  \href{http://www.ams.org/mathscinet-getitem?mr=#1}{#2}
}
\providecommand{\href}[2]{#2}

   \end{document}